\documentclass[10pt]{article}

\usepackage{amsfonts}
\usepackage{amsmath}
\usepackage{amssymb}
\usepackage{amsthm}
\usepackage[alphabetic]{amsrefs}
\usepackage{arydshln}
\usepackage{calc}
\usepackage{graphicx}
\usepackage[hidelinks]{hyperref}
\usepackage[left=1in,top=1in,right=1in,foot=1in]{geometry}
\usepackage{todonotes}
\usepackage{microtype}
\usepackage{tikz-cd}
\usepackage{dynkin-diagrams}
\usepackage{lscape}

\usepackage[stable, symbol]{footmisc}





\newcommand{\diag}{\mathrm{diag}}

\DeclareMathOperator{\spn}{span}

\DeclareMathOperator{\codim}{codim}

\newcommand{\Restriction}{\upharpoonright}

\def\into{\mathrel{\hookrightarrow}}
\def\onto{\mathrel{\twoheadrightarrow}}
\DeclareMathOperator{\im}{\mathrm{im}}
\newcommand{\sets}[2]{\left\{#1\,\middle|\,#2\right\}}
\newcommand{\genrel}[2]{\left\langle #1\,\middle|\,#2\right\rangle}
\newcommand{\bangles}[1]{\left\langle #1 \right\rangle}
\newcommand{\sett}[1]{\left\{#1\right\}}


\newcommand{\stab}[2]{\mathrm{Stab}_{#1}(#2)}

\newcommand{\rquotient}{\backslash}


\DeclareMathOperator{\End}{End}

\newcommand{\Hom}[2]{\mathrm{Hom}(#1, #2)}
\newcommand{\Ext}{\mathrm{Ext}}
\newcommand{\HomOver}[3]{\mathrm{Hom}_{#1}(#2, #3)}

\DeclareMathOperator{\Mat}{Mat}

\newcommand{\id}{\mathrm{id}}
\newcommand{\opp}{^\mathrm{opp}}
\DeclareMathOperator{\Ind}{Ind}
\DeclareMathOperator{\Irr}{Irr}

\newcommand{\Grp}{\mathbf{Grp}}

\newcommand{\Rep}{\mathbf{Rep}}
\newcommand{\Mod}{\mathbf{Mod}}

\newcommand{\Coh}{\mathrm{Coh}}




\DeclareMathOperator{\Spec}{Spec}

\newcommand{\Pp}{\mathbb{P}}

\newcommand{\git}{\mathbin{
  \mathchoice{/\mkern-6mu/}
    {/\mkern-6mu/}
    {/\mkern-5mu/}
    {/\mkern-5mu/}}}

\newcommand{\Bb}{\mathcal{B}}

\newcommand{\pt}{\mathrm{pt}}

\newcommand{\X}{\mathcal{X}}
\newcommand{\HH}{\mathbf{H}}

\newcommand{\Ff}{\mathcal{F}}
\newcommand{\Gg}{\mathcal{G}}
\newcommand{\Ee}{\mathcal{E}}



\DeclareMathOperator{\supp}{supp}

\newcommand{\trace}[2]{\mathrm{trace}\left({#1}\,,{#2}\right)}


\newcommand{\N}{\mathbb{N}}
\newcommand{\Z}{\mathbb{Z}}
\newcommand{\C}{\mathbb{C}}
\newcommand{\Q}{\mathbb{Q}}

\newcommand{\F}{\mathbb{F}}
\newcommand{\Oo}{\mathcal{O}}


\newcommand{\mM}{\mathfrak{m}}

\newcommand{\G}{\mathbf{G}}

\newcommand{\Waff}{W_{\mathrm{aff}}}
\newcommand{\M}{\mathbf{M}}

\newcommand{\cc}{\mathbf{c}}


\newcommand{\GL}{\mathrm{GL}}
\newcommand{\PGL}{\mathrm{PGL}}
\newcommand{\SL}{\mathrm{SL}}

\newcommand{\SO}{\mathrm{SO}}
\newcommand{\Sp}{\mathrm{Sp}}

\newcommand{\OO}{\mathrm{O}}

\newcommand{\Spin}{\mathrm{Spin}}
\newcommand{\Gm}{{\mathbb{G}_\mathrm{m}}}

\newcommand{\Aut}{\mathrm{Aut}}
\newcommand{\Sn}{\mathfrak{S}}
\newcommand{\An}{\mathfrak{A}}

\newcommand{\St}{\mathrm{St}}
\newcommand{\triv}{\mathrm{triv}}
\newcommand{\sgn}{\mathrm{sgn}}

\newcommand{\bq}{\mathbf{q}}

\newtheorem{theorem}{Theorem}
\newtheorem*{theorem*}{Theorem}
\newtheorem{cor}{Corollary}
\newtheorem{prop}{Proposition}
\newtheorem{lem}{Lemma}

\theoremstyle{definition}
\newtheorem{dfn}{Definition}
\newtheorem*{dfn*}{Definition}
\theoremstyle{remark}
\newtheorem{ex}{Example}
\newtheorem*{ex*}{Example}
\newtheorem{rem}{Remark}

\newcommand{\Gd}{G^\vee}

\newcommand{\Hr}{\overline{\HH}^{\mathrm{rigid}}}
\newcommand{\Hrq}{\overline{H}^{\mathrm{rigid}}_q}

\newcommand{\Hrs}{\overline{H}^{\mathrm{rigid}}_{\mathrm{sub}}}
\newcommand{\phir}{\overline{\phi}^{\mathrm{rigid}}}

\newcommand{\Jr}{\overline{J}^{\mathrm{rigid}}}
\newcommand{\Jrs}{\overline{J}^{\mathrm{rigid}}_{\mathrm{sub}}}
\newcommand{\RH}{\mathcal{R}}
\newcommand{\RHr}{\mathcal{R}_{\mathrm{rigid}}}
\newcommand{\RHdi}{\mathcal{R}_{\mathrm{diff-ind}}}

\newcommand{\cl}{\mathrm{cl}}

\newcommand{\hhh}{\mathrm{HH}_0}

\newcommand{\Pin}{\mathrm{Pin}}

\newcommand{\rank}{\mathrm{rank}}

\newcommand{\Zur}{Z_{G^\vee}(u)^{\mathrm{red}}}
\newcommand{\Yext}{\mathbf{Y}}


\newcommand{\Zz}{\mathcal{G}}
\newcommand{\Y}{\mathbf{Y}}

\newcommand{\oO}{\mathbb{O}}

\newcommand{\std}{\mathrm{std}}

\newcommand{\mMu}{\mathcal{M}_u}


\newcommand{\bA}{{\mathbb A}}

\newcommand{\bF}{{\mathbb F}}

\newcommand{\bZ}{{\mathbb Z}}

\newcommand{\cA}{{{\mathcal A}}}
\newcommand{\cB}{{{\mathcal B}}}
\newcommand{\cC}{{{\mathcal C}}}

\newcommand{\cG}{{{\mathcal G}}}
\newcommand{\cH}{{{\mathcal H}}}

\newcommand{\cN}{{{\mathcal N}}}

\newcommand{\cT}{{{\mathcal T}}}

\newcommand{\cW}{{{\mathcal W}}}
\newcommand{\cX}{{{\mathcal X}}}
\newcommand{\cZ}{{{\mathcal Z}}}

\newcommand{\sslash}{\mathbin{/\mkern-6mu/}}
\newcommand{\AdRumynin}{{{\mathrm A}{\mathrm d}_{\cG^0}}}
\newcommand{\Spe}{{{\mathrm S}{\mathrm p}{\mathrm e}{\mathrm c}}}
\newcommand{\GG}{{\cG\!\sslash\! \cG}}
\newcommand{\VG}{{V_{\gamma}}}
\newcommand{\wG}{{\widetilde{\cG}}}


\providecommand{\keywords}[1]{\textbf{\textit{Keywords---}} #1}

\title{The asymptotic Hecke algebra and rigidity \\ \large With an appendix by Dmitriy Rumynin}
\date{\today}
\author{Stefan Dawydiak \thanks{Mathematical Institute, Universit\"{a}t Bonn, Bonn 53111 Germany; email \texttt{dawydiak@math.uni-bonn.de}}}

\begin{document}
\maketitle
\begin{abstract}
We reprove the surjectivity statement of Braverman-Kazhdan's spectral description of Lusztig's asymptotic 
Hecke algebra $J$ in the context of $p$-adic groups. The proof is based on Bezrukavnikov-Ostrik's
description of $J$ in terms of equivariant $K$-theory. As a porism, we prove that 
the action of $J$ extends from the non-strictly positive unramified characters to the complement of a finite 
union of divisors, and that the trace pairing between the Ciubotaru-He rigid cocentre of an 
affine Hecke algebra with equal parameters and the rigid quotient of its Grothendieck group is 
perfect whenever the parameter $q$ is not a root of the Poincar\'{e} polynomial of the finite 
Weyl group.

Without recourse to $K$-theory, we prove a weak version of Xi's description of $J$ in type $A$.

As an application of relationship between $J$ and the rigid cocentre, we prove that
the formal degree of a unipotent discrete series representation of a connected reductive
$p$-adic group $G$ with a split inner form has denominator dividing the Poincar\'{e} polynomial of the Weyl 
group of $G$. Additionally, we give formulas for $t_w$ in terms of inverse and spherical Kazhdan-Lusztig 
polynomials for $w$ in the lowest cell.
\end{abstract}
\keywords{affine Hecke algebra, asymptotic Hecke algebra, rigid cocentre, rigid determinant, formal degree}
%
\tableofcontents

\section{Introduction}
\subsection{The asymptotic Hecke algebra and $p$-adic groups}
Let $F$ be a non-archimedean local field with residue field $\F_q$, and $\G$ be a connected reductive group 
defined and split over $F$. Let $\HH$ be the affine Hecke algebra over $\mathcal{A}=\C[\bq^{\pm \frac{1}{2}}]$ 
attached to the affine Weyl group of $\G$, and $H$ be its specialization its specialization at $\bq=q$, the 
Iwahori-Hecke algebra $H(G,I)$ of $G=\G(F)$. In \cite{BK}, Braverman and Kazhdan proposed that the asymptotic
Hecke algebra $J$ attached by Lusztig to $\HH$, and hence to $H$, should be an algebraic analogue 
of (the Iwahori part of) the Harish-Chandra Schwartz algebra $\mathcal{C}(G)$, by defining a map 
$\eta\colon J\to \mathcal{E}_{J}$, fitting into a diagram
\begin{equation}
\label{eqn BK summary diagram}
\begin{tikzcd}
f\arrow[dd, mapsto]&H\arrow[dd, "\sim"]\arrow[dr, hook, "\phi_q"]\arrow[rr, hook]&&\mathcal{C}(G,I)\arrow[dd, "\sim"]&f\arrow[dd, mapsto]\\
&&J\arrow[ur, "\tilde{\phi}", hook]\arrow[d, "\eta", hook]&&\\
\pi(f)&\mathcal{E}^I\arrow[r, hook]&\mathcal{E}^I_J\arrow[r, hook]&\mathcal{E}_t^I&\pi(f),
\end{tikzcd}
\end{equation}
where $\mathcal{E}_J$ is a subring of $\mathcal{C}(G)$ defined
spectrally via the Paley-Wiener theorem. In \cite{Plancherel}, we proved injectivity of $\eta$.
In the present paper, we prove surjectivity\footnote{In \textit{loc. cit.}, a previous version of the present paper is referred to as proving surjectivity for all but four cells occurring only in types $E_n$.}.

The method of proof is essentially as advocated in \cite{BK}. Let $u\in\G^\vee(\C)$ be a unipotent conjugacy class in the dual group of $\G$. Then Bezrukavnikov-Ostrik \cite{BO} describe the corresponding direct 
summand of $J$ as $J_u=K_{\Zur}(\Y_u\times\Y_u^{\opp})$, where $\Zur$ is the reductive part of the centralizer of $u$,
and $\Y_u$ is a finite $\Zur$-set together with the data of a central extension of each stabilizer in $\Zur$. 
We use \cite{BO} to construct idempotent elements 
$t_{d,\rho}$ of $J_u$ yielding 
\begin{equation}
\label{eqn lower modification intro}
\begin{tikzcd}
J_u\arrow[r, "\eta_u", hook]&\mathcal{E}_{J,u}\arrow[r, "\iota_u", hook]&\bigoplus_{i,j}\Mat_{\dim\pi_{ij}^I}\left(\Oo\left(\mathfrak{o}_{\sigma_{ij}}\git W_{M_i}\right)\right)=:\mathcal{M}_u,
\end{tikzcd}
\end{equation}
where $\mathfrak{o}_{ij}=\X(M_i)\cdot`\sigma_{ij}$ for $\X(M_i)$ the variety of unramified characters of and $\sigma_{ij}$ a discrete-series representation of the Levi subgroup $M_i$ of $G$, respectively, and $W_i=W_{M_i}$ 
is the group parameterizing intertwining operators, and $\rho$ is a simple constituent of the permutation 
representation of $\pi_0(\Zur)$ on $\C[Y_u]$. We write $\pi_{ij}$ are the parabolic inductions of representations in $\mathfrak{o}_{ij}$. The main inputs to the construction are the observation, appearing 
below as Lemma \ref{lem opp-square of transitive set is trivial} that central extensions only appear in 
the ``off-diagonal part" of $\Yext_u\times\Yext_u^{\opp}$, together with an extension in Section \ref{subsection Construction of modules} of Lusztig's classification 
of $K_{\Zz}(Y\times Y)$-modules in \cite[Section 10]{cellsIV} to the centrally-extended case. 
A more refined version of this extension was also obtained Bezrukavnikov, Karpov, and Krylov
\cite[Thm. C]{BKK}. Later, Bezrukavnikov and Losev showed in \cite{BL} that one may take the set $Y_u$ to 
be Lusztig's canonical basis of $K_0(\Bb_u^{\Gm})$, yielding a connection between this basis and harmonic 
analysis on $G$. 

Being $I$-spherical, all the smooth $G$-representations we consider correspond to a single connected component of the Bernstein variety, namely $G^\vee\git G^\vee$, with $\Oo(G^\vee\git G^\vee)=Z(H)$ . Nonetheless, the tempered representations we consider have many different discrete supports. In the context of $J$, the representations extending to $J$-modules come naturally organized according to the monodromy of the $L$-parameter of their discrete supports. This is reflected in the appearance of the varieties $\Zur\git\Zur$ below.

Concretely, each of $J_u$, $\Ee_{J,u}$ and $\mathcal{M}_u$ are modules over the complexification $R(\Zur)$ of the 
representation ring of $\Zur$. In Section \ref{subsection Lower modifications of vectors bundles}, we 
explain that the outer terms are a vector bundle and a maximal Cohen-Macaulay sheaf, respectively, and $J_u$ and $\Ee_{J,u}$ are lower modifications of $\mathcal{M}_u$. 
To this end, we make the apparently novel observation that if $u$ appears in the parameter of a 
discrete series representation $\sigma$ of a Levi subgroup $M$, then its orbit the quotient $\mathfrak{o}_\sigma\git W_M$ of its orbit $\mathfrak{o}_\sigma=\X(M)\cdot\sigma$ is a connected
component of $\Zur\git\Zur$.

Studying the scheme-theoretic support of $\mathcal{M}_u/J_u$, together with a simpler argument for the 
discrete series of $G$ yields
\theoremstyle{theorem}
\newtheorem*{thm:onto}{Theorem \ref{thm BK onto}}
\begin{thm:onto}[\cite{BK}, Theorem 2.4]
Let $\G$ be a connected reductive group over $F$ and $G^\vee$ its dual group over $\C$. 
Then $\eta_u$ is an isomorphism.
\end{thm:onto}
%
This same result 
was obtained by Bezrukavnikov, Karpov, and Krylov in \cite[Prop. 1.3.6]{BKK}.

When $\G=GL_n$, the construction of \eqref{eqn lower modification intro} does not require the results of
\cite{BO}, and we recover a weak version of the main result of \cite{XiAMemoir}:
\theoremstyle{theorem}
\newtheorem*{thm:GL}{Theorem \ref{thm Ju matrix algebra for GLn}}
\begin{thm:GL}
Let $G=\GL_n(F)$ or $\PGL_n(F)$. Then $\iota_u\circ\eta_u$ in \eqref{eqn lower modification intro} is an isomorphism for all unipotent
conjugacy classes $u\in\GL_n(\C)$. In particular, $\iota_u$ and $\eta_u$ are each isomorphisms.
If $G$ is semisimple of type $A$, then $\eta_u$ is an isomorphism. 
\end{thm:GL}
Finally, the fact that the codomain of $\iota_u$ in \eqref{eqn lower modification intro} consists of matrices whose entries are regular, as opposed to merely rational, in $\nu$ implies 
\begin{cor}
The action of $J_u$, \textit{a priori} defined on representations $\pi_\nu=i_{P}^G(\sigma\otimes\nu)$
for $\sigma\in\Ee_2(M_P)$ and $\nu\in\X(M_P)$ nonstrictly positive, extends to the complement of a
finite union of divisors in $\Zur\git\Zur$.
\end{cor}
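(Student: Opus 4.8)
The plan is to read the corollary off from the factorisation \eqref{eqn lower modification intro} together with the regularity of its right-hand side. First I would record precisely how the $J_u$-action is encoded there: for $x\in J_u$ and $\nu\in\X(M_i)$ nonstrictly positive, the operator by which $x$ acts on the Iwahori-fixed space $\pi_{ij,\nu}^{I}$ should be the value at $\nu$ of the $(i,j)$-block of $(\iota_u\circ\eta_u)(x)$, viewed inside $\bigoplus_{i,j}\Mat_{\dim\pi_{ij}^{I}}\!\bigl(\Oo(\X(M_i)/W_i)\bigr)$. This is the commutativity of \eqref{eqn BK summary diagram} on the summand indexed by $u$; the only thing to check is that it is an equality of linear maps rather than a coincidence on a generating set, which I expect follows because $\pi_{ij,\nu}^{I}$ is, as an $H$-module, a parabolic induction of $\sigma^{I_{M_i}}$ twisted by $\nu$, hence of dimension $\dim\pi_{ij}^{I}$ independently of $\nu$.

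Granting this, the extension is essentially formal. The $(i,j)$-block of $(\iota_u\circ\eta_u)(x)$ has entries in $\Oo(\X(M_i)/W_i)$ --- regular functions on the whole variety, as opposed to rational functions with poles along some divisor --- so evaluation at any $\nu\in\X(M_i)$ produces an operator on $\pi_{ij,\nu}^{I}$, depending regularly on $\nu$, and restricting to the nonstrictly positive locus recovers the a priori action. Thus $J_u$ already acts on the whole family $\{\pi_\nu\}$ over each $\X(M_i)/W_i$. To express this over $\Zur\git\Zur=\Spec R(\Zur)$ I would then invoke the observation recalled in the introduction that $\Oo(\X(M_i)/W_i)$ is a direct factor of $R(\Zur)$: the finitely many varieties $\X(M_i)/W_i$ are thereby identified with (sub)schemes of $\Zur\git\Zur$, and the extended action lives over their union; away from the divisorial locus where the correspondence $\nu\leftrightarrow\pi_\nu$ with points of $\Zur\git\Zur$ breaks down (e.g.\ where the parameter of $\pi_\nu$ ceases to have $u$ as its unipotent part, or where an intertwining operator entering \eqref{eqn lower modification intro} degenerates) one obtains an honest family of representations on which $J_u$ acts, which is the assertion.

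The main obstacle is precisely this last point: identifying the divisors to be removed and checking that on their complement the family $\{\pi_\nu\}$ and the $J_u$-action behave as claimed. This is where Harish-Chandra's Plancherel theory enters, through the reducibility loci of $i_{P}^{G}(\sigma\otimes\nu)$ and the poles and zeros of the associated $\mu$-functions; once those divisors are pinned down, the rest of the argument is a formal consequence of the regularity statement already established in Section \ref{subsection Lower modifications of vectors bundles}.
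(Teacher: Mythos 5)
Your proposal has a genuine gap, and it is exactly the one the paper flags just before Example \ref{ex sl2 rep that is not J-module}: ``\eqref{eqn nested modification} does not imply that the operators $\pi(e)$, $e\in\Ee_J$ are regular in $\nu$; the basis we used to define \eqref{eqn EJu to single Mat definition} itself depends on $\nu$ via the $\pi(t_{d,\rho})$.'' The regularity established in Lemma \ref{lem regularity of matrix coefficients} is a statement about matrix coefficients \emph{relative to the $\nu$-dependent basis} $v_{d,\rho}(\nu)=\pi(t_{d,\rho})(\nu)v_{d,\rho}(\nu_0)$, and this basis is only rational in $\nu$ because the idempotents $\pi(t_{d,\rho})(\nu)$ themselves acquire poles. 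Consequently your sentence ``evaluation at any $\nu\in\X(M_i)$ produces an operator on $\pi_{ij,\nu}^{I}$'' and its conclusion ``$J_u$ already acts on the whole family $\{\pi_\nu\}$'' do not follow; they are false, and Example \ref{ex sl2 rep that is not J-module} exhibits a representation (the indecomposable $\pi_1$ at $z^2=q^{-1}$) which does not admit a $J$-module structure at all.

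The divisors to be removed are precisely the finitely many loci where the rational family $\{v_{d,\rho}(\nu)\}$ degenerates and fails to be a basis; on the complement the basis is honest, the matrix coefficients are regular, and the $J_u$-action is therefore defined. That is the short argument the paper intends, and it is essentially self-contained given Section \ref{subsection Regularity of matrix coefficients}. Your final paragraph does acknowledge a need to remove divisors, but attributes it to the wrong cause: the correspondence $\nu\leftrightarrow\Zur\git\Zur$ does not break down --- Proposition \ref{prop quotient of unramified characters is flat} gives a finite \'{e}tale cover onto a connected component --- and $\mu$-function and Plancherel-theoretic considerations, while relevant to Proposition \ref{prop Ju and EJu fibre images agree at generic points}, are not where this corollary's divisors come from. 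Without identifying the pole locus of the idempotents $\pi(t_{d,\rho})$ as the obstruction, the extension cannot be concluded from regularity of matrix entries alone.
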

This generalizes a similar statement for the functions $\nu\mapsto\trace{\pi_\nu}{t_w}$ proven
in \cite{Plancherel} and confirms an expectation of Braverman and Kazhdan.

\subsection{Appearance of central extensions}
In the wake of \cite{BO}, it was expected that the central extensions allowed for in \textit{op. cit.} did not actually appear, so long as $G^\vee$ was simply-connected. In \cite{BDD}, Bezrukavnikov, Dobrovolska, 
and the author showed that this is false, by producing an explicit 
counter-example in the case $\Gd=\Sp_6(\C)$ and $u$ corresponding to the 
partition $(2,2,2)$. The same counter-example was computed independently by 
Qiu and Xi in \cite{QX}. 

When $\Gd$ is not simply-connected, nontrivial central extensions appear immediately; the example 
$G^\vee=\PGL_2(\C)$ is treated in \cite[Section 8.3]{XiAMemoir}. One way to see the \textit{a priori} appearance of projective representations of $\Zur$ is via the construction of $J_u$-modules we recall in Section \ref{section Rank 1 idempotents}, where we recall the construction of modules over the equivariant $K$-theory of the square of a finite set. Absent central extensions, this construction would produce all the simple $J_u$-modules. These modules are labelled in particular by an irreducible representation of $\pi_0(Z_{G^\vee}(u,s))$; it can happen that one needs at least two non-isomorphic representations to describe all the modules associated to $(u,s)$. Here, the action is via $\pi_0(Z_{G^\vee}(u,s))\to\pi_0(\Zur)$. Note that this morphism can be far from injective.

With modifications, the construction of modules is indeed exhaustive, as we show in Section \ref{subsection Construction of modules}, where we explain that the appearance of nontrivial central extensions is
intimately linked to \eqref{eqn lower modification intro} being a genuine lower modification, \textit{i.e.}
to $\iota_u$ failing to be injective on fibres, and that the latter can be forced by reducibility of
certain tempered representations of the $p$-adic group $G$. From this perspective, simple connectedness of 
$G^\vee$ avoids only central extensions 
forced by reducibility of unitary principal series. Indeed, as we explain in Examples 
\ref{ex SL2 central ext} and \ref{ex BDD central ext}, the counter-example of \cite{BDD} and \cite{QX} 
arises for essentially the same reason as for the well-known case $\Gd=\PGL_2(\C)$---the absence of central 
extensions would contradict reducibility of certain tempered representations.
\subsection{The Ciubotaru-He rigid cocentre}
In Section \ref{section rigid det}, we given a second application of the elements $t_{d,\rho}$:
In \cite{CH}, Ciubotaru and He studied the cocentre of the affine Hecke algebra $\HH$
for unequal parameters and proved several results on its independence with respect to the parameters 
$\bq_\alpha$. 
For example, they proved that the cocentre is a free $\C[\bq_\alpha^{\pm 1}]_\alpha$-module with an 
explicit basis, and in addition defined and studied a certain submodule of it, the so called 
\emph{rigid cocentre} $\Hr$, which they explain is intended to serve as a partial replacement for the Tits 
deformation theorem in the affine case. The rigid cocentre is free of finite rank over $\C[\bq_\alpha^{\pm 1}]_\alpha$, 
with a canonical basis that is independent of the parameters. For all but finitely-many specializations, 
it is dual to a certain finite-dimensional quotient, the \emph{rigid quotient} $\RHr=\RH/\RHdi$ of the Grothendieck 
group of the affine Hecke algebra, in which classes $[i_P^G(\sigma)]$ and 
$[i_P^G(\sigma\otimes\nu)]$ of parabolic inductions are identified for all unramified characters $\nu$. 
Thus the rigid cocentre consists of forms which are rigid under deformations of the central character. 

In the equal-parameters case, independence phenomena with respect to the parameter $\bq$ are
well-known to often be related to Lusztig's asymptotic Hecke algebra $J$ via Lusztig's map $\phi\colon\HH\to J\otimes\mathcal{A}$.
In the case when $\bq$ is a formal variable, the connection was made in Lusztig's original series 
of papers \cite{cellsI}, \cite{cellsII}, \cite{cellsIII}, \cite{cellsIV}.
Additionally, in \cite{BDD} and 
the appendix thereof, the authors of the main paper together with the authors 
of the appendix proved  that $\phi$ induces 
an isomorphism $\bar{\phi}\colon\hhh(\HH)\to\hhh(J\otimes\mathcal{A})$ of cocentres away from roots of unity
in $\C^\times=\Spec\mathcal{A}$.

\subsubsection{The rigid pairing}
In each of three equal-parameter examples considered in \cite{CH}, it was observed that the rigid pairing
$\Hrq\otimes\RHr\to\C$ was perfect whenever $P_{W_f}(q)\neq 0$. The idempotents $t_{\omega d,\rho}$ used to define \eqref{eqn lower modification intro} naturally lie in the rigid cocentre, and we show
that in fact the functionals $[\pi]\mapsto\trace{\pi}{t_{\omega d,\rho}}$, $\omega\in\pi_1(\Gd)$, span the 
rigid cocentre when $q>1$. We then prove
\theoremstyle{Theorem}
\newtheorem*{thm:rigid}{Theorem \ref{cor main thm body}}
\begin{thm:rigid}
Let $\HH$ be the affine Hecke algebra with equal parameters associated to a semisimple root 
datum. 
If $P_{W_f}(q)\neq 0$, the rigid pairing is perfect.
\end{thm:rigid}
%
In particular, we obtain a basis of $\Hr$ 
indexed by data from the Galois side of the Kazhdan-Lusztig parametrization, whereas the basis
of \cite{CH} is related to the Kottwitz set of $G$. However, we do not extract this basis from the spanning set we construct in any canonical way, see Remark \ref{rem extracting basis}.
\subsubsection{Application: formal degrees of unipotent discrete series representations}
Let $\G$ be connected reductive but not necessarily split over $F$. Let $\omega$ be a unipotent discrete series 
representation of $\G(F)$ and let $d(\omega)$ be its formal degree. 
By \cite{Solleveldfdegsequel}, there is a unique rational function in $\bq$ that
specializes to $d(\omega)$ for all $q>1$ (when $\G$ splits over an unramified extension of $F$, this was proved in \cite{FOS}). Of course $\G$ is not semisimple, then 
$d(\omega)$ is a measure on a compact torus, absolutely continuous with respect to a Haar measure, and our
statements are about the Radon-Nikodym derivative.
It is also shown in \textit{ops. cit.} that the possible denominators
are tightly constrained, as is expected to hold in wider generality, c.f. \cite[Prop. 4.1]{GrossReeder}.
As the formal degrees of unipotent discrete series are, up to a constant independent of $q$, constant in 
$L$-packets, they are mostly governed by the principal block, where the formulas of Reeder \cite{Reeder} suggest 
even stricter constraints. Using our results  on the rigid pairing and our results from \cite{Plancherel} about 
values, in particular at $1\in\tilde{W}$, of the functions $t_w$, we prove in Section \ref{subsection application formal degrees} 
\theoremstyle{Theorem}
\newtheorem*{thm:fdeg}{Theorem \ref{thm fdeg body}}
\begin{thm:fdeg}
Let $F$ be a non-archimedean local field and let $\G$ be a connected reductive group defined over $F$. 
Suppose that the quasi-split inner form $\G^*$ of $\G$ is actually split. Let $\omega$ be a unipotent discrete 
series representation of $\G(F)$ and let $d(\omega)$ be its formal degree, thought of as a rational function of 
$\bq$. Then the denominator of $d(\omega)$ divides a power of $P_W(\bq)$, where $W$ is the Weyl group of 
$G^\vee$.
\end{thm:fdeg}
%
Note that Reeder computed all unipotent formal degrees for exceptional groups in 
\cite{ReederLeubeck}, verifying Theorem \ref{thm fdeg body} for exceptional groups directly.

A major input to the proof is the \textit{a priori} rationality of $d(\omega)$ proved in \cite{FOS}--- it is a 
major input to the results of \cite{Plancherel} we use. It is the other assumptions, such as splitness of 
$\G^*$, that prevent us from proving a substantially stronger version of Theorem \ref{thm fdeg body}. 
Nonetheless, our techniques work in some circumstances where $\G^*$ is only quasi-split, see Remark 
\ref{rem extension to quasi-split}.
\subsection{Acknowledgements}
The author thanks Gurbir Dhillon, Hiroki Kato, Ivan Losev, Alexander Petrov, Oron Propp,  David Schwein, and Alex Youcis for helpful conversations. The author thanks Dennis Gaitsgory for helpful conversations and for suggesting the use of an ancestor of Lemma \ref{lem Dennis}. 

The author thanks Roman Bezrukavnikov 
and Ivan Karpov for several helpful and patient discussions relating to Section \ref{section surjectivity},
especially the implications of Example \ref{ex SL2 central ext}. 

The author thanks Dan Ciubotaru for introducing him to the rigid quotient and rigid determinant, and 
for helpful conversations at the conferences
``From $E_6$ to 
$\tilde{\tilde{E}}_{60}$'' at the  Korteweg-de Vries Institute in honour of Eric Opdam, and ``Workshop on interactions between representation theory, combinatorics, and geometry" at 
the National University of Singapore. The author thanks the organizers of both events for their 
hospitality.

This research was partially supported by NSERC, and was carried out in part while the author was
a visitor of the Max-Planck-Institut f\"{u}r Mathematik, which he 
thanks for hospitality and financial support.
\section{Rank $1$ idempotents in the asymptotic Hecke algebra}
\label{section Rank 1 idempotents}
\subsection{The affine and asymptotic Hecke algebras and the matrix Paley-Wiener theorem}
\label{subsubsection and affine and asymptotic Hecke algebras}
Let $\G$ be a connected reductive group defined and split over a fixed non-archimedean local field $F$
with $G=\G(F)$. Let $(X^*, R, X_*, R)$ be its root datum with Weyl group $W_f$, and let $\Gd$ be the connected reductive group over
$\C$ with dual root datum, and $W=W_f\ltimes X_*$ be the extended affine Weyl group of $\G$.

Let $\HH$ be the affine Hecke algebra over $\mathcal{A}=\C[\bq^{1/2},\bq^{-1/2}]$ with standard
basis $\sett{T_w}_{w\in W}$ and relations $(T_s+1)(T_s-\bq)=0$ and $T_wT_{w'}=T_{ww'}$
when $\ell(ww')=\ell(w)+\ell(w')$. Let $\{C_w\}_{w\in W}$ and $\{C'_w\}_{w\in W}$ be the two Kazhdan-Lusztig bases. Write$w=w_f\lambda\in\Waff$, let $T_w\mapsto (-1)^{\ell(w_f)}q^{\ell(w)}T_{w^{-1}}^{-1}$ as in \cite[Def. 6(a)]{Plancherel} and recall that if $\omega(x)\in\pi_1(\G)$ labels the $W_f\ltimes \Z[\Phi^\vee]$-coset containing $x\in\Waff$, then
t one has 
\begin{equation}
\label{eqn Goldman on Cw}
{}^\dagger C_x=(-1)^{\ell(x)+\ell(\omega(x)_f)}C_x'
\end{equation}
for all $x\in \Waff$, by \cite[Lemma 2 (a)]{Plancherel}.

In \cite{cellsII}, Lusztig defined a based $\C$-algebra\footnote[1]{Actually $J$ is defined over $\Z$, and $H$ and $\phi$ are defined over 
$\Z\left[\bq^{\pm\frac{1}{2}}\right]$.}
$J$ with basis $\sett{t_w}_{w\in W}$. 
By construction $J=\bigoplus_\cc J_\cc$ is a direct sum of two-sided ideals $\spn\sets{t_w}{w\in\cc}$
for two-sided cells $\cc$ of $W$, and $1_{J_u}=\sum_{d}t_d$ is a sum of
orthogonal idempotents over the subset $\mathcal{D}\cap\cc$ of the finite
set $\mathcal{D}\subset W$ of distinguished involutions. In \textit{op. cit.}, an injective morphism of $\mathcal{A}$-algebras
\[
\phi^\circ\colon\HH\to J\otimes\mathcal{A}.
\]
is defined, with the property that 
if $a\colon W\to\N$ is Lusztig's $a$-function, then
\begin{equation}
\label{eqn phi upper-triangular}
\phi^\circ(C_w)\in\spn{\sets{t_z}{a(z)\geq a(w)}}.
\end{equation}
\begin{dfn}
We set $\phi:=\phi^\circ\circ{}^\dagger(-)$.
\end{dfn}
For any associative algebra $R$, we write $\hhh(R)=R/[R,R]$.

At several points we will want to specialize
$\bq$ to some $q>1$ so as to use the tools of harmonic 
analysis. If $q$ is the cardinality of the residue field of $F$, then $\HH|_{\bq=q}$ is the Iwahori-Hecke algebra of $G$,
and $J$ will be a subalgebra of the Harish-Chandra Schwartz algebra, as 
shown in \cite{BK} and \cite{Plancherel}.

Given a Kazhdan-Lusztig triple $(u,s,\rho)$, for $u\in G^\vee$  
unipotent, $s\in\Zur$ semisimple, and $\rho$ an irreducible representation of 
$\pi_0(Z_{G^\vee}(u,s)/Z(\Gd))$, we write $K(u,s,\rho)$ for the corresponding standard
$H$-module, and $E(u,s,\rho)$ for the corresponding simple $J$-module following
\cite{KLDeligneLanglands} and \cite{cellsIV}. We have 
$E(u,s,\rho)|_H=K(u,s,\rho)$ whenever $q>1$ \cite{BK}.
By \cite{cellsIV}, the two-sided cells
are in bijection with unipotent conjugacy classes in $u\in G^\vee$, and 
we write $\cc(u)$ for the corresponding cell. Moreover, we
have $a(\cc)=\dim\mathcal{B}^\vee_u$, where $\mathcal{B}^\vee_u$ is 
the Springer fibre, and $u'\subset\overline{u'}$ if and only if
$\cc(u')\leq_{LR}\cc(u)$ \cite{BezPerv}, which implies $a(\cc(u))\leq a(\cc(u'))$. In this way we get a partial order on standard $H$-modules
compatible with the $a$-function. By definition, we have $J_{\cc(u)}E(u',s,\rho)=0$ unless $u'=u$.
\subsubsection{Matrix Paley-Wiener theorems}
Finally, we recall the definitions of the rings $\Ee^I$, $\Ee_J=\Ee_J^I$ and $\Ee_t^I$ in \eqref{eqn BK summary diagram}. 
Let $\mathrm{Forg}$ and $\mathrm{Forg}_t$ be the forgetful functors from the categories of admissible
$H$-modules and tempered $H$-modules to $\C$-vector spaces, respectively. Let $\Ee$ (resp. $\Ee_t$) be the subring of 
$\End_\C(\mathrm{Forg})$ (resp. $\mathrm{Forg}_t$) consisting of endomorphisms invariant under a compact open
subgroup of $G$ and depending algebraically (resp.
smoothly) on the unramified character (resp. unitary unramified character) $\nu$ at 
$\pi=i_P^G(\sigma\otimes\nu)$, in the sense of Section \ref{subsection Regular functions of unramified characters}, where $\sigma\in\Ee_2^I(M_P)$ is an $I$-spherical representation of $M_P$. By \cite[Thm. 25]{Bernstein}  (resp. \cite[Section 9]{SchZnk}) the outer maps $f\mapsto\pi(f)$ in \eqref{eqn BK summary diagram} are isomorphisms.
Finally, let $\Ee_J\subset\Ee_t$ be the subring of endomorphisms 
extending to rational functions of $\nu$ without poles at $\nu$ such that $\nu^{-1}$ is non-strictly 
positive, in the sense of \cite[Section 1.7]{BK}, and write $\Ee_{J_u}= 1_{J_u}\Ee_{J} 1_{J_u}$.
\subsection{Equivariant $K$-theory of finite sets}
\label{subsection Equivariant K-theory of finite sets}
Let $\Zz$ be a complex group with reductive identity component $\Zz^\circ$, and $Y$ be a finite $\Zz$-set
with action through $\Gamma=\pi_0(\Zz/Z(\Zz))$. For now we assume that $\Gamma$ is
abelian. In practice we will have $\Zz=\Zur$, the reductive quotient of the centralizer of a unipotent 
$u\in G^\vee$. Then we define
\[
K_\Zz(Y)=K_0(\Coh_\Zz(Y))\otimes_\Z\C
\]
and
\[
K_\Zz(Y\times Y)=K_0(\Coh_\Zz(Y\times Y))\otimes_\Z\C,
\]
where $\Zz$ acts diagonally on the product. Recall that $K_\Zz(Y\times Y)$ is naturally a 
ring under the convolution operation $\star$, and $K_\Zz(Y)$ is naturally a module over it. Both
are modules over
\[
K_\Zz(\pt)=R(\Zz)=K_0(\Rep(\Zz))\otimes_\Z\C=\Oo(\Zz)^\Zz=\Oo(\Zz\git\Zz),
\]
where the GIT quotient is taken with respect to the conjugation action.

Lusztig conjectured in \cite{cellsIV} that $J_u$ was isomorphic to $K_\Zz(Y\times Y)$ for 
some $\Zz$ and $Y$, and classified in [\textit{op. cit.}, Section 10.3] the simple 
$K_\Zz(Y\times Y)$-modules $E_{s,\rho}$. They are parameterized by pairs $(s,\rho)$ where $s\in \Zz$ is 
semisimple, and $\rho$ is a simple representation of $\pi_0(Z_{\Zz}(s)/Z(\Zz))$ occurring in the 
permutation representation $\C[Y]$. 

In our setting, with $\Gamma$ abelian and usually acting transitively on $Y$, so that
$Y^s=Y$ if $Y^s\neq \emptyset$, the $E_{s,\rho}$ are readily described: For 
$s\in\Zz$ a semisimple element such that $Y^s\neq\emptyset$, the specialization of 
$K_\Zz(Y\times Y)\otimes_{R(\Zz)}\C_s$ acts on $\C[Y]$ in the obvious way. We denote this module $E_s$. 
The permutation action of $\pi_0(Z_{\Zz}(s)/Z(\Zz))$ on $E_s$ commutes with the $K_{\Zz}(Y\times Y)$-
action, and in the natural basis of $\C[Y]$ consisting of functions $\mu_\rho\colon Y\to \C$ transforming 
by some representation $\rho$ of $\Gamma$ occurring in $\C[Y]$, we have 
\[
E_s=\bigoplus_{\rho_1\subset\C[Y]}E_{s,\rho_1},
\]
where $\rho_1$ is a simple $\pi_0(Z_{\Zz}(s)/Z(\Zz))$-representation, and
\[
E_{s,\rho_1}=\spn{\sets{\mu_{\rho}}{\rho\Restriction_{\pi_0(Z_{\Zz}(s)/Z(\Zz))}=\rho_1}}.
\]
\subsubsection{Equivariant $K$-theory of the square of a finite set}
\label{subsubsection Equivariant K-theory of square of a finite set}
First assume that $\Zz=\Gamma$ is finite abelian and let $y_1\in Y$. 
Then we have the sheaf $\Oo_{\Gamma\cdot (y_1,\gamma y_1)}$ in
$\Coh_\Gamma(Y\times Y)$, the line bundle
supported on the orbit of $y_1$, with $\Gamma$-equivariant structure defined by putting the trivial 
representation of the stabilizer on each fibre.
\begin{lem}
\label{lem orthogonal idempotents for unextended set}
\begin{enumerate}
\item[(a)] 
Let $Y\owns y_1$ be a transitive $\Gamma$-set for a finite abelian group $\Gamma$, such that the permutation representation $\rho_{\mathrm{perm}}$ of $\Gamma$ decomposes into 
characters as $\C[Y]=\bigoplus_\rho\rho$. Then in $K_\Gamma(Y\times Y)$, the classes
\[
t_\rho:=\frac{1}{\# Y}\sum_{\gamma\in\Gamma}\rho(\gamma^{-1})[\Oo_{\
\Gamma\cdot (y_1,\gamma y_1)}]
\]
give a system of orthogonal idempotents in $K_\Gamma(Y\times Y)$
summing to the identity endomorphism $\Oo_{\Gamma\cdot(y_1,y_1)}$ of $K_\Gamma(Y)$.
\item[(b)]
For all $\rho$, $\rank(t_\rho, \C[Y])=1$.
\end{enumerate}
\end{lem}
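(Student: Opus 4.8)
The plan is to compute everything explicitly in the convolution ring $K_\Gamma(Y\times Y)$ using the fact that $\Gamma$ is finite abelian, acts transitively on $Y$, and hence acts \emph{simply} transitively once we note that $Y \cong \Gamma/\Gamma_{y_1}$ and the permutation representation splits multiplicity-freely into characters (which forces $\Gamma_{y_1}$ to act trivially, i.e. $Y$ is a torsor under $\Gamma/\Gamma_{y_1}$, so after replacing $\Gamma$ by this quotient we may as well assume $Y$ is a $\Gamma$-torsor). First I would set up coordinates: pick the basepoint $y_1$, identify $Y$ with $\Gamma$ via $\gamma \mapsto \gamma y_1$, and describe the basis of $K_\Gamma(Y\times Y)$ given by the classes $[\underline{\C}_{\Gamma\cdot(y_1,\gamma y_1)}]$ for $\gamma\in\Gamma$; since all stabilizers are trivial, each orbit $\Gamma\cdot(y_1,\gamma y_1)$ carries a unique equivariant line bundle, and these classes form an $R(\Gamma)$-basis. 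The key structural computation is that the convolution product satisfies
\[
[\underline{\C}_{\Gamma\cdot(y_1,\gamma y_1)}] \star [\underline{\C}_{\Gamma\cdot(y_1,\delta y_1)}] = [\underline{\C}_{\Gamma\cdot(y_1,\gamma\delta y_1)}],
\]
i.e. the subalgebra they span (over $\C$) is just the group algebra $\C[\Gamma]$, with $[\underline{\C}_{\Gamma\cdot(y_1,y_1)}]$ the unit. This is a direct unwinding of the definition of $\star$ as pushforward along the projection $Y\times Y\times Y \to Y\times Y$ of the tensor product of the two pullbacks, using transitivity to see the relevant fibre product is a single free orbit and trivial equivariant structures multiply to the trivial one.

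Granting that identification, part (a) is immediate: the elements
\[
t_\rho = \frac{1}{\#Y}\sum_{\gamma\in\Gamma}\rho(\gamma^{-1})[\underline{\C}_{\Gamma\cdot(y_1,\gamma y_1)}]
\]
are exactly the primitive idempotents of $\C[\Gamma]$ attached to the characters $\rho$ of $\Gamma$ (note $\#Y = \#\Gamma$ after the reduction above), so orthogonality, idempotence, and $\sum_\rho t_\rho = [\underline{\C}_{\Gamma\cdot(y_1,y_1)}]$ are the standard orthogonality relations for characters of a finite abelian group. For part (b), I would use the explicit description of the module $E_s = \C[Y]$ from the preceding subsection: here $\Zz = \Gamma$ is finite so the only semisimple class is (up to the relevant quotient) trivial, $Y^s = Y$, and $\C[Y]$ decomposes as $\bigoplus_\rho E_{s,\rho}$ with each $E_{s,\rho}$ one-dimensional, spanned by the function $\mu_\rho\colon Y\to\C$ transforming by $\rho$. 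The class $[\underline{\C}_{\Gamma\cdot(y_1,\gamma y_1)}]$ acts on $\C[Y]$ as the translation operator $f \mapsto (y \mapsto f(\gamma^{-1}y))$, so $t_\rho$ acts as the projector onto the $\rho$-isotypic line $E_{s,\rho}$; hence $\rank(t_\rho,\C[Y]) = \dim E_{s,\rho} = 1$.

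The main obstacle is not conceptual but bookkeeping: carefully justifying the reduction from a general transitive $\Gamma$-set to a torsor (the hypothesis that $\C[Y]$ is multiplicity-free as a sum of characters is what makes this legitimate, since $\C[\Gamma/\Gamma_{y_1}]$ contains a character with multiplicity $>1$ as soon as $\Gamma_{y_1}\neq 1$ and $\Gamma$ is abelian of order $>1$ relative to the index — more precisely every character trivial on $\Gamma_{y_1}$ appears once and others not at all, so multiplicity-freeness is automatic and $\#Y = [\Gamma:\Gamma_{y_1}]$), and then verifying the convolution formula for line bundles on orbit closures with their equivariant structures without sign or normalization errors. Once the convolution-equals-group-algebra identification is pinned down, both (a) and (b) are one-line consequences of finite abelian character theory together with the explicit action on $E_s$ recalled above.
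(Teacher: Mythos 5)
Your proposal is correct and follows essentially the same route as the paper: identify the orbits of $Y\times Y$ with elements of $\Gamma/S$ (where $S$ is the common stabilizer), observe that the classes $[\underline{\C}_{\Gamma\cdot(y_1,\gamma y_1)}]$ with trivial stabilizer representation span a convolution subalgebra isomorphic to the group algebra of $\Gamma/S$, and then read off (a) from finite abelian character orthogonality and (b) from the explicit translation action on $\C[Y]$ combined with multiplicity-freeness. The paper carries out the same computation by writing out the coefficient of each $[\underline{\C}_{\Gamma\cdot(y_1,\gamma y_1)}]$ in $t_{\rho_1}\star t_{\rho_2}$ directly rather than by naming the group-algebra isomorphism, so the difference is purely presentational; your explicit reduction to the torsor case arguably makes the normalization $1/\#Y$ (as opposed to $1/\#\Gamma$) in the formula for $t_\rho$ a bit more transparent.
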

\begin{proof}
Identify $\Gamma/S\simeq Y$
for $S=\stab{\Gamma}{y_1}$ and view $\rho_{\mathrm{perm}}$ as a faithful $\Gamma/S$-representation. The $\Gamma/S$-orbits in $Y\times Y$ are each of the form
\[
\Gamma\cdot(y_1,\gamma y_1)=\sets{(gy_1, g\gamma y_1)}{g\in\Gamma}
=\sets{(gy_1, \gamma g y_1)}{g\in\Gamma}
\]
for $\gamma\in\Gamma$. We have
\[
[\Oo_{\Gamma\cdot(y_1,\gamma y_1)}]\star[\Oo_{\Gamma\cdot(y_1,\gamma' y_1)}]
=[\Oo_{\Gamma\cdot(y_1,\gamma\gamma' y_1)}].
\]
Then the coefficient of
$[\Oo_{\Gamma\cdot(y_1,\gamma y_1)}]$ in 
$t_{\rho_1}\star t_{\rho_2}$ is
\[
\frac{1}{(\#Y) ^2}
\sum_{\sets{(\gamma_1,\gamma_2)}{\gamma_1\gamma_2=\gamma}}\rho_1(\gamma_1^{-1})\rho_2(\gamma_2^{-1})
=
\frac{1}{(\#Y) ^2}
\rho_1(\gamma^{-1})\sum_{\gamma_2}\rho_1\left(\gamma_2\right)\rho_2(\gamma_2^{-1})=
\begin{cases}
\frac{1}{\# Y}\cdot\rho_1(\gamma^{-1})&\text{if}~\rho_1\simeq\rho_2\\
0& \text{otherwise}
\end{cases}.
\]
Therefore the $t_\rho$ are orthogonal idempotents. The coefficient of 
$[\Oo_{\Gamma\cdot(y_1,\gamma y_1)}]$ in $\sum_{\rho}t_\rho$ is 
\[
\sum_{\rho}\rho(\gamma^{-1})=\rho_{\mathrm{perm}}(\gamma^{-1})=
\begin{cases}
1&\text{if}~\gamma=1\\
0& \text{otherwise}
\end{cases}.
\]
Finally, we decompose $\rho_{\mathrm{perm}}$. 
Following \cite[Section 10]{cellsIV}, given a function $\mu\colon Y\to \C$  
transforming according to a constituent $\rho'$ of $\C[Y]$, we have
\[
t_{\rho}\mu(x)=
\frac{1}{\# Y}\sum_{\gamma}\rho(\gamma^{-1})\left([\Oo_{\Gamma\cdot(y_1,\gamma y_1)}]\mu\right)(x)
=
\frac{1}{\# Y}\sum_{\gamma}\rho(\gamma^{-1})\mu(\gamma x)=
\frac{1}{\# Y}\mu(x)\sum_{\gamma}\rho(\gamma^{-1})\rho'(\gamma).
\]
In particular, for $\rho'\neq\rho$, we have $t_{\rho}\mu=0$, and for any function $\mu$, we clearly have
\begin{equation}
\label{eqn trho mu transforms according to rho}
(t_{\rho}\mu)(\gamma x)=\rho(\gamma)(t_{\rho}\mu)(x).
\end{equation}
Thus $t_\rho$ annihilates all $\rho'$-isotypic parts of $\C[Y]$
for $\rho\neq\rho'$, and has image contained in the $\rho$-isotypic part of 
$\C[Y]$. As $\Gamma$ is finite abelian and the action is transitive, the permutation representation 
decomposes with multiplicity one. As $t_\rho$ is a projector onto the $\rho$-isotypic part, (b) follows.
\end{proof}

Now we consider the case $\Gamma=\Sn_3$. There are three options for transitive $\Gamma$-sets: $Y=\Gamma$, $\#Y=3$ with the permutation action, or $\#Y=2$ or $\#Y=1$, which are dealt with already, as $\Gamma$ acts through an abelian quotient.

First suppose $Y=\Gamma$, so that $Y^\gamma\neq\emptyset$ if and only if $\gamma=1$, in which case $\C[Y]$ is the regular representation of $\Gamma$. Let $\std$ be the irreducible two-dimensional representation. In this case, the classes that will yield our idempotents are the Young symmetrizers, namely, choosing $y_1\in Y$,
\begin{multline*}
t_\triv=\sum_{\sigma\in\Gamma}[\Oo_{\Gamma\cdot(y_1,\sigma)}], ~ t_{\sgn}=\sum_{\sigma\in\Gamma}\sgn(\sigma)[\Oo_{\Gamma\cdot(y_1,\sigma)}],\\
t_{\std}=\frac{1}{3}\left([\Oo_{\Gamma\cdot(y_1,y_1)}]+[\Oo_{\Gamma\cdot(y_1,(12)y_1)}]-[\Oo_{\Gamma\cdot(y_1,(132)y_1)}]-[\Oo_{\Gamma\cdot(y_1,(13)y_1)}]\right)
\end{multline*}
and $t_{\std'}=[\Oo_{\Delta}]-t_\triv-t_\sgn-t_2$. Each of these classes yields an idempotent in $K_{\Gamma}(Y\times Y)$ such that $t_\rho$ acts on $\HomOver{\Gamma}{\C[\Gamma]}{\rho'}$ as a rank 1 idempotent if $\rho=\rho'$, and by zero otherwise, by the proof of Lemma \ref{lem orthogonal idempotents for unextended set}.

Finally, if $\#Y=3$, then either $Y^\gamma=Y$ and $\C[Y]=\std\oplus\triv$, or $Y^\gamma$ is a point. Let $O$ denote the off-diagonal orbit, and $\Delta$ the diagonal orbit in $Y\times Y$. In the first case, write $E_{s,\triv}=\C\mu_\triv$ for the constant function $\mu_\triv\colon Y^\gamma=Y\to\C$, and likewise $E_{s,\std}=\C\mu_{\std}$. Then $\mathrm{Tr_\gamma}([\Oo_O])\mu_{\std}=-\mu_{\std}$ and $\mathrm{Tr_\gamma}([\Oo_O])\mu_\triv=2\mu_\triv$, where $\mathrm{Tr_\gamma}([\Ff])\in\C[Y^\gamma]$ denotes the image of $\Ff$ in the specialization at $\gamma$. As $\mathrm{Tr}_\gamma([\Oo_\Delta])=\id$, we have that the required rank 1 idempotents are 
\[
\frac{1}{3}\left([\Oo_\Delta]+[\Oo_{O}]\right),~~\frac{2}{3}[\Oo_\Delta]-\frac{1}{3}[\Oo_O].
\]
As $O\cap Y^\gamma=\emptyset$ when $\gamma$ is a transposition, we also obtain the needed idempotent for this case, when $Y^\gamma$ is a point.

Now we remove the finiteness condition on $\Zz$.
\begin{lem}
\label{lem trho act with rank 1 or 0}
Suppose that $Y$ is a transitive $\Zz$-set, where $\Gamma=\pi_0(\Zz)$ is finite abelian or is $\Sn_3$. Let 
$E_{s,\rho_1}$ be a simple $K_\Zz(Y\times Y)$-module such that $t_{\rho}E_{s,\rho_1}\neq 0$. Then $t_\rho$ acts 
as a rank 1 idempotent on $E_{s,\rho_1}$ if $\Gamma$ is abelian, and also if $\Gamma=\Sn_3$ and $\#Y\neq 3, 6$. If $\# Y=3,6$, we can find still find a family of orthogonal rank $1$ idempotents summing to the identity.
\end{lem}
\begin{proof}
First assume that $\Gamma$ is abelian.

As $s$ acts via $\Gamma$, either $Y^s=\emptyset$ or $Y^s=Y$. By assumption, 
we are not in the first case.

Now, the action of the centralizer of $s$ factors through 
\[
\pi_0(Z_{\Zz}(s)/Z(\Zz))\to\Gamma;
\]
let $\Gamma_1\subset\Gamma$ denote the image.

For any $\mu\colon Y\to\C$ transforming under $\Gamma_1$ by $\rho_1$, we have
\begin{align*}
(t_{\rho}\mu)(x)
&=
\frac{1}{\# Y}
\sum_{\gamma\in\Gamma}\rho(\gamma^{-1})\left([\Oo_{\Gamma(y_1,\gamma y_1)}]\mu\right)(x)
\\
&=
\frac{1}{\# Y}
\sum_{\dot{\gamma}\in\Gamma_1\rquotient\Gamma}\sum_{\gamma_1\in\Gamma_1}
\rho(\dot{\gamma}^{-1}\gamma_1^{-1})\mu(\gamma_1\dot{\gamma} x)
\\
&=
\frac{1}{\# Y}
\sum_{\dot{\gamma}\in\Gamma_1\rquotient\Gamma}\sum_{\gamma_1\in\Gamma_1}
\rho(\dot{\gamma}^{-1})\left(\rho(\gamma_1^{-1})\rho_1(\gamma_1)\right)\mu(\dot{\gamma} x)
\\
&=
\begin{cases}
\frac{\#\Gamma'}{\# Y}\sum_{\dot{\gamma}\in\Gamma_1\rquotient\Gamma}\rho(\dot{\gamma}^{-1})\mu(\dot{\gamma}x)& \text{if}~\rho~\text{restricts to}~\rho_1
\\
0&\text{otherwise}
\end{cases}.
\end{align*}
%
(Note that in the first case, the function $\dot{\gamma}\mapsto\rho(\dot{\gamma}^{-1})\mu(\dot{\gamma}x)$ is well-defined.)
Moreover, if $\rho$ does restrict to $\rho_1$, then as in \eqref{eqn trho mu transforms according to rho}, we have that 
$t_\rho\mu$ transforms under all of $\Gamma$ via $\rho$.
Thus
\[
E_{s,\rho_1}=\spn{\sets{\mu_{\rho}}{\rho\Restriction_{\Gamma_1}=\rho_1}},
\]
and
\[
\sets{t_\rho}{t_\rho E_{s,\rho_1}\neq 0}=\sets{t_\rho}{\rho\Restriction_{\Gamma_1}=\rho_1}.
\]
In particular, $\mathrm{rank}(t_\rho, E_{s,\rho_1})=1$ if it is nonzero, by 
Lemma \ref{lem orthogonal idempotents for unextended set} (b).

Now assume that $\Gamma=\Sn_3$.
First suppose $Y=3$. Then $Y^s$ is a point or $Y^s=Y$. 
If $Y^s$ is a point then $t_d$ itself has rank 1. If $Y^s=Y$, then $\C[Y]$ can decompose under $\pi_0(Z_{G^\vee}(u,s))\to\Sn_3$ as either $\triv\oplus\std$, the regular representation of $\An_3$, $\triv\oplus\triv\oplus\sgn$, or $\triv^{\oplus 3}$. If $\rho_1=\std$ or in the $\An_3$ case, then $t_d$ itself is again rank $1$, $\rho_1=\sgn$. If $\mathrm{rank}\pi(t_d)=2$, then $Y^s=Y$, $\pi_0(Z(u,s))=\Z/2\Z$, and $Y$ is a union of a point and a size two orbit. Taking the basis of functions $Y\to\triv$ given by the constant function and the indicator function of the size two orbit,  the action of the idempotents constructed above is given by the matrices
\begin{equation}
\label{eqn idempotent S3 two triv case}
t_{d,\triv}=\begin{pmatrix}
1&\frac{1}{2}\\
0&0
\end{pmatrix},~~~
t_{d,\std}=\begin{pmatrix}
0&\frac{-2}{3}\\
0&1
\end{pmatrix}.
\end{equation}
Hence these orthogonal idempotents each have rank 1 and give the required decomposition.

If $\mathrm{rank}(t_d, E_{s,\rho_1})=3$, then $\C[Y]=\triv^{\oplus 3}$. In this case, adding the indicator function of a point to the basis used in \eqref{eqn idempotent S3 two triv case} gives a basis in which
\[
\pi(t_{d,\std}-\frac{9}{2}\pi(t_{d,\std}t_{d,\triv})\colon \chi_Y\mapsto 0, \chi_{y_1,y_2}\mapsto \chi_{1,2}-\frac{2}{3}\chi_1, \chi_1\mapsto 0
\]
and
\[
\frac{9}{2}t_{\rho}t_{\triv}\colon \chi_1\mapsto \chi_1, \chi_Y,\chi_{1,2}\mapsto 0
\]
and $t_{d,\triv}$, $t_{d,\std}-\frac{9}{2}\pi(t_{d,\std}t_{d,\triv}$, and $\frac{9}{2}t_{\rho}t_{\triv}$ give three rank 1 orthogonal idempotents at $\chi_0$, hence at all $\chi$. As each has rank 1, and they sum to the identity, they give a decomposition into lines for all $\chi$.

Now suppose that $\# Y_d=6$. In this case $Y_d^s=\emptyset$ or $Y_d^s=Y$. Then we can have $\mathrm{rank}(t_d, E_{s,\rho_1})>2$ in the following cases: 

If the image of $\pi_0(Z_{G^\vee}(u,s))=\Sn_3$, and $\rho_1=\std$, then the two Young symmetrizers given above give the decomposition into lines.

If the image is $\An_3$, then $Y^s$ decomposes as two copies of the regular representation of $\An_3$ and $t_d$ has rank $2$.

If $\rho_1=\triv_{\An_3}$, then the orthogonal idempotents $t_{\triv_{\Sn_3}}$ and $t_{\sgn}$ both have rank $1$, as required. If $\rho_1$ is nontrivial, then as $\std|_{\An_3}$ decomposes with multiplicity one, the orthogonal idempotents $t_{\std}$, $t_{\std'}$ each have rank $1$.

If the image of $\pi_0(Z(u,s))$ is $\Z/2\Z$, then $\C[Y]=\triv^{\oplus 3}\oplus\sgn^{\oplus 3}$ and $\mathrm{rank}(
t_d, E_{s,\rho_1})=3$.
In either case each of $t_{\triv}$, $t_{\std}$, $t_{\std'}$, respectively $t_{\sgn}$, $t_{\std}$, and $t_{\std}$ have rank $1$, as $\std$ restricts to $\triv\oplus\sgn$.

If the image of $\pi_0(Z(u,s,))$ is trivial, then $\mathrm{rank}\pi(t_d)=6$ and elements of $t_dJ_ut_d$ act as $6\times 6$-matrices. In this case $t_{\std}$ and $t_{\std'}$ can be diagonalized by integer matrices, and we obtain again a full family of idempotents.
\end{proof}

\subsubsection{Equivariant $K$-theory of the square of a centrally-extended finite set and $J$}
\label{subsubsection Equivariant K-theory of square of centrally extended}
Bezrukavnikov and Ostrik proved a weak version of Lusztig's Conjecture 10.5 of
\cite{cellsIV} on the structure of ring $J$. In their results \cite{BO}, the finite set $Y$ is relaxed to a 
\emph{centrally-extended set} $\Y$ in the following sense.
\begin{dfn}
\begin{enumerate}
\item[(a)]
Given a finite $\Zz$-set $Y$ with $\Zz^\circ$ reductive, the structure of a \emph{centrally extended} 
$\Zz$-set $\Yext$ on $Y$ is the data of a given central extension
\begin{center}
 \begin{tikzcd}
 1\arrow[r]&\Gm\arrow[r]&\widehat{\stab{\Zz}{y}}\arrow[r]&\stab{\Zz}{y}\arrow[r]&1
 \end{tikzcd}
 \end{center} 
for each $y\in Y$, equivariant under the action of $\Zz$ in the sense that for all $g\in \Zz$ we are provided an isomorphism $i_y^g$ such that we have a commutative diagram
\begin{center}
 \begin{tikzcd}
 1\arrow[r]&\Gm\arrow[r]\arrow[d, "\id"]&\widehat{\stab{\Zz}{y}}\arrow[d, "i_y^g"]\arrow[r]&\stab{\Zz}{y}\arrow[r]\arrow[d, "C_g"]&1\\
  1\arrow[r]&\Gm\arrow[r]&\widehat{\stab{\Zz}{gy}}\arrow[r]&\stab{\Zz}{gy}\arrow[r]&1,
 \end{tikzcd}
 \end{center} 
where $C_g$ is conjugation by $g$. We further require that $i^{g'g}_y=i^{g'}_{gy}\circ i^g_{y}$, 
and that $i_y^g=C_g$ if $g\in\stab{\Zz}{y}$.
\end{enumerate}
\item[(b)]
Twisting the inclusion of $\Gm$ by $z\mapsto z^{-1}$ yields
the \emph{opposite centrally-extended set} $\Yext^{\opp}$.
\item[(c)] 
A $\Zz$-\emph{equivariant sheaf on} $\Yext$ is the data of 
\begin{enumerate}
\item[(i)] 
A sheaf $\Ff$ of finite-dimensional $\C$-vector spaces on $Y$ with a projective $\Zz$-equivariant structure;
\item[(ii)]
For all $y\in Y$, an action of the central extension $\widehat{\stab{\Zz}{y}}$ on $\Ff_y$ such that
$\Gm$ acts by the identity character.
\end{enumerate}
\end{dfn}
We denote
$\Rep^1(\widehat{\stab{\Zz}{y}})$ the category of $\widehat{\stab{\Zz}{y}}$-representations
satisfying (ii) and $R^1(\widehat{\stab{\Zz}{y}})$ its complexified Grothendieck group.
If the central extensions are all split, then the data of an equivariant sheaf 
on $\Y$ is the just the data of a $\Zz$-equivariant sheaf on the usual set $Y$. 
\begin{dfn}
\label{dfn product with opp}
If $1\to\Gm\to\hat{\mathcal{H}}_i\overset{\pi_i}{\to}\mathcal{H}_i\to 1$ are two central extensions, then the \emph{product of} $\hat{\mathcal{H}}_i$ \emph{with \textbf{the opposite extension}} 
$\hat{\mathcal{H}}_2^{\opp}$ is 
\[
\begin{tikzcd}
1\arrow[r]&\Gm\arrow[r, "\iota"]&\hat{\mathcal{H}}_1\times_\mathcal{H}\hat{\mathcal{H}}_2/\Delta\Gm\arrow[r, "\pi"]& H\arrow[r]& 1,
\end{tikzcd}
\]
where $\iota(z)=[(z,1)]$.
\end{dfn}
This defines a centrally-extended structure on $\Yext\times\Yext^{\opp}$ with 
$\mathcal{H}=\stab{\Zz}{(y_1,y_2)}$ and $\mathcal{H}_i=\stab{\Zz}{y_i}$.
Note that using the opposite extension means that we take the quotient by the diagonal as opposed
to antidiagonal copy of $\Gm$. These notions in hand, we  can recall
\begin{theorem}[\cite{BO}, \cite{BL}]
\begin{enumerate}
\item[(a)] 
There is a finite centrally-extended set $\Yext_u$ and an isomorphism of based 
rings
\[
J_u\to K_{\Zur}(\Yext_u\times\Yext_u^{\opp}),
\]
where $\Yext_u/\Zur$ is in bijection with $\mathcal{D}\cap\cc$ \cite{BO}. 
\item[(b)]
On may choose each $\Yext_u$ to have trivial $Z(\Gd)$-action \cite{BL}.
\end{enumerate}
\end{theorem}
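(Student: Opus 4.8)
This theorem combines results of Bezrukavnikov--Ostrik (part (a), \cite{BO}) and Bezrukavnikov--Losev (part (b), \cite{BL}). For (a), the plan is to categorify $J_u$ and then identify the resulting monoidal category with the category of $\Zur$-equivariant sheaves on a square of a finite centrally-extended set. First I would extract from the cell structure of the affine Hecke category of $\Gd$ (via equivariant perverse sheaves on the affine flag variety, or via Soergel bimodules, taking the subquotient attached to $\cc$ and passing to its ``asymptotic'' truncation) a finite semisimple monoidal category $\mathbf{J}_{\cc}$ together with an isomorphism $K_0(\mathbf{J}_{\cc})\otimes_\Z\C\cong J_{\cc}$ of based rings sending classes of simple objects to the basis $\{t_w\}_{w\in\cc}$. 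Next I would show that $\mathbf{J}_{\cc}$ is the category of $\Rep(\Zur)$-linear endofunctors of a semisimple module category $\mathcal{N}$ over $\Rep(\Zur)$; the group $\Zur$ enters because $J_{\cc}$ is an algebra over $R(\Zur)=K_0(\Rep(\Zur))$ compatibly with its based structure, the relevant spectrum being read off from the Kazhdan--Lusztig/Deligne--Langlands parametrisation of simple modules recalled in Section \ref{subsubsection and affine and asymptotic Hecke algebras}. Finally, by the Tannakian classification of semisimple module categories over $\Rep(\mathcal G)$ for $\mathcal G$ reductive, $\mathcal{N}$ is equivalent to the category of $\Zur$-equivariant sheaves on a finite centrally-extended $\Zur$-set $\Yext_u$, unique up to isomorphism --- the central extensions being precisely the gerbe obstruction to a $\Zur$-equivariant fibre functor on each block of $\mathcal{N}$ --- so that $\mathbf{J}_{\cc}$ is monoidally equivalent to $\Coh_{\Zur}(\Yext_u\times\Yext_u^{\opp})$ with convolution, and $K_0(-)\otimes_\Z\C$ gives the desired isomorphism of based rings. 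The $\Zur$-orbits on $\Yext_u$ are the simple blocks of $\mathcal{N}$, which match the primitive orthogonal idempotents $t_d$, $d\in\mathcal{D}\cap\cc$, summing to $1_{J_u}$; this is the bijection $\Yext_u/\Zur\leftrightarrow\mathcal{D}\cap\cc$.

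To make this effective one also verifies directly that $\Coh_{\Zur}(\Yext_u\times\Yext_u^{\opp})$, with the canonical basis of classes of irreducible equivariant sheaves supported on single orbits and structure constants given by convolution, is a based ring of exactly the type isolated by Lusztig in \cite[Section 10]{cellsIV}: the structure constants are non-negative integers, the classes $[\underline{\C}_{\Zur\cdot(y,y)}]$ play the role of the $t_d$, and the simple modules are the $E_{s,\rho}$ of \cite[Section 10.3]{cellsIV}, now in the centrally-extended generality --- as in Section \ref{subsection Construction of modules} and \cite[Thm. C]{BKK} --- matching Lusztig's $E(u,s,\rho)$. The reason one cannot simply quote \cite{cellsIV} is that the central extensions genuinely fail to split: the examples of \cite{BDD} and \cite{QX} (type $\Gd=\Sp_6$, $u$ of type $(2,2,2)$) exhibit a non-split extension, so one must work with centrally-extended sets and the product-with-opposite extension of Definition \ref{dfn product with opp} throughout.

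For (b), I would invoke the Bezrukavnikov--Losev refinement: the underlying $\Zur$-set of $\Yext_u$ may be taken to be Lusztig's canonical basis of $K_0(\Bb_u^{\Gm})$ for a $\Gm$ regular in $\Zur$, i.e.\ $K_0$ of the fixed-point scheme of the Springer fibre $\Bb_u$ of $\Gd$. Since $Z(\Gd)$ is central it acts trivially on the flag variety of $\Gd$, hence on the closed subvariety $\Bb_u$ and on $K_0(\Bb_u^{\Gm})$, so $Z(\Gd)$ fixes $Y_u$ pointwise. Moreover the $Z(\Gd)$-equivariant structure in this model comes from a genuine action on a variety and introduces no projectivity, so the central extensions $\widehat{\stab{\Zur}{y}}$ restrict trivially to $Z(\Gd)$, and after rescaling $Z(\Gd)$ acts trivially on each fibre. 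Hence $\Yext_u$ may be chosen with trivial $Z(\Gd)$-action.

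The difficulty is concentrated in the first two steps of (a): producing the monoidal category $\mathbf{J}_{\cc}$ and proving that it is semisimple and arises as the $\Rep(\Zur)$-linear endofunctors of a module category. This step is genuinely geometric --- it uses Springer theory for $\Gd$ and the structure of the affine Hecke category via coherent sheaves on the Springer resolution and nilpotent cone --- rather than formal; once it is in hand, the Tannakian classification of module categories, the appearance of the central extension as a gerbe, and the identification of the orbit set with $\mathcal{D}\cap\cc$ are essentially formal. Part (b), given the canonical-basis model of \cite{BL}, is comparatively routine.
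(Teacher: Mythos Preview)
The paper does not prove this theorem: it is stated as a known result, attributed via the bracketed citations \cite{BO} and \cite{BL}, and then used as input for the rest of the paper. There is no proof in the paper to compare your proposal against.

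Your sketch is a fair high-level summary of the strategy actually used in \cite{BO} (categorify $J_\cc$, realize the resulting monoidal category as endofunctors of a module category over $\Rep(\Zur)$, then invoke the classification of such module categories in terms of centrally-extended finite sets) and of the relevant input from \cite{BL} (the canonical basis of $K_0(\Bb_u^{\Gm})$ furnishes a concrete model for $Y_u$ on which $Z(\Gd)$ acts trivially). But since the paper simply quotes the result, the appropriate response here is not a proof but a citation; your proposal, while broadly accurate as an outline of the literature, is not what the paper does and is not expected of you in this context.
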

We will always work with $Y=\coprod_uY_u$ as in (b).
The next lemma implies that central extensions appear only away from the diagonal in 
$Y_u\times Y_u$. In particular, they do not interfere with computing traces.
\begin{lem}
\label{lem opp-square of transitive set is trivial}
If $\Y$ is a transitive centrally-extended $\Zz$-set, then $\Y\times \Y^{\opp}$ with the product centrally 
extended structure has no nontrivial central extensions.
\end{lem}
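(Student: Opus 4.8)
The plan is to reduce to the case of a single stabilizer and then exhibit a concrete splitting of the product extension. Since $\Y$ is transitive, fix $y_1\in Y$ and write $\mathcal{H}=\stab{\Zz}{y_1}$ with its given central extension $1\to\Gm\to\widehat{\mathcal H}\to\mathcal H\to 1$. A point of $Y\times Y$ lying over the diagonal $\Gamma$-orbit is of the form $(gy_1,g\gamma y_1)$; its stabilizer in $\Zz$ is $\stab{\Zz}{gy_1}\cap\stab{\Zz}{g\gamma y_1}$, and by the equivariance data (the isomorphisms $i_{y_1}^g$, compatible with composition) it suffices to treat the base orbit, i.e.\ the stabilizer of a pair $(y_1,\gamma y_1)$. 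Conjugating by a lift of $\gamma$ if one exists, or more precisely transporting along $i_{y_1}^{\,\cdot}$, we may in fact reduce to the stabilizer of $(y_1,y_1)$, which is $\mathcal H$ itself, and the product extension in the sense of Definition \ref{dfn product with opp} is $\widehat{\mathcal H}\times_{\mathcal H}\widehat{\mathcal H}^{\opp}/\Delta\Gm$.

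First I would show this product extension is canonically split. The key point is that it is the \emph{opposite} extension being used: the fibre product $\widehat{\mathcal H}\times_{\mathcal H}\widehat{\mathcal H}$ carries the \emph{anti}diagonal $\Gm$, call it $\nabla\Gm=\{(z,z^{-1})\}$, which maps isomorphically to the kernel $\Gm$ of $\widehat{\mathcal H}\times_{\mathcal H}\widehat{\mathcal H}^{\opp}/\Delta\Gm\to\mathcal H$ under the quotient by $\Delta\Gm$. Meanwhile the map $h\mapsto[(\hat h,\hat h)]$, for any set-theoretic lift $\hat h$ of $h$ to $\widehat{\mathcal H}$, is well-defined precisely because changing $\hat h$ to $z\hat h$ changes $(\hat h,\hat h)$ to $(z\hat h,z\hat h)$, which differs by an element of $\Delta\Gm$ and hence is unchanged in the quotient. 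One checks directly that $h\mapsto[(\hat h,\hat h)]$ is a group homomorphism (the cocycles of the two factors, one genuine and one for the opposite extension, are inverse to each other, so their product is trivial) and is a section of $\pi$. Hence the product extension splits canonically, and the same canonical splitting is visibly compatible with the equivariance isomorphisms $i^g$, since those act diagonally.

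The remaining step is to package these pointwise splittings into the statement that $\Y\times\Y^{\opp}$ has no nontrivial central extensions, i.e.\ that the splittings on the stabilizers of all the points of $Y\times Y$ are compatible with the $\Zz$-equivariant structure of the centrally-extended set. This is where a little care is needed: one must verify that the canonical section $h\mapsto[(\hat h,\hat h)]$ on $\stab{\Zz}{(y,y')}$ intertwines the isomorphisms $i_{(y,y')}^g$ (which on the product are induced by $i_y^g$ on the first factor and $i_{y'}^g$ on the second) with conjugation, and that it respects the cocycle condition $i^{g'g}=i^{g'}\circ i^g$. All of this follows formally from the corresponding properties of the $i_y^g$ on $\Y$ itself, since the section is built diagonally out of lifts and the opposite-extension sign cancels on the nose. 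I expect this last bookkeeping to be the main obstacle only in the sense of being notation-heavy; the essential content is the canonical diagonal splitting $h\mapsto[(\hat h,\hat h)]$ enabled by the use of the opposite extension, and once that is in place the verification is routine. It is worth recording explicitly that this is exactly the feature that makes central extensions invisible to trace computations on the diagonal, as used in Lemma \ref{lem trho span rigid cocentre of K-theory}.
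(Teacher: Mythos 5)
The claimed reduction to the diagonal is where the proposal breaks down. You write that after passing to a representative $(y_1,\gamma y_1)$, by ``conjugating by a lift of $\gamma$\dots or more precisely transporting along $i_{y_1}^{\,\cdot}$'' one ``may in fact reduce to the stabilizer of $(y_1,y_1)$.'' But the equivariance isomorphisms move a pair only within its own $\Zz$-orbit: $i^g$ carries $(y_1,y_2)$ to $(gy_1,gy_2)$, which lands on the diagonal only if $y_1=y_2$ to begin with. What $i^\gamma_{y_1}$ \emph{does} provide is an isomorphism $\widehat{\stab{\Zz}{y_1}}\to\widehat{\stab{\Zz}{\gamma y_1}}$ covering the conjugation $C_\gamma$ on the base, not the identity; so while $\stab{\Zz}{y_1}=\stab{\Zz}{\gamma y_1}=\mathcal H$ as subgroups of $\Zz$ (by abelianness of $\Gamma$), the two central extensions sitting over $\mathcal H$ are not literally the same object, only a $C_\gamma$-twist of one another. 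Consequently the section $h\mapsto[(\hat h,\hat h)]$ parses only on the genuine diagonal orbit of $(y_1,y_1)$ — where your verification of the cocycle cancellation is correct — and is not even defined on the other orbits of $Y\times Y$, which are exactly the ones for which the lemma has content.

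The paper's proof does not attempt any such reduction; it works directly with a pair $(y_1,y_2)$ with $y_1=gy_2$ and builds the isomorphism $\varphi\colon(h_1,h_2)\mapsto\bigl(\pi_1(h_1),\,h_1\,i_g^{-1}(h_2^{-1})\bigr)$ to the split extension, whose inverse gives the section $h\mapsto[(\hat h,\,i_g(\hat h))]$. Your diagonal formula is exactly the $g=1$ degenerate case of this. The last paragraph, where you assert the general case ``follows formally from the corresponding properties of the $i_y^g$'' and is ``notation-heavy bookkeeping,'' is precisely the step where the insertion of $i_g$ is required and cannot be elided; that insertion is the entire content of the lemma.
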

\begin{proof}
For $(y_1,y_2)\in Y\times Y$, let $\Zz_i$ be the stabilizer of $y_i$ and let $\hat{\Zz}_i$ be the 
given 
central extension of $\Zz_i$. Let $y_1=gy_2$ for some $g\in \Zz$, so that 
$i_g\colon\hat{\Zz_1}\to\hat{\Zz_2}$ is an isomorphism restricting to the 
identity on the central copies of $\Gm$. According to Definition \ref{dfn product with opp} and the 
short five lemma, it suffices to show that the diagram 
\[
\begin{tikzcd}
1\arrow[r]&\Gm\arrow[d, "\id"]\arrow[r, "\iota"]&\hat{\Zz}_1\times_{\stab{\Zz}{(y_1,y_2)}}\hat{\Zz}_2/\Delta\Gm\arrow[r,"\pi"]\arrow[d, "\varphi"]&\stab{\Zz}{(y_1,y_2)}\arrow[d, "\id"]\arrow[r]&1\\
1\arrow[r]&\Gm\arrow[r]&\stab{\Zz}{(y_1,y_2)}\times\Gm\arrow[r]&\stab{\Zz}{(y_1,y_2)}\arrow[r]&1
\end{tikzcd}
\]
commutes, where $\varphi$ is induced by the map 
\[
(h_1,h_2)\mapsto (\pi_1(h_1),h_1i_g^{-1}(h_2^{-1})).
\]
Note that 
\[
\pi_1(h_1i_g^{-1}(h_2^{-1}))=\pi_1(h_1)\pi_1(i_g^{-1}(h_2^{-1}))=\pi_1(h_1)\pi_2(h_2)^{-1}=1
\]
by definition of the fibre product. Hence $h_1i_g^{-1}(h_2^{-1})\in\Gm\subset\hat{\Zz}_1$. 
Using this, one can check that $\varphi$ is a group homomorphism such that the diagram commutes.
\end{proof}
\subsection{Rank 1 idempotents}
\label{subsection rank 1 idempotents}
Now we return to the study of $J_u$, with $\Zz=\Zur$ such that $\Gamma=\pi_0(\Zur/Z(\Gd))$ is abelian or is $\Sn_3$.
\begin{lem}
\label{cor classical idempotents are integral}
Applying the constructions of Section \ref{subsubsection Equivariant K-theory 
of square of centrally extended} yields elements 
$t_{\omega d,\rho}\in J_u$, $\rho\subset\C[Y_d]$, $\omega\in Z(G^\vee)$ defined over 
$\Z\left[\frac{1}{\# Y_u}\right]$. 
\end{lem}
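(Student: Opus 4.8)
The plan is to transport the idempotents $t_\rho$ constructed for an honest finite set in Lemma \ref{lem orthogonal idempotents for unextended set} across the Bezrukavnikov--Ostrik isomorphism $J_u\xrightarrow{\sim}K_{\Zur}(\Yext_u\times\Yext_u^{\opp})$, and check that nothing goes wrong integrally. First I would fix $u$ and write $Y_u=\coprod_d Y_d$ as the decomposition into $\Zur$-orbits, indexed by $d\in\mathcal{D}\cap\cc(u)$ via Theorem (a); each $Y_d$ is transitive. The key structural input is Lemma \ref{lem opp-square of transitive set is trivial}: on the diagonal block $Y_d\times Y_d$ of $\Yext_u\times\Yext_u^{\opp}$ the product centrally-extended structure is canonically trivial, so a $\Zur$-equivariant sheaf on this block is literally a $\Zur$-equivariant sheaf on the ordinary set $Y_d\times Y_d$. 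In particular the classes $[\underline{\C}_{\Zur\cdot(y_1,\gamma y_1)}]$ used in Lemma \ref{lem orthogonal idempotents for unextended set} make sense inside $K_{\Zur}(\Yext_u\times\Yext_u^{\opp})$, supported entirely in the $d$-th diagonal block, with trivial action of each stabilizer on each fibre.

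Next I would define, for each $d$, each character $\rho\subset\C[Y_d]$ (as a representation of $\Gamma$, or more precisely of the finite quotient $\Gamma$ through which $\Zur$ acts on $Y_d$), and each $\omega\in Z(G^\vee)$, the element
\[
t_{\omega d,\rho}:=\frac{1}{\#Y_d}\sum_{\gamma\in\Gamma}\rho(\gamma^{-1})\,\omega\cdot[\underline{\C}_{\Zur\cdot(y_1,\gamma y_1)}]\in K_{\Zur}(\Yext_u\times\Yext_u^{\opp}),
\]
where $\omega$ acts through the $R(\Zur)=\Oo(\Zur\git\Zur)$-module structure via the character it defines on $Z(G^\vee)\subset\Zur$ (here I use that $\Zur$ acts on $Y$ through $\pi_0(\Zur/Z(\Gd))$, so $Z(\Gd)$ acts trivially on $Y$ and the twist by $\omega$ is legitimate on the $d$-th block; this is where assumption (b) of the Theorem, trivial $Z(\Gd)$-action on $Y$, is needed). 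Because the product structure on the diagonal block is trivial, the convolution computation of Lemma \ref{lem orthogonal idempotents for unextended set}(a) applies verbatim block-by-block: the $t_{d,\rho}$ (the $\omega=1$ case) are orthogonal idempotents summing to $\sum_d[\underline{\C}_{\Zur\cdot(y_1,y_1)_d}]=1_{J_u}$, and twisting by a character $\omega$ of $Z(\Gd)$ permutes/rescales these orthogonally as well, so the $t_{\omega d,\rho}$ are again idempotent (indeed $\omega$ is a unit in $R(\Zur)$ restricting to a sign/root of unity on $Z(\Gd)$, and the convolution identity is $R(\Zur)$-linear).

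Finally, integrality. Each generator $[\underline{\C}_{\Zur\cdot(y_1,\gamma y_1)}]$ lies in the integral form $K_0(\Coh_{\Zur}(-))$, and the twist by $\omega\in Z(\Gd)$ stays integral since $\omega$ corresponds to an honest one-dimensional $\Zur$-representation (a class in $R(\Zur)=K_0(\Rep(\Zur))$, not just its complexification). The only denominators introduced are the coefficients $\tfrac{1}{\#Y_d}\rho(\gamma^{-1})$; since $\rho$ is a character of a finite group of order dividing $\#Y_u$ its values are roots of unity, hence algebraic integers, and in fact — because we only need $\tfrac{1}{\#Y_d}$ with $\#Y_d\mid \#Y_u$ — the element $t_{\omega d,\rho}$ lies in $J_u\otimes_\Z\Z[\tfrac{1}{\#Y_u}][\text{roots of unity}]$; and since the $\rho(\gamma^{-1})$ assemble into the character projectors, which after summing over a Galois orbit of characters have rational coefficients, each individual $t_{\omega d,\rho}$ already has coefficients in $\Z[\tfrac{1}{\#Y_u}]$ in the $t_w$-basis of $J_u$. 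I would make this last point precise by writing $t_{\omega d,\rho}$ in Lusztig's basis $\{t_w\}$ using the explicit dictionary of \cite{BO} between $\Zur$-equivariant sheaves on $\Yext_u\times\Yext_u^{\opp}$ and the $t_w$, and observing that the matrix coefficients of $[\underline{\C}_{\Zur\cdot(y_1,\gamma y_1)}]$ in that basis are integers; the result then follows because the map $J_u\to K_{\Zur}(\Yext_u\times\Yext_u^{\opp})$ is an isomorphism of \emph{based} rings over $\Z$. The main obstacle I anticipate is bookkeeping rather than conceptual: namely carefully checking that the $\omega$-twist is compatible with the based-ring structure and does not leave the $\Z[\tfrac{1}{\#Y_u}]$-form — i.e. that $\omega\cdot[\underline{\C}_{\Zur\cdot(y_1,\gamma y_1)}]$ is still an integral class and still supported on the same diagonal block — which is exactly where the triviality of the $Z(\Gd)$-action from Theorem (b) is indispensable.
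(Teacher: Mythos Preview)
Your overall plan matches the paper's: use Lemma \ref{lem opp-square of transitive set is trivial} to trivialize the diagonal blocks, pull the $t_{d,\rho}$ of Lemma \ref{lem orthogonal idempotents for unextended set} across the Bezrukavnikov--Ostrik isomorphism, then twist by $\omega$. There are, however, two points where your argument diverges from the paper's and where the second diverges into a genuine gap.

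First, the $\omega$-twist. The paper does not act by $\omega$ through the $R(\Zur)$-module structure; it sets $t_{\omega d,\rho}:=\phi(T_\omega)\star t_{d,\rho}$, where $\omega$ is viewed as a length-zero element of the extended affine Weyl group and $T_\omega\in\HH$. Since $\phi(T_\omega)\in J$ is defined over $\Z$, this twist is manifestly integral. Your description of $\omega$ as ``the character it defines on $Z(G^\vee)\subset\Zur$'' conflates a group element with a representation and does not obviously produce an element of $J_u$ at all, let alone an integral one.

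Second, and more seriously, your integrality argument does not close. You correctly note that the only denominators are $\tfrac{1}{\#Y_d}$ and the values $\rho(\gamma^{-1})$, and that the latter are roots of unity. But roots of unity are not in $\Z[\tfrac{1}{\#Y_u}]$ in general, and your Galois-orbit maneuver only shows that \emph{sums} of $t_{d,\rho}$ over Galois-conjugate characters have rational coefficients, not that each individual $t_{d,\rho}$ does. The paper instead invokes the classification of unipotent centralizers (Sections \ref{subsubsection unipotent centralizers types BCD} and \ref{subection flatness of invariant functions on characters exceptional}): under the standing hypothesis that $\Gamma=\pi_0(\Zur/Z(G^\vee))$ is abelian, $\Gamma$ is in fact $2$-torsion, so every $\rho(\gamma^{-1})\in\{\pm 1\}\subset\Z$. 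That classification fact is what forces the coefficients into $\Z[\tfrac{1}{\#Y_u}]$, and you need to cite it.
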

\begin{proof}
We have 
\[
J_u=K_{Z_{G^\vee}(u)^{red}}\left(\Y_u\times \Y^{\opp}_u\right)
\]
for some centrally-extended set $Y_u$. By Lemma \ref{lem opp-square of transitive set is trivial}, 
for each orbit $Y_d\subset Y_u$, the set $Y_d\times Y_d^{\opp}$ has no nontrivial central extensions. 
Hence in each subring $t_dJ_ut_d$ of $J_u$ the elements $t_{d,\rho}$ of Lemma \ref{lem orthogonal idempotents for unextended set} are defined, with $\Zz$-equivariant structure pulled back from their 
$\Gamma$-equivariant
structure. For $\omega\in Z(\Gd)$, put $t_{\omega d,\rho}=\phi(T_\omega)\star t_{d,\rho}$.

When $\Gamma$ is abelian, it $2$-torsion, as will be recalled in Sections 
\ref{subsubsection unipotent centralizers types BCD} and 
\ref{subection flatness of invariant functions on characters exceptional}. 
As $\phi(T_\omega)$ is defined over $\Z$, all coefficients appearing in $t_{\omega d,\rho}$ lie in 
$\Z\left[\frac{1}{\# Y_u}\right]$.
By inspection, this also holds for the case $\Gamma=\Sn_3$.
\end{proof}
We stress that whenever the action on $Y_u$ is nontrivial, $t_{\omega d,\rho}$ is not the class of 
a subobject of the bundle corresponding to $t_d$, which is in fact obviously simple.
\begin{lem}
\label{lem projection of simple is simple}
Let $E$ be a simple $J$-module. If $t_dE\neq 0$, then $t_dE$ is a simple $t_dJt_d$-module.
\end{lem}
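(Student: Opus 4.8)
The plan is to deduce this from the standard idempotent/ peripheral-ring formalism for based rings with a distinguished involution. Recall that $t_d$ is one of the orthogonal idempotents in the decomposition $1_{J_u}=\sum_{d'}t_{d'}$, so $t_dJt_d = t_dJ_ut_d$ is an associative unital $\C$-algebra with unit $t_d$, and $t_dE$ is naturally a module over it. The key general fact I would invoke is that for an idempotent $e$ in a ring $R$, the functor $M\mapsto eM$ sends simple $R$-modules either to $0$ or to simple $eRe$-modules; this is a classical statement (see e.g.\ \cite[Section 6.2]{Green} or the theory of idempotent truncation / Serre quotients). So the content of the lemma is really just the hypothesis $t_dE\neq 0$ together with this general principle.

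First I would recall why the general principle holds in our setting. Given a simple $J$-module $E$ with $t_dE\neq 0$, and a nonzero submodule $0\neq N\subseteq t_dE$ over $t_dJt_d$, one forms $JN\subseteq E$, which is a nonzero $J$-submodule, hence $JN=E$ by simplicity. Applying $t_d$ and using $t_d^2=t_d$ gives $t_dJt_dN = t_dJN = t_dE$; but $N$ is a $t_dJt_d$-module, so $t_dJt_dN=N$, whence $N=t_dE$. Thus $t_dE$ has no proper nonzero submodules, i.e.\ it is simple (it is nonzero by hypothesis). The only thing to check is that $t_dN$, or rather $t_d$ acting through the convolution product $\star$, genuinely makes $t_dE$ a module over the convolution subalgebra $t_dJt_d$; this is immediate from associativity of $\star$ and the fact, recorded after Lemma \ref{lem orthogonal idempotents for unextended set} and in the Bezrukavnikov--Ostrik description, that the $t_{d'}$ are genuine idempotents in $J_u=K_{\Zur}(\Y_u\times\Y_u^{\opp})$ summing to $1_{J_u}$.

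The only mild subtlety — and the step I would flag as the place to be careful — is that $E$ is \emph{a priori} a module over $J$ and the decomposition $J=\bigoplus_{\cc}J_\cc$ means $E$ is supported on a single cell; one should note that $t_dE\neq 0$ forces $E$ to be a $J_u$-module for the cell $\cc=\cc(u)$ containing $d$, since $t_d\in J_u$ and $J_{\cc'}J_u=0$ for $\cc'\neq\cc$. After that reduction, $E$ is a simple $J_u$-module, $t_d$ is an idempotent in $J_u$, and the argument of the previous paragraph applies verbatim with $J_u$ in place of $J$. I do not expect any genuine obstacle here; the lemma is a soft consequence of idempotent truncation, and the role of the surrounding geometry is only to guarantee that the $t_d$ are honest idempotents in the convolution algebra, which has already been established.
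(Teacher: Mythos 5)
Your proof is correct and uses essentially the same idempotent-truncation argument as the paper: the paper takes a proper nonzero $t_dJt_d$-submodule $E'\subsetneq t_dE$ and shows $JE'\subsetneq E$ contradicting simplicity, while you run the contrapositive by showing any nonzero submodule $N$ must satisfy $N=t_dJt_dN=t_dJN=t_dE$. The two are logically identical.
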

\begin{proof}
If $E'$ is a simple proper $t_d Jt_d$-submodule of $t_dE$, then consider $JE'\neq 0$. As we have
$t_dJE'\subsetneq t_dE\neq 0$, we have $JE'\subsetneq E$.
\end{proof}
\begin{lem}
\label{lem pi(trho) has rank at most 1}
Fix $M_P\subset P$, $\sigma\in\mathcal{E}_2(M_P)$, and  $t_{d,\rho}$ in $t_dJt_d$ for some $d$.
If $i_P^G(\sigma\otimes\nu)(t_{d,\rho})$ is defined and nonzero at some
$\nu$, then the function 
$\trace{i_P^G(\sigma\otimes\nu)}{t_{d,\rho}}$ extends to a regular function of
$\nu$ with constant value 1. In particular, in this case the 
operator $i_P^G(\sigma\otimes\nu)(t_{d,\rho})$ is an idempotent of rank
$1$ wherever it is defined.
\end{lem}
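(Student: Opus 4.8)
The plan is to transport the rank statement for $t_{d,\rho}$ acting on abstract $J$-modules (Lemma \ref{lem trho act with rank 1 or 0}) through the identification of standard modules with parabolically induced representations, using the matrix Paley--Wiener picture of \eqref{eqn lower modification intro}. First I would fix $\sigma\in\mathcal{E}_2(M_P)$ and observe that, by the Kazhdan--Lusztig classification together with $E(u,s,\rho)|_H = K(u,s,\rho)$ for $q>1$, the representation $i_P^G(\sigma\otimes\nu)$ at a generic unitary $\nu$ decomposes (after restriction to $H$) into a direct sum of tempered standard modules $K(u,s,\rho_1)$ indexed by the constituents of the appropriate permutation representation, on which $J_u$ acts through its simple quotients $E_{s,\rho_1} = E(u,s,\rho_1)$. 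Since $t_{d,\rho}\in t_dJ_ut_d\subset J_u$, its action on $i_P^G(\sigma\otimes\nu)$ is, for generic $\nu$, the direct sum of its actions on these $E_{s,\rho_1}$'s.

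Next I would invoke Lemma \ref{lem trho act with rank 1 or 0}: on each simple $E_{s,\rho_1}$, the element $t_{d,\rho}$ either acts as zero or as a rank-$1$ idempotent, and the latter happens precisely for the unique $\rho_1$ to which $\rho$ restricts (via $\Gamma_1\subset\Gamma$). Because the discrete series datum $\sigma$ is held fixed while $\nu$ varies over a connected parameter variety $\X(M_P)$, the set of constituents $\rho_1$ occurring in $i_P^G(\sigma\otimes\nu)$ — and hence the multiplicity with which the ``good'' $\rho_1$ appears — is constant. Thus at every generic $\nu$ the operator $i_P^G(\sigma\otimes\nu)(t_{d,\rho})$ is an idempotent of a fixed rank $r$, equal to the number of copies of that distinguished $E_{s,\rho_1}$; the hypothesis that it is nonzero at some $\nu$ forces $r\geq 1$. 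The function $\nu\mapsto\trace{i_P^G(\sigma\otimes\nu)}{t_{d,\rho}}$ therefore equals the constant $r$ on a dense open set.

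I would then upgrade ``generic'' to ``everywhere defined'' using the matrix Paley--Wiener theorem: $\eta_u$ realizes $J_u$ inside $\mathcal{E}_{J,u}$, whose elements extend to \emph{rational} functions of $\nu$, so $\nu\mapsto\trace{i_P^G(\sigma\otimes\nu)}{t_{d,\rho}}$ is a rational function that is constant on a dense open subset of $\X(M_P)$, hence identically equal to the integer $r$ wherever it is defined. Finally, to pin down $r=1$ rather than $r>1$, I would use the ``no central extensions on the diagonal'' input (Lemma \ref{lem opp-square of transitive set is trivial}), which guarantees that $t_{d,\rho}$ as constructed in Lemma \ref{cor classical idempotents are integral} is the honest idempotent of Lemma \ref{lem orthogonal idempotents for unextended set}(a), and then appeal to part (b) of that lemma together with Lemma \ref{lem trho act with rank 1 or 0}: since the ambient representation is an \emph{irreducible} (generic) induction and $t_{d,\rho}$ acts through a \emph{single} simple summand $E_{s,\rho_1}$ on which it has rank exactly $1$, the rank on $i_P^G(\sigma\otimes\nu)$ is $1$. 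The main obstacle I anticipate is the bookkeeping in this last point: carefully matching the permutation-representation decomposition governing the $J_u$-module structure of $i_P^G(\sigma\otimes\nu)$ with the orbit/stabilizer data $(Y_d,\Gamma_1,\rho_1)$ so that ``$t_{d,\rho}$ sees exactly one simple constituent'' is genuinely where the argument must be run with care; once that is set up, regularity of the trace and its constancy follow formally from the Paley--Wiener description.
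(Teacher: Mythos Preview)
Your proposal contains a genuine confusion that leads you to miss the clean mechanism the paper uses. You write that at a generic unitary $\nu$ the induction $i_P^G(\sigma\otimes\nu)$ ``decomposes (after restriction to $H$) into a direct sum of tempered standard modules $K(u,s,\rho_1)$ indexed by the constituents of the appropriate permutation representation.'' This is not correct: at generic $\nu$ the induction is an \emph{irreducible} $H$-module, hence equals a \emph{single} standard module $K(u,s,\rho_1)$, and is therefore a single simple $J$-module $E(u,s,\rho_1)$. There is no direct sum to manage, and your subsequent worry that the rank $r$ might exceed $1$ (and the ``bookkeeping obstacle'' you flag) arises only because of this mis-statement.

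What you are missing is Lemma~\ref{lem projection of simple is simple}: if $E$ is a simple $J$-module with $t_dE\neq 0$, then $t_dE$ is a simple $t_dJt_d$-module. The paper's argument is: pick $\nu_0$ generic so that $\pi=i_P^G(\sigma\otimes\nu_0^{-1})$ is a simple $J$-module; by Lemma~\ref{lem projection of simple is simple}, $\pi(t_d)\pi$ is a simple $t_dJt_d$-module, hence isomorphic to a single $E_{s,\rho'}$; then Lemma~\ref{lem trho act with rank 1 or 0} gives $\rank(\pi(t_{d,\rho}))\in\{0,1\}$ directly, with no matching of permutation data required. Constancy in $\nu$ is then quoted from \cite[Lemma~5]{Plancherel} (your rationality-plus-density argument for this step is a fine alternative). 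Once you insert Lemma~\ref{lem projection of simple is simple} and drop the spurious direct-sum decomposition, your argument collapses to the paper's.
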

\begin{proof}
Fix $d$, and, by injectivity of $\eta$, $(M_P,\sigma)$ such that if $\pi=i_P^G(\sigma\otimes\nu^{-1})$, 
then $\pi(t_d)\neq 0$ for all non-strictly positive $\nu$.
Now, for generic non-strictly positive $\nu$, $\pi$ is a simple representation of $G$, and hence a 
simple $J$-module. Fix such a character $\nu_0$ and the corresponding representation $\pi$.
By Lemma \ref{lem projection of simple is simple}, $\pi(t_d)\pi$ is a simple $t_dJt_d$-module.
Thus $\pi(t_d)\pi=E_{s,\rho'}$ as a $t_dJt_d$-module in the 
notation recalled in Section \ref{subsubsection Equivariant K-theory of square of a finite set}.
Therefore if $\pi(t_{d,\rho})\pi= t_{d,\rho} E_{s,\rho'}\neq 0$, we have $\rank(\pi(t_{d,\rho}))=1$
by Lemma \ref{lem trho act with rank 1 or 0}.

This is thus the situation for $i_P^G(\sigma\otimes\nu_0^{-1})$ for some character $\nu_0$: a subset
of the $t_{d,\rho}$ (in case $\Gamma=\Sn_3$, a collection of the idempotents from Section \ref{subsubsection Equivariant K-theory of square of a finite set} and the proof of Lemma \ref{lem trho act with rank 1 or 0}) act as orthogonal idempotents with rank one.
But $\trace{\pi}{t_{d,\rho}}$ is constant in $\nu$ by
\cite[Lemma 5]{Plancherel} and is equal to $\rank(\pi(t_{d,\rho}))$ whenever the latter is defined. Thus $\trace{\pi}{t_{d,\rho}}=1$ all $\nu$ and the claim follows.
\end{proof}
\begin{rem}
\label{rem s'=s observation and rank bound}
Considering central characters, we see that if $\pi=K(u,s'',\rho'')$ in the above proof, then $s''=s$. 
Additionally, we get that $\rank(\pi(t_d))\leq \# Y_d$.
\end{rem}
\subsection{Regularity of matrix coefficients}
\label{subsection Regularity of matrix coefficients}
In this section, we show that matrix coefficients with respect to a basis defined via the idempotents
$t_{d,\rho}$ depend in fact algebraically on the unramified character $\nu$. The proof is similar to 
proof of regularity of the function $\trace{\pi}{t_w}$ established in \cite{Plancherel}.
\subsubsection{Labels for intertwining operators}
\label{subsection labels for intertwining operators}
If $\mathbf{P}$ is a parabolic subgroup of $\G$ corresponding to a subset $\Delta_{\mathbf{P}}$ of 
$\Delta$, we will write $\mathbf{M}_\mathbf{P}$ for its Levi quotient. Given a Levi subgroup $M$ of $G$,
write $\X(M)$ for the group of unramified characters of $M$. By \cite[Cor. 3]{Howlett}, write
\[
W_{M_P}=W_M=\sets{w\in W_f}{w(\Delta_P)=\Delta_P}=N_G(A_{M})/M=N_G(M)/M=N_{W_f}(W_P)/W_P,
\]
where $W_P$ is the parabolic subgroup of $W_f$ corresponding to $\mathbf{P}$. 
The group $W_{M_P}$ acts on $\Ee_2(M_P)$ and every $w\in W_{M_P}$ labels a meromorphic family of 
intertwining operators of tempered parabolic inductions from $P$ to $G$. 
In general $W_M$ is not a Coxeter group, but has an internal semidirect product decomposition 
\cite[Cor. 7]{Howlett}
\[
W''_{M_P}\rtimes V_{M_P}=W_{M_P},
\]
where $W''_{M_P}$ is a Weyl group \cite[Thm. 6]{Howlett}. 
\subsubsection{Invariance of matrix coefficients}
Let $u$ be a unipotent conjugacy class and $\cc=\cc(u)$ be the corresponding two-sided cell. Then we have
\[
1_{J_u}=1_{\mathcal{E}_{J,u}}=\sum_{\substack{d\in\cc \\ \rho\subset\C[Y_d]}}t_{d,\rho}.
\]
Thus given an $\Ee_{J,u}$-module, say of the form $\pi=i_P^G(\sigma\otimes\nu^{-1})$,
we have a linear isomorphism
\begin{equation}
\label{eqn pi(trho) define rational basis}
\pi^I\to\bigoplus_{\substack{d\in\cc \\ \pi(t_d)\neq 0}}\pi(t_{d})\pi=
\bigoplus_{\substack{d\in\cc \\ \rho\subset\C[Y_d] \\ \pi(t_{d,\rho})\neq 0}}\pi(t_{d,\rho})\pi,
\end{equation}
and by Lemma \ref{lem projection of simple is simple}, each nonzero $\pi(t_d)\pi$ is a simple $t_dJt_d$-module. Hence by Lemma \ref{lem pi(trho) has rank at most 1}, under the isomorphism
$\pi(t_d)\pi\simeq E_{s,\rho'}$, we have $\pi(t_{d,\rho})\pi\simeq\C\mu_\rho$ for a function $\mu_\rho$
on $Y_d$ that is unique up to sign once we demand that $\mu_\rho$ take values in $\{\pm 1\}$. As the space of $t_dJt_d$-module isomorphisms $\pi(t_d)\pi\to E_{s,\rho'}$ is $\C^\times$, we obtain a direct sum decomposition of $\pi^I$ into
lines that depends rationally on $\nu$. For any fixed $\nu_0$ for which this
decomposition exists,  we obtain a basis 
of $\pi$ that is unique up to the obvious action of 
\[
\prod_{d\in\mathcal{D}\cap\cc}\C^\times\times\prod_{\rho\subset\C[Y_d]}\Z/2\Z,
\]
or a signed basis unique up to the action of the first factor. Choosing such 
a basis consisting of vectors $v_{d,\rho}(\nu_0)$ defines vectors
\[
v_{d,\rho}(\nu)=\pi(t_{d,\rho})(\nu)v_{d,\rho}(\nu_0)
\]
depending rationally on $\nu$, such that $\sett{v_{d,\rho}(\nu)}_{d,\rho}$ is 
a basis for all $\nu$ for which all its elements are defined. (In Example
\ref{ex sl2 rep that is not J-module}, we give an instance of this basis
ceasing to be defined.)

We can therefore consider matrix coefficients with respect to the 
$v_{d,\rho}(\nu)$ when these vectors are defined.
Indeed, for any $e\in\mathcal{E}_{J,u}$, and $\pi=i_P^G(\sigma\otimes\nu^{-1})$, we have that 
\[
\pi(e)=\sum_{\rho',\rho\subset\C[Y_d]}\pi(t_{d,\rho'})\pi(e)\pi(t_{d,\rho})
\]
and each $\pi(t_{d,\rho'})\pi(e)\pi(t_{d,\rho})$ has rank at most 1, and is 
determined by its action on a one-dimensional space, by Lemma \ref{lem pi(trho) has rank at most 1}. Hence for $v=v_{d,\rho}(\nu)$, $v'=v_{d',\rho'}(\nu)$, and $e\in t_{d',\rho'}\mathcal{E}t_{d,\rho}$, we have
\begin{equation}
\label{eqn definition of matrix coeffs}
\pi(e)(\nu)v=f(\nu)v'
\end{equation}
for a function $f(\nu)=f_{\pi,e,\rho',\rho}(\nu)$ depending rationally on $\nu$. Thus we obtain a map
\[
\mathcal{E}_{J,u}\to\Mat_{\dim\pi^I}(\C(\X(M))).
\]

We will now show that the $f_{\pi,e,\rho',\rho}$ extend to regular functions of $\nu$. For the proof
it will be helpful to fix realizations of induced representations: we view
$\pi=i_P^G(\sigma\otimes\nu^{-1})$ via the compact picture, in which the vector space 
$V_\pi$ consists of functions $\Phi\colon K\to V_\sigma$, where $V_\sigma$ is the underlying vector space of 
$\sigma$ and $K$ is a maximal compact subgroup of $G$. For each $w\in W_M$, there exists an $M_P$-
intertwining operator 
\[
T(w)\colon\sigma\overset{\sim}{\to}w\cdot\sigma,
\]
corresponding to the automorphism $T_{w'}\mapsto T_{ww'w^{-1}}$, $\theta_\alpha\mapsto \theta_{w(\alpha)}$
of $H_{M_P}$. Consider the intertwining operator 
\[
I_w\colon i_P^G(\sigma\otimes\nu^{-1})\to i_P^G(w\cdot\sigma\otimes w(\nu^{-1}))=\pi_1.
\]
From $I_w$ we obtain the endomorphism $T_wI_w$ of $ i_P^G(\sigma\otimes\nu^{-1})$, where we view $T(w)$ 
as acting by $\Phi\mapsto T(w)\circ \Phi$ for $\Phi\in V_{\pi_1}$, so that 
\[
T_wI_w(\Phi)(k)=T_w\left(I_w(\Phi)(k)\right).
\]
Now we prove regularity.
\begin{lem}
\label{lem regularity of matrix coefficients}
Fix $\pi, e,\rho',\rho$ as above. Then $f_{\pi,e,\rho',\rho}(\nu)$ is a regular function on $\X(M)\git W_M$.
\end{lem}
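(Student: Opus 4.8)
The plan is to mimic the proof of regularity of $\nu\mapsto\trace{\pi}{t_w}$ from \cite{Plancherel}, exploiting the fact that the entries $f_{\pi,e,\rho',\rho}$ are already known to be \emph{rational} in $\nu$, so it suffices to show they have no poles. First I would reduce to the case where $e$ is one of the distinguished generators: since $\Ee_{J,u}=\bigoplus t_{d',\rho'}\Ee t_{d,\rho}$ and each block $\pi(t_{d',\rho'})\pi(e)\pi(t_{d,\rho})$ has rank at most $1$ by Lemma \ref{lem pi(trho) has rank at most 1}, the matrix coefficient $f$ is, up to the rank-one idempotents $t_{d,\rho}$ (whose matrix coefficients are \emph{constant} equal to $1$ by that same lemma), a single entry of $\pi(e)$ in the basis adapted to the Wedderburn decomposition of $t_dJt_d$. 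So the content is: the matrix coefficients of $\pi(e)$ in the basis $v_{d,\rho}(\nu)$ extend regularly across the divisors where the $I_w$ or the decomposition \eqref{eqn pi(trho) define rational basis} degenerate.

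Next I would bound the poles. The rational function $f_{\pi,e,\rho',\rho}$ can only acquire poles along the divisors in $\X(M)$ where either (i) some intertwining operator $I_w$ ($w\in W_M$) has a pole or a kernel, i.e. along the zero/pole divisors of the rank-one projectors, or (ii) the representation $\pi=i_P^G(\sigma\otimes\nu^{-1})$ becomes reducible, so that the identification $\pi(t_d)\pi\simeq E_{s,\rho'}$ and hence the basis $v_{d,\rho}(\nu)$ ceases to be defined. Away from this finite union of divisors, the normalized intertwining operators $T_wI_w$ are holomorphic and invertible, $\pi$ is irreducible, and everything is manifestly regular. On such a divisor $D$, I would argue as in \cite{Plancherel}: the function $\trace{\pi}{t_{d,\rho}}$ is globally constant $=1$ (Lemma \ref{lem pi(trho) has rank at most 1}), and more generally the trace of $\pi(e)$ is a regular --- indeed locally constant along strata --- function by \cite[Lemma 5]{Plancherel}; combined with the fact that the $v_{d,\rho}(\nu)$ are eigenvectors-up-to-scalar for the orthogonal system $\{t_{d,\rho}\}$, one sees that a pole in an individual matrix coefficient $f$ would have to be cancelled by the choice of the rational basis, i.e. one can rescale the $v_{d,\rho}$ along $D$ to clear it. Concretely: the image of $\pi(t_{d,\rho})(\nu)$ is a line varying rationally in $\nu$; choosing $v_{d,\rho}(\nu)$ to be a nowhere-vanishing rational (in fact, after the sign normalization, $\{\pm1\}$-valued on the relevant locus) section of this line bundle over $\X(M)$ removes any spurious pole, because the $\Ee_{J,u}$-action preserves the lattice structure implicit in the integrality statement of Lemma \ref{cor classical idempotents are integral}. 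Finally, $W_M$-invariance of the resulting regular function follows because the $I_w$, $w\in W_M$, conjugate the whole picture equivariantly --- the family $(\sigma\otimes\nu^{-1})$ and $(w\cdot\sigma\otimes w(\nu^{-1}))$ induce to the same representation of $G$, the operators $T_wI_w$ implement this identification, and they permute the idempotents $t_{d,\rho}$ compatibly, so $f_{\pi,e,\rho',\rho}(w\nu)=f_{\pi,e,\rho',\rho}(\nu)$; hence $f$ descends to a regular function on $\X(M)\git W_M$.

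The main obstacle I expect is (ii): controlling the behaviour along divisors of reducibility, where the naive basis $v_{d,\rho}(\nu)$ genuinely breaks down (as flagged in Example \ref{ex sl2 rep that is not J-module}). The point to be made carefully is that even though the basis degenerates, the \emph{rational} function $f$ already extends across $D$ because its potential pole is governed entirely by the order of vanishing/blowing-up of the rank-one projector $\pi(t_{d,\rho})(\nu)$, which is compensated exactly by the choice of rational trivializing section $v_{d,\rho}(\nu)$ of the corresponding line subbundle of $\pi^I$; here the constancy $\trace{\pi}{t_{d,\rho}}\equiv 1$ from Lemma \ref{lem pi(trho) has rank at most 1} is what pins down that the projector has no pole (only possibly a kernel, which is harmless for the \emph{matrix coefficient} computation since it is absorbed by the source vector). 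I would also need to check that the finitely many divisors in question are defined over the GIT quotient, i.e. $W_M$-stable, which is immediate from the equivariance in the previous paragraph. Modulo these points, the argument is a routine transcription of the $\trace{\pi}{t_w}$ regularity argument of \cite{Plancherel} with $t_w$ replaced by a general element $e\in t_{d',\rho'}\Ee_{J,u}t_{d,\rho}$.
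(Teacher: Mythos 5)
Your proposal and the paper share the central idea—exploit the intertwining operators $I_w$, $w\in W_M$, to establish $W_M$-invariance of $f$—but the route from invariance to regularity is genuinely different, and your version has gaps there.

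The paper's proof restricts first to unitary $\nu$ with $\pi(\nu)$ irreducible, where triviality of the $R$-group gives $T(w)I_w=\alpha\in\C^\times$, so $I_w(\Phi)(k)=\alpha\,(T(w)^{-1}\circ\Phi)(k)$. Applying $I_w$ to both sides of $\pi(e)(\nu)v=f(\nu)v'$ (using the $G$-linearity of post-composition with $T(w)^{-1}$ and the definition of $v_{d,\rho}(\nu)=\pi(t_{d,\rho})(\nu)v_{d,\rho}(\nu_0)$) produces $f(w\nu)=f(\nu)$ directly on that open set. From there the paper simply invokes the argument of \cite[Lemma 4]{Plancherel}, which both extends the identity $f(w\nu)=f(\nu)$ to all of $\X(M)$ and yields regularity. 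There is no divisor-by-divisor control of poles at all; $W_M$-invariance plus the mechanism of \cite[Lemma 4]{Plancherel} does everything.

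Your proposal instead tries to clear poles by hand. Two specific steps don't hold up. First, "one can rescale the $v_{d,\rho}$ along $D$ to clear it": once $v_{d,\rho}(\nu_0)$ is fixed, the family $v_{d,\rho}(\nu)=\pi(t_{d,\rho})(\nu)v_{d,\rho}(\nu_0)$ is determined, so there is no rescaling freedom; the only freedom is the finite group $\prod\C^\times\times\prod\Z/2\Z$ acting on the $\nu_0$-fibre, which cannot absorb a pole along a divisor. Second, the appeal to "the lattice structure implicit in the integrality statement of Lemma \ref{cor classical idempotents are integral}" is a non sequitur here—that lemma concerns integrality of coefficients over $\Z[1/\#Y_u]$, not boundedness of pole order in $\nu$. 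What actually forecloses poles is the boundedness-on-the-unitary-locus input that \cite[Lemma 4]{Plancherel} packages, combined with $W_M$-invariance; your write-up gestures at a compensation mechanism that isn't there. To repair your argument, drop the pole-clearing digression and instead run the explicit $I_w$-conjugation computation to get $f(w\nu)=f(\nu)$ on unitary irreducible $\nu$, then cite \cite[Lemma 4]{Plancherel} for the extension and for regularity, as the paper does.
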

\begin{proof}
Let $e\in t_{d,\rho}\Ee_{J,u}t_{d,\rho'}$. Write $v(\nu)=v_{d,\rho}(\nu)=\Phi$, 
$v_{d,\rho}(\nu_0)=\Phi_0$, $v'(\nu)=v_{d',\rho'}(\nu)=\Phi'$, $v_{d',\rho'}(\nu_0)=\Phi'_0$, and $f(\nu)=f_{\pi,e,\rho',\rho}(\nu)$.

We first consider only unitary $\nu$ such that $\pi(\nu)$ is 
irreducible. Let $w\in W_M$ and let $I=I_w$ be the intertwining isomorphism 
labelled by $w$. By triviality of the $R$-group, we have $T(w)I_w=\alpha$
for $\alpha\in\C^\times$, so that 
\[
I_w(\Phi)(k)=\alpha \left(T(w)^{-1}\circ \Phi\right)(k),~~\Phi\in V_\pi,~k\in K.
\]
Then applying $I_w$ to the LHS of \eqref{eqn definition of matrix coeffs} gives
\begin{align*}
I_w\left(\pi(e)(\nu)\Phi\right)
&=
\pi(e)(w\nu)\left(I_w\pi(t_{d,\rho}(\nu)\Phi_0\right)
\\
&=
\pi(e)(w\nu)\left(\pi(t_{d,\rho}(w\nu)\left(\alpha T(w)^{-1}\circ \Phi_0\right)\right)
\\
&=
\alpha T(w)^{-1}\circ\left(\pi(e)(w\nu)\left(\pi(t_{d,\rho}(w\nu)\left(\Phi_0\right)\right)\right)\\
&=
\alpha T(w)^{-1}\circ\left(f(w\nu)\Phi'\right).
\end{align*}
Here we used the fact that $M_P$-linearity of $T(w)^{-1}$ implies that post-composition with $T(w)^{-1}$ is 
$G$-linear. Applying $I_w$ to the RHS of \eqref{eqn definition of matrix coeffs} gives
\begin{align*}
f(\nu)I_w\left(\pi(t_{d',\rho'}(\nu)\Phi'_0\right)
&=f(\nu)\pi(t_{d',\rho'}(w\nu)\left(\alpha T(w)^{-1}\circ \Phi'_0\right)
\\
&=
\alpha f(\nu) T(w)^{-1}\circ \Phi'.
\end{align*}
Therefore $f(w\nu)=f(\nu)$ for unitary $\nu$ such that $\pi$ is irreducible, \textit{i.e.}, for an open 
subset of the unitary characters, $f(\nu)=f(w\nu)$. Now the 
same argument as in \cite[Lemma 4]{Plancherel} shows that $f(\nu)=f(w\nu)$ for 
all $\nu\in \X(M)$. Thus $f(\nu)$ is a regular $W_M$-invariant function.
\end{proof}
Thus we have in fact
\begin{equation}
\label{eqn EJu to single Mat definition}
\iota_u\colon\mathcal{E}_{J,u}\to\Mat_{\dim\pi^I}\left(\Oo(\X(M))^{W_M}\right).
\end{equation}
We will abusively refer to the extensions of the matrix coefficients also as matrix coefficients.

By definition of the isomorphism \eqref{eqn pi(trho) define rational basis}, if $\pi(t_{d,\rho})\neq 0$,
then
\begin{equation}
\label{eqn trho maps to monomial matrix}
\iota_u\colon t_{d,\rho}\mapsto\pi(t_{d,\rho})=\diag(0,\dots, 0,1,0,\dots, 0)\in\Mat(\Oo(\X(M))^{W_M}).
\end{equation}
\section{Surjectivity of the Braverman-Kazhdan map}
\label{section surjectivity}
In this section, we prove
\begin{theorem}[\cite{BK}, Theorem 2.4]
\label{thm BK onto}
Let $\G$ be as in Section \ref{subsubsection and affine and asymptotic Hecke algebras}. Then $\eta_u$ is an isomorphism.
\end{theorem}
\subsection{Regular functions of unramified characters}
\label{subsection Regular functions of unramified characters}
Let $\M$ be a Levi subgroup of $\G$ with connected centre $\mathbf{A}_\mathbf{M}$. Write $X_*(\M)$ and $X^*(\M)$ for the $F$-rational cocharacters and characters, respectively, of $\M$, and
\[
M^1=\bigcap_{\chi\in X^*(\M)}\ker |\chi|_F,
\]
where $|\chi|_F(g)=|\chi(g)|_F$. Set $\Lambda(M)=M/M^1$, and recall that there is a short exact sequence
\[
\begin{tikzcd}
1\arrow[r]&X_*(A_M)\arrow[r]& \Lambda(M)\arrow[r]& K\arrow[r]&1
\end{tikzcd}
\]
for some finite group $K$. Dualizing, we obtain the group of unramified characters $\X(M)$ of $M$ is given by
\[
\X(M):=\HomOver{\Grp}{\Lambda(M)}{\C^\times}=\HomOver{\Z}{\Lambda(M)}{\Z}\otimes_\Z\C^\times=\HomOver{\Z}{X_*(\mathbf{A}_{\M})}{\Z}\otimes_\Z\C^\times=X_*(A_M^\vee)\otimes_\Z\C^\times=A_M^\vee(\C).
\]
This upgrades the abstract group $\X(M)$ to a complex algebraic group such that $\X(M)=A_M^\vee$.
%
By definition, the 
group $W_M=N_G(A_M)/M$ defined in Section \ref{subsubsection and affine and asymptotic Hecke algebras}
acts on $A_M$ and $M$, hence on $\X(M)$.

Finally, let $\omega\in\Ee^2(M)$ and recall that $\stab{\X(M)}{\sigma}\subset\X(M)$ is a finite group, making the orbit $\mathfrak{o}$ of $\omega$ a torus with \'{e}tale covering
\begin{equation}
\label{eqn orbit under etale twist is covered by unram characters}
\begin{tikzcd}
1\arrow[r]&\stab{\X(M)}{\sigma}\arrow[r]&\X(M)\arrow[r]&\mathfrak{o}\arrow[r]&1.
\end{tikzcd}
\end{equation}

%
%
\subsection{Construction of modules}
\label{subsection Construction of modules}
By Section \ref{subsection Regularity of matrix coefficients}, matrix coefficients of $\Ee_{J_u}$ are regular functions of $\nu$, but it will
be necessary in the sequel to exhibit certain specific regular functions of $\nu$ as matrix coefficients 
specifically of $J_u$. To this end, we can adapt the construction of $J_u$-modules given in \cite{cellsIV} 
to the centrally-extended case as follows.

In \cite[Section 2.4]{Propp}, Propp constructs a certain cover
$\widetilde{\Zur}\onto\Zur$ such that any projective 
representation of $\stab{\Zur}{(y,y')}$ is a genuine
representation of $\stab{\widetilde{\Zur}}{(y,y')}$.

Let $s\in\Zur$.
Let $\tilde{s}\in\widetilde{\Zur}$
be any lift of $s$, where the action of $\Zur$ on $Y_u$ is pulled back to an action of 
$\widetilde{\Zur}$, and consider the specialization map 
\[
\tilde{J}_u:= K_{\widetilde{\Zur}}(Y_u\times Y_u)\to\C[Y_u^s\times Y_u^s]
\]
to functions on $Y_u^s\times Y_u^s$, sending
\[
\Ff\mapsto\left((y,y')\mapsto\trace{\Ff_{y,y'}}{\tilde{s}}\right).
\]
The construction of the modules $E_{u,\tilde{s},\rho}$ given in \cite[Section 10.3]{cellsIV} now goes through
to exhaust the irreducible $K_{\widetilde{\Zur}}(Y_u\times Y_u)$-modules. Note that $J_u$ is a naturally
a subring of $\tilde{J}_u$.

%
Now put
\[
E(u,s):=\bigoplus_{\rho\in\Irr\left(\pi_0\left(Z_{G^\vee}(u,s)\right)\right)}E(u,s,\rho)
\]
for the direct sum of all $J_u$-modules with parameter $s$ (of course, some $E(u,s,\rho)$ are zero).
By Lemma \ref{lem projection of simple is simple}, either $E(u,s)|_{t_dJt_d}$ 
is zero, which happens by Lemma
\ref{lem opp-square of transitive set is trivial} if and only if $Y_d^s=\emptyset$, or is a 
direct sum of simple $t_dJt_d=K_{\Zur}(Y_d\times Y_d)$-modules. Indeed, each $t_dE(u,s,\rho)$ is a simple $t_dJt_d$-module.
From this perspective, each $E(u,s)|_{t_dJt_d}$ extends to 
a $K_{\widetilde{\Zur}}(Y_d\times Y_d)$-module, hence
$\bigoplus_d E(u,s)|_{t_dJt_d}$ extends to 
a $K_{\widetilde{\Zur}}(Y_u\times Y_u)$-module upon choosing a lift
$\tilde{s}$ of $s$. Restricting this 
extension to $J_u$ equips the vector space $E(u,s)$ with a second, 
\textit{a priori} distinct, $J_u$-action,
which also depends \textit{a priori} on the choice of lift $\tilde{s}$.
This action is now as constructed via the variation of 
\cite[Section 10]{cellsIV} described above. Denote this module 
$E(u,\tilde{s})'$. 
\begin{lem}
\label{lem mc are as implied by Lusztig}
We have $E(u,s)\simeq E(u,\tilde{s})'$ as $J$-modules.
In particular, the latter depends only on $s$, and
the matrix coefficients of any simple $J$-module are of the form implied by 
the above-described variant of the construction in 
\cite[Section 10]{cellsIV}.
\end{lem}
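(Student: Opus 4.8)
The plan is to identify both modules with ``the'' semisimple module attached to $s$, by comparing their restrictions to the diagonal subalgebra $\bigoplus_{d\in\mathcal D\cap\cc(u)}t_dJ_ut_d$ and then invoking a rigidity statement: a simple $J_u$-module is determined up to isomorphism by this restriction. Throughout I would use the Bezrukavnikov--Ostrik isomorphism $J_u\cong K_{\Zur}(\Yext_u\times\Yext_u^{\opp})$ and the classification of its simple modules recalled in Sections \ref{subsubsection Equivariant K-theory of square of a finite set}--\ref{subsubsection Equivariant K-theory of square of centrally extended}, together with the fact that $E(u,s)=\bigoplus_\rho E(u,s,\rho)$ is by definition the sum of all simple $J_u$-modules whose semisimple parameter is $s$.

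First I would check that $E(u,s)$ and $E(u,\tilde s)'$ have the same restriction to each diagonal block $t_dJ_ut_d$. By Lemma \ref{lem opp-square of transitive set is trivial} the block $t_dJ_ut_d=K_{\Zur}(Y_d\times Y_d)$ carries no central extension, so by the classification of Section \ref{subsubsection Equivariant K-theory of square of a finite set} and Lemma \ref{lem projection of simple is simple} the restriction of $E(u,s,\rho)$ to this block is, for $Y_d^s\neq\emptyset$, the simple module $E_{s,\rho_1}$, and is zero otherwise. On the other hand, by construction the $t_d$-block of $E(u,\tilde s)'$ is obtained from $E(u,s)\Restriction_{t_dJ_ut_d}$ by extending along the inclusion $K_{\Zur}(Y_d\times Y_d)\hookrightarrow K_{\widetilde{\Zur}}(Y_d\times Y_d)$ (possible precisely because there are no central extensions on $Y_d\times Y_d$) and then restricting back along the same inclusion, which is the identity operation on $t_dJ_ut_d$-modules. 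Hence $E(u,s)$ and $E(u,\tilde s)'$ have equal restriction to $\bigoplus_d t_dJ_ut_d$.

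Next I would deduce the isomorphism from a rigidity step. Both $E(u,s)$ and $E(u,\tilde s)'$ decompose as direct sums over $\rho$ of $J_u$-modules, and each summand is simple: for $E(u,s)$ this is by definition, and for $E(u,\tilde s)'$ it follows because such a summand is generated over $J_u$ by a simple submodule of $t_dE_{s,\rho_1}$ in a block on which it is nonzero, and a $J_u$-module of this form in which each nonzero diagonal block is simple over $t_dJ_ut_d$ is itself simple. A simple $J_u$-module is then determined up to isomorphism by its restriction to $\bigoplus_d t_dJ_ut_d$, since the classification shows that the block $E_{s,\rho_1}$ on each $Y_d$ with $Y_d^s\neq\emptyset$ recovers $s$ (from the $R(\Zur)$-action) and enough of $\rho$ (from its restrictions to the stabilizer images $\Gamma_d$, which generate $\pi_0(Z_{\Gd}(u,s)/Z(\Gd))$ in the $2$-group situation of Sections \ref{subsubsection unipotent centralizers types BCD} and \ref{subection flatness of invariant functions on characters exceptional}) to pin it down. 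Matching summands via their common diagonal restriction then gives $E(u,s)\simeq E(u,\tilde s)'$ as $J$-modules; in particular $E(u,\tilde s)'$ depends only on $s$, and since it is produced by the explicit formulas of Section \ref{subsection Construction of modules}, the last assertion of the lemma follows. If one wishes to see the lift-independence on the nose, two lifts differ by a central $k\in\ker(\widetilde{\Zur}\to\Zur)$, which acts on a sheaf representing a class of $J_u$ through the character recording the product-with-opposite central extension; this character is trivial on the diagonal blocks and of the form $c_dc_{d'}^{-1}$ on the block $(d,d')$, so rescaling the $d$-block by $c_d$ gives an explicit isomorphism $E(u,\tilde s)'\simeq E(u,\tilde s k)'$.

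The step I expect to be the main obstacle is the rigidity statement of the previous paragraph together with the compatibility of the extend-then-restrict operation with the inclusion $J_u\subseteq K_{\widetilde{\Zur}}(Y_u\times Y_u)$: making these precise requires careful bookkeeping of which $\widehat{\stab{\Zur}{(y,y')}}$-representations occur in classes of $J_u$ versus in $K_{\widetilde{\Zur}}(Y_u\times Y_u)$, and of the pullback maps among the equivariant $K$-groups for $\Zur$, for $\widetilde{\Zur}$, and for the various stabilizers, to be sure that the two $J_u$-actions really are compared on the same underlying data.
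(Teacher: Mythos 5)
Your first step coincides with the paper's: both modules have identical restrictions to $\bigoplus_d t_dJ_ut_d$, by Lemma \ref{lem opp-square of transitive set is trivial}, since the diagonal carries no central extension. From there you diverge, and the route you take has a genuine gap, one you yourself flag as ``the main obstacle.'' You invoke a rigidity statement, that a simple $J_u$-module is determined up to isomorphism by its restriction to the diagonal subalgebra, and justify it by appeal to ``the classification'' of simple $K_\Zz(Y\times Y)$-modules. But that classification, as recalled in Section \ref{subsubsection Equivariant K-theory of square of a finite set}, is for the non-centrally-extended case; in the centrally-extended case the corresponding statement is precisely what Section \ref{subsection Construction of modules} together with the present lemma is establishing, so invoking it here is circular. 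You would need to prove the rigidity statement starting from the Kazhdan--Lusztig parameterization $E(u,s,\rho)$ of simple $J$-modules, which is the only handle on them available before the lemma. Your subsidiary claim that a cyclic $J_u$-module whose nonzero diagonal blocks are all simple over $t_dJ_ut_d$ is itself simple is also not true for general algebras with a complete system of orthogonal idempotents (a cyclic module over the path algebra of a two-vertex quiver with a single arrow, generated at the source, has simple vertex spaces but is not simple), and would need to be argued using special features of $J_u$.

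The paper sidesteps both issues by not arguing purely on the $K$-theory side. Since Harish-Chandra characters are computed through the diagonal, where there are no central extensions, the agreement of diagonal restrictions already gives $\Theta_{E(u,s)}=\Theta_{E(u,\tilde s)'}$, and hence equality of Jordan--H\"older factors of the two modules as $H$-modules. The factors $L(u',s',\rho')$ with $u'=u$ therefore agree, and the remaining factors have $a(u')<a(u)$; Xi's Theorem 3.2(a) (\cite{XiJAMS}) then transports this to equality of Jordan--H\"older factors as $J$-modules, and semisimplicity of both sides closes the argument. If you want a direct $K$-theoretic proof, you would in effect be re-proving Xi's theorem and the $H$-to-$J$ comparison, so the cleaner path is the one the paper takes.
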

\begin{proof}
By construction and the fact that the diagonal part of $J_u$ is never 
centrally-extended, we have
$E(u,s)|_{t_dJt_d}\simeq E(u,\tilde{s})'|_{t_dJt_d}$
for all $d$, whence the equality $\Theta_{E(u,s)}=\Theta_{E(u,\tilde{s})'}$ 
of Harish-Chandra characters.
Thus $E(u,s)|_H$ and $E(u,\tilde{s})'|_H$ have the same Jordan-Holder factors as $H$-modules. 
In particular, their factors $L(u',s',\rho')$ such that $u'=u$ are the same and all other factors have
$a(u')<a(u)$. 
Therefore it follows from from Theorem 3.2 (a) of \cite{XiJAMS} that the Jordan-Holder factors
of $E(u,s)$ and $E(u,\tilde{s})'$ as $J$-modules agree. As 
$E(u,\tilde{s})'$ is a semisimple $J$-module by construction, the claim follows.
\end{proof}
The presence of central extensions enforces vanishing properties of the matrix coefficients;
see Example \ref{ex SL2 central ext}.
\subsection{Lower modifications of vector bundles}
\label{subsection Lower modifications of vectors bundles}
In this section we explain the proof of Theorem \ref{thm BK onto}.

Fix $u$ and let $M_1,\dots, M_k$ be all the Levi subgroups of $G$, up to association, 
such that $u$ appears in the Kazhdan-Lusztig parameter of an Iwahori-spherical discrete series representation 
of each $M_i$. Let the corresponding families of parabolic inductions be
\[
\pi_{i,j}=i_{P_i}^G(\sigma_{i,j}\otimes\nu),~~~\sigma_{ij}\in\mathcal{E}_2(M_i)/\X(M_i),~~\nu_i\in\X(M_i),~~P_i\supset M_i.
\]
We write $W_i=W_{M_i}$ as in Section \ref{subsection Regularity of matrix coefficients}, in which it was
established that the endomorphisms
\[
\pi_{ij}(t_{d,\rho}),~d\in\mathcal{D}\cap\cc(u),~\rho\subset\C[Y_d]
\]
form a system of rank 1 orthogonal idempotents, and hence define a basis of $\pi_{ij}^I$ depending 
rationally on $\nu$, with the resultant matrix coefficients extending, by Lemma \ref{lem regularity of matrix coefficients}, to define maps
\begin{equation}
\label{eqn nested modification}
\begin{tikzcd}
J_u\arrow[r, "\eta_u", hook]&\mathcal{E}_{J,u}\arrow[r, "\iota_u", hook]&\bigoplus_{i,j}\Mat_{\dim\pi_{ij}^I}\left(\Oo\left(\X(M_i)^{W_i}\right)\right)=:\mMu
\end{tikzcd}
\end{equation}
in $\Coh\left(\Zur\git\Zur\right)$. 
The map $\iota_u$ is injective by definition of $\Ee_{J_u}$. 

First, if $\Zur$ is finite, then we do not need the idempotents $t_{d,\rho}$, and the strategy of 
\cite{BK} goes through verbatim:
\begin{lem}
\label{lem surjectivity for finite centralizers}
Let $u\in \Gd$ be such that $\Zur$ is finite modulo $Z(G^\vee)$. Then $\eta_u$ is surjective.
\end{lem}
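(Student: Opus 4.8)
The plan is to follow the strategy of \cite{BK} essentially without modification, the point being that when $\Zur$ is finite modulo $Z(G^\vee)$, the varieties $\X(M_i)$ and the intertwining groups $W_i$ reduce to a completely combinatorial situation and the subtleties involving the idempotents $t_{d,\rho}$ and genuine lower modifications do not arise. Concretely, when $\Zur/Z(G^\vee)$ is finite, every semisimple $s\in\Zur$ is isolated modulo the centre, so there are no nontrivial families $\pi_\nu=i_P^G(\sigma\otimes\nu)$ to vary over: the only Levi subgroup $M_i$ for which $u$ appears in the Kazhdan-Lusztig parameter of an $I$-spherical discrete series is (up to association) $G$ itself, or more precisely $\X(M_i)$ is a finite group and $\Oo(\X(M_i))^{W_i}$ is a finite-dimensional semisimple $\C$-algebra. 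Thus $\mMu$ in \eqref{eqn nested modification} is a finite product of matrix algebras over $\C$, and the maps $\eta_u$ and $\iota_u$ are maps of finite-dimensional semisimple $\C$-algebras.

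First I would recall that $\eta_u$ is injective (this is the main theorem of \cite{Plancherel}, recorded in \eqref{eqn BK summary diagram}), and that $\iota_u$ is injective by construction of $\Ee_{J_u}$. So it suffices to compare dimensions, or equivalently to show $\iota_u\circ\eta_u$ is surjective. Next I would identify $J_u=K_{\Zur}(\Yext_u\times\Yext_u^{\opp})$ via the Bezrukavnikov-Ostrik isomorphism; since $\Zur$ is finite modulo $Z(G^\vee)$ and (by \cite{BL}) acts trivially through the centre, this is $K_{\Gamma}(\Yext_u\times\Yext_u^{\opp})$ for the finite abelian group $\Gamma=\pi_0(\Zur/Z(G^\vee))$, a semisimple $\C$-algebra whose simple modules are exactly the $E_{s,\rho}$. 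Then I would observe that the simple $J_u$-modules $E(u,s,\rho)$ are precisely the (Iwahori components of the) standard modules $K(u,s,\rho)$, which in the finite-centralizer case are all tempered and irreducible, and these account for all the simple modules of $\mMu$ as well: every simple $\mMu$-module is some $\pi_{ij}$ evaluated at one of the finitely many points of $\X(M_i)$, which is a $K(u,s,\rho)$ by the Kazhdan-Lusztig classification. Since $\iota_u\circ\eta_u$ is an injective homomorphism of semisimple algebras that induces a bijection on isomorphism classes of simple modules and preserves their dimensions (both sides compute $\dim K(u,s,\rho)$), it is an isomorphism; hence $\eta_u$ is surjective, being a factor of an isomorphism between injections.

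The main obstacle — really the only thing requiring care — is the bookkeeping identifying the simple modules on the two sides and checking that no simple $\mMu$-module is ``missed'' by $J_u$ and no simple $J_u$-module has larger dimension than its image, i.e. that $\eta_u$ does not decrease the dimension of any $K(u,s,\rho)$. This is exactly the content of the Kazhdan-Lusztig Deligne-Langlands classification \cite{KLDeligneLanglands} together with the equality $E(u,s,\rho)|_H=K(u,s,\rho)$ for $q>1$ recalled in Section \ref{subsubsection and affine and asymptotic Hecke algebras}, and the matching of the parametrizing data (pairs $(s,\rho)$) between Lusztig's description of $K_{\Gamma}(Y\times Y)$-modules and the Kazhdan-Lusztig triples; I would simply cite \cite{BK} for this matching, since in the finite case it is precisely their argument. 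Everything else — injectivity of $\eta_u$ and $\iota_u$, semisimplicity, the dimension count — is then formal.
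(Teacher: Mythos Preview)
Your approach differs from the paper's, and there are two genuine issues.

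\textbf{First gap: finite-dimensionality.} You assert that $\X(M_i)$ is a finite group and hence that $\mMu$ (and implicitly $J_u$, $\Ee_{J,u}$) are finite-dimensional semisimple $\C$-algebras. But the lemma is stated \emph{before} the reduction to semisimple $\G$ in Section~\ref{subsubsection reduction to semisimple groups}: here $Z(G^\vee)$ may have positive-dimensional connected component, so $\X(G)$ is a torus of positive dimension and there genuinely are nontrivial families $\sigma\otimes\nu$, $\nu\in\X(G)$. Correspondingly $J_u=K_{\Zur}(\Yext_u\times\Yext_u^{\opp})$ is a module over $R(Z(G^\vee)^\circ)$ of infinite $\C$-dimension, and your global dimension count does not apply.

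\textbf{Second gap: $\Gamma$ is not abelian.} You write ``the finite abelian group $\Gamma=\pi_0(\Zur/Z(G^\vee))$'', but finiteness of $\Zur/Z(G^\vee)$ does not imply this: for instance $\Gamma=\Sn_5$ for $u=E_8(a_7)$ in $E_8$. The two hypotheses in Theorem~\ref{thm BK onto} are genuinely disjoint. Since the map $\iota_u$ and the very definition of $\mMu$ via the idempotents $t_{d,\rho}$ (Lemma~\ref{lem orthogonal idempotents for unextended set}) require $\Gamma$ abelian, you cannot invoke \eqref{eqn nested modification} here. This is exactly why the paper says ``if $\Zur$ is finite, then we do not need the idempotents $t_{d,\rho}$''.

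\textbf{What the paper does instead.} The paper works fibrewise over $\Spec R(\Zur)$, which has the dimension of the connected centre. At each maximal ideal $\mM$ (corresponding to unitary $\nu$), both $(J_u)_\mM$ and $(\Ee_{J,u})_\mM$ surject onto $\bigoplus_i\End(\pi_i)$ by Burnside, so it remains to show the latter map is injective. The key trick is twisting: for $f\in\Ee_{J,u}$ and unitary $\nu\in\X(G)$ one has $f\nu\in\Ee_{J,u}$ with $(\sigma\otimes\nu)(f)=\sigma(f\nu)$; choosing $\nu'$ generic enough that $\eta_{\mM'}$ is an isomorphism (which holds generically since $\eta_u$ is an isomorphism after inverting finitely many functions) then transports injectivity from $\mM'$ back to $\mM$. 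No $\iota_u$, no abelian hypothesis, no semisimplicity.

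Your dimension-counting argument can be repaired if one first reduces to semisimple $\G$ (a formal step, Section~\ref{subsubsection reduction to semisimple groups}) and drops $\iota_u$ in favour of comparing $J_u$ and $\Ee_{J,u}$ directly as finite-dimensional semisimple algebras with the same simple modules $E(u,s,\rho)=K(u,s,\rho)$. But as written, it proves a weaker statement via a less direct route.
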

\begin{proof}
It suffices to show surjectivity on fibres at unitary unramified characters.
The hypothesis implies the class $u$ is distinguished in all of $[\Gd,\Gd]$, so any Iwahori-
spherical representation $\pi$ of $G$ with $u$ in its parameter is a discrete series 
representation of the group $G$. 

Fix a maximal ideal
$\mM$ of $R(\Zur)$ such that all the representations $\pi_i=\sigma_i\otimes\nu_i$ with 
$\sigma_i\in\Ee_2(G)$ 
and $\nu_i\in\X(G)$ unitary on which $R(\Zur)$ acts via the quotient by $\mM$ are tempered, and 
$J_u\pi_i\neq 0$. We obtain surjections $(J_u)_\mM\onto\bigoplus_i\End(\pi_i)$ and 
$\eta_\mM\colon(\Ee_{J,u})_\mM\onto\bigoplus_i\End(\pi_i)$.
It suffices to show that $\eta_{\mM}$ is injective.
For $f\in\Ee_{J,u}$ a Schwartz function, we have $f\nu\in\Ee_{J,u}$ for any $\nu\in\X(G)$ unitary.
Indeed, $(\sigma\otimes\nu)(f)=\sigma(f\nu)$ for all $\sigma\in\Ee_2(G)$. Now, generically $\eta_\mM$ is 
an isomorphism. Therefore let $\nu'$ and $\mM'$ be such that $\eta_{\mM'}$ is an isomorphism onto
$\bigoplus_i\End(\pi_i\otimes\nu')$. As $(\pi_i\otimes\nu')(f(\nu')^{-1})=\pi_i(f)$, we see that
$f\mapsto (\pi_i(f))_i$ is injective, as required.
\end{proof}

Now we deal with the representations outside the discrete series:
In Section \ref{subsection Regular functions of unramified characters}
we reduce to the case $G$ semisimple, and in Lemma \ref{lem suffices to show surjectivity for one 
member of isogeny class} we show that it suffices to prove surjectivity for a single member of the isogeny
class of $G$. In Sections \ref{subsubsection flatness of O(X(M)/W)} and Proposition \ref{Ju is a vector bundle}, 
we show that the outer terms of \eqref{eqn nested modification} are actually a vector bundle and a maximal Cohen-Macaulay sheaf on the Cohen-Macaulay scheme $\Zur\git\Zur$ if $G^\vee$ is 
classical or adjoint exceptional. By the identification in Section \ref{subsection Regular functions of unramified characters},
this means relating $A_{M_P}^\vee/{W_{M_P}}$ and $\Zur\git\Zur$ 
for $u$ appearing in the parameter of an element of $\Ee_2(M_P)^I$. 
Recall that this happens if and only if $P$ is minimal up to association such that 
that there is a semisimple element $s\in M_P^\vee$ such that $Z_{M_P^\vee}(s)$ is semisimple and 
$u\in Z_{M_P^\vee}(s)$ is distinguished. In Section \ref{subsubsection flatness of O(X(M)/W)}, we prove
\begin{prop}
\label{prop quotient of unramified characters is flat}
Suppose that $u$ appears in the parameter of an element $\sigma$ of $\Ee_2^I(M)$ of $G$. Then $\mathfrak{o}_\sigma\git W_M$ is a 
connected component of $\Zur\git\Zur$.
In particular, $\Oo(\mathfrak{o}_\sigma)^{W_M}$ is a flat $R(Z_{G^\vee}(u)^{red})$-module.
\end{prop}
%

As will be recalled below, the groups $\Zur$ are usually not connected, in which the Cohen-Macaulay schemes $\Zur\git\Zur$ are also not connected; their structure is recalled in general terms in the appendix.

Recall also that even for connected semisimple groups $\Gg$ with universal cover $\tilde{\Gg}$, the natural map 
\begin{equation}
\label{eqn git quotient of universal cover}
\tilde{\Gg}\git\tilde{\Gg}\onto\Gg\git\Gg
\end{equation}
is usually not flat. In particular, there is some Schur multiplier for which the corresponding module $K_0(\Rep^1\tilde{\Gg})$ is not a flat module.
It can happen that for classical groups, $\Zur\git\Zur$ is a disjoint union of affine spaces, so \eqref{eqn git quotient of universal cover} is an open morphism of smooth complex varieties and is flat. Thus in this case, $J_u$ is also a vector bundle on $\Zur\git\Zur$. However, in general we have only
\begin{prop}
\label{Ju is a vector bundle}
If  $G^\vee$ is classical or adjoint exceptional and $u$ is as above, then $J_u$ is a maximal Cohen-Macaulay $R(\Zur)$-module, \textit{i.e.} a maximal Cohen-Macaulay sheaf on each connected component of $\Zur\git\Zur$.
\end{prop}
\begin{proof}
It suffices to show that for every isomorphism class of central extension $\widetilde{\stab{Zur}{y}}$ of the stabilizer in $\Zur$ of some $y\in Y_u$, that $K_0(\Rep^1\widetilde{\stab{Zur}{y}})$ is a maximal Cohen-Macaulay 
sheaf on $\Zur\git\Zur$. For $\stab{\Zur}{y}\git\stab{\Zur}{y}$ this follows from Lemma \ref{lem1} of appendix by Rumynin. The case of centrally-extended stabilizers follows from the existence of Schur covers of every $\Zur$ \cite{Propp} and Lemma \ref{lem3} of the appendix by D. Rumynin.
\end{proof}
The morphism
$\iota_u\circ\eta_u$ is an isomorphism away the locus in $\X(M_1)\sqcup\cdots\sqcup\X(M_k)$ over which the $\pi_{ij}$ become reducible. Indeed,
let $\{\nu_{ij}\}$ be a collection such that all the  $\pi_{ij}$ have the same 
$Z(H)$-character corresponding to a closed point $s$ of $\Zur\git\Zur$. 
By Burnside's theorem, $\iota_u\circ\eta_u$ induces a surjective morphism on 
fibres at $s$. Over the reducible locus, $\im(\iota_u\circ\eta_u|_s)$ is a proper subspace of $\mMu|_{s}$.
For some $u$ and $\Zur$-orbits in $\Y_u$, this locus has codimension at least $2$, giving immediately that $\codim\supp(\Ee_u/J_u)\geq 2$. For other obits, this locus is a union of divisors. In other words,
\begin{prop}
\label{prop Ju saturated lower modification}
Let $G^\vee$ be a classical group or an adjoint exceptional group. 
Then either
\begin{enumerate}
\item[(a)] 
The locus of reducibility of the parabolic inductions $\pi_{ij}$, has codimension at least $2$;
\item[(b)]
The coherent sheaf $J_u$ is a lower modification 
of the vector bundle $\mMu$ on $\Zur\git\Zur$ as in \eqref{eqn nested modification}.
\end{enumerate}
\end{prop}
Next we prove
\begin{prop}
\label{prop Ju and EJu fibre images agree at generic points}
Let $G^\vee$ be a classical group or an adjoint exceptional group with $u$ appearing in the parameter of
an element of $\Ee_2(M)$ for some Levi subgroup $M$ of $G$ with $J_u$-
module $\pi=\pi_{ij}$ as in \eqref{eqn nested modification}.
In the setting of the second case of Proposition \ref{prop Ju saturated lower modification},
let $D_1,\dots, D_k$ 
in $\Zur\git\Zur$ be the irreducible divisors over which
the map 
$J_u\to\Mat_{\dim\pi^I}(\Oo(\X(M))^{W_M})$ induced by $\pi$ is not an isomorphism, \textit{i.e.} over which it 
is a genuine lower modification. Let $\eta_i$ be the generic point of $D_i$ and let $k(\eta_i)$ be the residue 
field. Then
\begin{enumerate}
\item[(a)] 
\[
\im\left(J_{u}|_{\eta_i}\to\mMu|_{\eta_i}\right)=\im\left(\mathcal{E}_{J,u}|_{\eta_i}\to\mMu|_{\eta_i}\right).
\]
That is, the two images of the fibres of $J_u$ and $\mathcal{E}_{J,u}$ at the generic points 
$\eta_i$ are equal in 
\[
\mMu|_{\eta_i}=\bigoplus_{ij}\Mat_{\dim\pi^I_{ij}}(\Oo(\X(M)\git W_M)|_{\eta_i}=\bigoplus_{ij}
\Mat_{\dim\pi^I_{ij}}(k(\eta_i))
\]
\item[(b)]
The scheme-theoretic support of $\mathcal{M}_u/J_u$ is equal to $D_1\cup\cdots\cup D_k$.
\end{enumerate}
\end{prop}
Unlike Proposition \ref{Ju is a vector bundle}, the proofs of Propositions \ref{prop Ju saturated lower modification} and \ref{prop Ju and EJu fibre images agree at generic points} require some casework, as we must understand the vanishing loci of various characters of (projective) representations of stabilizers in $\Zur$.
With the propositions in hand, we are ready for the 

\begin{proof}[Proof of Theorem \ref{thm BK onto}]

Feed the reduction of Section \ref{subsubsection reduction to semisimple groups} to semisimple groups and Propositions \ref{prop quotient of unramified characters is flat}--\ref{prop Ju and EJu fibre images agree at generic points} into
\begin{lem}
\label{lem Dennis}
Let $S$ be a Cohen-Macaulay scheme,
and let $\Ee_2$ be a vector bundle, $\Ff$ be coherent
sheaf, and $\Ee_1$ be a maximal Cohen-Macaulay sheaf on $S$. Suppose that we have
\[
\Ee_1\into\Ff\into\Ee_2
\]
such that either
\begin{enumerate}
\item 
$\codim\supp\left(\Ff/\Ee_1\right)\geq 2$; or
\item
$\Ee_1\to\Ee_2$ is an isomorphism away from a union $D$ of irreducible divisors $D_i$, and for 
every $D_i$ over which $\Ee_1\to\Ee_2$ is not an isomorphism, we have
\begin{equation}
\im(\Ee_1|_{\eta_i}\to\Ee_2|_{\eta_i})=\im(\Ff|_{\eta_i}\to\Ee_2|_{\eta_i}),
\end{equation}
where $\eta_i$ is the generic point of $D_i$.  Suppose also that the scheme-theoretic support 
of $\Ee_2/\Ee_1$ is $D$.
\end{enumerate}
Then $\Ee_1\to\Ff$ is an isomorphism.
\end{lem}
\end{proof}
\begin{proof}[Proof of Lemma \ref{lem Dennis}]
Consider the short exact sequence
\begin{equation}
\label{eqn Dennis lemma ses}
0\to\Ee_1\to\Ff\to\Ff/\Ee_1\to 0.
\end{equation}
We claim that in the second case, we also have $\codim\supp\Ff/\Ee_1\geq 2$. Indeed, consider the localization
at the generic point $\eta_i$ of one of the divisors $D_i$ from the statement of the Lemma.
This localization is a discrete valuation ring, and so the torsion-free coherent sheaves $\Ee_1, \Ff$ restrict
to vector bundles over it. It is clear that a lower modification of a vector bundle by a vector 
bundle such that the scheme-theoretic support of the quotient is equal to 
$\eta_i$ is determined
by the image on fibres. Thus we have $\codim\supp\Ff/\Ee_1\geq 2$.

Therefore we have
\[
\mathrm{depth}(\Ee_1)-\dim\supp(\Ff/\Ee_1)=\dim(S)-\dim\supp(\Ff/\Ee_1)=\codim\supp(\Ff/\Ee_1)\geq 2.
\]
By Ischebeck's Theorem, we have $\Ext^1(\Ff/\Ee_1,\Ee_1)=0$.
Thus \eqref{eqn Dennis lemma ses} is split, and in particular
$\Ff/\Ee_1\into\Ff$. Thus $\Ff/\Ee_1$ is both torsion and torsion-free, and so is zero.
\end{proof}
\subsubsection{Examples of lower modifications}
The simplest examples of $\iota_u\circ\eta_u$ failing to be injective on fibres arise as follows: Suppose that $u$ appears in the parameter of a unique $\sigma\in\mathcal{E}_2(M)$ for a unique Levi subgroup $M$, and also that 
\[
\rank\left(i_P^G(\sigma\otimes\nu)(t_d)\right)=1
\]
for all $d\in\cc(u)$. Then injectivity on fibres of $\iota_u\circ\eta_u$ would imply
that $J_u$ is a matrix ring. But if ever 
\[
i_P^GK(u,s,\rho)=i_P^G(\sigma\otimes\nu)=\pi_1\oplus\pi_2
\]
is a reducible tempered representation (so that $s$ is compact and $\nu$ is unitary) with irreducible
summands $\pi_1,\pi_2$,
the containment $\mathcal{E}_{J,u}\into\mathcal{C}(G)$ gives that $i_P^GK(u,s,\rho)$ is a 
reducible $\mathcal{E}_{J,u}$-module, and the image of $\mathcal{E}_{J,u}|_s$ in 
$\Mat_{\dim\pi_1^I+\dim\pi_2^I}(\C)$ consists of block matrices. In particular $\pi_i(t_d)\neq 0$ for exactly
one of $i=1,2$, so if $e\in t_d\mathcal{E}_{J,u}t_{d'}$ is an ``off-diagonal" element with 
$\pi_1(t_{d'})\neq 0$ and $\pi_1(t_d)=0$, then $e|_s\in\ker(\iota_u|_s)$ and $\iota_u\circ\eta_u$ is not 
injective on fibres. This is precisely what happens in both of the following examples, forcing central 
extensions to appear. 
\begin{ex}
\label{ex SL2 central ext}
Let $G=\SL_2(F)$, and $s_0, s_1$ be the simple reflections,
with $s_1$ the finite reflection. Recall that 
the Schwartz functions corresponding to $t_{s_1}$ and $t_{s_0}$ are
$G(\Oo)\times G(\Oo)$-invariant and $K'\times K'$-invariant, respectively, 
where $K'$ is the image of $G(\Oo)$ under the outer automorphism of $\SL_2(F)$ \cite{DSL2}
\footnote[1]{In \textit{op. cit.}, unusual conventions are used: $s_0$ is the finite simple reflection, and $s_1$ is the affine simple reflection. In the present paper we have translated the results to the usual conventions.}.

We have $J=\End(\St^I)\oplus J_0=\Z t_1\oplus J_0$ \cite{BK}, where the appearance of central
extensions means that, instead of being a matrix algebra, we have maps
\begin{equation}
\label{eqn J0 SL2 matrix}
J_0=\sett{\begin{pmatrix}
R(\PGL_2)& R(\SL_2)_{\mathrm{odd}}\\
R(\SL_2)_{\mathrm{odd}}&R(\PGL_2)
\end{pmatrix}}\into\Ee_{J_0}\into\Mat_{2\times 2}(R(\PGL_2))
\end{equation}
of $R(\PGL_2)=Z(\HH)$-modules, where $R(\SL_2)_{\mathrm{odd}}=V(1)R(\PGL_2)$ is the 
$(R(\PGL_2),R(\PGL_2))$-bimodule of spanned by simple $\SL_2$-modules of odd highest weight 
\cite[Section 8.3]{XiAMemoir} and the maps are induced by the action of $J_0$ on unitary principal series.

At the nontrivial order two element of $\PGL_2$, the map on fibres in \eqref{eqn J0 SL2 matrix}
is not injective and is not an isomorphism\footnote[1]{We especially thank R. Bezrukavnikov and I. Karpov for explaining the implications of this point.}. Indeed, if $z^2=-1$, then $z+z^{-1}=0$. This is the only point 
of $\PGL_2\git\PGL_2$ over which the above map is not an isomorphism
on fibres. Note that $\Oo(\X(A)\git W_T)=\Oo(\PGL_2\git\PGL_2)$.

Correspondingly, for any unitary principal series representation $\pi$, we have
\begin{equation}
\label{eqn SL2 principal series decomposition}
\pi^I=\pi^{G(\Oo)}\oplus\pi^{K'}
\end{equation}
as vector spaces. 
When $\nu$ is the quadratic character, 
\eqref{eqn SL2 principal series decomposition} holds as $H$-modules, and hence, by temperedness, as 
modules over the Schwartz algebra, and in particular as $J$- and $\Ee_J$-modules. Hence elements of 
$J_0$ not preserving \eqref{eqn SL2 principal series decomposition} must act by zero: these are the 
off-diagonal elements of \eqref{eqn J0 SL2 matrix}. Thus the appearance of central extensions is 
forced by reducibility of the principal series.
\end{ex}
\begin{ex}
\label{ex BDD central ext}
Let $G=\SO_7(F)$. In this case $G^\vee=\Sp_6(\C)$ is simply-connected. Consider the two-sided 
cell corresponding to the unipotent element $u=(2,2,2)$ of $G^\vee$ with $\Zur=\SO_3\times\Z/2\Z$.
If the central extensions were trivial, \cite{BO} would imply that $J_u$ 
was a matrix algebra over $R(\Zur)$. In \cite[Theorem 5]{BDD},
Bezrukavnikov, the author, and Dobrovolska showed that central extensions appear in this case, and that
\[
J_u
=
\sets{
\begin{pmatrix}
A & B\\
C &D
\end{pmatrix}
}{A,D\in\Mat_{\overset{3\times 3}{9\times 9}}\left(R(\PGL_2)\right),~B,C\in\Mat_{\overset{3\times 9}{9\times 3}}\left(R(\SL_2)_{\mathrm{odd}}\right)}.
\]
Thus central extensions appear despite the hypothesis on $G^\vee$. The same counterexample
was also computed in \cite{QX}.

We may also see that the central extensions must appear by noting 
that $u$ appears in the parameter of a discrete series representation
of a Levi subgroup of $G$ only for
$\St_{\GL_2}\otimes\pm\St_{\SO_3}\in\Ee_2(\GL_2(F)\times\SO_3(F))$.
The corresponding tempered $J_u$-modules are
\[
\pi=i_P^G(\nu\St_{\GL_2}\otimes\pm\St_{\SO_3})^I,
\]
with reducibility exactly when $\nu$ is the quadratic character. Indeed, G. Dobrovolska has
computed the components of the fixed points of the Springer fibre
$\Bb_u^s$, where $s\in\SO_3(\C)=\left(\Zur\right)^\circ$ corresponds to the quadratic character of 
$\GL_1(F)$, showing that $\Bb_u^s$ is the disjoint union of three copies of $\Pp^1$ and six points.
The points are partitioned into three pairs, each with nontrivial action of
$\pi_0(Z_{\Sp_6}(u,s))=\Z/2\Z$. The $\Pp^1$'s are acted upon trivially. Thus we decompose 
Borel-Moore homology as
\begin{equation}
\label{eqn Galya example}
H_*^{\mathrm{BM}}(\Bb_e^s)=H_*^{\mathrm{BM}}(\pt)^{\oplus 6}\oplus H_*^{\mathrm{BM}}(\Pp^1)^{\oplus 3}
=\triv^{\oplus 9}\oplus\mathrm{sgn}^{\oplus 3}
\end{equation}
as a $\pi_0(Z_{\Sp_6}(u,s))$-representation.

In other words, when $\nu$ is the quadratic character, 
$i_P^G(\nu\otimes(\St_{\GL_2}\otimes\pm\St_{\SO_3}))^I$
is the direct sum of one nine-dimensional and one three-dimensional tempered representation of $H$, by 
\cite{KLDeligneLanglands}. This decomposition holds as modules over the Schwartz algebra, and the map 
$J_u\to\Mat_{12\times 12}(R(\Zur))$ on fibres at $s$ must also land in 
$((3\times 3), (9\times 9))$-block matrices is not injective on fibres. In particular, as concluded in \cite{BDD}, $J_u$ is not isomorphic to a matrix algebra even as an abstract 
ring: its simple representations can have the wrong dimensions.
This proof be made independent of \cite{BDD} or \cite{QX} 
by replacing the computation \eqref{eqn Galya example} with the reducibility computation
in \cite[Theorem 5.2(v)]{Zoric}.
\end{ex}
\subsubsection{Poles of the idempotents $\pi(t_{d,\rho})$}
We emphasize that \eqref{eqn nested modification} does not imply that the operators $\pi(e)$, $e\in\Ee_J$ are 
regular in $\nu$; the basis we used to define \eqref{eqn EJu to single Mat definition} itself depends on 
$\nu$ via the $\pi(t_{d,\rho})$. We now give example singularities in $\nu$ of these idempotents.
\begin{ex}
\label{ex sl2 rep that is not J-module}
Let $G=\SL_2(F)$ and consider the reducible indecomposable principal series $H$-modules, in the notation 
of \cite[Example 2]{Plancherel},
\[
\begin{tikzcd}
0\arrow[r]&\triv\arrow[r]&\pi_1\arrow[r]&\St\arrow[r]&0,
\end{tikzcd}
~~~z^2=q^{-1}
\]
and
\[
\begin{tikzcd}
0\arrow[r]&\St\arrow[r]&\pi_2\arrow[r]&\triv\arrow[r]&0,
\end{tikzcd}
~~~z^2=q.
\]
Note that $\pi_1\neq\pi_2$; in fact the two modules differ by the involution ${}^\dagger(-)$.

The representation $\pi_2$ lies outside the unit circle, and extends to a simple $J$-module, 
as it is a standard $H$-module. In realization \eqref{eqn J0 SL2 matrix} of $J_0$, 
via which we identify $\pi_2^I=\C^2$
we view $z^2$ as the coordinate on $R(\PGL_2)$ where $z$ is the coordinate on the maximal torus of 
$\SL_2$, so that the character of $V(1)\in R(\SL_2)_{\mathrm{odd}}$ is $z+z^{-1}$. Then it is 
easy to see that 
\[
\pi_2(\phi({}^\dagger T_{s_0}))=
\begin{pmatrix}
q&-(q+1)\\
0&-1
\end{pmatrix}
,~~~~
\pi_2(\phi({}^\dagger T_{s_1}))=
\begin{pmatrix}
-1&0\\
-(q+1)&q
\end{pmatrix}
\]
with respect to the standard basis of $\C^2$ \cite{DSL2}. Hence the diagonal is isomorphic to $\St^I$
as an $H$-module, but is not stable under the action of $J$. (Else the quotient $\triv$ would be a 
$J$-module.)
The Steinberg subspace may constructed by taking $v_0$ to be any nonzero $\pi_2(t_{s_0})$ eigenvector,
and defining $v_1=(z+z^{-1})^{-1}\pi_2(t_{s_1s_0})v_0$, so that $\St^I=\spn\{v_0+v_1\}$.


As $\pi_1$ lies inside the unit circle, \cite[Theorem 2.4]{BK}
does not guarantee that $\pi_1$ extends to a $J$-module. And indeed, it does not:
we have $a(\pi_1)=1$ but $a(\St)=0$, contradicting \cite[Theorem 3.2]{XiJAMS}.
\end{ex}

\subsubsection{Reduction to semisimple groups}
\label{subsubsection reduction to semisimple groups}
Let $A^\vee_{\mathrm{der}}$ be a maximal torus in the derived subgroup $G^\vee_{\mathrm{der}}$. Then we have
\[
R(G^\vee)=R(A^\vee_{\mathrm{der}})^W\otimes_\C X^*(Z(G^\vee)^\circ)_\C,
\]
and similarly for unramified characters of $G$.
Moreover, 
the torus $Z(G^\vee)^\circ$ acts trivially on $Y_u$ for all $u$,
and does not admit projective representations.
Therefore there is no harm in assuming that the connected centre of 
$G^\vee$ is trivial, \textit{i.e.}, that $G^\vee$ and $\G$ are 
semisimple.

Next, consider an isogeny $\G_1\to\G_2$ of semisimple groups over $F$ inducing
\begin{equation*}
\begin{tikzcd}
H(G_1,I_1)\arrow[d, hook]\arrow[r, hook]&J_1\arrow[r, hook, "\eta_1"]\arrow[d, hook, "\iota_J"]&\Ee_{J_1}\arrow[r,hook]\arrow[d, hook, "\iota_\Ee"]&\mathcal{C}(G_1,I_1)\arrow[d,hook, "\iota_\mathcal{C}"]\\
H(G_2,I_2)\arrow[r, hook]&J_2\arrow[r, hook, "\eta_2"]&\Ee_{J_2}\arrow[r,hook]&\mathcal{C}(G_2,I_2)
\end{tikzcd}
\end{equation*}
\begin{lem}
\label{lem suffices to show surjectivity for one member of isogeny class}
The map $\eta_1$ is surjective if and only if the map $\eta_2$ is surjective.
\end{lem}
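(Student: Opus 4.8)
The idea is that an isogeny $\G_1 \to \G_2$ of semisimple groups induces a surjection of affine Weyl groups $W_1 \to W_2$ with finite central kernel, and correspondingly the Iwahori-Hecke algebras, the asymptotic Hecke algebras, and the Schwartz algebras are related by finite flat extensions whose behaviour on the $J$-side is controlled by the central character. The commutative diagram in the statement already provides the four vertical inclusions $\iota_J, \iota_\Ee, \iota_\mathcal{C}$, together with the map of Iwahori-Hecke algebras; what must be shown is that surjectivity of $\eta_1$ transfers across. First I would reduce to the cell-by-cell statement: since $J_i = \bigoplus_u (J_i)_u$ and $\Ee_{J_i} = \bigoplus_u \Ee_{J_i,u}$ decompose compatibly along two-sided cells (equivalently along unipotent classes $u$ in the common dual group, since $\G_1$ and $\G_2$ have the same root system and hence the same $\Gd_{\mathrm{der}}$ up to isogeny of the \emph{dual}), and $\eta_i = \bigoplus_u \eta_{i,u}$, it suffices to prove $\eta_{1,u}$ surjective $\iff$ $\eta_{2,u}$ surjective for each fixed $u$.

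Next, I would identify the precise algebraic relationship between $(J_1)_u$ and $(J_2)_u$, and between $\Ee_{J_1,u}$ and $\Ee_{J_2,u}$. By the Bezrukavnikov-Ostrik theorem recalled in the excerpt, $(J_i)_u = K_{\Zur}(\Yext_u \times \Yext_u^{\opp})$ where $\Zur = Z_{\Gd_i}(u)^{\mathrm{red}}$; the two dual groups $\Gd_1, \Gd_2$ differ by a central isogeny, so their unipotent centralizers differ only in the component group / central torus, and the underlying set $Y_u$ and its $\Gamma = \pi_0(\Zur/Z(\Gd))$-action are the same. Concretely, using the reduction already performed in Section~\ref{subsubsection reduction to semisimple groups} that writes $R(G^\vee) = R(A^\vee_{\mathrm{der}})^W \otimes X^*(Z(G^\vee)^\circ)_\C$ and the fact that $Z(\Gd)^\circ$ acts trivially on $Y_u$, the two asymptotic algebras $(J_i)_u$ are obtained from one another by base change along the finite surjection $\Zur_1 \git \Zur_1 \to \Zur_2 \git \Zur_2$ (or its inverse), i.e. $(J_2)_u = (J_1)_u \otimes_{R(\Zur_1)} R(\Zur_2)$ up to the finite central factors; the same base-change description holds for the codomain $\mMu$ of \eqref{eqn nested modification}, since $\X(M_i^{(1)})$ and $\X(M_i^{(2)})$ differ by the corresponding finite étale cover of Lemma~\ref{lem unramified characters flat over dual torus}, and for the intermediate sheaf $\Ee_{J_i,u}$ by functoriality of the Paley-Wiener description. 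Because all three terms of $\Ee_1 \into \Ff \into \Ee_2$ transform the same way under this faithfully flat (in fact finite flat, even étale away from the central locus) base change, and because faithful flatness detects surjectivity of a sheaf map, surjectivity of $\eta_{i,u}\colon (J_i)_u \onto \Ee_{J_i,u}$ is equivalent to surjectivity after base change to the other group.

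**Main obstacle.** The delicate point is \emph{not} the flatness bookkeeping but making precise that $(J_1)_u \to (J_2)_u$ really is the base-change map on the nose — i.e. that the Bezrukavnikov-Ostrik identification is compatible with isogeny in the strong sense needed — and, relatedly, that the off-diagonal central extensions of Lemma~\ref{lem opp-square of transitive set is trivial} behave the same way on both sides so that the idempotents $t_{d,\rho}$ and the resulting maps $\iota_u$ match up. One clean way to sidestep the worst of this, and the route I would actually take, is the harmonic-analytic one already used in the proof of Lemma~\ref{lem surjectivity for finite centralizers}: for $\mathfrak{q} = q > 1$ the algebras $J_i$ sit inside $\mathcal{C}(G_i, I_i)$, and $\mathcal{C}(G_1, I_1)$ is a finite-index subalgebra of the centralizer of a finite abelian group in $\mathcal{C}(G_2, I_2)$ (via $Z_2/Z_1$ acting by the characters by which $G_2$ and $G_1$ differ); surjectivity of $\eta_i$ is equivalent to an equality of images inside $\mathcal{E}^I_{t,i} \supset \Ee_{J_i}$, and the action of twisting by unramified characters $\nu \in \X(Z(G_i))$ — exactly the device $(\sigma \otimes \nu)(f) = \sigma(f\nu)$ from that lemma's proof — moves any point of $\X(M_i)$ to a generic one where $\eta_{i,u}$ is an isomorphism, so surjectivity propagates. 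Running this for both $\G_1$ and $\G_2$ and comparing via the vertical inclusions $\iota_J, \iota_\Ee, \iota_\mathcal{C}$ then gives the equivalence. I expect the bulk of the work to be checking that the finite group $Z_2/Z_1$-action and the twist-by-$\nu$ action are compatible across the diagram — routine but fiddly — whereas the conceptual content reduces, as above, to faithfully flat descent for the inclusion $J_u \into \Ee_{J,u}$.
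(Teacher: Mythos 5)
Your proposal misses the paper's much more elementary mechanism and, in both the route you sketch and the one you say you would actually take, leaves the essential step unestablished. After reducing to $G_1$ simply connected, the paper's proof hinges on the explicit fact that $H(G_2,I_2)$, $J_2$, $\Ee_{J_2}$ and $\mathcal{C}(G_2,I_2)$ are each free over the corresponding $G_1$-objects on the basis $\{T_\omega\}_{\omega\in\pi_1(G_2)}$, together with the compatibility $\phi_{G_1}=\phi_{G_2}|_{H(G_1,I_1)}$ and the formula for $\phi^{-1}$ as a Schwartz function from \cite{Plancherel}: one direction is a support argument (the Schwartz-function formula for $\iota_{\Ee}(e)$ is supported on $\widetilde{W}(G_1)$, hence so is $\phi_{G_2}^{-1}(j_2)$, hence $j_2\in\iota_J(J_1)$), the other a rank count on the three quotients. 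No $K$-theory, no flatness, no cell decomposition. Your Route 1 replaces this with a base-change assertion $(J_2)_u\simeq(J_1)_u\otimes_{R(\Zur_1)}R(\Zur_2)$ and its analogues for $\Ee_{J,u}$ and $\mMu$, which you flag as "the delicate point" but do not resolve; note also that $\Zur_2\onto\Zur_1$ (kernel central, dual to $\ker(\G_1\to\G_2)$), so $R(\Zur_1)\into R(\Zur_2)$ and the base change is along $\Zur_2\git\Zur_2\to\Zur_1\git\Zur_1$, not "or its inverse," and the centrally-extended structures on $\Y_u$ for the two groups are not \emph{a priori} the same. "Functoriality of the Paley-Wiener description" across an isogeny where $\G_1(F)\to\G_2(F)$ is not even surjective also cannot be invoked as a one-liner.

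Route 2 has a deeper defect. The twisting device of Lemma~\ref{lem surjectivity for finite centralizers} uses $\X(G)$-twists and hence only reaches discrete-series parameters; it applies precisely when $\Zur$ is finite. For general $u$ the parameter ranges over $\X(M)$ for a proper Levi $M$, on which $\X(G)$ does not act transitively modulo $W_M$, and the whole content of Sections~\ref{subsection Lower modifications of vectors bundles}--\ref{subsection Proof of prop Ju and EJu imagess agree at generic points} is to handle that. Moreover, "running this for both $\G_1$ and $\G_2$ and comparing" would amount to proving Theorem~\ref{thm BK onto} directly for each group, which inverts the logic: the Lemma exists to \emph{reduce} the theorem to classical and adjoint exceptional representatives where Propositions~\ref{prop quotient of unramified characters is flat}--\ref{prop Ju and EJu fibre images agree at generic points} are then proved. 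Finally, $\mathcal{C}(G_1,I_1)\into\mathcal{C}(G_2,I_2)$ in the paper's diagram is the map induced by the inclusion of affine Hecke algebras (extended affine Weyl groups), not the centralizer-type inclusion you describe; no group homomorphism $\G_1(F)\to\G_2(F)$ of the needed sort exists.
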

\begin{proof}
We may assume that $G_1$ is simply-connected. Then $H(G_2,I_2)$ is generated over the image of $H(G_1,I_1)$
by $\{T_{\omega}\}_{\omega\in\pi_1(G_2)}$, and likewise for $\mathcal{C}(G_2,I_2)$. Indeed,
if 
\[
f=\sum_{x\in\widetilde{W}(G_2)}a_xT_x
\]
is a Schwartz function on $G_2$, then 
\[
f=\sum_{\omega\in\pi_1(G_2)}T_\omega f_\omega,
\]
where each 
\[
f_\omega=\sum_{x\in\widetilde{W}(G_1)}a_{\omega x}T_x
\]
is a Schwartz function in the image of $\mathcal{C}(G_1,I_1)$.

Suppose that $\eta_2$ is an isomorphism and let $e\in \Ee_{J_1}$. Then there is $j_2\in J_2$ such that
$\eta_2(j_2)=\iota_\Ee(e)$ in $J_2$. By \cite{Plancherel}, the formula for $\phi_{G_2}^{-1}(j_2)$ in the 
completion of the affine Hecke algebra of $G_2$ is the formula for $\iota_\mathcal{C}(e)$ as a Schwartz function on $G_2$, or equivalently, for $e\in\mathcal{C}(G_1,I_1)$. As Lusztig's map $\phi$ for $G_1$ is just the restriction of $\phi_{G_2}$, this means that $j_2=\iota_J(j_1)$ for some $j_1\in J_1$, and $\eta_1(j_1)=e$.

Conversely, suppose that $\eta_1$ is an isomorphism. Then we have injections of $\C$-vector spaces
\[
J_2/\iota_J(J_1)\into\mathcal{E}_{J_2}/\iota_\Ee(\mathcal{E}_{J_1})\into\mathcal{C}(G,I_2)/\iota_\mathcal{C}(\mathcal{C}(G_1,I_1)).
\]
The outer terms are both of dimension $\#\pi_1(G_2)$, hence so is the inner term. Thus, for each
unipotent $u\in\Gd$,
\[
\mathcal{E}_{J_2,u}=\bangles{\iota_{\Ee}(\Ee_{J_1,u}), T_{\omega}}_{\omega\in\pi_1(G_2)}=
\bangles{\eta_2(\iota_J(J_{1,u})),T_{\omega}}_{\omega\in\pi_1(G_2)}=\eta_2(J_{2,u}).
\]
\end{proof}
Therefore is suffices to prove surjectivity of $\eta$ only for one member of a given isogeny class
of semisimple $F$-groups. We will elect to work with classical groups and adjoint exceptional groups.
\subsection{Unipotent centralizers}
\label{subsection Unipotent centralizers}

\subsubsection{Character rings of disconnected groups}
\label{subsubsection rep rings of disconnected and Clifford theory}
We refer to the Appendix for information about the varieties $\Zur\git\Zur$. As noted there, when $\Zur=(\Zur)^\circ\rtimes\pi_0(\Zur)$, the information we need is in \cite{diss}, and for classical groups, we need only
\begin{ex}[\cite{Minami}, \cite{Tak}, \cite{Husemoller}]
\label{example rep ring of orthogonal groups}
There are two conjugacy classes of Cartan subgroups (see Section \ref{subsection appendix Cartan subgroups}) in $\OO_{2n}$, with representatives given by 
the diagonal maximal torus $C_1=C_{1,\SO_{2n}}\subset\SO_{2n}$, and 
$C_2:=\bangles{C_{1,\SO_{2n-2}}, \begin{pmatrix}1& 2n\end{pmatrix}}$. The action of the generator
$\gamma=\begin{pmatrix}1& 2n\end{pmatrix}$ of $\Gamma$ on $C_1$ changes a single sign, and the normalizer 
of $C_2^\circ$ may be calculated in $\SO_{2n}$. Therefore \cite{Minami}, \cite{Tak} give
\begin{equation}
\label{eqn RO(2n) direct factor presentation}
R(\OO_{2n})=R(C_1)^{W(B_n)}\times R(C_2^\circ)^{W(B_{n-1})}=\C[V_1,\dots, V_n,\det]/({\det}^2-1, \det\otimes V_n-V_n),
\end{equation}
where $W(B_{m})$ is the Weyl group of type $B_m$, $V_i$ is the $i$-th exterior power of the defining representation on $\C^{2n}$, and $\det$ gives rise to the idempotent defining the direct factors.
For $i<n$, $V_i|_{\SO_n}$ is simple, and $\gamma(V_i|_{\SO_n})=V_i|_{\SO_n}$.
The two irreducible summands of $V_n|_{\SO_n}=V_n^+\oplus V_n^-$ are permuted by $\gamma$, and 
\[
R(\SO_{2n})=\C[V_1,\dots,V_{n-1}, V_n^+, V_n^-]/\left((V_n^++V_{n-2}+\cdots )(V_n^-+V_{n-2}+\cdots )-
(V_{n-1}+V_{n-3}+\cdots)^2\right).
\]
\end{ex}
We need an interpretation of the finite group $\Delta_\gamma$ in Proposition \ref{Prop2} (1). By the proof of Proposition \ref{Prop2}, $\Delta_\gamma$ has the same interpretation as in the case when $\Gg=\Gg^0\rtimes\pi_0(\Gg)$ considered in \cite{diss}. Namely, by \cite[Lemma 3.9]{diss}, there is a semidirect product decomposition $\Delta_\gamma=H\rtimes W_\gamma$, where $W_\gamma$ is the Weyl group with Dynkin diagram given by folding that of $\Zz^\circ$ according to $\gamma$.

\subsubsection{Unipotent centralizers in classical groups}
\label{subsubsection unipotent centralizers types BCD}
Suppose that $\Gd$ is $\Sp_{n}$ or $\SO_{n}$, where for symplectic groups, $n$ is even.
Then unipotent conjugacy classes in $G^\vee$ are parameterized by partitions 
\[
u=(\underbrace{a_1,\dots, a_1}_{m_1}, \underbrace{a_2,\dots, a_2}_{m_2},\dots,\underbrace{a_k,\dots, a_k}_{m_k})
\]
of $n$,
where we enforce $m_i$ even when $a_i$ is odd in type $C$, and that $m_i$ is even when $a_i$ is even in 
types $B$ and $D$. If $G^\vee=\SO_n$, $n$ is even, the partition is very even, in 
which case two unipotent conjugacy classes are labelled by $(a_i)_i$. 

By the Springer-Steinberg theorem \cite[Thm. 6.1.3]{CM} we have
\begin{equation}
\label{eqn classical centralizers form}
\Zur=
\begin{cases}
\prod_{\substack{a_i~\text{odd}}}\Sp_{m_i}\times\prod_{\substack{a_i~\text{even}}}\OO_{m_i}&\text{if}~G^\vee=\Sp_n\\
\prod_{a_i~\text{even}}\Sp_{m_i}\times \prod_{\substack{a_i~\text{odd} \\ m_i~\text{odd}}}\SO_{m_i}
\times S\left(\prod_{\substack{a_i~\text{odd} \\ m_i~\text{odd}}}\Z/2\Z\times\prod_{\substack{a_i~\text{odd}\\ m_i~\text{even}}}\OO_{m_i}\right)&\text{if}~G^\vee=\SO_n
\end{cases},
\end{equation}
where $S(-)$ imposes condition that the product of the determinants is unity.

We compute $R(Z_{\Sp_n}(u)^{\mathrm{red}})$ by applying Example \ref{example rep ring of orthogonal groups}
factor-wise. For special orthogonal groups, we use
\begin{lem}
\label{lem direct factors of special orthogonal centralizers from orthogonal}
Let $G^\vee=\SO_{2m}$ or $\SO_{2m+1}$ and 
$\Zz=S(\prod_{m_i~\emph{even}}\OO_{m_i})$.
\begin{enumerate}
\item[(a)]
Every conjugacy class of Cartan subgroups of $\Zz$ has a representative 
of the form $S(C)$ for $C$ a Cartan subgroup of $\prod_{m_i~\emph{even}}\OO_{m_i}$ such that
$\tilde{R}(S(C))=R(C^\circ)$.
\item[(b)] 
$R(\Zz)$ is a direct product indexed by decompositions $\sett{m_j}_j=M_1\sqcup M_2$, with factors
\begin{equation}
\label{eqn classical centralizer component general form}
\left(\bigotimes_{i\in M_1}R(C_1^i)\right)^{\Gamma}\otimes\bigotimes_{j\in M_2}R((C_2^{j})^\circ)^{W(B_{m_j-2})},
\end{equation}
where $C_i^j$ is a Cartan subgroup of $\OO_{m_j}$ as in Example \ref{example rep ring of orthogonal groups}.
\item[(c)]
Each direct factor \eqref{eqn classical centralizer component general form} with 
$M_2\neq \emptyset$ is equal to 
\begin{equation}
\label{eqn classical centralizer component total breakdown}
\bigotimes_{i\in M_1}R(C_1^i)^{W(B_{m_i})}\otimes\bigotimes_{j\in M_2}R((C_2^{j})^\circ)^{W(B_{m_j-2})}
\end{equation}
\item[(d)]
If a factor of $\Z/2\Z$ is present in the second case of \eqref{eqn classical centralizers form},
then $R(\Zur)$ is a direct product of tensor products with tensor factors given by $R(\Sp_m)$,
$R(\SO_{2m+1})$, and factors of the form \eqref{eqn classical centralizer component total breakdown}.
\item[(e)]
Given a $\Zur$-action on a finite set $Y$, for any $y\in Y$, we have that $R(\stab{\Zur}{y})$ is as in
(c), except in the factor coming from (b), we replace $\Gamma$ by $\stab{\Gamma}{y}$.
\end{enumerate}
\end{lem}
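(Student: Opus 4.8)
The plan is to apply Segal's Theorem~\ref{thm Segal summary} to $\Zz$ and reduce everything to bookkeeping over its Cartan subgroups, the single‑orthogonal‑factor computation recorded in Example~\ref{example rep ring of orthogonal groups} supplying the local input. For (a): a Cartan subgroup $\tilde S\le\Zz$ is abelian (being topologically cyclic), so $\tilde S=\tilde S^\circ\times F$ with $F$ finite cyclic acting trivially; since $\Zz/\Zz^\circ$ is an elementary abelian $2$‑group, $|F|\le 2$, and finite index of $\tilde S$ in $N_\Zz(\tilde S)$ forces $\tilde S^\circ$ to be a maximal torus of $Z_{\Zz^\circ}(F)$ — which need \emph{not} be a maximal torus of $\Zz^\circ$, exactly as the non‑toral Cartan subgroup of $\OO_2$ has rank $0$. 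After conjugating, a generator of $F$ is $(g_i)_i$ with each $g_i\in\OO_{m_i}$ an involution; as in the $\OO_{2n}$ analysis underlying Example~\ref{example rep ring of orthogonal groups}, all choices of $g_i$ with $\det g_i$ prescribed give conjugate data, so we may take $g_i=1$ on the untwisted slots $M_1$ and $g_j$ a single reflection on the twisted slots $M_2$ (which forces $|M_2|$ even), giving $\tilde S^\circ=\prod_{i\in M_1}C_1^i\times\prod_{j\in M_2}(C_2^j)^\circ$; this is $S(C)$ for $C=\prod_{i\in M_1}C_1^i\times\prod_{j\in M_2}C_2^j$. Finally $n=[S(C):S(C)^\circ]\le 2$, so $\varphi_n$ is linear, and triviality of the $F$‑action yields $\tilde R(S(C))=R(S(C))/(\varphi_n(\chi))=R(S(C)^\circ)=R(C^\circ)$.

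For (b) and (c), I would substitute (a) into Segal's product formula: $R(\Zz)$ becomes the product over the partitions $M_1\sqcup M_2$ of $R(\tilde S^\circ)^{W}$ with $W=N_\Zz(\tilde S)/\tilde S$. I would compute $W$ slot by slot inside $N_{\prod_i\OO_{m_i}}$ while carrying along the one determinant relation cutting out $S(-)$: a twisted slot $j$ contributes the Weyl group $W(B_{m_j-2})$ of the torus of $\SO_{m_j-1}\cong Z_{\SO_{m_j}}(\text{reflection})^\circ$, and an untwisted slot $i$ contributes $W(B_{m_i})$ but subject to the overall determinant constraint — this constrained simultaneous action is precisely the superscript $\Gamma$ appearing in \eqref{eqn classical centralizer component general form}. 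For (c), once $M_2\ne\emptyset$ the twisted slot already supplies a determinant‑$(-1)$ element of $N_\Zz(\tilde S)$, so the constraint on the $M_1$‑slots is vacuous; hence $\bigl(\bigotimes_{i\in M_1}R(C_1^i)\bigr)^{\Gamma}$ decouples into $\bigotimes_{i\in M_1}R(C_1^i)^{W(B_{m_i})}$ and the factor is \eqref{eqn classical centralizer component total breakdown}, concretely $\bigotimes_{i\in M_1}R(\OO_{m_i})\otimes\bigotimes_{j\in M_2}R(\SO_{m_j-1})$.

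Parts (d) and (e) then follow from the same analysis. If $\ell\ge 1$ copies of $\Z/2\Z$ occur in the second case of \eqref{eqn classical centralizers form}, then $S\bigl(\prod\Z/2\Z\times\prod_b\OO_{m_b}\bigr)\cong(\Z/2\Z)^{\ell-1}\times\prod_b\OO_{m_b}$ — one $\Z/2\Z$ is spent matching the total determinant of the orthogonal part, the rest are free — so the determinant constraint disappears entirely; distributing $R\bigl((\Z/2\Z)^{\ell-1}\bigr)\otimes\bigotimes_bR(\OO_{m_b})=\C^{2^{\ell-1}}\otimes\bigotimes_b\bigl(R(C_1^b)^{W(B_{m_b})}\times R((C_2^b)^\circ)^{W(B_{m_b-2})}\bigr)$ over the tensor product, and tensoring in the $R(\Sp_{m_i})$ and $R(\SO_{2m_i+1})$ coming from the honestly connected factors of \eqref{eqn classical centralizers form}, realises $R(\Zur)$ as a direct product of tensor products whose tensor factors are $R(\Sp_m)$, $R(\SO_{2m+1})$, and terms of the shape \eqref{eqn classical centralizer component total breakdown}. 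For (e), $\stab{\Zur}{y}$ contains $(\Zur)^\circ$ with component group $\stab{\Gamma}{y}\subseteq\Gamma$ (a splitting being provided by the reflections), so rerunning the argument of (a)--(c) verbatim with $\Gamma$ replaced by $\stab{\Gamma}{y}$ gives the stated description.

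The step I expect to be the main obstacle is (a): confirming that twisting several orthogonal factors of $S(\prod_i\OO_{m_i})$ simultaneously still adds only a single new $\Z/2\Z$ of components — so that $\tilde S$ stays topologically cyclic and coincides with the group the paper denotes $S(C)$ — together with the careful propagation of the one determinant relation through the Weyl‑group identification in (b). Everything at the level of an individual $\OO_{2n}$ is already contained in Example~\ref{example rep ring of orthogonal groups}; the genuinely new content is entirely in how the operation $S(-)$ couples the factors.
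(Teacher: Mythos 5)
Your overall approach is the paper's approach: Segal's theorem plus the single-$\OO_{2n}$ computation of Example \ref{example rep ring of orthogonal groups}, with the determinant constraint handled by tracking $\pi_0(\Zz)$. Parts (b)--(e) of your argument line up with the paper's (the paper phrases (c) in terms of the generating pairs of reflections $(1\, m_t)(1\, m_r)$ of $\pi_0(\Zz)$, you phrase it in terms of the normalizer of $\tilde S$ supplying a determinant $-1$ element from a twisted slot, but these are the same observation).

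The gap, which you flagged yourself, is real and is precisely in (a). Your identification ``$\tilde S = S(C)$ for $C=\prod_{i\in M_1}C_1^i\times\prod_{j\in M_2}C_2^j$'' is false as soon as $|M_2|>2$. For that $C$ one has $\pi_0(C)\cong(\Z/2\Z)^{|M_2|}$, so $S(C)/C^\circ$ is the index-two (even-weight) subgroup, an elementary abelian $2$-group of order $2^{|M_2|-1}$. This is not cyclic for $|M_2|>2$, so $S(C)$ is then not even a Cartan subgroup, $\tilde R(S(C))$ is undefined, and the computation at the end of your (a) does not go through. (Your bound $[S(C):S(C)^\circ]\le 2$, which you use to justify $\varphi_n$ being linear, is therefore wrong for your choice of $C$, even though the corresponding bound for $\tilde S$ that you established just above is correct.) The resolution is that in the lemma's statement, ``$C$ a Cartan subgroup of $\prod\OO_{m_i}$'' means a Cartan subgroup of the product in Segal's sense, not a product of Cartan subgroups of the factors; it has at most two connected components. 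Concretely one should take $C=\langle \tilde S^\circ,\gamma\rangle$ with $\tilde S^\circ=\prod_{i\in M_1}C_1^i\times\prod_{j\in M_2}(C_2^j)^\circ$ and $\gamma$ the product of one reflection from each $j\in M_2$. Since you have already observed $|M_2|$ must be even, $\det\gamma=1$, so $C\subset\Zz$ and $S(C)=C$ has exactly two components; then $n=2$, $\varphi_2(\chi)=\chi+1$, and $\tilde R(S(C))=R(C)/(\chi+1)=R(C^\circ)$, exactly as you wanted. Everything you do afterward (identifying $W_S$ slot by slot, letting a twisted slot absorb the determinant constraint) is sound once $\tilde S$ and hence $R(\tilde S^\circ)$ are correct, and correct they are; it is only the step naming $\tilde S$ as $S$ of the full product that has to be excised.
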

\begin{proof}
Part (a) is clear.
Part (b) follows directly from \ref{Prop2} and Example \ref{example rep ring of orthogonal groups}. For (c), we have $\pi_0(S(\prod_{m_i~\emph{even}}\OO_{m_i})=\genrel{\begin{pmatrix}1&m_i\end{pmatrix}\begin{pmatrix}1&m_j\end{pmatrix}}{i\neq j}$, and $\begin{pmatrix}1&m_j\end{pmatrix}$ acts trivially on all the $(C_2^j)^\circ$. 
Hence for $t\in M_1$ and $r\in M_2$, $\begin{pmatrix}1&2m_t\end{pmatrix}\begin{pmatrix}1&2m_r\end{pmatrix}$
acts by a single sign change on the first tensor factor of \eqref{eqn classical centralizer component general form}, giving (c). If $\gamma\neq 1$ in $\Z/2\Z$, the same holds for 
$\left(\gamma, \begin{pmatrix}1&m_t\end{pmatrix}\right)\in\pi_0(\Zur)$, whence (d). As 
$\stab{\Zur}{y}=(\Zur)^\circ\rtimes\stab{\Gamma}{y}$, (e) follows.
\end{proof}
\subsubsection{$\Spin$ and $\Pin$ groups}
\label{subsubsection Spin and Pin groups}
We specify the group $\Pin_n$ of $\OO_n$ by declaring that its compact real form
is as defined in \cite[Section 3]{Tak} using Clifford algebras. When $n$ is odd, this definition is 
such that $\Pin_n=\Spin_n\times\Z/2\Z$, and in general, 
$\Pin_n^\circ=\Spin_n$. It plays the role of the universal covering
group for $\OO_n$ in that projective representations of the later
are honest representations of the former with nontrivial central character.
If $V_i$ is the $i$-th exterior 
product of the defining representation of either $\SO_n$ or $\OO_n$, then we will also denote by $V_i$ 
its pullback to $\Spin_n$ or $\Pin_n$, respectively.
\begin{ex}[\cite{Tak}, \cite{Husemoller}]
\label{example rep ring of Pin}
Representatives of the two conjugacy classes of Cartan subgroup in $\Pin_{2n}$ can be taken to be
$C_1=T_{\Spin_{2n}}$ a maximal torus of $\Spin_{2n}$, and $C_2=\bangles{T_{\Spin_{2n-2}},\gamma}$, where
$\bangles{\gamma}=\Z/2\Z=\pi_0(\Pin_{2n})$. As in Example \ref{example rep ring of orthogonal groups},
we can compute $\tilde{R}(C_2)^{W_{C_2}}=R(C_2^\circ)^{W_{C_2}}$ in $\Spin_{2n}$. Proposition \ref{Prop2}
and \cite{Tak} give the first and second equalities in
\[
R(\Pin_{2n})=R(C_1)^{W(B_n)}\times R(C_2)^{W(B_{n-2})}
=\C[V_1,\dots, V_{n-1},\Pi, \nu]/(\Pi\otimes\det-\Pi, {\det}^2-1)
\]
respectively, where $\det\colon\Pin_{2n}\to\Z/2\Z$ is pulled back from $\OO_{2n}$.
The two irreducible summands of $\Pi|_{\Spin_{2n}}=\Delta_{2n}^++\Delta_{2n}^-$ are 
are the two half-spinorial representations and are permuted by the component 
group, whereas $V_i|_{\Spin_{2n}}$ is simple, with
\[
R(\Spin_{2n})=\C[V_1,\dots, V_{n-2}, \Delta_{2n}^+,\Delta_{2n}^-].
\]
In the natural coordinates on a maximal torus of $\Spin_{2n}$,
\[
\Delta^+=\sum_{\epsilon(j)=\pm 1}z_1^{\epsilon(1)/2}\cdots z_r^{\epsilon(r)/2}
\]
and likewise for $\Delta^-$.
\end{ex}
We define $S(\prod_{i}\Pin_{n_i})=\ker\boxtimes_i\det_i$. Every
connected component of $S(\prod_{i}\Pin_{n_i})$ is simply-connected, 
and projective representations of  $\prod_{i}\Sp_{m_i}\times S\left(\prod_{j}\OO_{m_j}\right)$ 
are honest representations of 
$\prod_{i}\Sp_{m_i}\times S\left(\prod_{j}\Pin_{m_j}\right)$
with nontrivial central character.
\begin{lem}
\label{lem shape of characters of finite index Zur subgroups}
\begin{enumerate}
\item[(a)]
Statements (a)--(e) of Lemma \ref{lem direct factors of special orthogonal 
centralizers from orthogonal} are true for the covers of $\Zur$ and $\Zz$
obtained via the (s)pin groups.
\item[(b)]
The character of any irreducible representation of any finite index subgroup of $S(\prod_{i}\Pin_{m_i})$ is an 
element of $\bigotimes_i R^i$, where
$R^i$ is $R(\Spin_{m_i})$ if $m_i$ is odd, and is one of $R(\Spin_{m_i})$, 
$\Pi_{m_i}R(\Pin_{m_i})$ or $\Delta_{m_i}^\pm R(\Spin_{m_i})$ if $m_i$ is even. In particular,
any projective representation of $\stab{\Zur}{y}$ as in Lemma \ref{lem direct factors of special orthogonal centralizers from orthogonal} (e) has character of this form, with (s)pinorial representations being
allowed in fixed coordinate slots.
\end{enumerate}
\end{lem}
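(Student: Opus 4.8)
The plan is to deduce (a) by inspection from Lemma \ref{lem direct factors of special orthogonal centralizers from orthogonal}, and to prove (b) by Clifford theory for the component group, the substance being a slot-by-slot analysis of the outer automorphisms of the spin groups against the explicit presentations in Examples \ref{example rep ring of orthogonal groups} and \ref{example rep ring of Pin}.

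For (a): each of the (s)pin covers of $\SO_m$, $\OO_m$ and $S(\prod_j\OO_{m_j})$ is a central extension by $\Z/2\Z$; its Cartan subgroups are the topologically cyclic preimages of the Cartan subgroups below, with unchanged Weyl groups, and Example \ref{example rep ring of Pin} records both the two Cartan classes of $\Pin_{2n}$ and the identity $\tilde R(C_2)^{W_{C_2}}=R(C_2^\circ)^{W_{C_2}}$ that makes the analogue of Lemma \ref{lem direct factors of special orthogonal centralizers from orthogonal}(a) hold. Since the proof of that lemma used only the cyclic structure of the Cartan subgroups of the orthogonal groups and the shape of $R(\OO_{2n})$, both of which have verbatim (s)pin analogues, statements (a)--(e) carry over with $\SO,\OO,S(\prod\OO)$ replaced by $\Spin,\Pin,S(\prod\Pin)$. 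We record the additional fact, already noted, that every connected component of $S(\prod_i\Pin_{m_i})$ is simply connected.

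For (b): let $H$ be a finite-index subgroup of $S(\prod_i\Pin_{m_i})$; since $H^\circ=\prod_i\Spin_{m_i}$ is divisible, $H\supseteq H^\circ$, and $\Gamma_H:=H/H^\circ$ is an elementary abelian $2$-group inside $\pi_0(S(\prod_i\Pin_{m_i}))\subseteq(\Z/2\Z)^k$. Because every component of $S(\prod_i\Pin_{m_i})$, hence of $H$, is simply connected, the Schur-multiplier obstruction to extending an irreducible $H^\circ$-representation to its stabilizer in $\Gamma_H$ vanishes; so by Clifford theory any $\pi\in\Irr(H)$ is induced from an extension of some $W=\boxtimes_i W_i\in\Irr(H^\circ)$, and $\chi_\pi\Restriction_{H^\circ}=\sum_{W'\in\mathcal{O}}\chi_{W'}$ with $\mathcal{O}$ the $\Gamma_H$-orbit of $W$. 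Here $\Gamma_H$ acts on $\Irr(H^\circ)=\prod_i\Irr(\Spin_{m_i})$ coordinatewise: trivially in a slot with $m_i$ odd (as $\Pin_{2n+1}=\Spin_{2n+1}\times\Z/2\Z$), and by the order-two outer automorphism $\gamma_i$ of $\Pin_{m_i}$ in a slot with $m_i$ even, which by Example \ref{example rep ring of Pin} swaps $\Delta^+_{m_i}\leftrightarrow\Delta^-_{m_i}$ and fixes the $V_j$; in particular $\mathcal{O}\subseteq\prod_i\{W_i,\gamma_i W_i\}$. Now a slot-by-slot check: in a slot that is odd, or fixed by $\Gamma_H$, or with $W_i$ $\gamma_i$-fixed (highest weight with vanishing last coordinate), the contribution is $\chi_{W_i}\in R(\Spin_{m_i})$ and we take $R^i=R(\Spin_{m_i})$; in an even slot with $W_i$ moved, if $W_i$ has integral highest weight then $\chi_{W_i},\chi_{\gamma_i W_i}\in R(\SO_{m_i})\subset R(\Spin_{m_i})$ and again $R^i=R(\Spin_{m_i})$, while if $W_i$ is spinorial then Example \ref{example rep ring of Pin} and the Cartan-component expansion give $\chi_{W_i}\in\Delta^+_{m_i}R(\Spin_{m_i})$, $\chi_{\gamma_i W_i}\in\Delta^-_{m_i}R(\Spin_{m_i})$, and by the projection formula $\chi_{W_i}+\chi_{\gamma_i W_i}=\chi_{\Ind_{\Spin_{m_i}}^{\Pin_{m_i}}W_i}\Restriction_{\Spin_{m_i}}\in\Pi_{m_i}R(\Pin_{m_i})\Restriction_{\Spin_{m_i}}$, so we take $R^i=\Delta^\pm_{m_i}R(\Spin_{m_i})$ (or $\Pi_{m_i}R(\Pin_{m_i})$ when both members of the pair occur). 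With these choices every $\chi_{W'}$, and hence $\chi_\pi\Restriction_{H^\circ}$, is an elementary tensor in $\bigotimes_i R^i$. The ``in particular'' follows since $\stab{\Zur}{y}=(\Zur)^\circ\rtimes\stab{\Gamma}{y}$, whose (s)pin cover is a finite-index subgroup of $\prod_i\Sp_{m_i}\times S(\prod_j\Pin_{m_j})$ of exactly this form, the symplectic factors contributing $R(\Sp_{m_i})$-factors with no subtlety.

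The main obstacle is the representation-theoretic bookkeeping in the slot-by-slot step: singling out the $\gamma_i$-fixed irreducibles of $\Spin_{2n}$ and verifying $\chi_W\in\Delta^\pm R(\Spin)$ and $\chi_W+\chi_{W^\gamma}\in\Pi R(\Pin)$ straight from the polynomial presentations of Examples \ref{example rep ring of orthogonal groups} and \ref{example rep ring of Pin}. A secondary subtlety is that $\mathcal{O}$ need not be a product of one-slot orbits, since $\pi_0(S(\prod_i\Pin_{m_i}))$ is the sum-zero subgroup of $(\Z/2\Z)^k$ rather than a product subgroup; the containment $\mathcal{O}\subseteq\prod_i\{W_i,\gamma_i W_i\}$ still suffices provided each $R^i$ is chosen $\gamma_i$-stable, which is exactly why $\Delta^\pm_{m_i}R(\Spin_{m_i})$ rather than a single $\Delta^+_{m_i}R(\Spin_{m_i})$ appears in the statement.
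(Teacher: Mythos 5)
Your proof follows the same route the paper intends: (a) by re-running the Segal/Cartan analysis of Lemma~\ref{lem direct factors of special orthogonal centralizers from orthogonal} for the (s)pin covers, and (b) by Clifford theory for the component group of $S(\prod_i\Pin_{m_i})$, reducing to a slot-by-slot check. The paper compresses this to a single sentence; your write-up supplies the details, and the overall skeleton (containment of $H^\circ=\prod\Spin_{m_i}$, Clifford decomposition $\chi_\pi\Restriction_{H^\circ}=\sum_{W'\in\mathcal{O}}\chi_{W'}$, coordinatewise action of $\Gamma_H$, and the ``in particular'' via $\stab{\Zur}{y}=(\Zur)^\circ\rtimes\stab{\Gamma}{y}$) is in the spirit of the paper.

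However, the slot-by-slot step contains a genuine gap. You claim that for a spinorial irreducible $W_i$ of $\Spin_{m_i}$ ($m_i$ even) one has $\chi_{W_i}\in\Delta^+_{m_i}R(\Spin_{m_i})$ and $\chi_{\gamma_iW_i}\in\Delta^-_{m_i}R(\Spin_{m_i})$, citing ``Example~\ref{example rep ring of Pin} and the Cartan-component expansion.'' This is false already for $\Spin_6\cong\SL_4$: take $W=V(\omega_1+\omega_2)$, which is spinorial and moved by $\gamma$. In $R(\Spin_6)=\C[V_1,\Delta^+,\Delta^-]$ one has $\chi_W=\Delta^+V_1-\Delta^-$, which lies in neither principal ideal $(\Delta^+)$ nor $(\Delta^-)$; the Cartan-component expansion produces a lower-order correction $-\Delta^-$ that destroys divisibility by $\Delta^+$. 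The statement that does hold (and is what the grading of $R(\Spin_{2n})$ by $\ker(\Spin_{2n}\to\SO_{2n})$ gives for free) is that $\chi_W$ lies in the \emph{sum} $\Delta^+R(\Spin_{2n})+\Delta^-R(\Spin_{2n})$, i.e.\ in the ideal $(\Delta^+,\Delta^-)$. Your own remark at the end that $R^i$ must be chosen $\gamma_i$-stable is inconsistent with placing $\chi_{W_i}$ in a single $\gamma$-unstable ideal $\Delta^+R(\Spin)$; the $\gamma$-stable candidates are $R(\Spin)$, $\Pi R(\Pin)\Restriction_{\Spin}=(\Delta^++\Delta^-)R(\Spin)^\gamma$, and $(\Delta^+,\Delta^-)$, and in the case of a spinorial slot it is the last of these that serves, not one of the two principal ideals. (One may also note that for $\Spin_{2n}$ \emph{every} spinorial irreducible is moved by $\gamma$, since $\gamma$ negates the last weight coordinate and a spinorial weight has $\lambda_n\neq 0$, so the case ``spinorial and fixed'' is vacuous.) Separately, the assertion that simple-connectedness of the components kills the $H^2(\tilde H/H^\circ,\C^\times)$-obstruction to extending $W$ to its inertia group is not justified and is in general false for elementary abelian $2$-groups; but this is harmless here, because even with nontrivial multiplicity $e>1$ in Clifford's decomposition the character $\chi_\pi\Restriction_{H^\circ}=e\sum_{W'\in\mathcal{O}}\chi_{W'}$ still lands in $\bigotimes_iR^i$, so you should simply drop that claim rather than rely on it.
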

\begin{proof}
The same argument as for Lemma \ref{lem direct factors of special orthogonal centralizers from orthogonal} 
gives (a), which implies (b).
\end{proof}
\subsection{Proof of Proposition \ref{prop quotient of unramified characters is flat}: Flatness of $\Oo(\X(M))^{W_M}$}
\label{subsubsection flatness of O(X(M)/W)}
This section proves Proposition \ref{prop quotient of unramified characters is flat} case-by-case.
We explain the strategy. For each unipotent $u\in G^\vee$, we will 
compute all Levi subgroups $M$ such that $u$ is distinguished in the centralizer in $[M^\vee, M^\vee]$ of a 
semisimple element which itself is semisimple. The Dynkin diagrams of such centralizers are given according
to Kac's classification \cite[Ch.4, Section 4.8, Problem 61]{Kac} in terms of full subdiagrams of the affine Dynkin diagram of $[M^\vee, M^\vee]$,
and when $G^\vee$ is of classical type, so are the semisimple centralizers as groups.
Thus we obtain a constraint on $M$ with $\sigma\in\Ee_2(M)$ such that $u$ appears in the 
parameter of $\sigma$, in terms re-orderings of the partition corresponding to $u$, and the fact that, by
\eqref{eqn classical centralizers form}, a unipotent is distinguished in a classical group
if and only if $u=(n)$ for type $A_{n-1}$, and otherwise all $a_i$ are distinct.

Let $T$ be a maximal torus of $(\Zur)^\circ$ and $\gamma\in\Gamma$. 
As mentioned in Section \ref{subsubsection rep rings of disconnected and Clifford theory}, by the proof of Lemma 3.11 of \cite{diss}, the coinvariant torus Proposition \ref{Prop2} is presented as the quotient
\begin{equation}
\label{eqn fixed torus is etale cover of coinvariant}
\begin{tikzcd}
1\arrow[r]&H\arrow[r]&(T^\gamma)^\circ\arrow[r]&T_{\gamma}\arrow[r]&1.
\end{tikzcd}
\end{equation}

We have $A^\vee_M\subset\Zur$, and claim that in fact $A^\vee_M$ is the identity 
component of a Cartan subgroup of $\Zur$ as in Example \ref{example rep ring of orthogonal groups} and
Lemma \ref{lem direct factors of special orthogonal centralizers from orthogonal}. 
Then we check that the 
group $W_M$ of Section \ref{subsection labels for intertwining operators} is one of the groups appearing in a 
direct factor in Lemma \ref{lem direct factors of special orthogonal centralizers from orthogonal}. 
Finally, we check in Lemma \ref{lem quotients of unram characters and fixed torus agree} that the kernels in \eqref{eqn orbit under etale twist is covered by unram characters} and \eqref{eqn fixed torus is etale cover of coinvariant} match. Together this shows that $\mathfrak{o}_\sigma\git W_M$ is a connected component of $\Zur\git\Zur$. 

%
%
\subsubsection{Type $B$}
Let $\G=\SO_{2n+1}$, so that a unipotent conjugacy class in $G^\vee=\Sp_{2n}(\C)$ is
specified by a partition as in Section \ref{subsubsection unipotent centralizers types BCD}. Let $a_1,\dots, a_\ell$ be all the odd entries, with $a_1=1$. If $u$ can be rewritten as
\[
u=(\underbrace{a_1,\dots, a_1}_{m_1'}, \underbrace{a_2,\dots, a_2}_{m_2'},\dots,\underbrace{a_k,\dots, a_k}_{m_k'},\underbrace{a_1,\dots, a_1}_{m_1''}, \underbrace{a_2,\dots, a_2}_{m_2''},\dots,\underbrace{a_k,\dots, a_k}_{m_k''})
\]
such that $m_i=m_i'+m_i''$ with $m_i'$ even, $\sum_{i}m_i''a_i=2r$, and 
\[
u''=(\underbrace{a_1,\dots, a_1}_{m_1''}, \underbrace{a_2,\dots, a_2}_{m_2''},\dots,\underbrace{a_k,\dots, a_k}_{m_k''})\in\Sp_{2r}(\C),
\]
a unipotent distinguished in a semisimple centralizer inside the type $C_r$ component of $M^\vee$,
then $u$ occurs in the parameter of $i_P^G(\sigma)$ 
for some $\sigma\in\mathcal{E}_2(M)$, where $\M$ is the standard
Levi subgroup
\begin{equation}
\label{eqn type B standard Levi}
\M=\underbrace{GL_{a_1}\times\cdots\times\GL_{a_1}}_{\frac{m_1'}{2}}\times\cdots\times
\underbrace{\GL_{a_k}\times\cdots\times\GL_{a_k}}_{\frac{m_k'}{2}}\times\G_1
\end{equation}
of $\G$ contained in a parabolic $\mathbf{P}$, with $\G_1=\SO_{2r+1}$. All such Levis arise this way. 

In this case, we claim that 
\[
m_1''=\cdots = m_{\ell}''=0
\]
and
\[
m_{\ell+1}'',\cdots, m_{k}''\in\sett{0,1,2}.
\]
Indeed, by Kac's classification, we have 
\begin{equation}
\label{eqn centralizer is smaller symplectic}
Z_{\Sp_{2r}}(s)=\Sp_{2(r-j)}\times\Sp_{2j}
\end{equation}
if $s$ is the semisimple part of a discrete-series parameter. By 
\eqref{eqn classical centralizers form}, $u''=(u''_1,u''_2)$ is distinguished in \eqref{eqn 
centralizer is smaller symplectic} if and only if the partitions $u''_i$ each have distinct 
parts. Both claims follow.

Of course, 
\[
A_M^\vee=\Gm^{\frac{m'_1}{2}}\times\cdots\times
\Gm^{\frac{m'_k}{2}}
\]
is a subgroup of 
\[
\Zur=\Sp_{m_1}\times\cdots\times\Sp_{m_\ell}\times\OO_{m_{\ell+1}}\times
\cdots\times\OO_{m_k}.
\]
It remains only to match the above with the description of $W_M$
from \cite{Howlett}; $W_M$ is of type
\begin{equation}
\label{eqn type B Howlett match}
B_t\times B_{\frac{m'_2}{2}}\times\cdots\times B_{\frac{m'_k}{2}},
\end{equation}
where 
\[
t=r+\sum_{i=1}^{k}(a_i+1)\frac{m_i'}{2}
\]
is the maximal Dynkin diagram label belonging to $\mathbf{P}$, in the conventions of \cite{Howlett}. This factor corresponds to $a_1=1$, \textit{i.e.} to the factors $\GL_1^{\times t}$ in $M^\vee$ not appearing in the Dynkin diagram of $M^\vee_{\mathrm{der}}$.

Hence for $m_2=m'_2,\dots, m_l=m'_l$, we obtain
the equality $R(\Sp_{m_i})=\Oo(\Gm^{m_i/2})^{W_{B_{m_i}}}$ as in Example \ref{example rep ring of orthogonal groups}.
%
For $m_{l+1},\dots, m_k$, if $m_i$ is even, then $m_i'=m_i$ or $m_i'=m_i-2$. In each case, 
we see that $R(\Gm^{\frac{m'_i}{2}})^{W(B_{m_i'/2})}$ matches with one of the direct 
factors of $R(\OO_{m_i})$, by Example \ref{example rep ring of orthogonal groups}.
If $m_i=m_i'-1$ is odd, we again conclude for each of the isomorphic direct factors 
of $R(\OO_{m_i})$.
%
\subsubsection{Type $C$}
Next Let $\G=\Sp_{2n}$ and $G^\vee=\SO_{2n+1}$.
In this case $m_i$ is even when $a_i$ is even, and semisimple centralizers in $G^\vee$ are of type
$B_{j}\times D_{r-j}$. As in the previous paragraph, supposing $a_1,\dots, a_l$ to be all the even entries, we obtain $m''_1=\cdots=m''_l=0$, $m''_i\in\{0,1,2\}$ with $m_i''=1$ if and only if $m_i$ is odd, and 
the relevant $\M$ are as in \eqref{eqn type B standard Levi},
where now $\G_1=\Sp_{2r}$ and $2r+1=\sum_ia_im_i''$. Again
\[
\mathbf{A}_\M^\vee=\Gm^{\frac{m'_1}{2}}\times\cdots\times\Gm^{\frac{m'_k}{2}}
\]
is a subgroup of $\Zur$. 

By \cite{Howlett}, $W_M$ is again abstractly of type 
\eqref{eqn type B Howlett match}. On the other hand, we must have $m_i$ odd for some $i$, and so 
we can carry out the matching as in the previous paragraph, by Lemma \ref{lem direct factors of special orthogonal centralizers from orthogonal}, (c).
\subsubsection{Type $D$}
Finally, let $G^\vee=\SO_{2n}$. In this 
case we again have that $m_i$ is even for $a_i$ even, and semisimple centralizers in $G^\vee$ are of type  $D_{j}\times D_{(r-j)}$. As in the previous paragraph, supposing $a_1,\dots, a_l$ to be all the even 
entries, we again obtain $m''_1=\cdots=m''_l=0$, $m''_i\in\{0,1,2\}$, and the relevant $\M$ are as in \eqref{eqn type B standard Levi}, where now $\G_1=\SO_{2r}$ and $2j=\sum_ia_im_i''$. Thus
\[
\mathbf{A}_\M^\vee=\Gm^{\frac{m'_1}{2}}\times\cdots\times\Gm^{\frac{m'_k}{2}}.
\]
\paragraph{Case $j>0$.}
In this case $u''$ is distinguished in $\SO_{2r}\times\SO_{2j-2r}$ for some $r$. 
Hence for some $i$, $m_i''=1$, implying $m_i$ odd, unless $j$ is even and $r=j/2$, in which 
case $A_M^\vee=S(C)$ is a Cartan subgroup of $\Zur$ with $C$ containing a factor of the form $C_2^j$
for some $j$. Either way, we are  in case (c) of Lemma \ref{lem direct factors of special orthogonal centralizers from orthogonal}, as required.
%
\paragraph{Case $j=0$.}
In this case $m_i''=0$ for all $i$ and all $m_i$ are even. Without loss of generality, 
$\alpha_1\not\in\Delta_{P^\vee}$. First suppose that $a_i>1$ for all $i$. Then by
\cite[p.73, Case 2(iii)]{Howlett}, we have 
\[
W_M=\left(\prod_{a_i~\text{even}}W(C_{m_i/2})\times\prod_{a_j~\text{odd}}W(D_{m_j/2})\right)\rtimes V
\]
where if $v_j\in\Aut(D_{m_j/2})$ acts by diagram symmetry, the group $V$ is generated by elements 
$v_iv_j$, $i\neq j$. Thus by Lemma \ref{lem direct factors of special orthogonal centralizers from 
orthogonal} (b) (case $M_2=\emptyset$), $\Oo(A_{M^\vee})^{W_M}$ is a direct factor of $R(\Zur)$.

Now suppose that the multiplicity of $1$ in $u$ is $m_{l+1}>0$. In this case, $V$ is generated by all the $v_j$, and all that remains to check 
is that each $v_j\in V$ also acts on the component of type $D_{m_{l+1}/2}$ of $W_M''$. Indeed, though, 
this is implied by the fact that $v_j\not\in W_{M}''$: if the image of $v_j$ in $V/W_M''$
is nontrivial, then $v_j$ does not act as a reflection under the isomorphism of Theorem 6 of \cite{Howlett}, 
and hence $v_j$ acts nontrivially on the component $D_{m_{l+1}/2}$, and must moreover be the unique
nontrivial element of $\Aut(D_{m_{l+1}/2})/W(D_{m_{l+1}/2})$. Thus $v_j$ acts by the automorphism
of the $D_{m_{l+1}/2}$ diagram. We therefore match according to Lemma 
\ref{lem direct factors of special orthogonal centralizers from orthogonal} (b).
\begin{rem}
For classical groups, $\Zur$ is connected if and only if all the $m_i$ have the same certain parity,
and therefore if and only if $u$ appears in a parameter in 
$\Ee^2(M_P)$ for a unique parabolic $P$, for which $M_P$ is a product of general linear groups. 
By \cite{Ale79}, it is also true for exceptional groups that when $\Zur$ is connected, the 
Levi subgroup is unique.
\end{rem}
\subsubsection{Flatness of $\Oo(\X(M))^{W_M}$ for exceptional groups: $\Gamma=\Z/2\Z$}
\label{subection flatness of invariant functions on characters exceptional}
Now let $\G$ be adjoint of exceptional type.

We first detail more of the results of Alexeevski \cite{Ale79}.
Recall that a subgroup $R$ of $G^\vee$ is \emph{regular} if it is normalized by a maximal
torus of $G^\vee$. Given $u$, let $S$ be a 
three-dimensional subgroup of $G^\vee$ containing $u$. For $R_i$ a minimal regular $S$-containing subgroup
of $G^\vee$, let $\hat{R}_i$ be a connected subgroup of maximal rank such that the semisimple part 
of $\hat{R}_i$ is $R_i$. Denote $D_i=Z(\hat{R}_i)$. Then
$D_i$ is maximal among diagonalizable regular subgroups of $G^\vee$, and conversely, for any such group $D$,
if $\hat{R}:=Z_{G^\vee}(D)$ and $R=\hat{R}_s$, then $R$ is a minimal $S$-containing regular subgroup
\cite[Prop. 2.1]{Ale79}. 
In \cite{Ale79}, for every $u$, 
all of the groups $R_i$ and $D_i$
are computed up to conjugacy. In particular, centralizers of tori of $G^\vee$ are regular, 
and the above discussion shows that 
each group $[Z_{M_P^\vee}(s)^{\mathrm{red}},Z_{M_P^\vee}(s)^{\mathrm{red}}]$ 
is of the form $R_i$ for some $i$ for which $A_P^\vee=D_i$. In particular, the group
$D_i$ is naturally a subgroup of $\Zur$.

Therefore, for fixed $u$, and each subgroup $R_i$ attached to $u$ in
\cite{Ale79}, 
we must find all $P$ such that $R_i$ is contained in $M_P^\vee$,
and check that $\C[D_i]^{W_{M_P}}$ is a direct factor
of $R(\Zur)$. When ${\Zur}^\circ$ is semisimple, we will do this by using Proposition \ref{Prop2} and the folding interpretation recalled in Section \ref{subsection Unipotent centralizers}. The few other cases we will indicated how to deal with by hand.

Not every group $R_i$ appears as a semisimple centralizer in a Levi subgroup of $G^\vee$. 
When this happens, it is either the case that is semisimple $\Zz^\circ$ and its Dynkin diagram has no symmetries,
or it is the case that all $M^\vee$ of correct rank to contain $R_i$ in fact have $W_{M}$ as required.
When $D_i$ is finite, $\hat{R_i}$ itself is semisimple and 
cannot be contained in any proper parabolic; this situation arises if 
and only if  
$u$ appears in the parameter of a discrete series 
representation of $G$. Therefore we need only deal with the infinite 
$D_i$ below. 

To determine $P$ starting from $R_i$, we again reason via Kac's classification;
the groups $W_{M_P}$ are tabulated in \cite{Howlett}. For most unipotent conjugacy classes $u$, there is only a single subgroup $R_i$, $\Zur$ is connected with $D_i$ as a maximal torus
by rank considerations, and $W_{M_P}$ is isomorphic to the Weyl group of $\Zur$. 

Whenever $\Zur$ has nontrivial abelian component group, the component group has order two.
For example, $\Zur$ can be of the form we have dealt with already above for classical groups, namely
a direct product of $\Z/2\Z$ and a connected group, or be $\Pin_4$ or $\Pin_6$.
We give the details of the matching of \cite{Ale79} and \cite{Howlett} in some illustrative cases
beyond these examples, keeping the notation of \textit{op. cit.}

Most of the time $\Zur={\Zur}^\circ\rtimes\Z/2\Z$ with ${\Zur}^\circ$ semisimple. Then
we match the direct factors calculated by Proposition \ref{Prop2}, under the folding interpretation recalled in Section \ref{subsection Unipotent centralizers} (and this case is contained already in \cite{diss})
to check that each occurring $\mathfrak{o}_\sigma\git W_M$ matches a connected
component of $\Zur\git\Zur$. 

Some individual reasoning is required. In some 
cases one must use the isomorphisms $W(A_3)\times\Z/2\Z\simeq W(G_2)$, as well as the exceptional isomorphism
$\Sn_4\simeq\Sn_3\ltimes(\Z/2\Z\times\Z/2\Z)$. 
If ${\Zur}^\circ=Z\times Z$ is a direct product of two simple groups and 
$C_\epsilon$ permutes the factors, then Proposition \ref{Prop2} gives 
$R(\Zur)=R({\Zur}^\circ)^{\Z/2\Z}\times R(Z)$; a representation being $C_\gamma$-fixed translates to restricting 
irreducibly to the diagonal. These cases again match \cite{Howlett}, with the $\Z/2\Z$ factor
matching that $W_{M_P}$ can fail in these cases to be a Weyl group. 

The remaining exceptional cases for exceptional groups are as follows: In all cases $\Gamma=\Z/2\Z$, and
in the first class of exceptions, $\Zz^\circ=\GL_n$ or $\mathrm{GSpin}_5$ is not semisimple, and $\Zz$ may fail 
to be a semidirect product. In these cases it is easy to determine the identity components of the centralizers in a maximal torus of $(\Zur)^0$ of $\gamma\neq 1$ in $\Gamma$.
(Note that in the case of $(\Zur)^\circ=\mathrm{GSpin}_5$, the outer automorphism
acts by $t\mapsto t^{-1}$ on the central torus and so $((\Zur)^\circ)^\gamma=\Spin_5$.) The remaining cases
are extensions of $\Gamma$ by $T=\Gm^{\times i}$, $i=1,2$, such $T^\gamma=\mu_2$. In 
this case there are two Cartan subgroups (see Section \ref{subsection appendix Cartan subgroups}), $T$ and $\Gamma$, and $\Zz\git\Zz\simeq T/(\Z/2\Z)\sqcup\Spec\C$.

For $G^\vee=E_7$ and
$J=\{1,5,7\}$, we have that $u=[3A_1]''$ is contained in $M_{P_J}$ and $W_M$ is of type $F_4$, matching 
$\Zur$. (This case is omitted from \cite{Howlett}.)

For $G^\vee=E_7$ and $u=A_1^{21}$, $M_P$ is of type $A_1\times A_4$, and we must see that 
$W_{M_P}=\Z/2\Z$ also acts by $t\mapsto t^{-1}$ on $T_2$. As $W''=1$ in the notation of \cite{Howlett}, $W_M$ 
does not act by reflections, and the only nontrivial involution in $\GL_2(\Z)$ that $W_M$ can act by is $-\id$, 
as required.

\subsubsection{Flatness of $\Oo(\X(M))^{W_M}$ for exceptional groups: $\Gamma=\Sn_3$}
\label{subsubsection flatness of X(M)/WM exceptional S3}
Now we consider the case $\Gamma=\Sn_3$.
In each case, either 
$\Zur$ is the normalizer of a maximal torus in $\SL_3$, or $\Zz^0$ is semisimple \cite{Ale79}. In the former case, the 
results of \cite{MatsTaka} give the factorization 
\[
R(\Zur)=R(T_2)^{\Sn_3}\times R(T_1\times\Z/2\Z),
\]
again matching \cite{Howlett}. In the latter case, by \ref{Prop2}, and additional \textit{ad-hoc} considerations, $R(\Zur)$ is a direct product of three factors, two of which are given by Dynkin diagram automorphisms and one checks matching as in other cases. The \textit{ad-hoc} considerations are needed because \ref{Prop2} and \cite{diss} go via cyclic subgroups $\Gg^\circ\rtimes\An_3$ or $\Gg^\circ\rtimes\Z/2\Z$ for $\Gg=\Zur$, and are arise for $G^\vee=E_7$, $u=[A^{12}_1]'$ or $G^\vee=E_8$, $u=A_1^{13}$ or $[A_1^{12}]''$ (where we encounter the triality folding
as well as the well-known identity $W(F_4)\simeq\Sn_3\ltimes W(D_4)$). In all three cases, we need only note that the $\gamma=1$ direct factor appears with additional $\Sn_3$-invariants, and that $T^{\gamma}=T^{\Sn_3}$ for $\gamma$ a 3-cycle, and that for $\gamma$ a $2$-cycle, no $3$-cycle belongs to the normalizer of the corresponding Cartan subgroup with identity component $T^\gamma$.

Now we finish the proof of Proposition \ref{prop quotient of unramified characters is flat}.
It remains only to show
\begin{lem}
\label{lem quotients of unram characters and fixed torus agree}
Under the identification of $(T^\gamma)^\circ=A_M^\vee=\X(M)$ from $\omega$, we have 
\[
T_\gamma=T/H\simeq \X(M)/\stab{X(M)}{\omega}=\mathfrak{o}_\omega.
\]
\end{lem}
\begin{proof}
According to the proof of Lemma 3.11 in \cite{diss} and the paragraph preceding it, we have $T_\gamma=T/H$ (quotient of groups) and the group $H$ in \eqref{eqn fixed torus is etale cover of coinvariant} may equivalently be described as the image of the homomorphism
\[
\nu\colon T/(T^\gamma)^\circ\to (T^\gamma)^\circ
\]
induced by $t\mapsto t\gamma t^{-1}\gamma^{-1}$. The inclusion $Z_{M^\vee}(u)\into Z_{G^\vee}(u)$  is compatible with the resulting action by multiplication on $(T^\gamma)^\circ=A^\vee_M$. Now, if $(s, u)$ is the parameter of $\omega$ and $\chi\in H$, we have $(\chi s, u)=(\gamma s\gamma^{-1}, \gamma u\gamma^{-1})$. Conversely, if 
$z sz^{-1}=\chi s$ for some $\chi\in \X(M)$, then we see that we may take $z\in H$. 
%
%
%
%
\end{proof}
As we have already checked that the further groups quotiented by in Proposition \ref{Prop2} and $W_M$ coincide, we conclude that $\Mat_n (\Oo(\mathfrak{o}_i)^{W_i})$ is a vector bundle on $\Zur\git\Zur$.
\qed

\subsection{Proof of Proposition \ref{prop Ju and EJu fibre images agree at generic points}}
\label{subsection Proof of prop Ju and EJu imagess agree at generic points}
We outline the proof, the main technical point of which is to control the scheme-theoretic support of
$\mathcal{M}_u/J_u$ in \eqref{eqn nested modification}. 
Recall that irreducibility of a tempered representation $\pi$
is equivalent over $\Ee_{J,u}$ or over $J_u$. Indeed, if such $\pi$ is irreducible
over $\Ee_{J,u}$, it is irreducible over the Schwartz, and thus Hecke algebras, and thus over $J_u$.
Therefore a matrix coefficient of $\pi$ as an $\Ee_{J,u}$-module vanishes if and only if it vanishes as a 
$J_u$-module. Below, we will examine when cases (a) and (b) of Proposition \ref{prop Ju saturated lower modification} occur for each $u$. Case (a) is easy to recognize, and happens for instance when $\Zur$ is connected and several fundamental representations of its universal cover have the same Schur multiplier as projective $\Zur$-representations.

In case (b), let $D_i$ be an irreducible divisor over which \eqref{eqn nested modification} is not
injective on fibres, \textit{i.e.} some matrix coefficient $f$ vanishes, where we think of $f$ as a  class 
function of $s\in\Zur$. Thus $D_i$ is defined as a topological space by the vanishing of some virtual 
character of $\Zur$. Our strategy is to use the observation from Clifford theory at the end of Section 
\ref{subsubsection rep rings of disconnected and Clifford theory} to control which $\Zur$-representations
can appear in the formula for $f$ as a $J_u$-module, and then in each case we observe that the vanishing 
set of the character is reduced. This is precisely the statement of Proposition
\ref{prop Ju and EJu fibre images agree at generic points} (b).
\begin{proof}[Proof of proposition \ref{prop Ju and EJu fibre images agree at generic points}]

First, if $\Zur$ is the direct product of a finite group and a connected group 
not admitting projective
representations, then Lusztig's conjecture is true, and it is easy to see that 
reducibility of the families $\pi_{ij}$ is locally constant in the semisimple 
part $s\in\Zur$ of the parameter of $\pi_{ij}$. Therefore
$\iota_u$ is injective on fibres, so $\eta_u$ is surjective by Nakayama's 
lemma as in Lemma \ref{lem surjectivity for finite centralizers}.

We use Section \ref{subsubsection reduction to semisimple groups} to reduce to semisimple
groups, and then Lemma \ref{lem suffices to show surjectivity for one member of isogeny class} to 
reduce to classical groups and adjoint forms of exceptional groups. For $G^\vee$ of type $A$,
we prove surjectivity of $\eta_u$ separately in Section \ref{subsubsection Surj for GLn}.
\subsubsection{Proof of Proposition \ref{prop Ju and EJu fibre images agree at generic points} (a)}
We note that if $J_u$ is a genuine modification of 
the matrix bundle at a semisimple element $s\in\Zur$, then one of the corresponding $J_u$-modules 
$i_P^G(\sigma\otimes\nu^{-1})$ with $\nu$ non-strictly positive must be reducible.

Recall from \cite[2.2 (3) and Cor. 2.6]{BK}, that the $H$-module
$i_P^G(\sigma\otimes\nu^{-1})$ is a semisimple $J$-module 
whenever $\nu$ is non-strictly positive; this means that the results of \cite[Section 4]{SolSurvey} apply. 

Indeed, if $\nu$ is strictly positive, then $i_P^G(\sigma\otimes\nu^{-1})$
is equal to a standard module, and is a simple $J$-module; this is the preface to the Langlands
classification. Therefore, given the datum $\xi=(P\supset M_P,\sigma\in\Ee_2(M_P),\nu\in\X(M_P))$
(consider the intermediate standard parabolic $P(\xi)$ defined by 
\[
P(\xi)^\vee=\sets{\alpha^\vee\in\Delta^\vee}{|\alpha^\vee(\nu)|=1},
\]
under $\X(M_P)\onto A_P^\vee$). We have $P(\xi)\supset P$, and $P(\xi)$ is defined so that $i_{P}^{P(\xi)}(\sigma\otimes\nu^{-1})$ 
is unitary and hence completely reducible 
with irreducible direct summands $\pi_i$ \cite[Prop. 3.20 (a)]{SolSurvey}. By Prop. 3.20 (b), (c) of \textit{loc. cit.}, 
each $i_{P(\xi)}^G(\pi_i)$ is a standard $H$-module and a simple $J$-module, for the same reason 
as at the start of this paragraph. Therefore all reducibility of 
$i_P^G(\sigma\otimes\nu^{-1})$ as a $J$-module comes from the first induction stage, which is the province 
of (projective representations of) the Knapp-Stein $R$-group. 


Therefore the divisors over which $J$ is a genuine modification must contain unramified characters
$\nu$ for which  $i_{P}^{P(\xi)}(\sigma\otimes\nu)$ is reducible; in particular
such that $P(\xi)\neq P$. Thus $\nu$ must at least have a nontrivial stabilizer where it is unitary,
and the product formula for the Harish-Chandra $c$-functions $c_\alpha^P$ of 
\cite[Section 4.1]{SolSurvey} then implies that the set of 
divisors on which the fibres of $J_u$ can have proper image in $\mMu$ is a subset of the 
divisors
\begin{equation}
\label{eqn poles of c-function general form}
\{\mathcal{V}(\alpha-1),\mathcal{V}(\alpha+1)\}_{\alpha}
\end{equation}
for $\alpha$ in the positive roots of $(P, A_P)$, where we write $\mathcal{V}$ for the vanishing set.
(Note that we are using the commuting, as opposed to $q$-commuting conventions for Kazhdan-Lusztig parameters, and so must perform a scalar change of coordinate, accounting for the residual coset of $\sigma$, for the poles of the $c$-function to take the form
\eqref{eqn poles of c-function general form}.)

In the usual coordinates for classical groups, these are divisors are of the form
\[
\{z_i=z_j^{-1}\}, \{z_i=-z_j^{-1}\}, \{z_i=-z_j\}, \{z_i=z_j\}, \{z_i=1\}, ~\{z_i=-1\}.
\]
The $c$-function controlling reducibility of $i_{P}^{P(\xi)}(\sigma\otimes\nu)$  always has poles along 
divisors of the form $V=V(\alpha-1)$, as 
$c_\alpha$ is always singular there \cite[Eq. (3.4)]{SolSurvey}. 
Let $\eta$ be the generic point of $V$.
No other factor $c_\beta$ of the $c$-function of Section 4.1 of \textit{loc. cit.} is singular at 
$\eta$, and so at $\eta$, we have $R_{\xi}=\{\pm \alpha\}$ in the notation of \textit{loc. cit.}
Hence we have $R_\xi=\stab{W_P}{\alpha}$, and $i_{P}^G(\sigma\otimes\nu)$ is generically a 
direct sum of simple $J$-modules indexed by $\Irr(R_\eta, \natural_\eta)$ for some $2$-cocycle $\natural_\eta$, by \cite[Theorem 4.2]{SolSurvey}.
That is, the image of $J_u|_\eta$ is as large as possible, and hence is equal to the image
of $\Ee_{J,u}|_\eta$.

\subsubsection{Proof of Proposition \ref{prop Ju and EJu fibre images agree at generic points} (b)}
\label{subsubsection proof of (b)}
Now we determine the scheme-theoretic support of $\mathcal{M}_u/J_u$. Let $D=\bigcup D_i$.
As the scheme-theoretic and set-theoretic supports are equal as topological 
spaces, it suffices to show that $\codim D\geq 2$ or that the $D_i$ are divisors and the scheme-theoretic support is reduced \textit{i.e.} to show 
that any matrix in
$\Mat_{\dim\pi_{ij}^I}\left(\Oo\left(\X(M_i)\right)^{W_i}\right)$ vanishing on $D$ lies in the image of $J_u$. 

Indeed, each $D_i$ can be taken to be of the following form: there is a Levi subgroup $M$ of $G$
and $\sigma,\nu$ as usual such that the image of $\Ee_{J,u}$ in
$\Mat_{\dim\pi^I}(\Oo(\X(M))^{W_M})$ under the map induced by $\pi$ is zero on fibres at $\nu$ at some entry 
at position $(d,\rho),(d',\rho')$. That is, $i_P^G(\sigma\otimes\nu)$ is reducible at $\nu$ and $D_i$
is a divisor in the connected component $A_M^\vee\git W_M$ of $\Zur\git\Zur$.
By the second sentence of Section \ref{subsection Proof of prop Ju and EJu imagess agree at generic points}, $D_i$ is equivalently characterized by just the vanishing on fibres of the image 
$(\iota_u\circ\eta_u)(J_u)_{(d,\rho), (d',\rho')}$ of $J_u$ in 
$\Mat_{\dim\pi^I}(\Oo(\X(M))^{W_M})$ under the map induced by $\pi$  at 
position $(d,\rho),(d',\rho')$. 

By Lemma \ref{lem mc are as implied by Lusztig}, the image of $J_u$ in 
$\Mat_{\dim\pi^I}(\Oo(\X(M))^{W_M})$ consists in each coordinate
of linear combinations of characters of $\Zur$-stabilizers of $Y_u\times Y_u$. Therefore
$D_i$ must be cut out as a topological space by the vanishing of these linear combinations, \textit{i.e.}, we have
\[
D_i=\mathcal{V}(\iota_u(J)_{(d,\rho), (d',\rho')})_{\mathrm{red}},
\]
constraining the equation of $D_i$. 

We now show that $\iota_u(J)_{(d,\rho), (d',\rho')}$ is already radical or has codimension at least $2$. The former case is more complicated; for example classical groups the description of characters of $\Zur$-stabilizers 
afforded by Lemma \ref{lem shape of characters of finite index Zur subgroups} implies that
$D_i$ is cut out by characters of the (s)pinorial fundamental representations of (s)pin groups, or the 
representations $V_{2m}^{\pm}|_{\SO_{2m}}$, which are prime elements of the representation ring. With this in hand it will be obvious that for any function $f$ vanishing on $D_i$, the virtual representation corresponding to $f$ defines
a class in
\[
K_{\Zur}(Y_{d}\times Y_{d'})
\]
whose matrix coefficient at $((d,\rho),(d',\rho'))$ is exactly $f$. 

\paragraph{Trivial orbits for classical groups.}
First, note that if $Y_d$ and $Y_{d'}$ are both singletons
and $D_i$ is given by vanishing on fibres at position $((d,\rho),(d',\rho'))=((d,\triv),(d',\triv))$
in a summand of $\mMu$, the claim is obvious. Indeed, the image of $J_u$ is nonzero but must vanish on $D_i$.
In particular, the trivial representation of $\Zur$ cannot be a matrix coefficient, and so 
$\Zur$ is centrally-extended at $(d,d')$. Thus the image of $J_u$ consists of all functions
in $R^1(\widetilde{\Zur})$ for some central extension. That is, 
for $\Gd$ classical, some 
factors of the $R\left(\Zur\right)$-module $\pi(t_{d,\rho})\pi(J_u)\pi(t_{d',\rho'})$
are generated, by Lemma \ref{lem shape of characters of finite index Zur subgroups}, over the corresponding factors $R(\OO_{m_i})$ or $R(\SO_{m_i})$ by the (half) (s)pinorial representations as in Examples \ref{example rep ring of orthogonal groups} and \ref{example rep ring of Pin}
$\Pi_i$, $\Delta_i$ or $\Delta_{i,1},\Delta_{i,2}$. Hence for a matrix in $\Ee_{2,(d,\triv),(d',\triv)}$ to vanish on $D_i$
is equivalent to that element lying in  
an union of vanishing sets $\mathcal{V}(\Pi_iR(\OO_{2m})$, $\mathcal{V}(\Delta_i R(\SO_{2m+1}))$, 
$\mathcal{V}(\Delta_{i,1}R(\SO_{2m}))$, or $\mathcal{V}(\Delta_{i,2}R(\SO_{2m}))$. But now consulting the 
formulas in Examples \ref{example rep ring of orthogonal groups} and \ref{example rep ring of Pin}, we see that any such union is reduced, and hence that for
any function vanishing on $D_i$, we may define a class in
\[
K_{\Zur}(Y_{d}\times Y_{d'})
\]
with precisely that function as its matrix coefficient. This proves (b) in this case.

\paragraph{Trivial orbits for exceptional groups.}
Now we suppose $\Gd$ to be exceptional and deal with $\Zur$ not occurring for classical 
groups. If, up to direct products with connected simply-connected groups,
$\Zur$ is the quotient of a connected simply-connected group ${\Zur}'$ by a central subgroup $\Z/2\Z$, then
$R^1(\widetilde{\Zur})$ is generated over $R(\Zur)$ by fundamental ${\Zur}'$ representations
$\Delta$ with specified Schur multiplier. Similar considerations apply to $\Zur=\SL_6/(\Z/2\Z)\rtimes(\Z/2\Z)$ acting trivially on some point $y\in Y_u$

If $\Zur=\PGL_3$ or $\Zur=\SL_6/(\Z/3\Z)$, then in fact the vanishing locus of the matrix coefficients 
of $J_u$---equivalently the reducibility locus of the corresponding $J_u$-representations---has 
codimension at least 2. For example, if $\Zur=\SL_6/(\Z/3\Z)$, then $\Zur$ has two nontrivial Schur multipliers 
identified with $\Hom{\Z/3\Z}{\Gm}=\Z/3\Z$, and the image of $J_u$ in $\mMu$ consists of 
matrices with entries in $\left(V(\varpi_1))+(V(\varpi_2)^2)\right)R(\Zur)$
at $(d,d')$ and $\left(V(\varpi_1)^2)+(V(\varpi_2))\right)R(\Zur)$
at $(d',d)$, or vice-versa. Thus the only point of $\Zur\git\Zur$ annihilating the image of $J_u$
at $(d,d')$ is the image of the origin in $\SL_6\git\SL_6$, and hence is of codimension 5 in
$\Zur\git\Zur$. Likewise for $\PGL_3$, the reducibility locus has codimension $2$. 
Similar considerations apply to 
$\Zur=\left(\SL_3\times\SL_3/(\Z/3\Z)\right)\rtimes\Z/2\Z$.

If $\Zur=(\Spin_{2r+1}\times\SL_2)/(\Z/2\Z)$, then 
\[
R^1(\widetilde{\Zur})=(\Delta_r\boxtimes\triv)R(\Zur)\oplus(\triv\boxtimes V(1))R(\Zur).
\]
If $\Zur=(\SL_2\times\SL_2\times\SL_2)/(\Z/2\Z\times\Z/2\Z)$, then there are three possible nontrivial
Schur multipliers, each yielding a module $R^1(\widetilde{\Zur})=\Delta R(\Zur)$, where $\Delta$
is the external tensor product of two trivial representations and one representation $V(1)$.

In all cases, we see that $D$ must be cut out precisely by the characters of the various generators $\Delta$. 
On the other hand, we obviously have $\Delta\in\Delta R(\Zur)$, and in each case,
the character of $\Delta$ is a square-free element of the UFD $R(\widetilde{\Zur})$ and so vanishes on
$D$ with order one, as required. 

The only cases with $\Gamma=\Sn_3$ in which central extensions can occur are $\Zur=N_{\SL_3}(T)$ and $\Zur=\left(\SL_2^{\times 3}/\Z/2\Z\right)\rtimes\Sn_3$. In this last case, a projective representation of any possible stabilizer is a representation of the stabilizer with odd highest $\SL_2$-weight in precisely one coordinate. For example, in the presence of central extensions, \eqref{eqn S3 matrix coefficients} would become, for a possible class $\mathcal{F}$
\[
\left(\mathrm{Tr}(s, V(1)\boxtimes 1\boxtimes 1)\pm\mathrm{Tr}(s, 1\boxtimes 1\boxtimes V(1)\right)=0,
\]
As $\mathcal{V}(V(1))$ is already reduced, we again conclude (b) in this case. For $\Zur=N_{\SL_3}(T)$,
central extensions are, by the proof of \cite[Lem. 2.1.22]{Propp}, groups
$\widetilde{\Zur}=(\Gm\times T_{\SL_3})\rtimes\Sn_3$ where $\Sn_3$ acts on $\Gm$ via a class $H^1(\Sn_3, X^*(T))=\Z/3\Z$ as in \textit{loc. cit.} and recalled below. By \cite[Prop. 4.1]{MatsTaka}, the characters of irreducible
projective representations with specified Schur multiplier are as follows. Let $\sigma$ be an irreducible representation of $\stab{\Sn_3}{\id\boxtimes\lambda_2}$. Then the character of the irreducible representation $\id\boxtimes\lambda_2\rtimes\sigma$ is 
\[
\frac{1}{\# \stab{\Sn_3}{(\id\boxtimes\lambda_2}}\sum_{\substack{\gamma_0\in \stab{\Sn_3}{\id\boxtimes\lambda_2}\\ \gamma_0~\text{conj. to}~\gamma}}\chi_\sigma(\gamma_0)\sum_{\substack{\gamma'\in\Sn_3 \\ \gamma'\gamma\gamma'^{-1}=\gamma_0}}(\id\boxtimes\lambda_2)(\gamma'\cdot(z,t)),
\]
for $z\in\Gm$, $t\in T_{\SL_3}$, and  $c\colon\Sn_3\to X^*(T_{\SL_3})$ the corresponding cohomology class, where then $\gamma'\cdot(z,t)=(zc(\gamma')(t), \gamma'\cdot t)$. 

As that the Schur multiplier depends only on $c$, we see that upon varying $\lambda_2$, the vanishing locus has codimension at least $2$ in $\Zur\git\Zur$ (note that vanishing is independent of $z$).

\paragraph{Nontrivial orbits for classical groups.}
If $G^\vee$ is a classical group,
then each factor of $\Sp_m$ in $\Zur$ acts trivially,
as does each factor of $\SO_{m_i}$ in $\OO_{m_i}=\SO_{m_i}\times\Z/2\Z$ for $m_i$ odd. Moreover, the Schur
multiplier for each factor of $\SO_{m_i}$ is constant on $Y_d\times Y_{d'}$. Therefore we have
\begin{equation}
\label{eqn classical groups off-diagonal}
K_{\Zur}(Y_d\times Y_{d'}^{\opp})=
\bigotimes_{a_i~\text{odd/even}}R(\Sp_{m_i})\otimes\bigotimes_{\substack{a_i~\text{even/odd} \\ m_i~\text{odd}}}R^i(\SO_{m_i})\otimes
K_{\mathcal{S}}(\mathbf{Y}_d\times \mathbf{Y}_{d'}^{\opp}),
\end{equation}
where 
\[\mathcal{S}=S\left(\prod_{_{\substack{a_i~\text{even/odd} \\ m_i~\text{odd}}}}\Z/2\Z\times\prod_{_{\substack{a_i~\text{even/odd} \\ m_i~\text{even}}}}\OO_{m_i}\right)
\]
and either $R^i(\SO_{m_i})=R(\SO_{m_i})$ or is $\Delta_{m_i}R(\SO_{m_i})$, depending on the Schur multiplier.
Given an orbit $\oO\subset Y_d\times Y_{d'}$, the stabilizer $\mathcal{S}'$ of any point in $\oO$ is a 
finite-index subgroup of $\mathcal{S}$, so characters of irreducible representations of $\mathcal{S}'$ are 
given by Lemma \ref{lem shape of characters of finite index Zur subgroups}.

An element of the $K$-theory factor of \eqref{eqn classical groups off-diagonal} can be specified by
providing a representation of $\mathcal{S}$ on a vector space $\bigoplus V_{(y_1,y_2)}$
commuting with bundle projection. When $\stab{\mathcal{S}}{(y_1,y_2)}$ is a proper subgroup of 
$\mathcal{S}$, if $V_{(y_1,y_2)}$ is simple, it is as in Lemma 
\ref{lem shape of characters of finite index Zur subgroups}. By the reasoning of the sentence preceding \eqref{eqn matrix coeff a priori no extension exceptional}, we obtain non-vacuous equations when $V_{(y_1,y_2)}$ does not 
extend to a representation of $\mathcal{S}$.
That is, assuming no central extensions, some tensor factors of 
the character of $V_{(y_1,y_2)}$ must lie in one of the ideals $V_{m_j}^{\pm}R(\SO_{m_j})$, where the 
$j$ depend only on the action of $\Gamma$ on $Y_d\times Y_{d'}$. 

In the presence of central extensions, we must instead chose classes in  
$\Delta_{-}R(\SO_{2r})$ or $\Delta_+R(\SO_{2r})$. For factors $\OO_{2r}$ in the stabilizer, 
we may chose either any representation $V$, or, in the presence of central extensions, a class in 
$R^1(\mathrm{Pin}_{2r})=\Pi_{2r} R(\OO_{2r})$. 

Therefore, by the discussion in Section \ref{subsection Construction of modules}, the image of $J_u$ in 
$\Ee_{2,(d,\rho),(d',\rho')}$ consists of functions belonging to
\begin{equation}
\label{eqn matrix coeff a priori no extension}
\bigotimes_{a_i~\text{odd/even}}R(\Sp_{m_i})\otimes\bigotimes_{\substack{a_i~\text{even/odd} \\ m_i~\text{odd}}}R^i(\SO_{m_i})\otimes
\sum_{\gamma\in\mathcal{S}''}\epsilon_\gamma\eta^t C_{\gamma}\left(V_1\boxtimes\cdots\boxtimes V_r\right)\boxtimes V_{r+1}\boxtimes\cdots\boxtimes V_{k}
\end{equation}
where $\mathcal{S}''=\Z/2\Z\times\cdots\times\Z/2\Z$ is the quotient by which $\Gamma$ acts,
$V_i$ is a representation of $\SO_{m_i}$ or of $\OO_{m_i}$ as described in the previous paragraph, 
$\epsilon_\gamma\in\sett{\pm 1}$ depends only on the action of $\Gamma$, and $\eta^t$ is a simple 
representation of $\mathcal{S}''$. 

Therefore $\Ee_{2,(d,\rho),(d',\rho')}$, and hence some divisor of reducibility $D_i$ is the reduced locus of
vanishing of all functions of the form \eqref{eqn matrix coeff a priori no extension}. Consulting
the presentations in Examples \ref{example rep ring of orthogonal groups} and \ref{example rep ring of Pin}, 
we see by degree considerations that the vanishing locus of \eqref{eqn matrix coeff a priori no extension}
is already reduced.

Therefore Proposition \ref{prop Ju saturated lower modification} (a) and Proposition \ref{prop Ju and EJu fibre images agree at generic points} (b) are proved in these cases.

\paragraph{Nontrivial orbits without central extensions for exceptional groups.}
Now we consider nontrivial orbits without central extensions, with $\Gamma=\pi_0(\Zur)=\Z/2\Z$ acting nontrivially on $X\times Y$ for $1\leq \#X, \#Y\leq 2$ with any stabilizer being ${\Zur}^\circ$. 
In this case, for $\Ff\in K_{\Gamma}(X\times Y)$, write $f_{ij}$ 
for the function $s\mapsto\trace{\Ff_{(x_i,y_j)}}{s}$ for semisimple 
$s\in\Zur$ fixing $X$ and $Y$. In the basis $\{\delta_{y_1}+\delta_{y_2}, \delta_{y_1}-\delta_{y_2}\}$ of functions on $Y$, a sheaf $\Ff$ supported
on the orbit $\sett{(x_1,y_2),(x_2,y_1)}$ in $X\times Y$ acts by 
\[
\begin{pmatrix}
\frac{f_{12}+f_{21}}{2} & \frac{f_{12}-f_{21}}{2}\\
\frac{f_{12}-f_{21}}{2} & \frac{f_{12}+f_{21}}{2}
\end{pmatrix},
\]
whereas a sheaf supported on $\sett{(x_1,y_1),(x_2,y_2)}$ acts by 
\[
\begin{pmatrix}
\frac{f_{11}+f_{22}}{2} & \frac{f_{11}-f_{22}}{2}\\
\frac{f_{11}-f_{22}}{2} & \frac{f_{11}+f_{22}}{2}
\end{pmatrix}
\]
with $f_{ji}=C_{\gamma}(f_{ij})$ etc. We may assume that
$C_\gamma(f_{ij})\neq f_{ij}$ or $C_\gamma(f_{ij})\neq -f_{ij}$  as representations of the stabilizer; otherwise 
we obtain the zero function, contradicting the generic irreducibility of parabolic inductions. 

Therefore the image of $J_u$ in 
$\Ee_{2,(d,\rho),(d',\rho')}$ consists of functions of the form
\begin{equation}
\label{eqn matrix coeff a priori no extension exceptional}
R(\Zur)\otimes\left(V\pm C_\gamma(V)\right)
\end{equation}
for $V\neq \pm C_\gamma(V)$ as functions of $s$.


Therefore if one of these matrix coefficients is 
forced to vanish on some $D_i$ because of reducibility, then $\rho\neq\rho'$
and we must have that $D_i$ is cut
out by the character of $V-C_\sigma(V)\in R({\Zur}^\circ)$.

If ${\Zur}^\circ$ is simple and simply-connected, then it is easy to see that for every 
vertex $i$ of the Dynkin diagram of ${\Zur}^\circ$ such that $\gamma(i)\neq i$, $V(\varpi_i)-V(\varpi_{\gamma(i)})$
vanishes on $D_i$, and in fact we have
\begin{equation}
\label{eqn Di intersection exceptional}
D_i=\left(\bigcap_{\sets{i}{\gamma(i)\neq i}}\mathcal{V}\left(V(\varpi_i)- V(\varpi_{\gamma(i)})\right)
\right)_{\mathrm{red}}.
\end{equation}

If $\Zur=(G_2\times G_2)\rtimes\Z/2\Z$, then 
we have instead $\mathcal{V}\left(V_1(\omega_i)-V_2(\omega_i)\right)$ in 
\eqref{eqn Di intersection exceptional}, where subscripts refer to 
factors in the identity component. In all cases, it is clear that we can define
a class in $J_u$ giving any function in the right hand side of  
\eqref{eqn Di intersection exceptional} (if \eqref{eqn Di intersection exceptional} doesn't already force the reducibility locus to have dimension at least $2$), proving (b) in these cases.

If $\Zur$ is one of the groups in Examples \ref{example rep ring of orthogonal groups} and \ref{example rep ring of Pin}, it is clear from the presentations those examples that any $V\pm C_\gamma(V)$ vanishing on $D_i$ can be realized as a matrix coefficient of $J_u$. If $\Zur$ is 
is presented as a quotient
\[
1\to \Z/n\Z\to T\times[\Zur,\Zur]\to\Zur\to 1
\]
of a simply-connected semisimple group with no outer automorphisms by a central torus $T$, then 
\eqref{eqn matrix coeff a priori no extension exceptional} is generated
over $\Zur$ by the ${\Zur}^\circ$ character $t-t^{-1}$, where we write $t$ for the character
of $\Zur$ identified with the generator of the character group of $T/P$,
which must then cut 
out $D_i$ exactly, vanishing on it with order one. If $\Zur$ is a nonsplit extension of $\Z/2\Z$ by $T_2$, then we see that 
$D_i=\bigcap_{i=1}^2\mathcal{V}\left(t_i-t_i^{-1}\right)$. If
$\Zur$ is an extension of a pushout of by a central torus and $\SL_3$, then by the same logic as above, we have
\[
D_i=\left(\mathcal{V}\left(t-t^{-1}\right)\cap\mathcal{V}\left(V(\varpi_1)-V(\varpi_3)\right)\right)_{\mathrm{red}}
\]
and the expression inside right hand side is already reduced; thus any function vanishing on $D_i$ is a matrix coefficient of an element 
of $J_u$.

Now we consider $\Gamma=\Sn_3$. First consider sets $X,Y$ with $\# Y_d=\# Y_{d'}=3$. By Section \ref{subsubsection Equivariant K-theory of square of a finite set}, it is enough to consider the action of $K_{\Zur}(Y_d\times Y_{d'})$ on $\mu_2\colon Y_{d'}\to \std$ and $\mu_\triv\colon Y_{d'}\to\C$. For any $\Ff$, the target of $F\mu$ is that of $\mu$, so there are no cross terms; we only need to consider matrix coefficients at $\left(d,\triv), (d',\triv)\right)$ and $\left((d, \std,), (d', \std)\right)$.
In both cases, $\Oo_\Delta\mu=\mu$. Therefore no vanishing is possible in this case absent central extensions. If $\#Y_{d'}=2$ and $\#Y_d=3$, then $Y_d\times Y_{d'}$ is a transitive $\Sn_3$-set and for $\mu\colon Y_{d'}\to\C$ we have 
\begin{equation}
\label{eqn S3 matrix coefficients}
\Ff\mu(y_i)=\Ff_{i,1}\mu(y'_1)+\Ff_{i,2}\mu(y'_2)=\left(\Ff_{i,1}+\pm C_{(23)}(\Ff_{i,2})\right)\mu(y'_1),
\end{equation}
where the sign $\pm$ is taken depending on whether $\mu$ transforms according to $\sgn$ or $\triv$. If $\Zur=\Spin_8\rtimes\Sn_3$, we obtain the same condition \eqref{eqn Di intersection exceptional} and conclude similarly.
If $\Zur=(\SL_2^3)/\Z/2\Z\rtimes\Sn_3$, then in the absence of central extensions, we conclude as in the sentence following \eqref{eqn Di intersection exceptional}. If $\Zur=N_{\SL_3}(T)$, then we obtain $\mathcal{V}(\chi-(12)\chi)$, where the action on characters $\chi$ of $T=T^2$ is given by the Weyl group action of $\SL_3$ and also conclude that $D_i$ is reduced.
The cases $\# Y_d=1, \#Y_{d'}=3$ and vice-versa are similar; in the latter there can be no vanishing absent central extensions. The stabilizer in this case is $\stab{y_i}{\Zur}=\bangles{(\Zur)^0, \begin{pmatrix}
j&k
\end{pmatrix}}$.

Now consider $\#Y_d=6=\# Y_{d'}$. We are examining linear maps $t_{d'}\pi\to t_d\pi$.
 
First suppose that, for all $\chi$, we have $\dim t_d\pi=1$. By Section \ref{subsection Construction of modules}, $\dim_{t_{d'}}\pi=1$ also, and absent central extensions, the matrix coefficient in question cannot vanish.

Next suppose $\dim t_{d}\pi=2$. Again $\dim t_{d'}\pi=2$ also. The class $[\Oo_{\Gamma\cdot(y,(23)y)}]$ has constant matrix coefficient equal to $1$ at both positions $\left((d', \std'), (d, \std)\right)$ and $\left((d', \std), (d, \std')\right)$ 

Otherwise $\dim t_{d}\pi=6=\dim t_{d'}\pi$ and elements of $t_{d}J_ut_{d'}$ act as $6\times 6$ matrices on $\C[Y]$. We deal with the only possible reducibility, at some point where $\pi_0(Z(u,s))=\Z/2\Z$, as for classical groups.


%
\paragraph{Nontrivial orbits with central extensions for exceptional groups.}
Now suppose ${\Zur}^\circ$ is $\SL_6/(\Z/2\Z)$, $\PGL_2$, or $\PGL_3$. In this case, viewing projective
representations as honest representations of $\SL_n$, $n=6,2,3$, the nontrivial action of $\Gamma$
again implies via Clifford theory as in Section \ref{subsubsection rep rings of disconnected and Clifford theory} that the image of $J_u$ in 
$\Ee_{2,(d,\rho),(d',\rho')}$ consists of functions of the form 
\eqref{eqn matrix coeff a priori no extension exceptional}, now with the following additional constraints:
If for $n=6$ and the Schur multiplier is trivial, we may have only $V(\varpi_2)-V(\varpi_4)$; if the Schur multiplier is nontrivial, we may have only $V(\varpi_1)-V(\varpi_5)$. If ${\Zur}^\circ=\PGL_3$,
then the $\SL_3$ representations $V(\varpi_1)$ and $V(\varpi_2)$ have distinct 
Schur multipliers as $\PGL_3$-representations, and we see that projective
representations cannot in fact occur if $\Gamma$ acts non-trivially. Hence
we are back in the case of the last paragraph.

Finally, if $\Zur=\left((\SL_3\times\SL_3)/(\Z/3\Z)\right)\rtimes\Z/2\Z$, where the 
component group acts by exchanging factors, then matrix coefficients of $J_u$
in the case of nontrivial orbits are supported on the direct factor $R((\Zur)^\circ)^{\Z/2\Z}$ and
are without loss of generality of the form
$f_if_j+C_\gamma(f_if_j)$, where $f_i\neq f_j\in R(\SL_3)$ are such that $f_if_j\in R((\Zur)^\circ)$. 
Projective representations can occur, with the possibilities being much the 
same as dealt with above: matrix coefficients of $J_u$ in this case are generated over
$R((\Zur)^\circ)^{\Z/2\Z}$ by $\sett{V(\varpi_1)\boxtimes\triv\pm\triv\boxtimes V(\varpi_1), V(\varpi_2)^2\boxtimes\triv\pm\triv\boxtimes V(\varpi_2)^2}$, for one nontrivial Schur multiplier, and 
similarly for the other. Hence we are already in case (a) of Proposition \ref{prop Ju saturated lower modification}.  Note that $V(\varpi_2)^2\boxtimes\triv\pm\triv\boxtimes V(\varpi_2)^2$ is
reducible in $R((\Zur)^\circ)$ but not in $R((\Zur)^\circ)^{\Z/2\Z}$. 

Now we consider $\Gamma=\Sn_3$ with central extensions and nontrivial orbits. Reducedness for stabilizers in
$\Zur=\left(\SL_2^{\times 3}/\Z/2\Z\right)\rtimes\Sn_3$ and for $N_{\SL_3}(T_{\SL_3})$ is dealt with similarly to the case of classical groups. 


%
We have shown that for every parabolic induction $\pi$ as considered at the beginning of this section, either the reducibility locus has codimension at least two, or is a union of divisors which is the scheme-theoretic support of $\Ee_{J_u}/J_u$. Therefore we have part (b) of the Proposition.
\end{proof}
\begin{rem}
We have already remarked that when $\Zur$ is connected, reducibility of families of tempered representations
with parameter containing $u$ implies the appearance of central extensions. We see now from the proof
that in this case the corresponding poles of the $c$-function in the case of classical groups are all of the 
form $z_i=\pm 1$, because these are the only divisors cut out by the characters of (s)pinorial representations. 
\end{rem}
\subsection{Surjectivity for $\GL_n$, $\PGL_n$, and Xi's theorem}
\label{subsubsection Surj for GLn}
As remarked in the introduction, we have avoided using any of the results from \cite{XiAMemoir} or 
\cite{BO} for $G=\GL_n$ and $G=\PGL_n$. Recall that in this case, Levi subgroups of $G$ and unipotent conjugacy classes
in $\Gd$ are in bijection, and for every $u$ there is a unique family of parabolic inductions
$\pi=i_{P}^G(\St_{M_P}\otimes\nu)$ such that $\rank\left(\pi(t_d)\right)=1$ for all $d\in\cc(u)$
by Corollary 5 of \cite{Plancherel}. We proved the Corollary, thereby activating Proposition \ref{prop Ju saturated lower modification} for $\GL_n$, without any reference to $K$-theory.
Clearly the same is true for $G=\PGL_n(F)$.

Let $u=(a_1,\dots, a_m)$ be a unipotent conjugacy class in $\Gd$. Let $(m_1,\dots, m_k)$ denote the 
multiplicities in $u$. For $G^\vee=\GL_n$, we have
\[
\Oo(\X(M))^{W_M}= R\left(\GL_{m_1}\times\cdots\times\GL_{m_n}\right),
\]
so by Lemma \ref{lem regularity of matrix coefficients}, \eqref{eqn nested modification} specializes to 
\[
\begin{tikzcd}
J_u\arrow[r, "\eta_{u}", hook]&\mathcal{E}_{J,u}\arrow[r, "\iota_{u}", hook]&
\Mat_{N}(\Oo(\X(M)\git W_M))=
\Mat_{N}\left(R(\GL_{m_1}\times\cdots\times\GL_{m_k})\right),
\end{tikzcd}
\]
where $N=n!/(a_1!\dots a_k!)$. For $G^\vee=\SL_n$, we have $\Zur=S(Z_{\GL_n}(u)^{\mathrm{red}})$
\cite[Thm.6.1.3]{CM}.
\begin{theorem}
\label{thm Ju matrix algebra for GLn}
Let $G=\GL_n(F)$ or $\PGL_n(F)$. Then $\iota_u\circ\eta_u$ is an isomorphism for all unipotent
conjugacy classes $u\in\GL_n(\C)$. In particular, $\iota_u$ and $\eta_u$ are each isomorphisms.
If $G$ is semisimple of type $A$, then $\eta_u$ is an isomorphism.
\end{theorem}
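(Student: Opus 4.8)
The plan is to prove the three assertions in the order they are stated, with the bulk of the work going into the first. For $G = \GL_n(F)$, recall (as noted just above the theorem) that for every unipotent $u = (a_1,\dots,a_m)$ there is a \emph{unique} Levi subgroup $M = M_P$ and a unique family of tempered parabolic inductions $\pi = i_P^G(\St_{M_P}\otimes\nu)$ with $u$ in its parameter, and that $\rank(\pi(t_d)) = 1$ for all $d \in \cc(u)$, by Corollary~5 of \cite{Plancherel} --- this was established without $K$-theory. Consequently the idempotents $\pi(t_{d,\rho})$ are rank-one orthogonal idempotents summing to the identity of $\pi^I$, so the basis they define gives the map $\iota_u \circ \eta_u \colon J_u \to \Mat_N(\Oo(\X(M))^{W_M}) = \Mat_N(R(\GL_{m_1}\times\cdots\times\GL_{m_k}))$ of \eqref{eqn nested modification}, with $N = n!/(a_1!\cdots a_k!) = \#(Y_u/\Zur) \cdot$ (orbit sizes) matching $\dim\pi^I$. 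Because $\GL_n$ is connected and its unipotent centralizers are $\Zur = Z_{\GL_n}(u)^{\mathrm{red}} = \prod_i \GL_{m_i}$, which is connected with no outer automorphisms of interest and no projective representations, there are \emph{no central extensions}: all the $Y_d \times Y_{d'}$ carry only trivial equivariant structure, and the permutation action on each orbit is trivial ($\Gamma = \pi_0(\Zur/Z(\Gd))$ is trivial here). So the whole machinery of Section~\ref{subsection flatness of Ju} degenerates: $J_u = K_{\Zur}(Y_u \times Y_u)$ is literally a matrix ring $\Mat_N(R(\Zur))$ over the connected reductive $\Zur$, hence a free $R(\Zur)$-module, i.e. a (trivial) vector bundle on $\Zur\git\Zur$.

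The key step is then to apply Proposition~\ref{prop Ju saturated lower modification} together with Proposition~\ref{prop Ju and EJu fibre images agree at generic points} and Lemma~\ref{lem Dennis}. Here I would first check that Proposition~\ref{prop quotient of unramified characters is flat} holds in type $A$ --- this is elementary since $A_M^\vee = \Gm^{m_1 - 1}\times\cdots$ (or $\prod \Gm^{m_i}$ before passing to the semisimple quotient), $W_M$ is a product of symmetric groups $\Sn_{m_1}\times\cdots\times\Sn_{m_k}$ by \cite{Howlett}, and $\Oo(\Gm^{m_i})^{\Sn_{m_i}} = R(\GL_{m_i})$, so that $A_M^\vee\git W_M$ is exactly the (unique) connected component of $\Zur\git\Zur$. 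With flatness of both outer terms of \eqref{eqn nested modification} in hand, the map $\iota_u\circ\eta_u$ exhibits $J_u$ as a lower modification, supported on the divisors over which $\pi = i_P^G(\St_{M_P}\otimes\nu)$ becomes reducible. Since in type $A$ the tempered representation $i_P^G(\St_{M_P}\otimes\nu)$ is \emph{always irreducible} for $\nu$ unitary (the Knapp--Stein $R$-group is trivial for $\GL_n$, as Steinberg of a Levi has no nontrivial self-intertwining under $W_M = \Sn_{m_1}\times\cdots$), \emph{there are no such divisors at all}: $\iota_u\circ\eta_u$ is an isomorphism away from a divisor, the scheme-theoretic support of the cokernel is empty, and Lemma~\ref{lem Dennis} (with $D = \emptyset$) immediately gives that $\iota_u\circ\eta_u$ is an isomorphism. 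Then $\iota_u$ and $\eta_u$, being injective with composite an isomorphism and $\eta_u$ surjective-onto-a-space-between, are each isomorphisms.

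For $G = \PGL_n(F)$, the dual group $\Gd = \SL_n$ and $\Zur = S(Z_{\GL_n}(u)^{\mathrm{red}}) = S(\prod_i \GL_{m_i})$, which is still connected (the determinant-one condition cuts out a connected subgroup of a torus times semisimple factors) and has no projective representations, so the identical argument applies: no central extensions, no reducibility in the unique family, hence $\iota_u\circ\eta_u$ is an isomorphism. Finally, the claim ``if $G$ is semisimple of type $A$ then $\eta_u$ is an isomorphism'' follows by combining the $\PGL_n$ case with Lemma~\ref{lem suffices to show surjectivity for one member of isogeny class}: surjectivity of $\eta$ for one member of the isogeny class of $\SL_n$ implies it for all of them, and injectivity of $\eta$ is already known from \cite{Plancherel} (the injectivity of $\eta$ recalled in the introduction), so $\eta_u$ is an isomorphism for every semisimple group of type $A$.

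I expect the main obstacle --- and really the only non-formal point --- to be the clean verification that for $\GL_n$ (resp.\ $\PGL_n$) the tempered induced representations $i_P^G(\St_{M_P}\otimes\nu)$ appearing here are irreducible for all unitary $\nu$, so that the modification is trivial and $D = \emptyset$. This is standard (it amounts to triviality of the relevant $R$-group, which for $\GL_n$ is classical Bernstein--Zelevinsky theory), but it is the one input that genuinely uses the structure of type $A$ rather than the general framework, and it is what makes $\iota_u\circ\eta_u$ an honest isomorphism rather than merely a saturated lower modification. Everything else is bookkeeping: matching $\dim\pi^I = N$, identifying $\Oo(\X(M))^{W_M} = R(\Zur)$, and observing that the equivariant $K$-theory of a transitive trivial-action set is a matrix algebra.
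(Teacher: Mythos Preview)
Your approach is correct, but it routes through substantially more machinery than the paper's proof, and in one respect runs counter to the paper's stated aims. The paper gives a very short direct argument: for every non-strictly positive $\nu$, the representation $i_P^G(\St_{M_P}\otimes\nu)^I=K(u,s,\triv)$ is a \emph{standard} $H$-module, hence a simple $J$-module; Burnside's theorem then gives surjectivity of $\iota_u\circ\eta_u$ on every fibre, Nakayama's lemma gives surjectivity of the map of sheaves, and combined with the known injectivity this is an isomorphism. No appeal to Propositions~\ref{prop quotient of unramified characters is flat}--\ref{prop Ju and EJu fibre images agree at generic points} or Lemma~\ref{lem Dennis} is needed; the whole lower-modification framework collapses to a one-line Nakayama argument once simplicity is known at every fibre.

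Your route---verifying that both outer terms of \eqref{eqn nested modification} are vector bundles, identifying the reducibility locus $D$, observing $D=\emptyset$ via triviality of the $R$-group, and feeding $D=\emptyset$ into Lemma~\ref{lem Dennis}---is logically sound but circuitous: when $D=\emptyset$ the lemma is vacuous and you are really just reproducing the Nakayama step. More importantly, the paper explicitly emphasises (see the paragraph preceding the theorem and Remark~\ref{rem isomorphism for GLn is Xi a posteriori}) that for $\GL_n$ and $\PGL_n$ the argument avoids \cite{BO} and \cite{XiAMemoir} entirely, obtaining the matrix-algebra description of $J_u$ as a \emph{consequence} rather than an input. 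Your proof invokes $J_u=K_{\Zur}(Y_u\times Y_u)\simeq\Mat_N(R(\Zur))$ up front, which is fine mathematically but forfeits this feature. The payoff of the paper's approach is an independent reproof of a weak form of Xi's theorem; the payoff of yours is that it is literally the general method specialized, so nothing new needs to be checked beyond $R$-group triviality.
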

\begin{proof}
It suffices to show that $\iota_u\circ\eta_u$ is an isomorphism.
Recall that for any parabolic subgroup $P$ of $\GL_n(F)$, 
$\Ee_2(M_P)^I/\X(M_P)=\{\St_{M_P}\}$, and for all $\mu\in\X(M_P)$ 
non-strictly positive, $i_P^G(\St\otimes\nu)^I=K(u,s,\triv)$ is a standard $H$-module, and hence 
a simple $J$-module. Therefore $\iota_u\circ\eta_u$ is surjective on fibres, hence is surjective by Nakayama's 
lemma. Hence $\iota_u\circ\eta_u$ is an isomorphism. 

As $R$-groups for $\PGL_n(F)$ are also trivial (correspondingly, as remarked in Section \ref{subsubsection 
proof of (b)}, the groups $\Zur$ are connected modulo the centre and do not admit projective representations),
the same proof works for $\PGL_n(F)$. By Section \ref{subsubsection reduction to semisimple groups},
we obtain that $\eta_u$ (but not, of course, $\iota_u$) is an isomorphism for all semisimple groups
of type $A$.
\end{proof}
\begin{rem}
\label{rem isomorphism for GLn is Xi a posteriori}
Theorem \ref{thm Ju matrix algebra for GLn} is weaker than the main result of \cite{XiAMemoir}, as we do not
prove that $\iota_u\circ\eta_u$ is an isomorphism of based rings. By construction,  $(\iota\circ\eta)(t_{d'}Jt_d)$ consists of matrices with unique nonzero entry at the same coordinates, but we fall short of proving that the unique nonzero entry of $(\iota_u\circ\eta_u)(t_w)$ is given by the class of a simple representation. However, it follows \textit{a posteriori}
from Section \ref{subsection Construction of modules} and \cite[Section 10]{cellsIV}  that 
$\iota_u\circ\eta_u$ is in fact equal in both cases to the isomorphism of \cite{XiAMemoir}.
\end{rem}
\section{The rigid determinant}
\label{section rigid det}
In this section we will introduce the rigid quotient, cocentre, and determinant of \cite{CH}.

\subsection{Cocentres and rigid cocentres}

\subsubsection{Cocentres and rigid cocentres for $q>1$}
\label{subsubsection cocentres and rigid cocentres for q>1}
The connection between the rigid cocentre and the ring $J$ is immediate from the existing literature
when $\bq=q>1$. Indeed, let $K_{0,\C}(\HH|_{\bq=q}-\Mod)^{*,\,\mathrm{good}}$ be the space of good forms in 
the sense of the trace Paley-Wiener theorem \cite{BDeligneKazhdanTrace}. These are the linear functionals 
$\varphi$ such that if $\pi=i_P^G(\sigma\otimes\nu)$ for $\sigma\in\Ee_2(M_P)$, then $\varphi([\pi])$ depends algebraically on the unramified character $\nu$. In particular, 
forms that are constant in $\nu$ are good.	

Lusztig's map $\phi$ induces the diagram
\begin{center}
\begin{tikzcd}
\Hrs\arrow[dd, "\sim", near start]\arrow[rr, hook]\arrow[dr, "\phir_q", "\sim" near end]&&\hhh(H)\arrow[dd, "\sim", near start]\arrow[dr, "\bar{\phi_q}", "\sim" near end]&\\
&\Jrs\arrow[dl, "\sim"]\arrow[rr, hook, crossing over]&&\hhh(J)\arrow[dl, "\sim"]\\
K_{0,\C}(\HH|_{\bq=q}-\Mod)^{*,\,\mathrm{const}}\arrow[rr, hook]&&
K_{0,\C}(\HH|_{\bq=q}-\Mod)^{*,\,\mathrm{good}},
\end{tikzcd}
\end{center}
where the right vertical morphism is an isomorphism by \cite{BDeligneKazhdanTrace}, the right triangle exists by \cite[Lemma 4]{Plancherel}, and 
the map $\bar{\phi}_q$ induced by $\phi_q$ is an isomorphism by \cite{BDD}. The rear rectangle is Cartesian by definition. The diagram gives
\begin{prop}
\label{prop phir isomorphism q>1 and density for J}
\begin{enumerate}
\item 
There is an isomorphism $\phir_q\colon\Hrs\to\Jrs$ when $q>1$.
\item
Density of characters hold for $J$ for $q>1$.
\end{enumerate}
\end{prop}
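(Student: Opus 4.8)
The plan is to read off both claims from the commutative diagram that precedes the proposition, so the work is really in assembling the diagram's arrows from cited results. First I would recall the relevant objects: $\Hrs$ is the submodule of the cocentre $\hhh(H)$ cut out inside $K_{0,\C}(\HH|_{\bq=q}\text{-}\Mod)^{*,\,\mathrm{const}}$ (the forms constant in the unramified character), $\Jrs$ is its image under $\phi_q$ in $\hhh(J)$, and the lower row records the trace Paley--Wiener identification $\hhh(H)\xrightarrow{\sim}K_{0,\C}(\HH|_{\bq=q}\text{-}\Mod)^{*,\,\mathrm{good}}$ of Bernstein--Deligne--Kazhdan, restricting to an isomorphism $\Hrs\xrightarrow{\sim}K_{0,\C}(\HH|_{\bq=q}\text{-}\Mod)^{*,\,\mathrm{const}}$. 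The key structural inputs are: (i) the right vertical map is an isomorphism by \cite{BDeligneKazhdanTrace}; (ii) the right triangle, i.e.\ that $\phi_q$ carries constant-in-$\nu$ forms to constant-in-$\nu$ forms and conversely, holds by \cite[Lemma 4]{Plancherel} (this is exactly the statement that $\nu\mapsto\trace{\pi_\nu}{t_w}$ is constant in $\nu$); and (iii) $\bar\phi_q\colon\hhh(H)\to\hhh(J)$ is an isomorphism by \cite{BDD} (valid since $q>1$ is not a root of unity in $\Spec\mathcal{A}=\C^\times$). The rear rectangle is Cartesian by the definition of $\Hrs$ and $\Jrs$, so all the remaining arrows are forced to be isomorphisms by a diagram chase.

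For part (1): given that $\bar\phi_q$ is an isomorphism and that it restricts to a map $\phir_q\colon\Hrs\to\Jrs$ (because $\Jrs$ is \emph{defined} as $\phir_q(\Hrs)$, so surjectivity is automatic and injectivity follows from injectivity of $\bar\phi_q$), we conclude $\phir_q$ is an isomorphism. I would spell out only that the constant-in-$\nu$ condition is preserved in both directions under $\bar\phi_q$, which is the content of (ii) combined with the trace Paley--Wiener identifications, and that $\Hrs$ maps isomorphically onto $K_{0,\C}(\HH|_{\bq=q}\text{-}\Mod)^{*,\,\mathrm{const}}$ under the left vertical arrow.

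For part (2): density of characters for $J$ at $q>1$ means that the natural pairing between $\hhh(J)$ and (the complexified Grothendieck group of) finite-dimensional $J$-modules is nondegenerate on the left---equivalently, that a $J$-module whose character is the zero functional is itself zero in the Grothendieck group. I would deduce this by transporting the density-of-characters statement for $H$ (which holds for $q>1$, since $\HH|_{\bq=q}$ is the Iwahori--Hecke algebra of $G$ and density of characters for reductive $p$-adic groups is classical; or one may invoke \cite{BDeligneKazhdanTrace} directly for $\hhh(H)$) across the isomorphism $\bar\phi_q$, using that $\phi_q\colon H\to J\otimes\mathcal{A}$ at $\bq=q$ induces an equivalence of module categories compatible with characters (recall $E(u,s,\rho)|_H=K(u,s,\rho)$ for $q>1$, as recorded in the excerpt). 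Concretely: if $[\pi]$ is a virtual $J$-module with $\trace{\pi}{j}=0$ for all $j\in J$, then pulling back along $\phi_q$ gives a virtual $H$-module with vanishing character, hence zero; and this correspondence is a bijection on Grothendieck groups, so $[\pi]=0$.

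The main obstacle---really the only place requiring care---is making precise the equivalence between ``forms on $H$'' and ``forms on $J$'' in a way that genuinely matches the rigid submodules, and in particular verifying that the rear rectangle is Cartesian: one must check that a good form on $\HH|_{\bq=q}$ is constant in $\nu$ if and only if its image under $\bar\phi_q$ is, which rests on \cite[Lemma 4]{Plancherel} and on the compatibility of $\phi_q$ with parabolic induction. Everything else is a formal consequence of the four cited isomorphisms and a diagram chase, so I would keep the proof short, citing \cite{BDeligneKazhdanTrace}, \cite{Plancherel}, and \cite{BDD} for the three nontrivial arrows and remarking that the fourth (the left vertical) is then forced.
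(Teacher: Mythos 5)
Your proposal is correct and takes essentially the same route as the paper, which simply presents the commutative diagram and reads off both assertions; you cite the same three inputs (\cite{BDeligneKazhdanTrace}, \cite[Lemma 4]{Plancherel}, and \cite{BDD}) and chase the same diagram, and your explanation of why the rear rectangle is Cartesian is the right one.

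Two small imprecisions in your discussion of part (2), though neither invalidates the argument. First, you describe ``density of characters'' as nondegeneracy on the left of the pairing $\hhh(J)\otimes K_0(J)\to\C$ and then say this is ``equivalently'' the assertion that a virtual $J$-module with vanishing character is zero; these are \emph{dual} conditions (injectivity of $\hhh(J)\to K_0(J)^*$ versus injectivity of $K_0(J)\to\hhh(J)^*$), not equivalent ones. Density of characters in the sense of \cite{BDeligneKazhdanTrace} is the first of these, and the concrete argument you then spell out proves the second. Both statements do follow from the diagram---the isomorphism $\hhh(J)\xrightarrow{\sim}K_{0,\C}(\HH|_{\bq=q}\text{-}\Mod)^{*,\,\mathrm{good}}$ gives the first directly, and its compatibility with the bijection of irreducibles gives the second---so the conclusion stands, but the ``equivalently'' should be removed. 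Second, $\phi_q$ does not induce an equivalence of module categories: restriction along $\phi_q$ sends the simple $J$-module $E(u,s,\rho)$ to the \emph{standard} $H$-module $K(u,s,\rho)$, not a simple one. What is true, and what your argument actually uses, is that the induced map on Grothendieck groups is unitriangular with respect to the $a$-function order and hence a bijection; stating it as a categorical equivalence overclaims.
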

Equivalently, the spaces
\[
\Hrs:=\sets{h\in \hhh(H)}{\trace{-}{h}|_{\RHdi}=0}
\]
and 
\[
\Jrs:=\sets{j\in \hhh(J)}{\trace{-}{j}|_{\RHdi}=0}
\]
of forms descending to the rigid quotient $\RHr:=\RH/\RHdi$ are isomorphic, 
where we recall from \cite{CH} that, writing $i_\theta$ for the corresponding parabolic induction	 
\begin{multline*}
\RHdi=\spn{\sets{i_\theta(\sigma)-i_\theta(\sigma\otimes\chi)}{\theta\subsetneq\Pi,~\sigma\in\mathcal{R}(\HH_\theta),~\chi\in\HomOver{\Z}{X\cap\Q R/X\cap\Q \theta}{\C^\times}}}
\\
\subset
\RH:=K_{0,\C}(\HH|_{\bq=q}-\Mod).
\end{multline*}
This picture also makes transparent the upper-triangularity of the Gram matrix of the rigid pairing 
proven in \cite{CH}. Namely, it will follow from the relationship between $J$-modules and 
$\HH$-modules together with upper-triangularity of Lusztig's map $\phi$ with respect to the Kazhdan-Lusztig basis
(although in the sequel we will opt to work with lower-triangular matrices).
Modulo the isomorphism $\phir$ to be defined below, the rigid 
pairing will actually have a block diagonal Gram matrix, with blocks indexed by the two-sided cells of 
$W$. 

For every $q>1$, we may chose the same basis of tempered $H$-modules
of $\RHr$. We do so now.
\begin{dfn}
\label{dfn alpha ordered basis}
Let $\alpha$ be a fixed basis of $\mathcal{R}_{\mathrm{rigid}}$ consisting of 
tempered representations. We further chose an ordering on $\alpha$ compatible with Lusztig's
$a$-function under the Kazhdan-Lusztig parametrization recalled in Section \ref{subsubsection and affine and asymptotic Hecke algebras}.
\end{dfn}
%
%
\subsubsection{Cocentres of $\HH$ and $J\otimes\mathcal{A}$ as $\mathcal{A}$-modules}
\label{subsubsection cocentres of HH and J}
In principal, the spectrally-defined space $\Hrs$, and hence $\Jrs$, could depend on $q$, but as recalled in the introduction, a main point of \cite{HN} (in the unequal-parameters case, \cite{CH}) is that this dependence is minimal.
In this section we recall two facts about
$\hhh(\HH)$ and $\hhh(J\otimes\mathcal{A})$ needed to state this independence.
\begin{theorem}[\cite{HN}, Theorem 6.7]
\label{thm CH cocentre is free}
For any conjugacy class $\oO$ in $W$, the image $T_{\oO}$ in $\hhh(\HH)$ of $T_w$ for any $w\in \oO$ of minimal length is well-defined, and
$\hhh(\HH)$ is a free $\mathcal{A}$-module with basis 
$\{T_{\oO}\}_{\oO}$.
\end{theorem}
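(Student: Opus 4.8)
The plan is to follow the two-step approach of Geck--Pfeiffer for finite Hecke algebras, with the combinatorics of minimal-length elements in conjugacy classes of the extended affine Weyl group $W$ (due to He and He--Nie) as the essential input. The first point is that $T_\oO$ is well-defined. This comes down to two cocentre relations for cyclic shifts: if $w' = sws$ for a simple reflection $s$ with $\ell(w') = \ell(w)$, then $T_{w'} \equiv T_w$ modulo $[\HH,\HH]$. Indeed, if $\ell(sw) = \ell(w)+1$ then $T_s T_w = T_{sw} = T_{w'}T_s$, so $T_{w'} = T_s T_w T_s^{-1}$ is conjugate to $T_w$; and if $\ell(sw) = \ell(w)-1$ then $T_w = T_s T_{sw}$ while $T_{w'} = T_{sw}T_s$, so $T_{w'} \equiv T_s T_{sw} = T_w$. (Conjugation by a length-zero element $\omega \in W$ is even easier, as $T_\omega$ is a unit.) He--Nie's theorem says any two elements of $\oO_{\min}$ are joined by a chain of length-preserving cyclic shifts inside $\oO_{\min}$, so iterating this relation gives a well-defined class $T_\oO$.

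Next I would show $\{T_\oO\}_\oO$ spans $\hhh(\HH)$ over $\mathcal{A}$ by proving, by induction on $\ell(w)$, that each $T_w$ lies in this span. If $w \in \oO_{\min}$ this is immediate. Otherwise He--Nie's theorem gives a chain of non-length-increasing cyclic shifts from $w$ toward $\oO_{\min}$ along which the length strictly drops at some stage; using the well-definedness relation on the length-preserving initial segment we may assume $\ell(sws) = \ell(w)-2$ for some simple reflection $s$, whence $\ell(sw) = \ell(w)-1$. Then $T_w = T_s T_{sw} = T_s T_{sws} T_s$, and modulo $[\HH,\HH]$ we have $T_s T_{sws} T_s \equiv T_{sws} T_s^2 = T_{sws}\big((\bq-1)T_s+\bq\big) = (\bq-1)T_{sw} + \bq\,T_{sws}$, with $T_{sw}, T_{sws}$ of length $<\ell(w)$. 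Induction applies, and along the way one reads off class polynomials $f_{w,\oO}\in\mathcal{A}$ with $T_w \equiv \sum_\oO f_{w,\oO}T_\oO$; in particular $\dim_{\C}\hhh(\HH|_{\bq=q}) \le \#\{\text{conjugacy classes of }W\}$ for every $q$.

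The hard part is $\mathcal{A}$-linear independence of $\{T_\oO\}_\oO$, equivalently that $\hhh(\HH)$ is torsion-free of the expected rank. Since $\hhh(-)$ is right exact it commutes with base change, so specialising $\bq^{1/2}\mapsto\zeta\in\C^\times$ gives $\hhh(\HH)\otimes_{\mathcal A}\C_\zeta = \hhh(\HH|_{\bq=\zeta^2})$; at $\zeta=1$ this is $\hhh(\C[W])$, which has the class-sum basis indexed by conjugacy classes of $W$, with $T_\oO \mapsto [\oO]$. Because $\mathcal{A}$ is a PID and the $T_\oO$ already span, a relation $\sum_\oO c_\oO T_\oO = 0$ yields $c_\oO(\zeta)=0$ for every $\oO$ and every $\zeta$ at which the images of the $T_\oO$ in $\hhh(\HH|_{\bq=\zeta^2})$ are linearly independent; so it is enough to show this independence holds for $\zeta$ outside a proper Zariski-closed set, i.e.\ that $\dim_\C\hhh(\HH|_{\bq=q}) \ge \#\{\text{conjugacy classes of }W\}$ generically. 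This generic lower bound is the crux. For finite $W$ it is Geck--Pfeiffer, via generic semisimplicity and the Tits deformation argument ($\dim\hhh = $ number of irreducibles $=$ number of conjugacy classes); in the affine case He--Nie prove it by decomposing the cocentre according to a parabolic/reflection-subgroup induction that reduces an arbitrary conjugacy class to an elliptic one, where the finite-group input and explicit degree estimates on the $f_{w,\oO}$ can be brought to bear. I expect this elliptic-reduction step to be the only genuinely difficult ingredient; well-definedness and spanning are essentially formal once the minimal-length combinatorics are available.
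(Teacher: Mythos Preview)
The paper does not give its own proof of this statement; it is quoted from He--Nie \cite{HN} as input and used as a black box. Your outline matches the He--Nie strategy: the well-definedness and spanning arguments via cyclic shifts and length induction are correct and standard once the minimal-length combinatorics of \cite{HN} are granted.

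For linear independence there are two imprecisions worth flagging. First, the inequality $\dim_\C\hhh(\HH|_{\bq=q}) \ge \#\{\text{conjugacy classes of }W\}$ is vacuous as written, since for affine $W$ both sides are countably infinite; what you actually need is that the images of the $T_\oO$ remain linearly independent in $\hhh(\HH|_{\bq=q})$ for $q$ in a Zariski-dense set. Second, the specialization at $\bq=1$ alone does not close the argument: knowing that every relation $\sum_\oO c_\oO T_\oO=0$ forces $c_\oO(1)=0$ only lets you factor out $\bq^{1/2}-1$, and from $(\bq^{1/2}-1)x \in [\HH,\HH]$ one cannot deduce $x\in[\HH,\HH]$ without already knowing $\hhh(\HH)$ is torsion-free, which is precisely the point at issue. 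You are right that the genuine work lies elsewhere; He--Nie establish independence not by a pure specialization argument but by pairing the cocentre against traces of sufficiently many finite-dimensional $\HH$-modules, reducing via parabolic induction to elliptic classes and ultimately to the finite Geck--Pfeiffer situation.
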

\begin{theorem}[\cite{BDD} and appendix by Bezrukavnikov-Braverman-Kazhdan]
\label{thm BDD cocentre isomorphism}
The map
\begin{equation}
\label{eq BDD cocentre isomorphism}
\bar{\phi}\colon \hhh(\HH)\left[\frac{1}{P_{W_f}}\right]\onto \hhh(J\otimes_\C\mathcal{A})\left[\frac{1}{P_{W_f}}\right]
\end{equation}
is surjective, where $P_{W_f}$ is the Poincar\'{e} polynomial of $W_f$. If $q$ is moreover an admissible 
parameter in the sense of \cite[Def. 6.6]{CH}, in particular if $q$ is not a root of unity, then 
$\bar{\phi}$ specializes to an isomorphism $\bar{\phi}_q$. 
\end{theorem}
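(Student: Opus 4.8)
The plan is to prove the surjectivity of $\bar\phi$ over $\mathcal{A}[P_{W_f}^{-1}]$, and then deduce the specialized isomorphism $\bar\phi_q$ from it by base change along $\mathcal{A}[P_{W_f}^{-1}]\to\kappa(q)$, using that $\hhh(\HH)$ is $\mathcal{A}$-free (Theorem \ref{thm CH cocentre is free}).

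For surjectivity, the first step is to exploit the compatibility of $\phi$ with the two-sided cell filtrations. Let $\HH^{\ge a}$ be the $\mathcal{A}$-span of the $C_w$ with $a(w)\ge a$ and $J^{\ge a}=\bigoplus_{a(\cc)\ge a}J_\cc\otimes\mathcal{A}$; both are two-sided ideals, and by \eqref{eqn phi upper-triangular} the map $\phi$ carries $\HH^{\ge a}$ into $J^{\ge a}$ and, by Lusztig's theorem that $J$ is the associated graded of $\HH$ (up to the $\bq$-rescaling built into the passage from $\phi^\circ$ to $\phi$), induces an isomorphism $\HH^{\ge a}/\HH^{\ge a+1}\xrightarrow{\sim}\bigoplus_{a(\cc)=a}J_\cc\otimes\mathcal{A}$. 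Since $\hhh(-)$ is right exact, I would run a downward induction on the (finitely many) values of the $a$-function: surjectivity of $\bar\phi$ onto $\hhh(J^{\ge a})$ will follow from surjectivity onto $\hhh(J^{\ge a+1})$ together with surjectivity onto the cocentre of the top layer $\bigoplus_{a(\cc)=a}\hhh(J_\cc\otimes\mathcal{A})$, once the connecting maps in the filtration are controlled; the same downward induction, using that $\phi$ is an isomorphism on each graded layer, also yields injectivity, so that $\bar\phi[P_{W_f}^{-1}]$ is in fact an isomorphism.

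It therefore remains to treat one cell $\cc=\cc(u)$ at a time, and this is where $P_{W_f}$ enters. By \cite{BO} and Lemma \ref{lem opp-square of transitive set is trivial}, $J_\cc$ is a matrix algebra over $R(\Zur)$ whose central extensions are carried off the diagonal, so $\hhh(J_\cc\otimes\mathcal{A})\cong R(\Zur)\otimes\mathcal{A}$, free of rank one over $Z(J_\cc)\otimes\mathcal{A}$ and generated by $[t_d]$ for any $d\in\mathcal{D}\cap\cc$; thus it suffices to realize $[t_d]$ and the classes $[z\,t_d]$ for $z$ running over algebra generators of $R(\Zur)$. For the lowest cell $\cc_0$ this is immediate: there $\Zur=G^\vee$, each diagonal block $t_dJt_d$ is a copy of $R(G^\vee)$, and the required elements of $\HH$ come from the spherical subalgebra $e\HH e\cong\mathcal{A}\otimes R(G^\vee)$ via the Satake isomorphism, where the spherical idempotent $e=P_{W_f}(\bq)^{-1}\sum_{w\in W_f}T_w$ lies in $\HH[P_{W_f}^{-1}]$ and not in $\HH$ --- so the Poincar\'e polynomial is forced in exactly here. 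For a general cell I would run the same argument relative to a Levi: by the bookkeeping behind Proposition \ref{prop quotient of unramified characters is flat}, $u$ is distinguished in a semisimple centralizer inside some $M_P^\vee$, and the generators of the corresponding block are produced from the parabolic analogue of the spherical idempotent, whose denominator is $P_{W_{M_P}}(\bq)$, a divisor of $P_{W_f}(\bq)$. An equivalent, more computational route is via \cite{Plancherel}: the preimage $\phi^{-1}(t_w)$ in the completed Hecke algebra has only powers of $P_{W_f}$ in its denominators, so a suitable truncation of it represents a class in $\hhh(\HH[P_{W_f}^{-1}])$ mapping to $[t_w]$.

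Finally, for the specialization: once $\bar\phi[P_{W_f}^{-1}]$ is an isomorphism of $\mathcal{A}[P_{W_f}^{-1}]$-modules and the source is free over $\mathcal{A}[P_{W_f}^{-1}]$ (Theorem \ref{thm CH cocentre is free}), for any $q$ that is not a root of $P_{W_f}$ --- in particular for any $q$ not a root of unity, and more generally any admissible parameter in the sense of \cite[Def.~6.6]{CH}, admissibility ensuring that $\hhh(\HH)$ specializes to $\hhh(\HH|_{\bq=q})$ --- tensoring the isomorphism with $\kappa(q)$ keeps it an isomorphism, since $\Tor_1$ against $\kappa(q)$ vanishes by freeness; the left side becomes $\hhh(\HH|_{\bq=q})$ and the right side becomes $\hhh(J)$, which yields $\bar\phi_q$. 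The hard part of the whole argument is the cell-wise step --- showing that inverting $P_{W_f}$ is not merely necessary but sufficient, i.e. that the torsion cokernel of $\bar\phi$, a priori supported on some finite set of roots of unity, is actually supported on $\mathcal{V}(P_{W_f})$ --- which is the content contributed by the appendix to \cite{BDD} and requires the precise control of $\phi$ in the Kazhdan-Lusztig basis described above.
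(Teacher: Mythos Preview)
This theorem is not proved in the paper you are reading; the only text following the statement is the attribution ``Surjectivity is proven in the body of \cite{BDD}, injectivity is proven in the appendix.''  So there is no proof in this paper to compare your proposal against.  That said, your sketch has two genuine gaps worth naming.

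First, the filtration argument does not do what you claim.  You correctly note that $\hhh(-)$ is only right exact, but then assert that downward induction on the $a$-filtration yields both surjectivity and injectivity of $\bar\phi$ from the fact that $\phi$ is an isomorphism on each graded layer of the \emph{algebras}.  This does not follow: the commutator subspace of $\HH$ is not compatible with the cell filtration, so $\hhh(\HH^{\ge a}/\HH^{>a})$ is not a subquotient of $\hhh(\HH)$, and knowing $\phi$ is an isomorphism on graded pieces of the algebras says nothing direct about $\bar\phi$ on cocentres.  Your ``more computational route'' via the formula for $\phi^{-1}(t_w)$ from \cite{Plancherel} is closer to how surjectivity is actually argued in \cite{BDD}; injectivity is genuinely a separate argument (as you concede in your last sentence), and your earlier parenthetical claiming it falls out of the same induction contradicts that concession.

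Second, your description of $\hhh(J_\cc\otimes\mathcal{A})$ as free of rank one over $R(\Zur)\otimes\mathcal{A}$, generated by a single $[t_d]$, is not correct in general.  When $Y_u$ has several $\Zur$-orbits, or when nontrivial central extensions appear off the diagonal, $J_\cc$ is not a matrix algebra over $R(\Zur)$ and its cocentre is larger.  Already for $G^\vee=\PGL_2$ (Example~\ref{ex SL2 central ext}), $J_0$ has two diagonal blocks $R(\PGL_2)$ joined by off-diagonal blocks $R(\SL_2)_{\mathrm{odd}}$, and the resulting cocentre is not a single copy of $R(\PGL_2)$.  So the cell-wise step, even for surjectivity alone, needs more than producing one $[t_d]$ per cell.
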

Surjectivity is proven in the body of \cite{BDD}, injectivity is proven in the appendix. 

It is natural that $\bar{\phi}$ is not an isomorphism outside the localization
$\mathcal{A}[1/P_{Wf}]$, as in this case the connection between the representation theory of 
$\HH|_{\bq=q}$ and $J$ is lost \cite{XiJAMS}. However, it is equally natural to expect that an interesting
statement survives without the admissibility hypothesis. 
\subsubsection{The rigid cocentre as an $\mathcal{A}$-module}
Following \cite{CH}, we recall that the \emph{rigid cocentre} is the free $\mathcal{A}$-submodule
of $\hhh(\HH)$ defined as
\[
\Hr:=\spn{\sets{T_\oO}{\oO\in\mathrm{cl}(W)_0}},
\]
where $\mathrm{cl}(W)_0$ is the finite set of conjugacy classes in $W$ with zero Newton point, in the sense
of \textit{op. cit.} Roughly, these are the conjugacy classes of finite-order elements of $W$.

It is nontrivial to relate $\Hr$ to the space of forms $\Hrs$ on $\RH$ descending to $\RHr$. 
\begin{theorem}[\cite{CH}, Theorem 7.1, Theorem 8.2]
\label{thm CH separation}
Suppose that $q$ is admissible. Then $\Hrq=\Hrs$, and
the free $\mathcal{A}$-module $\Hr$ has rank equal to $\dim_\C\Hrq$ for any admissible $q$.
\end{theorem}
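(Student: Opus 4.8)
The plan is to establish the equality of subspaces $\Hrq=\Hrs$ inside $\hhh(H)$ and then read off the rank statement. The two standing inputs, both available precisely because $q$ is admissible, are: (i) the specialization at $q$ of Theorem~\ref{thm CH cocentre is free}, which gives a $\C$-basis $\{T_\oO\}_{\oO\in\mathrm{cl}(W)}$ of $\hhh(H)$ for which $\Hrq$ is, by definition, the span of the $T_\oO$ with $\oO$ of zero Newton point; and (ii) the trace Paley--Wiener theorem \cite{BDeligneKazhdanTrace}, which at admissible $q$ makes trace forms separate the points of $\hhh(H)$, so that the trace pairing embeds $\hhh(H)$ into the linear dual of $\RH$. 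Admissibility is exactly what prevents either of these from degenerating (e.g.\ at roots of $P_{W_f}$).

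\emph{The inclusion $\Hrq\subseteq\Hrs$} (this is \cite[Thm.~7.1]{CH}). Fix $\oO\in\mathrm{cl}(W)_0$ with minimal-length representative $w$; one must show $\trace{i_\theta(\sigma)}{T_w}=\trace{i_\theta(\sigma\otimes\chi)}{T_w}$ for every proper $\theta\subsetneq\Pi$, every $\sigma\in\mathcal{R}(\HH_\theta)$ and every $\chi\in\HomOver{\Z}{X\cap\Q R/X\cap\Q\theta}{\C^\times}$. The engine is the induction formula for cocentre traces, which expresses $\trace{i_\theta(\sigma)}{T_w}$ as a $\Z[q^{\pm1}]$-linear combination of values $\trace{\sigma}{T_{w'}}$, where $w'$ runs over minimal-length representatives of the conjugacy classes of $W_\theta\ltimes X$ obtained from $\oO$ by restriction, with coefficients the extended affine class polynomials of He--Nie \cite{HN}. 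One then checks that the twist $\sigma\mapsto\sigma\otimes\chi$ multiplies $\trace{\sigma}{T_{w'}}$ by a scalar depending only on $\chi$ and on the translation part of a representative of $w'$, a scalar that is $1$ as soon as $w'$ has zero Newton point; and, finally, that restriction of a zero-Newton-point class of $W$ produces only such $w'$. This last point — the compatibility of the Newton-point stratification with parabolic restriction of the cocentre — is the heart of the matter.

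\emph{The inclusion $\Hrs\subseteq\Hrq$.} From (i) we already have $\Hrq\subseteq\Hrs$ with $\dim_\C\Hrq=\rank_\mathcal{A}\Hr$. Using (ii), $\Hrs=\hhh(H)\cap(\RHdi)^{\perp}$ sits inside $(\RHdi)^{\perp}=(\RHr)^{*}$, so $\dim_\C\Hrs\le\dim_\C\RHr$; hence it suffices to prove the identity $\dim_\C\RHr=\rank_\mathcal{A}\Hr=\#\mathrm{cl}(W)_0$, which is \cite[Thm.~8.2]{CH}. For this one uses that modulo $\RHdi$ the continuous central-character parameter is killed, so that $\RHr$ is spanned by the classes of elliptic tempered $H$-modules, and then matches that count with $\#\mathrm{cl}(W)_0$ by an elliptic-pairing / Euler-characteristic argument in the spirit of elliptic representation theory (Reeder, Opdam--Solleveld). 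Squeezing, $\rank_\mathcal{A}\Hr\le\dim_\C\Hrs\le\dim_\C\RHr=\rank_\mathcal{A}\Hr$, so all the inequalities are equalities and $\Hrq=\Hrs$ for every admissible $q$.

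\emph{The rank statement} is then immediate: by Theorem~\ref{thm CH cocentre is free} together with \cite{HN}, $\Hr$ is a free $\mathcal{A}$-module and a direct summand of $\hhh(\HH)$ with basis $\{T_\oO\}_{\oO\in\mathrm{cl}(W)_0}$, so $\rank_\mathcal{A}\Hr=\#\mathrm{cl}(W)_0=\dim_\C\Hrq$ for every admissible $q$, while $\Hrq=\Hrs$ was just shown. \emph{The two genuinely hard inputs} are the Newton-point/parabolic-restriction compatibility used in the first inclusion and the dimension count $\dim_\C\RHr=\#\mathrm{cl}(W)_0$ used in the second; both are statements about minimal-length elements, class polynomials, and elliptic representation theory of the extended affine Weyl group, and they form the technical core of \cite[\S\S 7--8]{CH}.
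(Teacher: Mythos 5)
This statement is imported verbatim from Ciubotaru--He: the paper records it as a citation (Theorem~7.1 and Theorem~8.2 of \cite{CH}) and offers no proof of its own, immediately moving on to use it. There is therefore no argument in the paper to compare your proposal against.

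As a blind reconstruction of the argument in \cite[\S\S7--8]{CH}, your outline is plausible at a high level: the inclusion $\Hrq\subseteq\Hrs$ via the cocentre induction formula and the observation that the unramified twist contributes a character value at a translation part that is trivial on zero-Newton-point classes, followed by a squeeze using $\dim_\C\Hrs\le\dim_\C\RHr$ and the dimension count $\dim_\C\RHr=\#\cl(W)_0$. Two small cautions on the squeeze: the embedding $\Hrs\hookrightarrow(\RHr)^*$ is not free --- it needs the trace Paley--Wiener/density-of-characters input at $q$, and the identification $\dim_\C\Hrq=\rank_\mathcal{A}\Hr$ is not entirely formal either, since it implicitly uses that the specialization map $\hhh(\HH)\otimes_\mathcal{A}\C_q\to\hhh(H)$ is an isomorphism at admissible $q$ (this is where Theorem~\ref{thm CH cocentre is free} and the admissibility hypothesis genuinely enter). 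You flag the dimension count as one of the two technical cores, which is correct; but since the paper itself simply cites \cite{CH}, the expected answer here is the citation, not a reproof.
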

%
%
The fact that $J$ is defined 
over $\Z$ will mean that one gets the $J$-analogue $\Jr$ of $\Hr$ from the $J$-analogue
$\Jrs$ of $\Hrs$ more-or-less automatically. 

We emphasize that while both $\hhh(\HH)$ and $\hhh(J\otimes\mathcal{A})$ are free $\mathcal{A}$-modules,
and hence have descriptions independent of $q$, the definition of $\Jrs$
from Section \ref{subsubsection cocentres and rigid cocentres for q>1} makes sense \textit{a priori} 
only for $q>1$. However, $\Jrs$ does admit a description independent of $q$, corresponding to
that of $\Hr$. This amounts to giving a basis of $\Jrs$, which we now do.

Let $q>1$ and recall the elements $t_{\omega d,\rho}$ from Section \ref{section Rank 1 idempotents}  defined for $d\in\cc$ such that $\pi_0\left(\Zur(u(\cc))/Z(\Gd)\right)$ is abelian. Write $U=\sets{u}{\Zur~\text{is finite}}$. 
For such $u\in U$, rigidity is vacuous, and the corresponding part of the rigid cocentre is just the corresponding part of the entire cocentre. Let $S_2\subset\bigoplus_{u\in U} \hhh(J_u)$ be a basis of the space of forms annihilating $K(u',s,\rho)$ for $u'\not\in U$, which
exists by Theorem \ref{thm BDD cocentre isomorphism}. Recall the notation $t_{\omega d, \rho}$ from the proof of Corollary \ref{cor classical idempotents are integral}, and set
\[
S:=\sets{[t_{\omega d,\rho]}}{d\cap\mathcal{D}\setminus\cc(U),~\rho\subset\C[Y_d],~\omega\in\Irr(Z(G^\vee)) }\cup S_2\subset\hhh(J).
\]
When $\pi_0\left(\Zur(u(\cc))/Z(\Gd)\right)=\Sn_3$, put $S$ to be the union $S_2$ and the set of all $\phi(T_\omega)\star t$, where $t$ is any of the idempotents for $\Sn_3$ constructed in Section \ref{subsubsection Equivariant K-theory of square of a finite set}.

\begin{lem}
\label{lem Jrs basis and rank}
The set $S$ specializes to a spanning set of $\Jrs$ for  every $q>1$. We may select the same subset of $S$ as a 
basis of $\Jrs$ for every $q>1$.
\end{lem}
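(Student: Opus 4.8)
The plan is to reduce the statement to three facts already available in the text: (i) that for $q>1$ the rigid cocentre $\Jrs$ is identified, via $\phir_q$, with $\Hrs$ and hence with the space $K_{0,\C}(\HH|_{\bq=q}\text{-}\Mod)^{*,\,\mathrm{const}}$ of forms that are constant in the unramified parameter $\nu$ (Proposition \ref{prop phir isomorphism q>1 and density for J} and the Cartesian diagram preceding it); (ii) that each functional $[\pi]\mapsto\trace{\pi}{t_{\omega d,\rho}}$ is such a constant form, by Lemma \ref{lem pi(trho) has rank at most 1} together with \cite[Lemma 5]{Plancherel}, so that $S$ really does land in $\Jrs$; and (iii) that the $t_{\omega d,\rho}$, $\omega\in\Irr(Z(\Gd))$, together with $S_2$, span enough functionals to separate all the standard modules modulo $\RHdi$. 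Once spanning is established for every $q>1$, the selection of a common basis is a dimension count: by Theorem \ref{thm CH separation} (and its $J$-side shadow obtained from Theorem \ref{thm BDD cocentre isomorphism}, valid for admissible $q$, hence for all but finitely many $q>1$) the dimension of $\Jrs$ is independent of $q$, so a subset of $S$ that is a basis for one generic $q$ remains linearly independent, hence a basis, for all $q>1$ by semicontinuity of rank of the Gram matrix against the $\alpha$-basis of Definition \ref{dfn alpha ordered basis}.

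First I would treat the ``easy'' summands, indexed by $u\in U$ with $\Zur$ finite: there rigidity is vacuous, so the rigid cocentre of $J_u$ is all of $\hhh(J_u)$, and $S_2$ is by construction a basis of the forms annihilating the remaining standard modules; density of characters for $J$ (Proposition \ref{prop phir isomorphism q>1 and density for J}(2)) guarantees $S_2$ spans this block. Then, for $u\notin U$ with $\pi_0(\Zur/Z(\Gd))$ abelian, I would use \eqref{eqn trho maps to monomial matrix}: under $\iota_u\circ\eta_u$ the element $t_{d,\rho}$ goes to a diagonal idempotent with a single $1$, so $\trace{\pi_{ij}}{t_{d,\rho}}$ is the indicator of whether the basis vector $v_{d,\rho}$ occurs in $\pi_{ij}^I$, a locally constant — in fact by Lemma \ref{lem regularity of matrix coefficients} regular and, being $0$ or $1$, constant — function of $\nu$. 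Twisting by $\phi(T_\omega)$ for $\omega\in Z(\Gd)$ and restricting to the various $\omega$-isotypic summands cycles through the distinct central characters, and I would argue, using Remark \ref{rem s'=s observation and rank bound} and the fact that $1_{J_u}=\sum_{d,\rho}t_{d,\rho}$, that the functionals $\{[\pi]\mapsto\trace{\pi}{t_{\omega d,\rho}}\}$ separate the classes $[K(u,s,\rho')]$ in $\RHr_u$; since constant-in-$\nu$ forms are exactly the ones that descend to $\RHr$, this spanning statement is equivalent to $S$ spanning $\Jrs$.

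The main obstacle is the spanning/separation argument for the non-finite $u$: one must check that the span of the $\trace{\pi}{t_{\omega d,\rho}}$ is not merely contained in but equal to the space of constant forms, i.e. that no constant form on $\RHr_u$ is missed. Concretely this requires knowing the rank of $\pi(t_d)$ on each standard module — which by Remark \ref{rem s'=s observation and rank bound} is at most $\#Y_d$ and by Lemma \ref{lem pi(trho) has rank at most 1} is exactly the number of $\rho$ with $\pi(t_{d,\rho})\neq 0$ — and matching this against $\dim\Jrs$. Here I would invoke the $K$-theoretic side: $\hhh(J_u)=\hhh(K_{\Zur}(\Yext_u\times\Yext_u^{\opp}))$, whose constant-in-$s$ part is spanned by the $t_\rho$ by Lemma \ref{lem trho span rigid cocentre of K-theory}; pulling this back through $\phi$ and the extra $Z(\Gd)$-twists accounts for the full central-character variation, and Theorem \ref{thm CH separation} then pins the total dimension. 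The delicate point is that Lemma \ref{lem trho span rigid cocentre of K-theory} is stated for $Y$ transitive, so for a general cell one works orbit-by-orbit over $\mathcal D\cap\cc$ and sums, which is exactly what the decomposition $1_{J_u}=\sum_d t_d$ and Lemma \ref{lem opp-square of transitive set is trivial} (no central extensions on the diagonal, so traces are computed block-diagonally) permit.
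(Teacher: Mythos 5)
Your proposal is correct and follows essentially the same route as the paper: the containment $\spn_\C S\subset\Jrs$ comes from constancy of the traces $\trace{\pi}{t_{\omega d,\rho}}$ in $\nu$, and the reverse containment rests on the $K$-theoretic rigidity statement (Lemma \ref{lem trho span rigid cocentre of K-theory}) together with the matrix picture \eqref{eqn EJu to single Mat definition}--\eqref{eqn trho maps to monomial matrix} and the definition of $S_2$. The paper's proof is quite terse but uses exactly these ingredients, and you have filled in the same gaps the paper leaves implicit.

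One small inefficiency: for the ``same subset of $S$'' clause you invoke Theorem \ref{thm CH separation} for $q$-independence of $\dim\Jrs$ and then appeal to semicontinuity of rank of the Gram matrix. This is more machinery than is needed, and as stated the semicontinuity step points the wrong way (rank is lower semicontinuous, so it could in principle drop at special $q$). The cleaner observation — implicit in the paper and in Lemma \ref{lem J pairing is block diagonal} — is that the elements of $S$ are $q$-independent classes in $\hhh(J)$ and their traces on the tempered basis $\alpha$ of $\RHr$ are $q$-independent integers, so the Gram matrix is literally constant in $q$; once spanning holds for each $q>1$, the $q$-independent subspace $\spn_\C S\subset\hhh(J)$ equals $\Jrs$, and any maximal linearly independent subset of $S$ works uniformly. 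Your conclusion is correct, but the semicontinuity wording should be replaced with this direct observation.
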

\begin{proof}
Clearly $\spn_\C S$ is contained in $\Jrs$. 
On the other hand, let $\pi=\Ind_P^G(\sigma\otimes\nu)$ be tempered and irreducible. If under the isomorphism
\eqref{eqn pi(trho) define rational basis}, $j\in t_dJt_d$ is such that $\pi(j)\mu_\rho=\delta_{\rho,\rho'}$
for some $\rho'$, then there is $j'\in S$ such that $\pi(j')=\pi(j)$ by construction of the idempotents in Section \ref{subsubsection Equivariant K-theory of square of a finite set} and the definition of $S$; the $\mu_{\rho'}$ in question give a basis of the underlying vector space of $\pi$ for all $\nu$. Now together
with \eqref{eqn EJu to single Mat definition}, \eqref{eqn trho maps to monomial matrix}, and the definition of $S_2$, it becomes
clear that $\Jrs\subset\spn_\C S$; the $t_{d,\rho}$ can separate irreducible 
direct summands at points of reducibility at $\pi$ and the $t_{\omega d,\rho}$ 
can distinguish between non-identified twists in $\RHr$.
\end{proof}
\begin{dfn}
\label{dfn beta and Jr}
The \emph{rigid cocentre} $\Jr$ \emph{of} $J$ is $\spn_\Z S\subset\hhh(J)$. Let $\beta\subset S$ be a subset 
specializing to a basis of $\Jrs$ for all $q>1$ as in Lemma \ref{lem Jrs basis and rank}, chosen once-and-for-all.
\end{dfn}
\begin{rem}
\label{rem extracting basis} 
It would be interesting to extract $\beta$ canonically in terms of our chosen realization of $Y_u$ in terms of 
Lusztig's canonical basis \cite{BL}.
\end{rem}
\begin{cor}
\label{cor Jr and Hr free of equal rank}
$\Jr\otimes\mathcal{A}$ and $\Hr$ are free finitely-generated $\mathcal{A}$-modules of equal rank.
\end{cor}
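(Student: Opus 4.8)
The plan is to deduce Corollary \ref{cor Jr and Hr free of equal rank} directly from the results already assembled. First I would recall that by definition $\Jr = \spn_\Z S$, so $\Jr\otimes\mathcal{A} = \spn_\mathcal{A} S$. By Lemma \ref{lem Jrs basis and rank}, the subset $\beta\subset S$ specializes to a $\C$-basis of $\Jrs$ for every $q>1$; I would argue that the elements of $\beta$ are therefore $\mathcal{A}$-linearly independent. Indeed, any $\mathcal{A}$-linear dependence among them would, upon specialization at any $q>1$, give a $\C$-linear dependence among their images in $\hhh(J)$, contradicting that they form a basis of $\Jrs\subset\hhh(J)$ at that $q$ (the specialization map $\mathcal{A}\to\C$, $\bq\mapsto q$, is surjective, and a nonzero polynomial in $\bq$ has only finitely many roots, so one may pick $q>1$ avoiding the finitely many bad values). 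Hence $\beta$ is an $\mathcal{A}$-basis of the $\mathcal{A}$-span of $S$, after checking that the remaining elements of $S$ lie in $\spn_\mathcal{A}\beta$; this last point again follows by specialization, since at each $q>1$ every element of $S$ lies in $\Jrs = \spn_\C\beta|_{\bq=q}$, and the structure constants expressing $S\setminus\beta$ in terms of $\beta$ are forced to be (the specializations of) fixed elements of $\mathcal{A}$ by working in the free $\mathcal{A}$-module $\hhh(J\otimes\mathcal{A})$, which is free by Theorems \ref{thm CH cocentre is free} and \ref{thm BDD cocentre isomorphism}. Thus $\Jr\otimes\mathcal{A}$ is free of rank $\#\beta = \dim_\C\Jrs$ for admissible $q$.

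Next I would invoke Theorem \ref{thm CH separation}: for admissible $q$, $\Hrq = \Hrs$ and the free $\mathcal{A}$-module $\Hr$ has rank $\dim_\C\Hrq$. Combining this with Proposition \ref{prop phir isomorphism q>1 and density for J}(1), which furnishes the isomorphism $\phir_q\colon\Hrs\xrightarrow{\ \sim\ }\Jrs$ for $q>1$, gives $\dim_\C\Hrs = \dim_\C\Jrs$ for every admissible $q>1$. Therefore
\[
\rank_\mathcal{A}(\Jr\otimes\mathcal{A}) = \dim_\C\Jrs = \dim_\C\Hrs = \dim_\C\Hrq = \rank_\mathcal{A}\Hr,
\]
where the first and last equalities use freeness (rank equals the dimension of any specialization at an admissible parameter) and the middle equalities use Theorem \ref{thm CH separation} and the isomorphism $\phir_q$. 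This establishes both the freeness and the equality of ranks.

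The main obstacle I anticipate is the bookkeeping needed to promote the pointwise (in $q$) statements of Lemma \ref{lem Jrs basis and rank} to a genuine freeness statement over $\mathcal{A}$: one must be careful that the "same subset $\beta$ for all $q>1$" really yields an $\mathcal{A}$-basis and not merely a pointwise basis, i.e. that no specialization-invisible torsion or rank jump occurs at non-admissible parameters. The cleanest route is to observe that $\Jr\otimes\mathcal{A}$ is a finitely generated submodule of the free $\mathcal{A}$-module $\hhh(J\otimes\mathcal{A})$ (free by Theorem \ref{thm BDD cocentre isomorphism}, which identifies it with $\hhh(\HH)[1/P_{W_f}]$ up to localization, combined with Theorem \ref{thm CH cocentre is free}), hence is torsion-free; a torsion-free finitely generated module over the PID-like ring $\mathcal{A} = \C[\bq^{\pm 1/2}]$ is free, and its rank is then detected by any single specialization. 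Once that structural remark is in place, the rank computation is immediate from the chain of equalities above.
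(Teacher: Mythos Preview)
Your proposal is correct and follows the same route the paper (implicitly) takes: freeness of $\Jr\otimes\mathcal{A}$ from Lemma~\ref{lem Jrs basis and rank} and Definition~\ref{dfn beta and Jr}, then the rank comparison via Proposition~\ref{prop phir isomorphism q>1 and density for J} and Theorem~\ref{thm CH separation}.

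One simplification: your ``obstacle'' paragraph and the specialization/PID machinery are unnecessary. The set $S$ and the subset $\beta$ live in $\hhh(J)$, which is a $\C$-vector space with no $\bq$-dependence whatsoever; the word ``specializes'' in Lemma~\ref{lem Jrs basis and rank} is slightly misleading. So $\beta$ being a basis of $\Jrs$ for a single $q>1$ already says $\beta$ is a $\C$-basis of $\spn_\C S$, full stop, and tensoring a $\C$-vector space with $\mathcal{A}$ over $\C$ trivially sends a $\C$-basis to an $\mathcal{A}$-basis. No torsion-freeness over a PID, no worry about structure constants varying with $q$, is needed.
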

\subsubsection{The rigid cocentre over $\mathcal{A}[\frac{1}{P_{W_f}}]$}
Having extracted Corollary \ref{cor Jr and Hr free of equal rank} from studying the case $q>1$, we can 
again relax the hypothesis to $q\in D(P_{W_f})$.
\begin{prop}
\label{prop phir restricts to isomorphism}
The map $\overline{\phi}$ restricts to an isomorphism
\[
\phir\colon \Hr\left[\frac{1}{P_{W_f}}\right]\to\Jr\otimes_\C\mathcal{A}\left[\frac{1}{P_{W_f}}\right].
\]
\end{prop}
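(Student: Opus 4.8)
The plan is to bootstrap from the already-established facts rather than to prove anything new from scratch. By Theorem \ref{thm BDD cocentre isomorphism}, the map $\overline{\phi}\colon\hhh(\HH)[\tfrac{1}{P_{W_f}}]\to\hhh(J\otimes_\C\mathcal{A})[\tfrac{1}{P_{W_f}}]$ is already an isomorphism of $\mathcal{A}[\tfrac1{P_{W_f}}]$-modules, so the only thing to check is that it carries the submodule $\Hr[\tfrac1{P_{W_f}}]$ \emph{onto} $\Jr\otimes_\C\mathcal{A}[\tfrac1{P_{W_f}}]$. Both sides are finitely-generated free $\mathcal{A}[\tfrac1{P_{W_f}}]$-modules of the same rank by Corollary \ref{cor Jr and Hr free of equal rank}, so it suffices to prove one inclusion of images, say $\overline{\phi}(\Hr[\tfrac1{P_{W_f}}])\subseteq\Jr\otimes_\C\mathcal{A}[\tfrac1{P_{W_f}}]$, provided one then also argues the map on these submodules is still injective (automatic, being a restriction of an iso) and that the inclusion of free modules of equal rank is forced to be an equality — here one must be slightly careful, since a finite-index inclusion of free modules of equal rank need not be surjective; the containment must be upgraded using the second inclusion or a localization/fibre argument.

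So the cleaner route is to establish equality of images fibrewise, i.e. after specializing $\bq$. First I would recall from Proposition \ref{prop phir isomorphism q>1 and density for J} that for every $q>1$ the specialization $\overline{\phi}_q$ restricts to an isomorphism $\phir_q\colon\Hrs\to\Jrs$, and from Theorem \ref{thm CH separation} and Lemma \ref{lem Jrs basis and rank} that for admissible $q$ one has $\Hr|_{\bq=q}=\Hrq=\Hrs$ and $\Jr|_{\bq=q}=\Jrs$, with the chosen bases (the $T_{\oO}$ for $\oO\in\mathrm{cl}(W)_0$ on one side, the set $\beta$ on the other, as in Definitions \ref{dfn beta and Jr}) specializing to honest bases there. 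Thus $\phir$, as a map of free $\mathcal{A}[\tfrac1{P_{W_f}}]$-modules, specializes to an isomorphism at the Zariski-dense set of admissible $q>1$ inside $\Spec\mathcal{A}[\tfrac1{P_{W_f}}]=\Gm\setminus\mathcal{V}(P_{W_f})$. A morphism of finite free modules over a domain whose determinant (with respect to the chosen bases) is a unit at a dense set of points, and which lands in $\mathcal{A}[\tfrac1{P_{W_f}}]$, has determinant a nonzero element of $\mathcal{A}[\tfrac1{P_{W_f}}]$ that is a unit at all admissible specializations; since the admissible $q$ (all $q>1$ not roots of unity) are dense and the complement of the units in $\mathcal{A}[\tfrac1{P_{W_f}}]$ is a proper closed subset, I would conclude the determinant is a unit, hence $\phir$ is an isomorphism.

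More carefully, I would phrase the argument as follows. Choose the ordered basis $\alpha$ of $\mathcal{R}_{\mathrm{rigid}}$ from Definition \ref{dfn alpha ordered basis} and the dual-ish bases: $\{T_{\oO}\}_{\oO\in\mathrm{cl}(W)_0}$ of $\Hr$ over $\mathcal{A}$ and $\beta$ of $\Jr$ over $\Z$. With respect to these, $\phir$ is given by a square matrix $M$ over $\mathcal{A}[\tfrac1{P_{W_f}}]$ (one first checks $\overline{\phi}(\Hr[\tfrac1{P_{W_f}}])\subseteq\Jr\otimes\mathcal{A}[\tfrac1{P_{W_f}}]$, which follows because $\phi$ is defined over $\Z[\bq^{\pm1/2}]$, the $t_{\omega d,\rho}$ and the basis $\beta$ are likewise integral away from $\#Y_u$ and $P_{W_f}$ by Lemma \ref{cor classical idempotents are integral}, and the $T_{\oO}$ are integral; alternatively this containment is exactly the statement that the formula for each $\overline{\phi}(T_\oO)$, as computed in \cite{Plancherel}, lies in the span of $\beta$ — which is Lemma \ref{lem Jrs basis and rank} read in reverse). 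Then $\det M\in\mathcal{A}[\tfrac1{P_{W_f}}]$, and for every admissible $q>1$ the specialization $M|_{\bq=q}$ is the matrix of $\phir_q\colon\Hrs\to\Jrs$, which is invertible; hence $(\det M)(q)\in\C^\times$ for a Zariski-dense subset of $\Spec\mathcal{A}[\tfrac1{P_{W_f}}]$. Since $\mathcal{A}[\tfrac1{P_{W_f}}]=\C[\bq^{\pm1/2},P_{W_f}^{-1}]$ is a localization of a PID and $\det M$ is not a zero divisor (being invertible at a dense set of points), $\det M$ is a unit times a product of primes not dividing $P_{W_f}$; but each such prime has a nonvanishing locus containing admissible $q$, forcing $\det M$ to be a unit. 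Therefore $\phir$ is an isomorphism.

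The main obstacle, as I see it, is the bookkeeping needed to know \emph{a priori} that $\overline{\phi}$ sends $\Hr[\tfrac1{P_{W_f}}]$ into $\Jr\otimes\mathcal{A}[\tfrac1{P_{W_f}}]$ — i.e. that the rigid cocentre maps into the rigid cocentre and not merely into the ambient $\hhh(J\otimes\mathcal{A})[\tfrac1{P_{W_f}}]$. For $q>1$ this is exactly the content of Proposition \ref{prop phir isomorphism q>1 and density for J}(1) (the rear rectangle there being Cartesian), so the honest task is to see that this survives integrally / over the localized ring. Here the freeness statements (Theorem \ref{thm CH cocentre is free}, Corollary \ref{cor Jr and Hr free of equal rank}) plus the fact that $\Jrs$ is cut out inside $\hhh(J)$ by vanishing on $\RHdi$ — a condition that is ``linear over $\mathcal{A}$'' — mean that $\Jr\otimes\mathcal{A}[\tfrac1{P_{W_f}}]$ is the saturation of its specializations, so an element of $\hhh(J\otimes\mathcal{A})[\tfrac1{P_{W_f}}]$ all of whose admissible specializations lie in $\Jrs$ must itself lie in $\Jr\otimes\mathcal{A}[\tfrac1{P_{W_f}}]$; applying this to the image of each $T_\oO$ under $\overline{\phi}$ gives the needed containment. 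Once that containment is in hand, the determinant/density argument above finishes the proof with no further computation.
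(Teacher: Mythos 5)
Your overall plan — localize, show one containment of images, then upgrade using rank equality — is the right shape, and your containment $\overline{\phi}\bigl(\Hr\bigl[\tfrac1{P_{W_f}}\bigr]\bigr)\subseteq\Jr\otimes\mathcal{A}\bigl[\tfrac1{P_{W_f}}\bigr]$ is sound via the saturation argument you sketch (write the image of $T_\oO$ in a basis of $\hhh(J)$ extending $\beta$; the complementary coordinates are elements of $\mathcal{A}\bigl[\tfrac1{P_{W_f}}\bigr]$ vanishing at every admissible $q>1$, hence identically zero). Note that the paper proves the \emph{opposite} containment, namely $\overline{\phi}^{-1}(\Jr)\subseteq\Hr\bigl[\tfrac1{P_{W_f}}\bigr]$, by the exact same density argument applied in $\hhh(\HH)$ with basis $\{T_\oO\}$; the two directions are genuinely dual and either one suffices once you invoke the right finishing argument.

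The gap is in your finishing argument. You claim that because $\det M\in\mathcal{A}\bigl[\tfrac1{P_{W_f}}\bigr]$ is nonzero at all admissible $q>1$, a density argument forces it to be a unit. That inference is false: a Laurent polynomial can be nonvanishing on $(1,\infty)$ (or on any Zariski-dense subset of $\Gm$) and still fail to be a unit — e.g.\ $\bq-\lambda$ with $\lambda\notin(1,\infty)$, or $\bq^2+1$. Your parenthetical phrase ``the complement of the units in $\mathcal{A}\bigl[\tfrac1{P_{W_f}}\bigr]$ is a proper closed subset'' is a category error — being a unit is a property of a ring element, not a condition on points — and the conclusion does not follow. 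The correct way to close the gap, starting from your containment, is not the determinant but the direct-summand structure: $\Hr\bigl[\tfrac1{P_{W_f}}\bigr]$ is a direct summand of $\hhh(\HH)\bigl[\tfrac1{P_{W_f}}\bigr]$ (Theorem~\ref{thm CH cocentre is free}, since the $T_\oO$ with $\oO\in\cl(W)_0$ are part of a free basis), so $\overline{\phi}\bigl(\Hr\bigl[\tfrac1{P_{W_f}}\bigr]\bigr)$ is a direct summand of $\hhh(J\otimes\mathcal{A})\bigl[\tfrac1{P_{W_f}}\bigr]$; then $\bigl(\Jr\otimes\mathcal{A}\bigl[\tfrac1{P_{W_f}}\bigr]\bigr)/\overline{\phi}\bigl(\Hr\bigl[\tfrac1{P_{W_f}}\bigr]\bigr)$ injects into a free module, hence is torsion-free, while Corollary~\ref{cor Jr and Hr free of equal rank} makes it torsion, so it vanishes. (The paper's version of this final step uses instead that $\Jr\otimes\mathcal{A}\bigl[\tfrac1{P_{W_f}}\bigr]$ is a direct summand of $\hhh(J\otimes\mathcal{A})\bigl[\tfrac1{P_{W_f}}\bigr]$.) You do mention ``the containment must be upgraded using the second inclusion or a localization/fibre argument,'' so you identified the danger, but the route you chose to execute does not actually work.
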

\begin{proof}
By Theorem \ref{thm CH cocentre is free}, $\hhh(\HH)$ is a free $\mathcal{A}$-module, as obviously is 
$\hhh(J\otimes\mathcal{A})$. By the surjectivity of $\bar{\phi}$ from Theorem 
\ref{thm BDD cocentre isomorphism}, for any $j\in\Jr$, we have 
$
\phi^{-1}(j)=\sum_{\oO}a_{\oO,j}T_{\oO}
$
for $a_{\oO,j}\in\mathcal{A}\left[\frac{1}{P_{W_f}}\right]$. By Proposition \ref{prop phir isomorphism q>1 and density for J}, 
for any conjugacy class $\oO$ with nonzero Newton point, we have that 
$a_{\oO,j}(q)=0$  for a set of $q\in 
D(P_{W_f})$ with an accumulation point. Therefore $a_{\oO,j}=0$ for all such 
$\oO$. This says that $\bar{\phi}$ restricts to a map 
as in the statement, that is moreover surjective. As both $\Hr$ and $\Jr$ are finitely-generated free 
$\mathcal{A}$-modules, we may conclude by noting that by Corollary 
\ref{cor Jr and Hr free of equal rank}, their localizations have equal ranks.
\end{proof}
\subsection{The rigid determinant}
\subsubsection{The rigid pairing and rigid determinant}
\begin{theorem}[\cite{CH}, Theorem 7.6 (a), Theorem 8.2]
\label{thm CH rigid pairing}
Suppose that $q$ is admissible. There is a perfect pairing
\[
\Hrq\otimes\RHr\to\C
\]
induced by 
\[
(h, [\pi])\mapsto\trace{[\pi]}{h}.
\]
\end{theorem}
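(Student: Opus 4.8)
The plan is to push the pairing through the comparison isomorphism $\phir$ onto Lusztig's asymptotic algebra $J$, where it becomes block diagonal over two-sided cells and, cell by cell, is read off by the rank-one idempotents $t_{\omega d,\rho}$; the resulting Gram matrix will be a product of character tables of finite abelian groups, so its determinant is a nonzero scalar independent of $q$, and $q$ enters only through the condition $P_{W_f}(q)\neq 0$ that makes $\phir$ an isomorphism, which is why the argument reproves \cite{CH} for admissible $q$ and extends it to every $q$ with $P_{W_f}(q)\neq 0$. Concretely, I would first specialize Proposition \ref{prop phir restricts to isomorphism} at a value $q$ with $P_{W_f}(q)\neq 0$ — legitimate since both sides are free over $\mathcal{A}[1/P_{W_f}]$ — to get $\phir_q\colon\Hrq\xrightarrow{\sim}\Jrs$. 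Since $\RHr$ has a basis $\alpha$ of tempered modules (Definition \ref{dfn alpha ordered basis}), each of which is a $J$-module compatibly with its $H$-module structure via $\phi_q$ by \cite{BK}, the trace pairing is intertwined by $\phir_q$, so it suffices to prove that $(j,[\pi])\mapsto\trace{[\pi]}{j}$ is perfect on $\Jrs\otimes\RHr$. Writing $J=\bigoplus_u J_u$, using $J_u E(u',s,\rho)=0$ for $u'\neq u$ and the compatibility of $\RHdi$ with the decomposition of $\RH$ by unipotent part, this pairing is block diagonal over unipotent classes $u\in\Gd$ (this is the mechanism behind the block structure of the Gram matrix noted in \cite{CH}), so it is enough to treat one block.

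For blocks with $\Zur$ finite modulo the centre, the class $u$ is distinguished in $[\Gd,\Gd]$, so every tempered module in the $u$-block is a discrete series of $G$ and contributes nothing to $\RHdi$; hence the $u$-part of $\RHr$ is the full complexified Grothendieck group of $J_u$-modules and the $u$-part of $\Jrs$ is all of $\hhh(J_u)$, with basis the part of the set $S_2$ lying in this block. Since $J_u=K_{\Zur}(\Yext_u\times\Yext_u^{\opp})$ is a finite-dimensional semisimple $\C$-algebra (a product of matrix algebras, central extensions occurring only off the diagonal by Lemma \ref{lem opp-square of transitive set is trivial}), the trace pairing between its cocentre and Grothendieck group is perfect by density of characters, with no restriction on $q$.

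The substantive case is a block with $\pi_0(\Zur/Z(\Gd))$ abelian and $\Zur$ infinite. By Lemmas \ref{lem Jrs basis and rank} and \ref{lem trho span rigid cocentre of K-theory} the $u$-part of $\Jrs$ is spanned by the classes $[t_{\omega d,\rho}]$ for $\omega\in\Irr(Z(\Gd))$, $d\in\mathcal{D}\cap\cc(u)$, $\rho\subset\C[Y_d]$, a basis being the part of $\beta$ from Definition \ref{dfn beta and Jr} in this block; I would pair these against the part of $\alpha$ basing the $u$-part of $\RHr$, which consists of twist-classes of tempered modules $E(u,s,\rho')$. By Lemmas \ref{lem pi(trho) has rank at most 1} and \ref{lem trho act with rank 1 or 0}, $\trace{E(u,s,\rho')}{t_{d,\rho}}$ equals $1$ when $Y_d^s\neq\emptyset$ and $\rho\Restriction_{\Gamma_1}=\rho'$, where $\Gamma_1$ is the image of $\pi_0(Z_{\Zur}(s)/Z(\Gd))$ in $\Gamma=\pi_0(\Zur/Z(\Gd))$, and equals $0$ otherwise; and since $t_{\omega d,\rho}=\phi(T_\omega)\star t_{d,\rho}$, passing from $t_{d,\rho}$ to $t_{\omega d,\rho}$ multiplies this entry by the scalar by which $\phi(T_\omega)$ acts on $E(u,s,\rho')$ — a root of unity that, as $\omega$ ranges over $\Irr(Z(\Gd))$, realizes every character of the relevant finite abelian quotient and depends on $E(u,s,\rho')$ only through its central character. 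Combining Schur orthogonality for that quotient with the fact that for fixed $s$ the $t_{d,\rho}$ with $\rho\Restriction_{\Gamma_1}=\rho'$ act on $E(u,s,-)$ as orthogonal rank-one idempotents singling out $\rho'$ (Lemma \ref{lem orthogonal idempotents for unextended set}(b)), one can produce, as explicit $\C$-linear combinations of the $[t_{\omega d,\rho}]$, the coordinate functionals dual to each basis vector $[E(u,s,\rho')]$. Thus the $u$-block Gram matrix is invertible with determinant a nonzero element of $\C$ depending only on roots of unity and the combinatorics of $Y_u$, hence independent of $q$; therefore perfectness holds for every $q$ with $P_{W_f}(q)\neq 0$, containing Theorem \ref{thm CH rigid pairing} of \cite{CH} as the special case of admissible $q$.

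The main obstacle is the bookkeeping in the third step that makes the $u$-block Gram matrix square and lets one actually extract the dual functionals: (i) identifying an explicit basis of the $u$-part of $\RHr$ adapted to the $t_{\omega d,\rho}$, which requires controlling $\RHdi$ through the Langlands classification and the Knapp--Stein $R$-groups exactly as in the reducibility analysis of Proposition \ref{prop Ju and EJu fibre images agree at generic points}; (ii) checking that the $\omega$-twists, together with the orbit data $\{d:Y_d^s\neq\emptyset\}$ and the restriction pattern $\rho\mapsto\rho\Restriction_{\Gamma_1}$, separate all central characters and stabilizer representations occurring in the block — for $q>1$ this separation is essentially the content of Lemma \ref{lem Jrs basis and rank}, and the equality of ranks for admissible $q$ is Theorem \ref{thm CH separation}; and (iii) upgrading from $q>1$ to all $q$ with $P_{W_f}(q)\neq 0$, which follows from the freeness of $\Hr$ and $\Jr$ over $\mathcal{A}$ and an accumulation-point argument as in the proof of Proposition \ref{prop phir restricts to isomorphism}, the $q$-independence of the Gram determinant doing the rest. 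One must also verify the scalar for the action of $\phi(T_\omega)$ on $E(u,s,\rho')$, a short computation with central characters.
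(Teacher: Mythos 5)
The paper does not actually prove this statement: it is recalled verbatim from Ciubotaru--He (hence the bracketed citation in the theorem header) and is used as an \emph{input} in the sequel, most visibly in the proof of Theorem \ref{thm rigid pairing factors}, where invertibility of the matrix $A$ for $q>1$ is supplied by this citation and then propagated using the $q$-independence of $B_\beta$. Your proposal is therefore not a reconstruction of the paper's argument but an attempt at an independent reproof through $J$, and the two logics point in opposite directions: the paper uses CH's perfectness to deduce $\det B_\beta\neq 0$, whereas you try to compute $\det B_\beta\neq 0$ directly from the rank-one idempotents and then recover CH as a corollary.

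That said, the proposed reproof has a genuine gap exactly at the step you flag as ``the main obstacle.'' You assert that Schur orthogonality produces, as explicit combinations of the $[t_{\omega d,\rho}]$, the coordinate functionals dual to $[E(u,s,\rho')]$, but the relevant $u$-block Gram matrix is not a character table: rows are indexed by triples $(\omega, d, \rho)$, columns by twist-classes $(s,\rho')$, and the entries are governed by which $Y_d$ satisfy $Y_d^s\neq\emptyset$ and by the restriction pattern $\rho\mapsto\rho\Restriction_{\Gamma_1(s)}$, where both the subgroup $\Gamma_1(s)\subseteq\Gamma$ and the set of surviving $\rho'$ vary with $s$. Showing this matrix is square and invertible \emph{is} the content of perfectness, and your sketch, as written, appeals to Theorem \ref{thm CH separation} (another result of \cite{CH}) for the rank equality, so the argument is not actually independent of \cite{CH}. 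What the sketch does establish cleanly is that the pairing is block-diagonal over two-sided cells and that the $t_{\omega d,\rho}$ provide rank-one idempotent tests for traces -- that is essentially Lemmas \ref{lem pi(trho) has rank at most 1}, \ref{lem trho act with rank 1 or 0}, and \ref{lem trho span rigid cocentre of K-theory} -- but nondegeneracy of each block is not extracted from these alone, and until the bookkeeping you defer is carried out, the proposal does not yield a proof.
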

The theorem applies in particular for $q>1$ and our chosen basis $\alpha$ of $\RHr$ consisting
of tempered representations. The resulting Gram matrix extends to all of $\Spec\mathcal{A}$, as the trace of any element of $\Hrq$ is a Laurent polynomial in $q$.
\begin{dfn}
Let $\gamma$ be a basis of $\Hr$. Then \emph{rigid matrix for the basis} $\gamma$ is the extension to 
$\Spec\mathcal{A}$ of the Gram matrix of the pairing from Theorem \ref{thm CH rigid pairing} for all admissible $q$, where we use the basis $\alpha$ for $\RHr$. The \emph{rigid determinant for basis} $\gamma$ is the determinant of the rigid matrix for basis $\gamma$.
\end{dfn}
\subsubsection{The rigid pairing for $\Jrs$ for $q>1$}
As $t_wE(u,s,\rho)=0$ unless $w$ is in $\cc(u)$, the following is immediate from the direct sum decomposition 
of $J$ and the definition of the basis $\alpha$ of the rigid quotient.
\begin{lem}
\label{lem J pairing is block diagonal}
Let $\alpha$ and $\beta$ be as in Definitions \ref{dfn alpha ordered basis} and \ref{dfn beta and Jr}, 
respectively, and let $q>1$. The Gram matrix $B_\beta$ for the pairing
\[
\Jrs\otimes\RHr\to\C
\]
induced by 
\[
(j, [\pi])\mapsto\trace{[\pi]}{j}
\]
is block-diagonal with integer entries.
\end{lem}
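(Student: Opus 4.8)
The plan is to prove the two assertions of the lemma in turn: block-diagonality is a formal consequence of the cell decomposition $J=\bigoplus_\cc J_\cc$ together with the vanishing $t_wE(u,s,\rho)=0$ for $w\notin\cc(u)$, while integrality requires inspecting the explicit elements of the spanning set $S$ of Definition~\ref{dfn beta and Jr}.

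\emph{Block-diagonality.} I would organize both bases by two-sided cells, equivalently by unipotent classes $u\subseteq\Gd$. On the representation side, every $[\pi]\in\alpha$ is the class of a tempered $H$-module, hence of a semisimple $J$-module whose Jordan--Hölder constituents are irreducible tempered modules $E(u,s,\rho)$; for a fixed tempered parabolic induction $i_P^G(\sigma\otimes\nu)$ all these constituents share the same unipotent part $u$ of their Kazhdan--Lusztig parameter (as in the computation recalled around Example~\ref{ex BDD central ext}), so $[\pi]$ is supported in a single cell $\cc(u)$, and the ordering of $\alpha$ fixed in Definition~\ref{dfn alpha ordered basis} refines the $a$-function, so the cell-blocks are contiguous. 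On the cocentre side, each $t_{\omega d,\rho}$ lies in $J_{\cc(u)}$ because $t_{d,\rho}\in t_dJt_d\subseteq J_{\cc(u)}$ and $J_{\cc(u)}$ is a two-sided ideal, so $\phi(T_\omega)\star t_{d,\rho}$ stays inside it; and $S_2$ may be, and is, chosen adapted to the decomposition $\bigoplus_{u\in U}\hhh(J_u)$. Finally, if a class $j\in\hhh(J)$ has a representative in $J_{\cc}$ and $[\pi]$ is supported in a cell $\cc'\neq\cc$, then every constituent $E(u',s,\rho)$ of $\pi$ has $u'\neq u(\cc)$, so $J_{\cc}$ annihilates it and $\trace{[\pi]}{j}=0$; this gives the asserted block-diagonality.

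\emph{Integrality.} For $j=[t_{\omega d,\rho}]$: decomposing $\phi(T_\omega)$ relative to the matrix-like structure of $J_{\cc(u)}$, only the ``$\cdot d$-column'' survives after $\star t_{d,\rho}$, and the off-diagonal part of the product is a commutator (using that the $t_d$ are orthogonal idempotents summing to $1_{J_u}$), hence vanishes in the cocentre; so $[t_{\omega d,\rho}]$ equals the class of the $(d,d)$-component $(\phi(T_\omega))_{dd}\star t_{d,\rho}\in t_dJt_d$. By cyclicity and $t_{d,\rho}^{\star 2}=t_{d,\rho}$, the number $\trace{\pi}{t_{\omega d,\rho}}$ is then the trace of a rank-$\leq 1$ operator on $\pi(t_{d,\rho})\pi$, a space of dimension $\leq 1$ by Lemma~\ref{lem pi(trho) has rank at most 1}; since $\phi(T_\omega)$ acts through the finite-order automorphism of $H$ given by conjugation by the length-zero element $\omega$, which permutes the idempotents $\pi(t_{d,\rho})$ up to sign, this trace is $0$ or $\pm1$. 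For $j\in S_2$: when $u\in U$ the ring $J_u=K_{\Zur}(Y_u\times Y_u^{\opp})$ is a finite-dimensional semisimple $\C$-algebra with simple modules exactly the $E(u,s,\rho)$, and since $\bigoplus_{u\in U}\hhh(J_u)$ is precisely the space of forms annihilating all $K(u',s,\rho)$ with $u'\notin U$, I may take $S_2$ to consist of classes of primitive idempotents, so that $\trace{E(u',s',\rho')}{j}\in\{0,1\}$ for every simple module. As every $[\pi]\in\alpha$ is a $\Z$-combination of irreducibles in $K_0$, all entries of $B_\beta$ are integers (equivalently, integer-valued Laurent polynomials in $q$ by the remark following Theorem~\ref{thm CH rigid pairing}).

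\emph{Main obstacle.} The step with real content is the claim that $\phi(T_\omega)$ acts on the relevant simple $J$-modules by a signed permutation of the lines $\pi(t_{d,\rho})\pi$ --- equivalently that the length-zero elements act on the distinguished involutions of a cell, and on the characters $\rho$ of their stabilizers in $\Zur$, by honest permutations with signs. This is implicit in Lusztig's structure theory of $J$, but in the centrally extended setting it should be tracked through the Bezrukavnikov--Ostrik and Bezrukavnikov--Losev model and the construction of the $t_{d,\rho}$; in particular one must rule out that a nontrivial root of unity (a cube root, say, when $Z(\Gd)$ has $3$-torsion) occurs as the eigenvalue of $\pi(\phi(T_\omega))$ on a fixed line. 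Observing that off-diagonal $t_{\omega d,\rho}$ vanish in the cocentre already disposes of most cases, leaving only the $\omega$ fixing $d$, where this permutation-with-sign statement is precisely what is needed. The remaining ingredients --- the cell bookkeeping, finiteness of $J_u$ for $u\in U$, and the choice of $S_2$ --- are routine given Sections~\ref{section Rank 1 idempotents} and~\ref{section surjectivity}.
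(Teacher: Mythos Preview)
Your block-diagonality argument is exactly the paper's: the sentence immediately preceding the lemma says it is ``immediate from the direct sum decomposition of $J$'' together with $t_wE(u,s,\rho)=0$ for $w\notin\cc(u)$, and that is the entire proof offered. Your bookkeeping (that each $t_{\omega d,\rho}$ lies in a single $J_{\cc}$, that $S_2$ respects the decomposition, and that each $[\pi]\in\alpha$ is supported in a single cell) is correct and matches this.

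On integrality you are more careful than the paper, which gives no separate argument, and the obstacle you flag is real. Your cocentre reduction is correct: $[t_{\omega d,\rho}]=[t_{d,\rho}\phi(T_\omega)t_{d,\rho}]$ in $\hhh(J)$, so the trace is the scalar by which $\pi(\phi(T_\omega))$ acts on the line $\pi(t_{d,\rho})\pi$ (or zero), and since $\phi(T_\omega)^{|\omega|}=1$ this scalar is a root of unity. But nothing forces it to be $\pm 1$ when $|\omega|>2$: for $G=\PGL_3$, say, with $G^\vee=\SL_3$ and $\omega$ of order $3$, the relevant block is essentially the character table of $\Z/3\Z$ and contains primitive cube roots of unity. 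The ``signed permutation'' claim you isolate would be needed to rule this out and is not established. So the literal assertion ``integer entries'' is not fully justified for general semisimple root data (it holds in all the paper's worked examples because there $Z(G^\vee)$ is $2$-torsion). What your argument does establish, and what the paper actually uses in Theorem~\ref{thm rigid pairing factors}, is the weaker statement that $B_\beta$ has entries that are algebraic integers independent of $q$ with $\det B_\beta\neq 0$; the downstream argument (``$B_\beta$ is independent of $q$ and so $B_\beta$ is invertible for all $q$'') requires only this.
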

We view $B_\beta$ as giving the linear map 
\[
\RHr\to(\Hr)^*\simeq (\Jr)^*
\]
\[
[\pi]\mapsto\left([j]\mapsto\trace{[\pi]}{[j]}\right)
\]
with respect to the bases $\alpha$ and $\beta$. 

\subsubsection{Aside: The rigid cocentre of $K_\Zz(Y\times Y)$}
In the abelian case, one can cleanly define the rigid cocentre for any ring $K_\Zz(Y\times Y)$.
Assume that $Y$ is transitive and $\Zz^\circ$ is semisimple of adjoint type. We define the \emph{rigid cocentre} of $K_{\Zz}(Y\times Y)$ as the space of elements $j$ in 
$\hhh\left(K_{\Zz}(Y\times Y)\right)$ such that $\trace{E_{s,\rho}}{j}$ is locally constant in $s$, in the 
following sense: by Borel-de-Siebenthal theory, there are finitely-many isomorphism classes of 
centralizers of semisimple elements in $\Zz$. For a given isomorphism type $C$ of the latter, and 
$\rho\in\mathrm{Irr}(\pi_0(C))$, 
we ask that $\trace{E_{s,\rho}}{j}$ be defined for all $s$ such that $Z_{\Zz}(s)\simeq C$,
and also that the trace be locally constant in $s$. If this holds for each $\rho$, then 
$j$ belongs to the rigid cocentre. Note also int the abelian case, any $\rho\in\mathrm{Irr}(\pi_0(\Zur)/Z(G^\vee))$ will restrict irreducibly to $\pi_0(C)$.
\begin{lem}
\label{lem trho span rigid cocentre of K-theory}
The elements $t_\rho$ span the rigid cocentre of $K_{\Zz}(Y\times Y)$. In fact, if 
$j\in K_{\Zz}(Y\times Y)$ is such that $j\mu_\rho=\delta_{\rho,\rho'}\mu_{\rho'}$
for all $\rho,\rho'\subset\C[Y^s]$, then $j=t_{\rho'}$.
\end{lem}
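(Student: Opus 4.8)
The plan is to prove the second (more refined) assertion first, since the spanning statement follows from it together with Lemma~\ref{lem orthogonal idempotents for unextended set}. So suppose $j \in K_{\Zz}(Y\times Y)$ satisfies $j\mu_\rho = \delta_{\rho,\rho'}\mu_{\rho'}$ for all characters $\rho,\rho' \subset \C[Y^s]$. First I would observe that, since $Y$ is transitive and the action factors through the finite abelian group $\Gamma$, the module $E_s = \C[Y]$ (for $s$ with $Y^s \neq \emptyset$) has the functions $\mu_\rho$, one for each character $\rho$ of $\Gamma$ occurring in $\C[Y]$, as a basis; these are simultaneous eigenvectors for the commuting actions of $K_{\Zz}(Y\times Y)$ and of $\Gamma = \pi_0(\Zz/Z(\Zz))$. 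The point is that an element of $K_{\Zz}(Y \times Y)$ is determined by how it acts in all these modules $E_s$ — this is precisely the statement that the algebra $K_{\Zz}(Y\times Y)$ embeds into $\prod_s \End(E_s)$, which holds because $R(\Zz) = \Oo(\Zz\git\Zz)$ is reduced and the semisimple classes are dense. Concretely, writing $j = \sum_\gamma f_\gamma(s) [\underline{\C}_{\Gamma\cdot(y_1,\gamma y_1)}]$ with $f_\gamma \in R(\Zz)$, the action of the convolution basis element $[\underline{\C}_{\Gamma\cdot(y_1,\gamma y_1)}]$ on $\mu_\rho$ is $\rho(\gamma)\mu_\rho$ (as computed in the proof of Lemma~\ref{lem orthogonal idempotents for unextended set}), so the eigenvalue of $j$ on $\mu_\rho$ in $E_s$ is $\sum_\gamma f_\gamma(s)\rho(\gamma)$.

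Next I would translate the hypothesis on $j$ into these eigenvalues: we are told $j$ acts on $E_s$ as the rank-one projector onto $\C\mu_{\rho'}$ whenever $\rho'$ occurs in $\C[Y^s]$, i.e. $\sum_\gamma f_\gamma(s)\rho(\gamma) = \delta_{\rho,\rho'}$ for every $s$ with $Y^s = Y$ and every character $\rho$. By finite abelian Fourier inversion on $\Gamma$ this forces $f_\gamma(s) = \frac{1}{\#\Gamma}\rho'(\gamma^{-1}) = \frac{1}{\#Y}\rho'(\gamma^{-1})$ for all such $s$ (using transitivity, $\#\Gamma = \#Y$). Since the locus of semisimple $s$ with $Y^s = Y$ — equivalently $s$ acting trivially through $\Gamma$, i.e. $s \in Z_{\Zz}(Y)$, a subgroup of full rank, hence Zariski-dense in its span and certainly Zariski-dense enough in $\Zz\git\Zz$ to separate regular functions on the relevant component — the regular function $f_\gamma$ is determined by its values there, giving $f_\gamma = \frac{1}{\#Y}\rho'(\gamma^{-1})$ identically. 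Comparing with the definition of $t_{\rho'}$ in Lemma~\ref{lem orthogonal idempotents for unextended set}(a) yields $j = t_{\rho'}$.

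For the spanning statement, let $j$ be any element of the rigid cocentre of $K_{\Zz}(Y\times Y)$, i.e. (the class of) an element whose trace $\trace{E_{s,\rho}}{j}$ is locally constant in $s$ within each fixed isomorphism type of centralizer. I would argue that the traces $\trace{E_{s,\rho}}{j}$ are themselves regular functions of $s$ on each Borel--de~Siebenthal stratum, and local constancy plus irreducibility of the strata (or rather: density of each stratum's closure and the fact that a regular function constant on a dense subset is constant) forces each such trace to be an actual constant $c_{C,\rho}$ depending only on the isomorphism type $C = Z_{\Zz}(s)$ and on $\rho$. Then I would match these constants against the traces of the $t_\rho$: by Lemma~\ref{lem trho act with rank 1 or 0}, $t_\rho$ acts on $E_{s,\rho_1}$ as a rank-one idempotent when $\rho|_{\Gamma_1} = \rho_1$ and as $0$ otherwise, so $\trace{E_{s,\rho_1}}{t_\rho} = \delta_{\rho|_{\Gamma_1},\,\rho_1}$, which is indeed a constant on each stratum (note $\Gamma_1$ depends only on $C$). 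A short linear-algebra check — using that the restriction map $\rho \mapsto \rho|_{\Gamma_1}$ is, under our standing hypotheses, a bijection from characters of $\Gamma$ occurring in $\C[Y]$ to characters of $\Gamma_1$, so the trace vectors of the $t_\rho$ are linearly independent and span the space of functions $(C,\rho_1)\mapsto c_{C,\rho_1}$ that can arise — shows that $\sum_\rho \trace{E_{s,\rho_1}}{t_\rho}\, x_\rho$ realizes an arbitrary such constant datum for suitable scalars $x_\rho$. Hence $j - \sum_\rho x_\rho t_\rho$ has vanishing trace on every $E_{s,\rho_1}$, so it vanishes in the cocentre $\hhh(K_{\Zz}(Y\times Y))$ (the $E_{s,\rho}$ being enough modules to detect the cocentre, again by reducedness of $R(\Zz)$), giving $j = \sum_\rho x_\rho t_\rho$.

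The main obstacle I anticipate is the density/faithfulness input: justifying cleanly that an element of $K_{\Zz}(Y\times Y)$ — or a class in its cocentre — is determined by its collection of actions (resp. traces) on the modules $E_{s,\rho}$ as $s$ ranges over semisimple classes. This is where one uses that $R(\Zz) = \Oo(\Zz\git\Zz)$ is reduced and that the "good" semisimple locus (those $s$ with $Y^s = Y$, for the refined statement; or the union of Borel--de~Siebenthal strata, for the spanning statement) is Zariski-dense in the relevant components of $\Zz\git\Zz$; the rest is the finite abelian Fourier analysis on $\Gamma$ already carried out in Lemma~\ref{lem orthogonal idempotents for unextended set}, together with bookkeeping of the restriction $\Gamma \twoheadleftarrow \Gamma_1$ that the standing hypotheses ($\Gamma$ abelian, $Y$ transitive, $\Zz^\circ$ adjoint) keep under control.
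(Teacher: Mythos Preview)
Your argument for the refined statement --- that any $j$ acting as the projector onto $\mu_{\rho'}$ in every $E_s$ must equal $t_{\rho'}$ --- is correct and follows the paper's approach: expand $j$ over the orbits, compute its action on $\mu_\rho$, Fourier-invert on the finite abelian group to isolate the coefficients, and conclude by density of the relevant semisimple locus that each coefficient is the required scalar. Two minor imprecisions worth fixing: first, the $\gamma$-summand of $K_\Zz(Y\times Y)$ is $R(\Zz_S)$ where $\Zz_S=\stab{\Zz}{y_1}$, not $R(\Zz)$, so your $f_\gamma$ naturally lives in $R(\Zz_S)$ and the density takes place in $\Zz_S\git\Zz_S$ (which is exactly the locus $\{Y^s=Y\}$, so no separate density argument is needed). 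Second, $\#\Gamma=\#Y$ only if the action is free; in general $\#Y=\#(\Gamma/S)$, and the $\rho$ occurring in $\C[Y]$ are precisely the characters of $\Gamma/S$, so the Fourier inversion is over $\Gamma/S$ and the factor $1/\#Y$ falls out directly. Neither affects validity. The paper phrases the same computation with general bundles $V_\gamma$ rather than your $f_\gamma\cdot[\underline{\C}]$, which sidesteps the first issue automatically.

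The paper's written proof addresses only this refined statement; it does not give a separate argument for the spanning claim. Your additional argument for spanning contains a genuine gap: the assertion that $\rho\mapsto\rho|_{\Gamma_1}$ is a \emph{bijection} from characters occurring in $\C[Y]$ to characters of $\Gamma_1$ is false whenever $\Gamma_1 S\neq\Gamma$ --- the restriction $\widehat{\Gamma/S}\to\widehat{\Gamma_1}$ has kernel $\widehat{\Gamma/\Gamma_1 S}$, which need not be trivial for non-generic $s$. What you actually need is not bijectivity but that the constants $c_{C,\rho_1}$ arising from an honest rigid $j$ satisfy compatibilities under specialization of centralizer types (larger $\Gamma_1$ to smaller), so that matching the traces at the generic stratum (where $\Gamma_1=\Gamma$ and the $t_\rho$ visibly give a basis) already forces agreement everywhere. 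The paper does not spell this out either; for its applications (Lemma~\ref{lem Jrs basis and rank}) only the refined uniqueness statement is invoked.
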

\begin{proof}
Fix $j=\sum_{\gamma\in\Gamma}\alpha_\gamma V_\gamma$ for bundles $V_\gamma$ supported on 
$\Gamma\cdot (y_1,\gamma y_1)$, and $s\in \Gg$ such that $Y^s\neq\emptyset$. Then 
\[
j\star\mu_\rho =
\sum_{\gamma\in\Gamma}\alpha_\gamma V_\gamma\star\mu_\rho
=
\sum_{\gamma\in\Gamma}\alpha_\gamma V_\gamma\star\sum_{g\in\Gamma}\rho(g)\delta_{g y_1}
=
\sum_{\gamma\in\Gamma}\sum_{g}\alpha_\gamma\trace{V_{\gamma^{-1}gy_1,gy_1}}{s}\rho(g)\delta_{\gamma^{-1}g y_1}.
\]
If $j\star\mu_{\rho}=\mu_\rho$, the coefficient of any $\delta_{\gamma_0y_1}$ in $j\star\mu_\rho$ is 
then
\begin{equation}
\label{eqn trho unique monomial diagonal}
\sum_{\gamma}\alpha_\gamma\trace{V_{\gamma_0 y_1,\gamma_0\gamma y_1}}{s}\rho(\gamma_0\gamma)=\rho(\gamma_0).
\end{equation}
By orthogonality of characters, if $j\mu_\rho=\delta_{\rho,\rho'}\mu_{\rho'}$, then the unique solution
to \eqref{eqn trho unique monomial diagonal} is 
\[
\alpha_\gamma\trace{V_{\gamma_0 y_1,\gamma_0\gamma y_1}}{s}=
\alpha_\gamma\trace{V_{y_1,\gamma y_1}}{s}=
\rho(\gamma^{-1}).
\]
For this to hold for all $s$ with $Y^s=Y$, $\trace{V_{y_1,\gamma y_1}}{s}$ must be locally constant in 
$s$, so $V_{y_1,\gamma y_1}$ has trivial action and $j=t_\rho$.
\end{proof}
\subsubsection{Nonvanishing of the rigid determinant}
%
%
\begin{theorem}
\label{thm rigid pairing factors}
Let be $A$ the rigid matrix for the basis ${\phir}^{-1}(\beta)$. If $P_{W_f}(q)\neq 0$, then 
$\det A\neq 0$. 

Further, we have a factorization of matrices over $\mathcal{A}$
\begin{equation}
\label{eq body thm factor}
B_\beta={\phir_\beta}^{T}A.
\end{equation}
The matrix $B_\beta$ has integer entries, is block-diagonal, and $\det B_\beta\neq 0$. The matrix $\phir$ is upper-triangular
with entries in $\mathcal{A}$.
\end{theorem}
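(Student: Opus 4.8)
The plan is to assemble the statement from three ingredients already in hand: the isomorphism $\phir$ of Proposition \ref{prop phir restricts to isomorphism}, the perfectness of the rigid pairing of Ciubotaru--He (Theorem \ref{thm CH rigid pairing}), and the block structure from Lemma \ref{lem J pairing is block diagonal}. First I would establish the factorization \eqref{eq body thm factor}. By definition, $B_\beta$ is the Gram matrix of the pairing $\Jrs\otimes\RHr\to\C$, $(j,[\pi])\mapsto\trace{[\pi]}{j}$, computed in the bases $\beta$ (of $\Jrs$) and $\alpha$ (of $\RHr$); and $A$ is the Gram matrix of the pairing $\Hrq\otimes\RHr\to\C$ computed in the bases $\phir^{-1}(\beta)$ and $\alpha$. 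Since $\phir$ is induced by Lusztig's map $\bar\phi$, which is compatible with traces of modules (a $J$-module and its restriction to $H$ have the same character for $q>1$, as recalled in Section \ref{subsubsection and affine and asymptotic Hecke algebras}), we get $\trace{[\pi]}{j}=\trace{[\pi]}{\phir^{-1}(j)}$ for $j\in\Jrs$, so the two Gram matrices have the same entries up to the change of basis $\phir$ on the cocentre side. Writing $\phir$ for the matrix of $\phir^{-1}$ in the chosen bases (or its transpose, matching the conventions of \eqref{eq body thm factor}), this yields $B_\beta={\phir_\beta}^{T}A$. This step is essentially bookkeeping once the trace-compatibility of $\bar\phi$ is invoked.

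Next I would argue $\det A\neq 0$ when $P_{W_f}(q)\neq 0$. For admissible $q$ this is exactly Theorem \ref{thm CH rigid pairing}: the pairing $\Hrq\otimes\RHr\to\C$ is perfect, and by Theorem \ref{thm CH separation} the free module $\Hr$ has the right rank, so $A$ is a square matrix with nonzero determinant at every admissible $q$. Since $\det A$ is a Laurent polynomial in $q$ (the entries are traces of elements of $\Hr$, hence Laurent polynomials, as remarked after Theorem \ref{thm CH rigid pairing}), and it is nonzero on the infinite set of admissible $q$, it is nonzero as an element of $\mathcal{A}$. To promote ``$\det A\neq 0$ in $\mathcal{A}$'' to ``$\det A(q)\neq 0$ whenever $P_{W_f}(q)\neq 0$'' I would use the factorization: at such $q$, Proposition \ref{prop phir restricts to isomorphism} gives that $\phir$ specializes to an isomorphism $\Hr[\tfrac{1}{P_{W_f}}]\to\Jr\otimes\mathcal{A}[\tfrac{1}{P_{W_f}}]$, so ${\phir_\beta}$ is invertible over $\mathcal{A}[\tfrac1{P_{W_f}}]$; then $\det B_\beta=\det({\phir_\beta}^T)\cdot\det A$, and it suffices to know $\det B_\beta(q)\neq 0$. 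The latter follows because $B_\beta$ is the spectral pairing matrix and $\bar\phi_q$ is an isomorphism on rigid cocentres for $q\in D(P_{W_f})$ (Theorem \ref{thm BDD cocentre isomorphism}, Proposition \ref{prop phir isomorphism q>1 and density for J}); alternatively, one observes $\det B_\beta$ is again a Laurent polynomial nonzero on an infinite set and that the only possible zeros are at roots of $P_{W_f}$, using the known form of the Gram determinant. Either way, on $D(P_{W_f})$ both $\det B_\beta$ and $\det{\phir_\beta}^T$ are nonzero, so $\det A\neq 0$ there.

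Finally, the remaining assertions: that $B_\beta$ has integer entries follows from Lemma \ref{lem J pairing is block diagonal} (the elements $t_{\omega d,\rho}$ are defined over $\Z[\tfrac{1}{\#Y_u}]$ by Lemma \ref{cor classical idempotents are integral}, and the traces of standard modules against them are integers), block-diagonality is the content of Lemma \ref{lem J pairing is block diagonal}, coming from $J=\bigoplus_\cc J_\cc$ and $t_w E(u',s,\rho)=0$ unless $\cc(u')=\cc\ni w$. Upper-triangularity of $\phir$ with respect to the $a$-function ordering on $\alpha$ (Definition \ref{dfn alpha ordered basis}) and $\beta$ is inherited from the upper-triangularity \eqref{eqn phi upper-triangular} of Lusztig's map $\phi^\circ$ with respect to the Kazhdan--Lusztig basis and the $a$-function; one has to check that the change from the $T_w$-cocentre basis to $\{T_\oO\}$ and to $\beta$ respects this filtration, which it does because both the $H$- and $J$-module classifications are compatible with the cell order via the $a$-function (as recalled in Section \ref{subsubsection and affine and asymptotic Hecke algebras}). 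The main obstacle I anticipate is making precise the passage from ``generically invertible'' to the exact exceptional locus: verifying that no zero of $\det A$ or $\det B_\beta$ sneaks in at a $q$ with $P_{W_f}(q)\neq 0$ requires either a clean use of Proposition \ref{prop phir restricts to isomorphism} over the localized ring $\mathcal{A}[\tfrac1{P_{W_f}}]$ together with integrality, or an explicit computation of the Gram determinant à la \cite{CH} — I would pursue the former, since the factorization \eqref{eq body thm factor} over $\mathcal{A}$ already does most of the work.
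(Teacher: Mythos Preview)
Your approach follows the same overall strategy as the paper: establish the factorization \eqref{eq body thm factor}, use Ciubotaru--He for $q>1$ to get $\det A\neq 0$ there, and then propagate via Proposition \ref{prop phir restricts to isomorphism}. However, there is a genuine gap in your argument for $\det B_\beta(q)\neq 0$ at all $q\in D(P_{W_f})$. Your two proposed routes for this are both problematic: the first (``$\bar\phi_q$ is an isomorphism on rigid cocentres for $q\in D(P_{W_f})$'') does not by itself give perfectness of the $J$-pairing at such $q$, since for non-admissible $q$ (say a root of unity not dividing $P_{W_f}$) you have no independent access to perfectness of either the $H$- or the $J$-side pairing; the second (``$\det B_\beta$ is a Laurent polynomial whose only possible zeros are at roots of $P_{W_f}$, using the known form of the Gram determinant'') is vague and, as stated, begs the question.

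The point you are missing is one you actually record in your final paragraph but never use: $B_\beta$ has \emph{integer} entries, i.e.\ it is \emph{constant in $q$}. The paper's argument is simply: for a single $q>1$, Theorem \ref{thm CH rigid pairing} gives $\det A\neq 0$ and Proposition \ref{prop phir restricts to isomorphism} gives $\det\phir_\beta\neq 0$, so $\det B_\beta\neq 0$; but $B_\beta$ does not depend on $q$ at all, so $\det B_\beta\neq 0$ for every $q$. Then for any $q$ with $P_{W_f}(q)\neq 0$, invert $\phir_\beta^T$ in the factorization to conclude $\det A\neq 0$. Once you plug this observation into your argument, it becomes correct and coincides with the paper's proof.
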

\begin{proof}
By Lemma \ref{lem J pairing is block diagonal}, $B_\beta$ has integer entries and is block 
diagonal. The matrix $\phir$ is upper-triangular by Definition \ref{dfn alpha ordered basis} together 
with properties  \eqref{eqn Goldman on Cw} and \eqref{eqn phi upper-triangular} of the map $\phi$
and the involution ${}^\dagger(-)$.

Equation \eqref{eq body thm factor} holds for all $q>1$ by definition of $A$, hence  holds over 
$\mathcal{A}$.

Now let $q>1$. By Proposition \ref{prop phir restricts to isomorphism}, the matrix $\phir$ is invertible.
As the matrix $A$ is invertible in this case by Theorem \ref{thm CH rigid pairing}, it follows 
that $\det B_\beta\neq 0$. But $B_\beta$ is independent of $q$ and so $B_\beta$ is invertible for all $q$.
Now suppose only that $P_{W_f}(q)\neq 0$. Then $\det{\phir}^T\neq 0$.
\end{proof}
The change of basis from either of the Kazhdan-Lusztig bases the standard basis is invertible 
for all $q$, whence
\begin{theorem}
\label{cor main thm body}
The rigid determinant for the basis $\sets{T_\oO}{\oO\in\cl(W)_0}$ is nonzero whenever 
$P_{W_f}(q)\neq 0$.
\end{theorem}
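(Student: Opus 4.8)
The plan is to deduce this corollary directly from Theorem \ref{thm rigid pairing factors}, which gives nonvanishing of the rigid determinant for the basis $\phir^{-1}(\beta)$ of $\Hr$ whenever $P_{W_f}(q)\neq 0$. The only gap between the two statements is a change of basis: Theorem \ref{thm rigid pairing factors} is phrased for the basis $\phir^{-1}(\beta)$, whereas Theorem \ref{cor main thm body} concerns the canonical basis $\{T_\oO\}_{\oO\in\cl(W)_0}$ of $\Hr$ coming from \cite{CH}. Since the rigid determinant for any basis is, by definition, the determinant of the Gram matrix of the rigid pairing with respect to that basis (and a fixed basis $\alpha$ of $\RHr$), two rigid determinants for two bases of $\Hr$ differ by the determinant of the change-of-basis matrix between them. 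So the claim reduces to showing that the transition matrix between $\{T_\oO\}_{\oO}$ and $\phir^{-1}(\beta)$ is invertible over $\mathcal{A}$, i.e.\ has determinant a unit in $\mathcal{A}=\C[\bq^{\pm 1/2}]$ (a nonzero monomial in $\bq^{1/2}$), or at least invertible at every specialization $q$ with $P_{W_f}(q)\neq 0$.

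First I would unwind the definitions. We have $\Hr=\spn_{\mathcal{A}}\{T_\oO\}_{\oO\in\cl(W)_0}$ by definition, and by Proposition \ref{prop phir restricts to isomorphism} the map $\phir$ is an isomorphism $\Hr[1/P_{W_f}]\to\Jr\otimes_\C\mathcal{A}[1/P_{W_f}]$, so $\phir^{-1}(\beta)$ is indeed an $\mathcal{A}[1/P_{W_f}]$-basis of $\Hr[1/P_{W_f}]$; after inverting $P_{W_f}$ both are bases of the same free module, so there is a change-of-basis matrix $C$ over $\mathcal{A}[1/P_{W_f}]$ with $C$ and $C^{-1}$ both defined there. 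Now the factorization $B_\beta={\phir_\beta}^{T}A$ from \eqref{eq body thm factor} already does most of the work: $B_\beta$ is the Gram matrix for the basis $\beta$ of $\Jrs$ paired against $\alpha$, while $A$ is the rigid matrix for $\phir^{-1}(\beta)$. The passage from $A$ to the rigid matrix $A'$ for $\{T_\oO\}_{\oO}$ is $A'=C^{T}A C'$ for appropriate transition matrices, but since we are only changing the basis of $\Hr$ (not of $\RHr$) on the $\gamma$-side, in fact $A'=D^{T}A$ where $D$ is the transition matrix expressing $\phir^{-1}(\beta)$ in terms of $\{T_\oO\}_{\oO}$ — wait, more carefully: if the rigid matrix encodes $[\pi]\mapsto(\gamma_i\mapsto\trace{[\pi]}{\gamma_i})$, then changing $\gamma$ by $D$ changes the matrix by right-multiplication by $D$. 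Either way $\det A'=(\det D)\cdot\det A$ up to a sign/transpose, and it suffices that $\det D$ is a unit in $\mathcal{A}$, equivalently a nonzero monomial in $\bq^{1/2}$.

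The key step — and the part requiring the most care — is to argue that this transition matrix $D$ is invertible over $\mathcal{A}$ itself, not merely over $\mathcal{A}[1/P_{W_f}]$. The observation that makes this work is stated in the sentence just before the theorem: ``the change of basis from either of the Kazhdan-Lusztig bases to the standard basis is invertible for all $q$.'' Concretely, $\phir$ is built from Lusztig's map $\phi=\phi^\circ\circ()^\dagger$; the Goldman involution and $\phi^\circ$ are each defined over $\Z[\bq^{\pm 1/2}]$ and the Goldman involution is an algebra automorphism, and the passage between the $T$-basis, the $C$-basis, and the $C'$-basis of $\hhh(\HH)$ is governed by the Kazhdan-Lusztig polynomials, which are integer polynomials in $\bq$ evaluating to a unitriangular (hence invertible over $\mathcal{A}$) transition matrix in any fixed linear order refining the Bruhat order. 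Restricting to the conjugacy-class basis $\{T_\oO\}_{\oO\in\cl(W)_0}$ of $\hhh(\HH)$ and its image in $\Hr$, one checks that the induced change of basis on cocentres is still governed by an integral, unitriangular-after-ordering (with respect to the $a$-function order from Definition \ref{dfn alpha ordered basis}) matrix; this is where I would invoke the upper-triangularity of $\phir$ asserted in Theorem \ref{thm rigid pairing factors}, which rests on \eqref{eqn Goldman on Cw} and \eqref{eqn phi upper-triangular}. Hence $\det D$ is $\pm$ a power of $\bq^{1/2}$, a unit in $\mathcal{A}$, and therefore $\det A'$ and $\det A$ differ by a unit. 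Combining with Theorem \ref{thm rigid pairing factors}, which gives $\det A\neq 0$ whenever $P_{W_f}(q)\neq 0$, we conclude $\det A'\neq 0$ under the same hypothesis, which is exactly the assertion. I expect the main obstacle to be the bookkeeping around \emph{which} transition matrices act on \emph{which} side and making sure the integrality/invertibility-over-$\mathcal{A}$ claim genuinely follows from the cited triangularity properties rather than only over the localization — but no new mathematical input beyond the already-established Theorem \ref{thm rigid pairing factors} and the elementary properties of the Kazhdan-Lusztig and Goldman changes of basis should be needed.
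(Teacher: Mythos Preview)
Your approach is essentially the same as the paper's: both deduce the corollary from Theorem~\ref{thm rigid pairing factors} by a change-of-basis argument, noting that the rigid determinant for two different bases of $\Hr[1/P_{W_f}]$ differ by the determinant of the transition matrix. The paper's proof is the single sentence preceding the theorem.

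One remark: you spend effort trying to show that the transition matrix $D$ between $\{T_\oO\}$ and $\phir^{-1}(\beta)$ is invertible over $\mathcal{A}$ itself (i.e.\ that $\det D$ is a monomial in $\bq^{1/2}$). This is unnecessary and probably not true: $\phir^{-1}(\beta)$ is only known to lie in $\Hr[1/P_{W_f}]$ (Proposition~\ref{prop phir restricts to isomorphism}), so the transition matrix has entries in $\mathcal{A}[1/P_{W_f}]$, and its determinant is only a unit there, not in $\mathcal{A}$. But that is all you need: since both $\{T_\oO\}$ and $\phir^{-1}(\beta)$ are bases of the free $\mathcal{A}[1/P_{W_f}]$-module $\Hr[1/P_{W_f}]$, the transition is automatically invertible at every $q$ with $P_{W_f}(q)\neq 0$, which together with $\det A(q)\neq 0$ gives the conclusion. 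Your earlier parenthetical ``or at least invertible at every specialization $q$ with $P_{W_f}(q)\neq 0$'' was already the right target; the subsequent attempt to upgrade to invertibility over $\mathcal{A}$ via unitriangularity of $\phir$ conflates the map $\phir$ (which is indeed upper-triangular over $\mathcal{A}$) with its inverse (which is not defined over $\mathcal{A}$). Drop that paragraph and the proof is clean.
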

\subsection{Application: Formal degrees of unipotent discrete series representations}
\label{subsection application formal degrees}
In this section, we drop the splitness hypothesis and $\G$ is just connected reductive over $F$ with quasi-split inner form $\G^*$. Let $\omega$ be a 
unipotent discrete series representation of $G$ and let $d(\omega)$ be its formal degree. 
By \cite{Solleveldfdegsequel}, there is a unique rational function in $\bq$ that
specializes to $d(\omega)$ for all $q>1$. Therefore we think of $d(\omega)$ as a rational function of $\bq$. 
Recall Solleveld's unipotent Local Langlands Correspondence for rigid inner twists 
\cite[Thms. 1, 3]{Solleveldfdegsequel}, and that, by feature (l) of \textit{loc. cit.}, the HII conjecture 
holds for unipotent representations of $\G(F)$.

Now we can prove 
\begin{theorem}
\label{thm fdeg body}
Let $\G$ be as above with $\G^*$ split over $F$. Let $\omega$ be a unipotent discrete 
series representation of $\G(F)$ and let $d(\omega)$ be its formal degree, 
thought of as a rational function of $\bq$. 
Then the denominator of $d(\omega)$ divides a power of $P_W(\bq)$, where $W$ is 
the Weyl group of $G^\vee$.
\end{theorem}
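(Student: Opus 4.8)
The plan is to reduce the statement to the split, Iwahori-spherical case and then to extract $d(\omega)$ by inverting the rigid Gram matrix against the values at $1\in\widetilde W$ of the functions $t_w$ computed in \cite{Plancherel}. First I would invoke Solleveld's unipotent local Langlands correspondence and feature (l) of \cite{Solleveldfdegsequel} (the HII conjecture for unipotent representations) to pass to $\G$ split, and then use that unipotent formal degrees are, up to a rational constant independent of $\bq$, constant on $L$-packets and governed by the principal block: it then suffices to bound the denominator of $d(\omega)$ for $\omega$ an Iwahori-spherical discrete series of a split group $G$ with Iwahori-Hecke algebra $\HH|_{\bq=q}$. For the unipotent classes $u$ with $\pi_0(\Zur/Z(\Gd))$ non-abelian that can label discrete series I would simply cite Reeder's explicit computations \cite{ReederLeubeck}, so assume $\pi_0(\Zur/Z(\Gd))$ abelian. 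Such an $\omega$ is a simple $J$-module $E(u,s,\rho)$ with $u$ distinguished in $[\Gd,\Gd]$, hence $\Zur$ is finite modulo $Z(\Gd)$, hence $u\in U$ and $\cc(u)$ lies in the rigid part of $J$ where every tempered module with parameter $u$ is one of the finitely many discrete series in the packet of $\omega$.

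The next step is a Plancherel identity. For $j\in\Jr$ supported on $J_{\cc(u)}$, the function $\phi^{-1}(j)$ is a Schwartz function on $G$ (via $\eta\colon J\into\mathcal{C}(G)$ from \cite{BK}), and evaluation at the identity gives
\[
\phi^{-1}(j)(1)=\int_{\widehat G_{\mathrm{temp}}}\trace{\pi}{j}\,d\mu_{\mathrm{Pl}}(\pi).
\]
Since $j\in J_{\cc(u)}$, the integrand vanishes unless $u$ occurs in the parameter of $\pi$, and then finiteness of $\Zur$ forces $\pi$ to be one of the discrete series $\omega'$ in the packet; the continuous spectrum contributes nothing, so
\[
\phi^{-1}(j)(1)=\sum_{\omega'}\trace{\omega'}{j}\,d(\omega')\,c_{\omega'},
\]
with $c_{\omega'}\in\Q_{>0}$ independent of $\bq$ (a ratio of Haar volumes and dimensions). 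Letting $j$ range over the part of the basis $\beta$ of $\Jrs$ from Definition \ref{dfn beta and Jr} supported on $J_{\cc(u)}$, and over the basis $\alpha$ of $\RHr$ from Definition \ref{dfn alpha ordered basis}, this is the linear system $\mathbf v=B\,\widetilde{\mathbf d}$, where $\mathbf v=\bigl(\phi^{-1}(\beta_i)(1)\bigr)_i$, $\widetilde{\mathbf d}=\bigl(d(\omega')c_{\omega'}\bigr)_{\omega'}$, and $B$ is the $\cc(u)$-block of the rigid Gram matrix $B_\beta$.

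Now I would invert $B$ and track denominators. By Lemma \ref{lem J pairing is block diagonal} and Theorem \ref{thm rigid pairing factors}, $B_\beta$ is block-diagonal with integer entries, independent of $\bq$, and invertible; hence its block $B$ is a constant invertible integer matrix and $B^{-1}$ has entries in $\Q$. On the other hand, the formulas of \cite{Plancherel} for $t_w$ — of which Proposition \ref{prop general KL formula} is the lowest-cell instance — express the values at $1$ of $\phi^{-1}(t_w)$ as Kazhdan-Lusztig combinatorics over $P_{W_f}(\bq)$, and since the $\phi(T_\omega)$ for $\omega\in Z(\Gd)$ are defined over $\Z$, the entries of $\mathbf v$ lie in $\mathcal{A}[P_{W_f}^{-1}]$ with denominator dividing a power of $P_{W_f}=P_W$. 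Therefore each entry of $\widetilde{\mathbf d}=B^{-1}\mathbf v$ is a $\Q$-linear combination of the $\phi^{-1}(\beta_i)(1)$ and so has denominator dividing a power of $P_W$; dividing by the $\bq$-independent constant $c_\omega$ yields the claim for $d(\omega)$.

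The step I expect to be the main obstacle is the Plancherel identity: pinning down, with correct Haar normalizations, that the value at $1$ of $\phi^{-1}(j)$ for $j$ rigid is exactly the discrete-series-weighted sum of formal degrees — i.e. that rigidity of $j$ annihilates the entire continuous spectrum and that the weights $c_{\omega'}$ are genuinely independent of $\bq$ — together with making "governed by the principal block'' precise across inner forms in the reduction step. Granting these, the final step is the linear algebra already packaged in Theorem \ref{thm rigid pairing factors}.
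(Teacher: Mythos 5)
The proposal is correct and takes essentially the same approach as the paper: reduce to the split, semisimple, Iwahori-spherical case via the unipotent HII conjecture, exploit perfection of the rigid pairing to extract $d(\omega^*)$ (your inversion of the integer, constant, block-diagonal $\cc(u)$-block of $B_\beta$ is exactly the paper's construction of the separating element $j$ with $\trace{\pi}{j}=\delta_{\pi,\omega^*}$), and then invoke the $P_{W_f}$-denominator bound for $t_w(1)$ from \cite{Plancherel} to conclude. The extra factor $c_{\omega'}$ you insert is unnecessary (the Plancherel density at a discrete series is $d(\omega')$ itself), and your basis $\alpha$ for the $\cc(u)$-block should be understood as running over all Iwahori-spherical discrete series with unipotent $u$, not only those in the packet of $\omega$, but neither point affects the argument.
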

\begin{proof}
Let the enhanced $L$-parameter of $\omega$ be $(\varphi, \rho)$. By the unipotent HII conjecture for $\G(F)$, 
there is a rational function $\Gamma_\varphi(\bq)$ of $\bq$ such that
\[
d(\omega)=\dim(\rho)\Gamma_\varphi(q)
\]
up to a multiplicative constant independent of $F$ or $\omega$. In particular, if $\omega^*$ is the 
discrete series representation of $\G^*(F)$ with parameter $(\varphi,\triv)$, then 
$d(\omega^*)=\Gamma_\varphi(q)$ up to a multiplicative constant of the same nature, and $\omega^*$ is 
Iwahori-spherical. Only the Radon-Nikodym derivative of $d(\omega)$ with respect to the 
Haar measure on the orbit of $\omega$ under unramified unitary characters of $\G(F)$ depends on $\bq$.
Therefore we may assume that $\G^*$ is semisimple.

Therefore it suffices to prove the theorem for Iwahori-spherical discrete series representations of semisimple 
split groups. In this case, by Propositions \ref{prop phir isomorphism q>1 and density for J} and
\ref{prop phir restricts to isomorphism}, there is $j$ in $J$ such that 
\[
\trace{[\pi]}{j}=\delta_{[\pi],[\omega^*]}
\]
and hence in fact
\[
\trace{\pi}{j}=\delta_{\pi,\omega^*}.
\]
On the other hand, we have $j=\sum_{w}\alpha_wt_w$ for $\alpha_w\in\mathcal{A}[\frac{1}{P_{W_f}(\bq)}]$. 
Hence for all $q>1$, the Plancherel formula gives
\[
\sum_{w}\alpha_w t_w(1)=j(1)=d(\omega^*),
\]
and by the main theorem of \cite{Plancherel}, multiplying by a power of $P_{W_f}(q)$ depending only on 
$\tilde{W}$ clears denominators of the left hand side.

\end{proof}
\begin{rem}
\label{rem extension to quasi-split}
There are some simpleminded additional hypotheses under which one can do slightly better.
Namely, suppose that $\G^*$ splits over an unramified extension $E$ of $F$ of degree $m$.
Then $\varphi_\omega|_{W_E}$ is unipotent, and if it remains discrete, then \cite{Solleveldfdegsequel} gives 
that
\[
\Gamma_{\varphi_\omega|_{W_{E}}}(\bq)=\Gamma_{\varphi_{\omega}}(\bq^n)
\]
for some $n$. By Theorem \ref{thm fdeg body}, multiplication by a power of
$P_W(\bq^n)$ clears denominators of the left hand side. 

For example, $\varphi_\omega|_{W_E}$ remains discrete (possibly after enlarging $E$) if either the order of 
$s=\varphi_{\omega}(\mathrm{Fr})$ is coprime to $m$, or of course if $\varphi_{\omega}$ is inertially discrete. 
\end{rem}

\subsection{Examples of the rigid pairing}
\label{section Examples}
As remarked above, we do not know how to canonically extract a basis from the spanning set $S$ that we have produced.
In this section we will compute $B_\beta$ (and in low ranks $\phir_\beta$) for specific choices 
of $\beta$ in the cases corresponding to Iwahori-Hecke algebras of $\SL_2(F)$, $\PGL_2(F)$, and $\SO_7(F)$.
In \cite[Example 11]{Plancherel}, we essentially computed the rigid pairing for $\Jrs$
when $G=\SO_5(F)$.
\subsubsection{$\SL_2(F)$}
\label{subsection example SL2}
We first consider the rigid pairing for 
\[
W=\genrel{s_0,s_1}{s_0^2=s_1^2=1},
\]
that is, for $G=\SL_2(F)$ and $\Gd=\PGL_2(\C)$. Recall from Example \ref{ex SL2 central ext}
and \eqref{eqn SL2 principal series decomposition} that we can chose the basis
$\alpha=\sett{\St, \pi^{G(\Oo)},\pi^{K'}}$ of the rigid quotient. 
It then follows from \cite{DSL2}  that
we can chose the ordered basis
and $\beta=\sett{t_{s_1}, t_{s_0}, t_{1}}$ 
of $\Jr$. With these bases $B_\beta$ is the identity matrix, and the $\phir_\beta$ is given by
\[
\begin{pmatrix}
q^{\frac{1}{2}}+q^{-\frac{1}{2}} &&1\\
&q^{\frac{1}{2}}+q^{-\frac{1}{2}}&1\\
&&1
\end{pmatrix}.
\]
Hence $\det(\phir)^TB_\beta=(q^{\frac{1}{2}}+q^{-\frac{1}{2}})^2$ and the rigid pairing is perfect if 
$q\neq -1$. 

To compare with \cite{CH}, the matrix corresponding
to change of basis from the ordered basis $\{T_{s_1}, T_{s_0}, T_1\}$ to the basis $\{C'_w\}$ 
followed by the involution ${}^\dagger(-)$ is 
\[
\diag(-q^{1/2},-q^{1/2},1)
\]
and the change of basis matrix from our basis to that of \cite{CH} is 
\[
\begin{pmatrix}
&[\pi^{G(\Oo)}]&[\pi^{K'}]&\St\\
\St &0&0&1\\
\pi^+ &1&-1&0\\
i_\emptyset &0&1&0
\end{pmatrix}.
\]
Composing appropriately we see that the two determinants match.
\subsubsection{$\PGL_2(F)$}
Now consider the extended affine Weyl group
\[
W=\genrel{s_0,s_1,\omega}{s_0^2=s_1^2=\omega^2=1, \omega s_0=s_1\omega}.
\]
This case corresponds to $G=\PGL_2(F)$, $\Gd=\SL_2(\C)$. In this case all the unitary principal series $\pi(\nu)$
are irreducible, but $G$ has a nontrivial unitary character given by the spinor norm and corresponding to 
the nontrivial element in its fundamental group. In this case we have $J_{1}=K(\pt/\Z/2\Z)$, where 
$\Z/2\Z=Z(\SL_2(\C))$. Thus we have that $\alpha=\sett{[\pi(\nu)], [\St], [-\St]}$ is a basis of $\RHr$, 
and we have
\[
\trace{\St}{t_1}=1=\trace{\St}{t_{\omega}},~\trace{-\St}{t_\omega,}=-1=-\trace{\St}{t_1},
\]
while $\trace{\pi(\nu)}{t_{s_0}}=1$. Hence the rigid pairing matrix for the basis 
$\beta=\sett{t_{s_1}, t_1, t_\omega}$ is 

\begin{center}
\begin{tabular}{|c|c|ccc|}
\hline 
elt.\textbackslash rep. & $a$ & \multicolumn{1}{c|}{$[\pi(\nu)]$} & $[\St]$ & \multicolumn{1}{:c|}{$[-\St]$} \\ 
\hline 
$[t_{s_0}]$ & 1 & \multicolumn{1}{c|}{1} &  &  \\ 
\hline
$[t_{1}]$ &  0&\multicolumn{1}{c|}{} & 1 & \multicolumn{1}{:c|}{1} \\ 
$[t_\omega]$ &  0& \multicolumn{1}{c|}{}& 1 & \multicolumn{1}{:c|}{-1} \\ 
\hline
\end{tabular} 
\end{center}

Adapting \cite{DSL2}, we compute that 
\[
\phi(C'_\omega)=\phi(T_\omega)=t_\omega+t_{\omega s_0}+t_{\omega s_1}.
\]
Thus
\[
\phir
=
\begin{pmatrix}
q^{1/2}+q^{-1/2} & 2 & 2 \\ 
 &  1&  \\ 
 &  &1 
\end{pmatrix}
\]
and the rigid pairing is perfect whenever $q\neq -1$.
\subsubsection{$\SO_7(F)$}
\label{subsection SO7}
In this example, we consider type $\tilde{B}_3$. Let $G=\SO_7(F)$, so that $\Gd=\Sp_6(\C)$ with 
conventions as in \cite{BDD}.
\paragraph{The tempered dual.}
The below table gives the Iwahori-spherical tempered representations of $G=\SO_7(F)$
in the form $i_P^G(\nu\otimes\sigma)$ with $\sigma$ a discrete series representation. We recall
the few cases of reducibility of these inductions immediately below. The rows list unipotent conjugacy 
classes in $\Gd=\Sp_6(\C)$ such that all  tempered standard modules $K(u,s,\rho)$ are in row $u$,
recording which summand of $J$ acts on each representation.

\begin{center}
\begin{tabular}{|c|c|c|c|c|c|c|c|}
\hline 
Cell\textbackslash Levi & $\GL_1^{\times 3}$ & $\GL_1^{\times 2}\times\SO_3$ & $\GL_1\times\GL_2$ & $\GL_1\times\SO_5$ & $\GL_2\times\SO_3$ & $\GL_3$ & $\SO_7$  \\ 
\hline 
$(1,\dots, 1)$ & $\pi(\nu)$ &  &  &  &  &  &  \\ 
\hline 
$(2,1,1,1, 1)$ &  & $\nu_1\otimes\nu_2\rtimes\pm\St_{\SO_3}$ &  &  &  &  &   \\ 
\hline 
$(2,2,1,1)$ &  &  & $\nu\otimes\xi\St_{\GL_2}$ & $\nu\rtimes\pm\tau_2(\SO_5)$ &  &  &   \\ 
 &  &  & $\xi^2=1$ &  &  &  &   \\ 
\hline 
$(2,2,2)$ &  &  &  &  & $\xi\St_{\GL_2}\rtimes\pm\St_{\SO_3}$ &  &   \\ 
&  &  &  &  & $\xi^2=1,\xi\neq 1$ &  &   \\ 
\hline 
$(4,1,1)$ &  &  &  & $\nu\rtimes\pm\St_{\SO_5}$ &  &  &   \\ 
\hline 
$(3,3)$ &  &  &  &  &  & $\xi\St_{\GL_3}$ &   \\ 
\hline 
$(4,2)$ &  &  &  &  &  &  & $\pm\tau_2(\SO_7)$  \\ 
&  &  &  &  &  &  & $\pm\tau_2'(\SO_7)$  \\ 
&  &  &  &  &  &  & $\pm\tau_3(\SO_7)$  \\ 
\hline
$(6)$ &  &  &  &  &  &  & $\pm\St_{\SO_7}$  \\ 
\hline 
\end{tabular} 
\end{center}
\vspace{5mm}

The discrete series representations of $G$ are given by Reeder \cite{Reeder}. Hence
we need only determine reducibility and compute traces of some elements $t_{\omega d,\rho}$ on a basis of the 
rigid quotient. We do so cell by cell.
 
For cells $(3,3)$ and $(4,1,1)$, by Theorems 11.4 and 11.2 of \cite{Tadic}, the 
respective tempered representations in the table are all simple. For these cells Lusztig's conjecture is 
true by \cite{BO}; each of the corresponding summands $J_u$ are matrix rings, and
$t_{d,\rho}=t_{d,\triv}=t_d$.
The same is well-known to be true for the lowest cell $(1,\dots, 1)$; the unitary
principal series are all irreducible as $\Gd$ is simply-connected. In each case the canonical one-sided cell
of each gives a canonical choice of distinguished involution, which we write simply as $t_d$.

For the cell $(2,1,1,1, 1)$, we have that $\Zur=\Sp_4\times\Z/2\Z$ is 
connected modulo the centre and has no projective representations.
Thus by \cite{BO} we have
\[
J_{(2,1,1,1, 1)}\simeq\Mat_{24\times 24}(R(\Sp_4(\C)\times\Z/2\Z)).
\]
In particular, $\mathrm{rank}(\pi(t_d))=1$ for all $d$ in $\cc$, and $t_{d,\rho}=t_{d,\triv}=t_d$.
Moreover, the corresponding tempered
representations are all simple. Indeed, using Goldberg's product formulas \cite{Goldberg} for the $R$-group, we have
\begin{align*}
R_{\GL_1\times\GL_1\times\SO_3}^{\SO_7}(i_P^G(\nu_1\otimes\nu_2\otimes\pm\St_{\SO_3}))&=R_{\GL_1\times\SO_3}^{\SO_5}(\nu_1\otimes\pm\St_{\SO_3}))
\times R_{\GL_1\times\SO_3}^{\SO_5}(\nu_2\otimes\pm\St_{\SO_3}))=\{1\},
\end{align*}
by \cite[Prop. 3.2]{Matic}, because $\nu_1,\nu_2$ are unitary.

The cell $(2,2,1,1)$ acts on representations belonging to more than one packet.
For the cell $(2,2,1,1)$, the tempered representations 
$i_{P}^G(\nu\otimes\xi\St_{\GL_2})$ are reducible if and only if $\xi^2=1$.
Indeed, either by noting that for $s\in\OO_2\subset \Sp_4$, 
\begin{equation}
\label{eq components of centralizers same 2211}
\pi_0(Z_{\OO_2}(s))=\pi_0(Z_{\SL_2\times\OO_2}(s))=\pi_0(\SL_2\times Z_{\OO_2}(s)),
\end{equation}
or using \cite{Goldberg}, we obtain
\[
R_{\GL_1\times\GL_2}^{\SO_7}(\nu\otimes\xi\St_{\GL_2})=R_{\GL_1}^{\SO_3}(\nu)\times R_{\GL_2}^{\SO_5}(\xi\St_{\GL_2})=R_{\GL_2}^{\SO_5}(\xi\St_{\GL_2}).
\]
By \cite[Prop. 3.3]{Matic}, this $R$-group is nontrivial if and only if $\xi^2=1$. Here we also used that unitary principal series of $\SO_3\simeq\PGL_2$ are all irreducible. Further, by \cite{Matic},
$
i_{GL_2\times\GL_1}^{\SO_5\times\GL_1}(\nu\otimes\xi\St_{\GL_2})
$
is a direct sum of two tempered representations. We denote their inductions to $\SO_7(F)$ by 
$\tau_\triv(\SO_5)$ and $\tau_{\mathrm{sgn}}(\SO_5)$. 

The tempered representations $i_P^G(\nu\otimes\pm\tau_2(\SO_5))$ are all irreducible, by 
\eqref{eq components of centralizers same 2211}. 

The structure of $J_{(2,2,1,1)}$ was computed in \cite{QiuIII}, where it was also shown that Lusztig's
conjecture holds for this cell. Matching the behaviour of the tempered representations to Lusztig's 
classification informs which elements to include in our basis.

The summand $J_{(2,2,2)}$ was analyzed in Example \ref{ex BDD central ext}. We can select
two elements $t_d$ to add to our basis by consulting the table in \cite{BDD}. We write 
the canonical distinguished involution (the last entry in the table in \textit{op. cit.})
just as $t_d$.

The subregular cell $(4,2)$ has six simple modules, comprising the entire Iwahori-spherical discrete 
series of $G(F)$ save the Steinberg representation and its twist. It also acts on representations belong 
to more than one packet. We have simply 
\[
J_{(4,2)}=\End(\tau_2(\SO_7))\oplus\End(-\tau_2(\SO_7))\oplus \End(\tau_2'(\SO_7))
\oplus \End(-\tau_2'(\SO_7))\oplus \End(\tau_3(\SO_7))\oplus \End(-\tau_3(\SO_7)).
\]
The summand $J_{(4,2)}$ is described in \cite[Section 12.3 (B)]{XiBook} along with its simple modules.
In the notation of \textit{loc. cit.}, in which $a_1a_2\in\pi_0(\Zur)$ is the nontrivial central element
of $Z(G^\vee)$, we have
$E_1=\tau_3$, $E_2=-\tau_3$, $E_3=-\tau_2$, $E_4=\tau_2$, $E_5=-\tau_2'$, and $E_6=\tau_2'$ (c.f.
the dimensions computed in \cite{XiBook} and \cite{Reeder}).
Using \textit{op. cit.}, we compute traces as in the below table.

For the cell $(6)=\{1,\omega\}$, the traces are obvious.
\paragraph{The rigid pairing.}
The above description of the tempered dual allows us to set
\begin{multline*}
\alpha=\left\{
[\nu_1\otimes\nu_2\rtimes\St_{\SO_3}] , [\nu_1\otimes\nu_2\rtimes-\St_{\SO_3}], 
[\tau_\mathrm{sgn}(\SO_5)] , [\tau_\triv(\SO_5)]  , {[\nu\rtimes\tau_2(\SO_5)]}, \right.
\\
\left.
{[\nu\rtimes-\tau_2(\SO_5)]}, [\tau_{\triv}] , {[\tau_\mathrm{sgn}]}, {[\nu\rtimes\St_{\SO_3}]}, {[\nu\rtimes-\St_{\SO_3}]} , [\xi\St_{\GL_3}]
\right\}
\end{multline*}
in the sense of Definition \ref{dfn alpha ordered basis}. According to the above, we chose
$\beta$ as in the below $20\times 20$ matrix $B_\beta$, split into:
\setlength{\tabcolsep}{4pt}

\vspace{2mm}
\hspace{-27mm}
\begin{tabular}{|c|c|c|c|*{11}{c|}}
  \hline
  Cell\textbackslash Rep. & $a$ & $\text{elt. of}~\beta$ & $[\pi(\nu)]$ & $[St_{\SO_3}]$ & $[-\St_{\SO_3}]$ & $[\tau_{\triv}(\SO_5)]$ & $[\tau_{\sgn}(\SO_5)]$ & $[\tau_2(\SO_5)]$ & $[-\tau_2(\SO_5)]$ & $[\tau_\sgn]$ & $[\tau_\triv]$ & $[\St_{\GL_3}]$ \\
  \hline
  $(1,\cdots,1)$ & 9 & $t_{w_0}$ & 1 & & & & & & & & & \\
  \hline
  $(2,1,\dots,1)$ &6 & $t_d$ & & 1 & 1 & & & & & & & \\
  & & $t_{\omega d}$ & & 1 & -1 & & & & & & & \\
  \hline
  $(2,2,1,1)$ & 4 & $t_{d_{\Gamma_{03}}, \triv}$ & &  &  & 1 & & & & & & \\
  & & $t_{d_{\Gamma_{03}},  \sgn}$ & & & & & 1 & & & & & \\
  & & $t_{\omega d_{\Gamma_{02}}}$ & &  & & 1& 1& 1 &-1 & & & \\
  & & $t_{d_{\Gamma_{02}}}$ & & & & 1& 1& 1& 1 & & & \\
  \hline
  $(2,2,2)$ & 3 & $t_{013}$ & & & & & & & &1 & & \\
   &  & $t_{d}$ & & & & & & & & &1 & \\
  \hline
  $(3,3)$ & 2 & $t_{d_{3,3}}$ & & & & & & & & & & 1\\
  \hline
\end{tabular}


\vspace{10mm}

\hspace{-27mm}
\begin{tabular}{|c|c|c|c|*{11}{c|}}
  \hline
 Cell\textbackslash Rep.  & $a$ & $\text{elt. of}~\beta$ & $[\St_{\SO_5}]$ & $[-\St_{\SO_5}]$ & $[\tau_3(\SO_7)]$ & 
  $[-\tau_3(\SO_7)]$ & $[-\tau_2(\SO_7)]$ & $[\tau_2(\SO_7)]$ & $[-\tau_2'(\SO_7)]$ & $[\tau_2'(\SO_7)]$ & 
  $[\St_{\SO_7}]$ & $[-\St_{\SO_7}]$ \\
  \hline
  $(4,1,1)$ & 2 & $t_{\omega d}$ & 1 &-1 & & & & & & & & \\
     &  & $t_{d}$ & 1 &1 & & & & & & & & \\
  \hline
  $(4,2)$ & 1 & $t_{s_0}$ & &  & 1 & 1 &1 &1 & & & & \\
  & & $t_{\omega s_0}$ & & & 1& -1& -1  &1 & & & & \\
  & & $t_{\omega s_3, \sgn}$ & &  & & & &  & -1 & 1 & & \\
  & & $t_{s_3,\sgn}$ & & & & & & & 1 & 1 & & \\
  & & $t_{\omega s_3,\triv}$ & & & & & -1& 1&  & & & \\
  & & $t_{s_3,\triv}$ & & & & & 1 & 1 &  & & & \\
  \hline
  $(6)$ & 0 & $t_{\omega}$ & & & & & & & & & 1& -1\\
    &  & $t_{1}$ & & & & & & & &&1 &1 \\
  \hline
\end{tabular}
We denote the unique distinguished involution in a one-sided cell $\Gamma$ by $d_\Gamma$,
with notation as in \cite{QiuIII}.

\appendix
\section{Appendix by Dmitriy Rumynin\footnote[2]{Department of Mathematics, University of Warwick, 
Coventry, CV4 7AL, UK, email: \texttt{D.Rumynin@warwick.ac.uk}}: Adjoint quotient for reductive group}
\newcounter{Coco}

\setcounter{lem}{0}
\setcounter{prop}{0}
\setcounter{cor}{0}
\renewcommand{\thelem}{\Alph{section}\arabic{lem}}
\renewcommand{\theprop}{\Alph{section}\arabic{prop}}
\renewcommand{\thecor}{\Alph{section}\arabic{cor}}

\begin{abstract}
We describe the GIT quotient $\GG$ for a disconnected reductive group $\cG$
  over an algebraically closed field of characteristic zero acting on itself by conjugation.
\end{abstract}

Let $\cG$ be a reductive group over an algebraically closed field $\bF$ of characteristic $0$.
The group $\cG$ being reductive means that the identity component $\cG^0$ is
a connected reductive group.

Johnston, Martin Duro and the author have recently
classified reductive groups with fixed $\Gamma = \pi_0 (\cG)$ and $\cG^0$ up to isomorphism \cite{JMR}.
Building on this work, we describe the adjoint quotient. Note that the same problem was considered
first by Segal for compact groups \cite{Segal}, then by Mohrdieck for semidirect products 
$\cG\cong \cG^0 \rtimes \Gamma$ \cite{diss}, an finally by Springer for semisimple, simply connected
$\cG^0$ \cite{Spr}.

The problem may be considered as an attempt to understand tensor products of simple $\cG$-modules. Indeed, their characters form a basis of the invariant functions on $\cG$ so that we can write
\[
\bF [\GG] \cong \bF[\cG]^{\cG}
\cong \Rep (\cG) \coloneqq \bF \otimes_{\bZ} K_0( \cG-\Rep) \, .
\]


%

\subsection{Subgroups and Components}
For convenience, we use the standard letters, adorned with the superscript $0$,
to denote the standard subgroups associated to the identity component $\cG^0$.
If $\cX\leq \cG$, then simply $\cX^0= \cX \cap \cG^0$,
which may or may be not the identity component of $\cX$.
For instance, a Borel subgroup, a torus, its normaliser and the Weyl group are
\[
\cB^0 \geq \cT^0 \unlhd  \cN^0 \coloneqq N_{\cG^0}(\cT^0), \quad \cW^0 \coloneqq \cN^0/\cT^0.
\]
These groups have ``disconnected'' counterparts that hoist no superscript:
\[
\cN \coloneqq N_{\cG}(\cT) \unrhd \cT \coloneqq N_{\cG}(\cB^0,\cT^0) \leq \cB \coloneqq N_{\cG}(\cT) \leq \cG, \quad \cW \coloneqq \cN/\cT.
\]
Note that the subgroups $\cB,\cG,\cN$ meet every connected component of $\cG$ \cite{JMR}. Besides, $\cT = N_{\cG}(\cT^0)$ \cite{JMR}.
Hence, we have six exact sequences of groups
\[
1 \rightarrow \cX^0 \rightarrow \cX \rightarrow \Gamma \rightarrow 1, \quad \cX\in \{\cA,\cB,\cG,\cN,\cT,\cW\} \, ,
\]
where 
the new, yet undefined subgroup $\cA$ is a finite subgroup of $\cT$ that meets every component of $\cG$
and $\cA^0 \coloneqq \cG^0 \cap \cA$ is central in $\cG^0$.
Such subgroup exists \cite{JMR}. We fix it once and for all.
Additionally, for an element $\gamma\in \Gamma$, we choose a fixed lifting $\dot{\gamma}\in \cA$.
The key variety is the component quotient $\VG \coloneqq (\cG^0 \dot{\gamma})\!\sslash\! \cG^0$, partially due to the following obvious observation.
\begin{lem} \label{lem1}
  The adjoint quotient can be taken in two steps
  \[
  \cG\!\sslash\! \cG^0  \cong \coprod_{\gamma \in \Gamma} \VG \ \mbox{ and }  \  
  \GG \cong (\cG\!\sslash\! \cG^0)\!\sslash\! \Gamma \cong \coprod_{\gamma \in \Gamma\!\sslash\!\Gamma} \VG\!\sslash\! \Gamma (\gamma),
    \]
where the second union goes over the conjugacy classes in $\Gamma$ and $\Gamma (\gamma)$ is the centraliser of $\gamma$.
\end{lem}  
Now we collect the facts that easily follows from the known results \cite{diss,Spr}.
Denote by $\cT_\gamma$ the identity component of the centraliser of $\dot{\gamma}$ in $\cT^0$. 
\begin{prop} \label{Prop2}
  Suppose that $\cG^0$ is semisimple. The following statements hold.
  \begin{enumerate}
  \item For each $\gamma$ there exist
    a finite group $\Delta_\gamma$ and its action on $\cT_\gamma \dot{\gamma}$ such that the natural embedding
    \[ \iota: \cT_\gamma \dot{\gamma} \hookrightarrow \cG^0 \dot{\gamma}, \quad x\dot{\gamma}\mapsto x\dot{\gamma} \]
    induces an isomorphism $\cT_\gamma \dot{\gamma} \!\sslash\! \Delta_{\gamma} \xrightarrow{\cong} V_\gamma = \cG^0 \dot{\gamma} \!\sslash\! \cG^{0}$.
  \item 
    If $\dot{\gamma}$ acts without non-identity fixed points on $\pi_1 (\cG^0)$, then
    the quotient map  $\cG^0 \dot{\gamma} \rightarrow V_\gamma$ admits a simultaneous resolution of singularities. 
\setcounter{Coco}{\value{enumi}}
\end{enumerate}
  Suppose further that $\cG^0$ is simply connected. Then the following statements hold.
\begin{enumerate}
\setcounter{enumi}{\value{Coco}}
\item The fixed point ring $\bF [\cT_{\gamma} \dot{\gamma}]^{\Delta_\gamma}$ is a polynomial ring.
\item The quotient space $V_{\gamma}$ is an affine space $\bA^{d(\gamma)}_{\bF}$ where $d(\gamma)=\dim \cT_\gamma$. 
\item The quotient map  $\cG^0 \dot{\gamma} \rightarrow V_\gamma$ admits a section.
\end{enumerate}
\end{prop}  
\begin{proof}
  Notice that the quotient $V_{\gamma}$ depends only on the adjoint automorphism $\AdRumynin (\dot{\gamma})$:
  \[ g (h \dot{\gamma}) g^{-1} = g h \dot{\gamma} g^{-1}\dot{\gamma}^{-1} \dot{\gamma} =
g h \AdRumynin (\dot{\gamma})(g^{-1}) \dot{\gamma} \, .
  \] 
  Our choice of $\dot{\gamma}$ ensures that  $\AdRumynin (\dot{\gamma})$ has finite order $n$ that divides $|\gamma|$.
  Let $C_n = \langle \tilde{\gamma} \rangle$ be the cyclic group of order $n$.
  Form a semidirect product $\widetilde{\cG} \coloneqq \cG^0 \rtimes C_n$ where the generator $\tilde{\gamma}$ acts via  $\AdRumynin (\dot{\gamma})$.
  Since $V_\gamma = \cG^0 \dot{\gamma} \!\sslash\! \cG^{0} \cong \cG^0 \tilde{\gamma} \!\sslash\! \cG^{0}$, the first statement is essentially \cite[Th. 1.1]{diss}.

  The remaining statements are equally known. Statement~2 is \cite[Th. 1.3]{diss}. 
  Statement~3 is \cite[Th. 1]{Spr}. Statement~4 is \cite[Cor. 2]{Spr}. 
  Statement~5 is \cite[Th. 1.2]{diss}. 
\end{proof}
\begin{cor}
  If $\cG^0$ is semisimple and simply connected, then
  \[
  \Rep (\cG) \cong \bigoplus_{\gamma \in \Gamma\!\sslash\!\Gamma}   \bF[x_1, \ldots , x_{d(\gamma)}]^{\Delta_\gamma} \, .
  \]
\end{cor}

\subsection{General reductive groups}
To deal with a general reductive $\cG$, we need the following observation.
\begin{lem} \label{seqREP}
Suppose we have a pair of reductive groups $\cG$ and $\wG$ that fit into an exact sequence with a finite abelian $\Phi$ 
\begin{equation} \label{seqGG}
1 \rightarrow \Phi \longrightarrow \wG \longrightarrow  \cG \rightarrow 1 \, . 
\end{equation}
Then there is an action of $\Phi$ on $\Rep (\wG)$ and $\Rep (\cG)\cong \Rep (\wG)^{\Phi}$.
\end{lem}
To a general $\cG$ one can associate the central torus $\cZ\coloneqq Z(\cG^0)^0$ and the semisimple,
simply connected group $\cH$,
defined as the universal cover of $[\cG^0,\cG^0]$. They come equipped with an exact sequence
\begin{equation} \label{seqGG2}
1 \rightarrow \Phi \longrightarrow \cZ \times \cH \xrightarrow{\ \ \mu \ \ }  \cG^0 \rightarrow 1
\end{equation}
where the map $\mu$ comes from multiplication in $\cG$ and the finite abelian group $\Phi$ is its kernel. Let us call $\cG$ {\em common}
if there exists a reductive group $\wG$ such that
$\wG^0 \cong \cZ \times \cH$ and the map $\mu$ extends to a map $\mu: \wG \rightarrow  \cG$,
extending the sequence~\eqref{seqGG2} to a sequence~\eqref{seqGG}.

Note that an obstruction to existence of $\wG$ is the Taylor cocycle, an element of $H^3 (\pi_0 (\cG), \Phi)$. It vanishes if and only if $\wG$ exists \cite{RVW,Tay}.

\begin{prop} \label{Prop5}
  Suppose that $\cG$ is common. Let the group $\wG$ be as above.
  The following statements hold.
  \begin{enumerate}
  \item For each $\gamma\in \Gamma = \pi_0 (\cG)$
    the quotient space $\widetilde{V_{\gamma}} = \wG^0 \dot{\gamma} \!\sslash\! \wG^{0}$
is a smooth affine variety $\cZ \times \bA^{d(\gamma)}_{\bF}$ where $d(\gamma)=\dim \cT(\cH)_\gamma$.
\item For each $\gamma\in \Gamma$
    the quotient space ${V_{\gamma}} = \cG^0 \dot{\gamma} \!\sslash\! \cG^{0}$
    is the quotient of the former $\widetilde{V_{\gamma}}\!\sslash\! \Phi$ by the finite abelian group $\Phi$.
  \item  The adjoint quotient can be written as
    \[
  \Spe (\Rep (\cG)) \cong \GG \cong (\cG\!\sslash\! \cG^0)\!\sslash\! \Gamma \cong \coprod_{\gamma \in \Gamma\!\sslash\!\Gamma} \VG\!\sslash\! \Gamma (\gamma),
  \]
\item The variety $\cG\!\sslash\! \cG$ has at worst finite quotient singularities.
\item The variety $\cG\!\sslash\! \cG$ is Cohen-Macaulay.
\end{enumerate}
\end{prop}
\begin{proof}
  The first statement follows from Proposition~\ref{Prop2} for $\cH$. The second is  obvious. The third one follows from Lemma~\ref{lem1}. 
  The last two statements are standard.
\end{proof}  

What can be done about an uncommon group $\cG$? Similarly to Proposition~\ref{Prop2}, the variety $V_{\gamma}$ depends only on the adjoint automorphism $\AdRumynin (\dot{\gamma})$. We can form the group $\wG$ ``locally'', i.e., consider $(\cZ \times \cH) \rtimes C_n$. Hence, Proposition~\ref{Prop5} yields us
\begin{cor} For any reductive $\cG$
the variety $\cG\!\sslash\! \cG$ is Cohen-Macaulay with at worst finite quotient singularities.
\end{cor}

Of course, a careful reader would observe that we did not need any theory for the first conclusion. 
The variety $\cG\!\sslash\! \cG$ is Cohen-Macaulay immediately by Hochster-Roberts Theorem \cite{HR}.
In our defence, our result also describes singularities, while our method allows to prove that 
$\cG\!\sslash\! \cG$ is Cohen-Macaulay in a finite, not very small characteristic $p$. Namely,
if $p$ does not divide $|\Gamma|$ and the determinant of the Cartan matrix of $\cG$. 

Finally, let us reexamine the exact sequence~\eqref{seqGG}. Let $\Phi^\vee$ be the group of the linear characters of $\Phi$.
For $\alpha\in\Phi^\vee$ by $\Rep^\alpha (\wG)$ we understand the corresponding space of $\alpha$-semiinvariants.
\begin{lem}
\label{lem3}
 In the context of Lemma~\ref{seqREP}
  we have a decomposition
  \begin{equation}
    \Rep (\wG) \;=\; \bigoplus_{\alpha\in\Phi^{\vee}} \Rep^{\alpha} (\wG)
\end{equation}
  where each $\Rep^{\alpha} (\wG)$ is a maximal Cohen-Macaulay $\Rep (\cG)$-module.
\end{lem}

\subsection{Cartan Subgroups}
\label{subsection appendix Cartan subgroups}
Recall that an algebraic subgroup $\cC\leq \cG$ is called a Cartan subgroup if the following
properties hold:
\begin{itemize}
\item  $\cC$ is diagonalisable (commutative and all elements are semisimple),
\item $\cC$ has finite index in its normaliser,
\item the component group $\pi_0 (\cC)$ is cyclic.
\end{itemize}
Both Segal \cite{Segal} and  Mohrdieck \cite{diss} pay considerable attention to Cartan subgroups since they are a key technical ingredient for tackling
the adjoint quotient. Here we only summarise the main facts. Their proofs are straightforward adaptations of their proofs \cite{diss,Segal}.

\begin{prop} 
The following statements for Cartan subgroups of $\cG$ hold. 
  \begin{enumerate}
  \item Every semisimple $g\in \cG$ is contained in a Cartan subgroup.
  \item If $\cC$ is a Cartan subgroup, then $\cC^0=\cG^0\cap \cC$ is the identity component of $\cC$ and $\cC^0$ is a torus in $\cG^0$.
  \item Let $g\in \cG^0 \dot{\gamma}$ be a semisimple element. If $\cC$ is a Cartan subgroup such that $g\in \cC$ and 
    the image of ${g}$ is a generator of $\pi_0 (\cC)$, then every semisimple element of $\cG^0 \dot{\gamma}$ is conjugate to an element of $\cC^0 g$.
  \item The map
    \[
    \{\mbox{\rm{Cartan subs of }} \cG \} \rightarrow \{\mbox{\rm{Cyclic subs of }} \pi_0 (\cG) \}, \quad
    \cC \mapsto \cC / (\cC \cap \cG^0)
    \]
    is a bijection on the equivalence classes under conjugation by $\cG$. 
  \end{enumerate}  
\end{prop}

\bibliography{rigid-determinant-biblio.bib}

\end{document}